\documentclass{amsart}

\usepackage{amssymb, amsmath, esint, xcolor}
\usepackage[backref=page]{hyperref}

\usepackage{mathtools}
\mathtoolsset{showonlyrefs}

\usepackage{enumitem}

\renewcommand{\[}{\begin{equation}\begin{aligned}}
\renewcommand{\]}{\end{aligned} \end{equation}}

\newcommand{\dist}{\overline{\mathrm{dist}}} 
\newcommand{\dd}{\mathbf{d}_{\mathcal{C}_{n,k}}}

\usepackage{verbatim}

\newtheorem{thm}{Theorem}
\newtheorem{prop}[thm]{Proposition}

\newtheorem{cor}[thm]{Corollary}
\newtheorem{conj}[thm]{Conjecture}

\theoremstyle{remark}

\theoremstyle{definition}
\newtheorem{definition}[thm]{Definition}

\author{G\'abor Sz\'ekelyhidi}
\address{Department of Mathematics, Northwestern University, Evanston,
  IL, USA}
\email{gaborsz@northwestern.edu}

\title{Mean convex flows with surgery}
\date{}

\begin{document}

\begin{abstract}
  We construct a mean curvature flow with surgery starting with any
  compact mean convex hypersurface in $\mathbb{R}^{n+1}$, extending previous results of
  Huisken-Sinestrari, Brendle-Huisken, and Haslhofer-Kleiner for
  2-convex flows. In contrast with previous constructions of mean
  curvature flow with surgery, the topological surgeries
  are performed by the flow itself through nondegenerate cylindrical
  singularities. At the same time the flow needs to be slightly adjusted at
  finitely many smooth times. 
\end{abstract}

\maketitle
\section{Introduction}
The mean curvature flow deforms a hypersurface $L\subset
\mathbb{R}^{n+1}$ in the direction of its mean curvature, and can be
viewed as the negative $L^2$-gradient flow of the area functional. Since the
foundational studies by Brakke~\cite{Brakke} and
Huisken~\cite{Hui84}, the flow has become a central object in
geometric analysis and the study of hypersurfaces. 
A fundamental feature of the flow is that typically
singularities will form in finite
time. In the setting of convex hypersurfaces Huisken~\cite{Hui84} showed that the
flow eventually becomes graphical over a shrinking sphere that
collapses to a point. More general initial conditions can lead to more
complicated singularities, changing the topology of the
evolving hypersurface. For instance, suitable dumbbell shaped surfaces in
$\mathbb{R}^3$ are decomposed by the flow into two
pieces, which then continue to collapse into round points.
Understanding the behavior of the flow through such
singularities is essential for geometric and topological
applications.

A successful approach under certain assumptions
is to perform surgery at the singular times:
one removes high-curvature regions, caps off the
resulting boundary components, and restarts the flow on the resulting
smooth hypersurface. If this process terminates after finitely many
surgeries, then it yields a complete picture of the topology of the
initial hypersurface. This program was carried out in full for the
class of 2-convex hypersurfaces with $n > 2$ -- i.e. where the principal curvatures
$\lambda_1 \leq \ldots \leq \lambda_n$ satisfy $\lambda_1+\lambda_2 >
0$ -- by Huisken-Sinestrari in a landmark paper
\cite{HS08}, and for $n=2$ by Brendle-Huisken~\cite{BH15},
Haslhofer-Kleiner~\cite{HK16}. The key geometric consequence of the 2-convexity
assumption is that all singularities are either spherical, or are
modeled on the cylinder $S^{n-1}\times\mathbb{R}$, by
White~\cite{White02}. Powerful canonical
neighborhood theorems in \cite{HS08, HK16, BH15} give enough
information in neighborhoods of such singularities to successfully
execute the surgery procedure.

While the 2-convexity hypothesis is natural, it is somewhat
restrictive. In this paper we will instead consider the larger class
of mean convex hypersurfaces, i.e. those that satisfy 
$\lambda_1 + \ldots + \lambda_n > 0$. In this case, by White~\cite{White02}, the singularities
can be modelled on more general cylinders of the form $S^{n-k}\times
\mathbb{R}^k$. Compared to the 2-convex case, the higher dimensional Euclidean
factors with $k\geq 2$ present serious difficulties in extending the surgery program to this
setting. Instead of attempting to 
change the topology of the evolving hypersurface near singularities
by a cutting and pasting procedure, our approach is to show that after suitable
perturbations the flow admits only nondegenerate singularities in the
sense of Angenent-Vel\'azquez~\cite{AV97}, more recently studied
extensively by Sun-Xue~\cite{SX22} and Sun-Wang-Xue~\cite{SWX25}. This
approach is suggested by the following conjecture. See
Sun-Wang-Xue~\cite[Conjecture 1.4]{SWX25}, as well as
Ilmanen~\cite[Problem 9]{IlmanenProb}, 
Colding-Minicozzi-Pedersen~\cite[Conjecture 7.1]{CMP15} and
Sun-Xue~\cite[Conjecture 1.1]{SX22} for closely related conjectures. 

\begin{conj}
  For a generic compact embedded initial hypersurface $L_0$, the mean curvature flow
  $L_t$ admits only spherical and nondegenerate cylindrical
  singularities. 
\end{conj}

The importance of nondegenerate singularities is that by Sun-Wang-Xue~\cite{SWX25}
they correspond to well understood topological surgeries (see
Theorem~\ref{thm:SWXsurgery}), so 
a flow through nondegenerate cylindrical singularities can be viewed
as a canonical flow with surgery.
While we are not able to prove this conjecture in such generality, our
main result is the following.

\begin{thm}\label{thm:main}
  For any compact, embedded, mean convex initial hypersurface
  $L_0$, there exists an \emph{almost mean curvature flow} $L_t$ for all
  $t\geq 0$ that
  has only spherical and nondegenerate cylindrical singularities. 
\end{thm}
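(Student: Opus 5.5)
The plan is to build the almost mean curvature flow inductively in time, flowing by genuine mean curvature flow and perturbing only at finitely many smooth times just before singularities. Mean convexity gives the structure theory we need. By White~\cite{White02} and the noncollapsing/canonical neighborhood results for mean convex flows, every blow-up limit at a singular point is a multiplicity one round shrinking cylinder $S^{n-k}\times\mathbb{R}^k$ or a sphere. Moreover, the singular set has parabolic Hausdorff dimension at most $n-1$. The first step is to fix a scale $\epsilon>0$ and a time $T_1$ slightly before the first singular time. At $T_1$, every high curvature region is $\epsilon$-close, after rescaling, to a neck region of some cylinder $S^{n-k}\times\mathbb{R}^k$, with $k$ ranging over a finite set. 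The only problematic singularities are the degenerate cylindrical ones, i.e.\ those with $k\ge1$ at which the Hermite/Jacobi expansion of the rescaled profile (in the sense of Angenent--Vel\'azquez~\cite{AV97} and Sun--Xue~\cite{SX22}) has a nonnegative-definite quadratic part in some $\mathbb{R}^k$ directions, rather than the nondegenerate negative-definite behaviour.

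The second step is a local perturbation argument at each near-singular region. Following the strategy of Sun--Xue~\cite{SX22} and Sun--Wang--Xue~\cite{SWX25}, I would use the finite-dimensional family of perturbations generated by the unstable and neutral modes of the linearized rescaled flow on the cylinder. These are the modes $1$, $x_i$, and $x_ix_j-2\delta_{ij}$ in the Gaussian $L^2$ space restricted to the normal graph. The aim is to show that for all parameters outside a set of measure zero (or of small codimension), the perturbed flow develops, near that region, only nondegenerate cylindrical or spherical singularities. The perturbation must be localized, using cutoff functions supported in the neck region, and kept small in $C^{2,\alpha}$ so that mean convexity is preserved. This gives the adjusted hypersurface at time $T_1$, from which we restart the mean curvature flow. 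By~\cite{SWX25} (Theorem~\ref{thm:SWXsurgery}), flowing through the resulting nondegenerate singularities is then a canonical topological surgery, and the flow continues smoothly afterwards, away from isolated singular times, as a mean convex level set flow.

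The third step is termination. Each adjustment is performed at a smooth time. I need to show that only finitely many adjustments are needed before the flow becomes extinct. For this I would use mean convexity to get a uniform lower bound on how much area, or how much enclosed volume, is lost between successive adjustment times. I would also use the fact that the extinction time is finite and comparable to that of the initial surface, since the perturbations are small and, by mean convexity, the domains are nested. The canonical neighborhood estimates, being uniform in terms of the initial noncollapsing constant, let one choose the adjustment scale uniformly.

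The main obstacle is the second step for $k\ge2$. There the degenerate directions form a higher-dimensional set, possibly a continuum of nearby singular points, and a single parameter perturbation of the kind used in the 2-convex case is not enough. What I would try first is a transversality/Sard-type argument on the map from perturbation parameters $(a,b_i,c_{ij})$ to the limiting Hermite coefficients of the first singular point reached. The goal is to show this map is a submersion near the degenerate cylinder, using the spectral gap of the linearized operator on $S^{n-k}\times\mathbb{R}^k$ together with the nondegeneracy theory of~\cite{SX22}. It would then follow that generic parameters hit only nondegenerate singularities.
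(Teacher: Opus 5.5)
\textbf{There is a genuine gap, and it sits in your Step 2, where the whole difficulty of the theorem lies.} That step is not actually carried out, and the transversality mechanism you suggest does not fit the problem.

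First, there is no smooth ``limiting Hermite coefficient'' map to which a submersion or Sard argument could apply. The neutral modes evolve by the nonlinear ODE $A'\approx-\nu_{n,k}A^2$, so $\tau A(\tau)$ always converges to $\nu_{n,k}^{-1}$ times a rank-$\ell$ orthogonal projection. Degeneracy is therefore a discrete invariant, not the vanishing of a coefficient that varies smoothly with the data. There is also no reason for the location of the singular points to depend smoothly on the parameters.

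Second, and more seriously, when $\ell<k$ the $\ell$-degenerate singular points of nearby flows can form a continuum spread along the $k-\ell$ degenerate directions, as for $\mathbb{R}^{k-\ell}$ times a nondegenerate model. Making ``the first singular point reached'' nondegenerate says nothing about its neighbours.

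The paper handles this quantitatively, in three steps.
\begin{enumerate}
\item It perturbs only by the $(k-\ell)$-parameter family $\mathbf{a}\cdot y$ in the degenerate linear modes $y_q$, $q>\ell$. Constants, $z_\alpha$, and $y_i$ with $i\le\ell$ are, to leading order, produced by spacetime translations of the singular point. The $y_q$ modes are not, by Proposition~\ref{prop:uqdecay}: $\partial_{y_q}u$ decays like $e^{-\kappa_0\tau}$. That result needs the sharpened normal form with $\tau^{-19/10}$ errors over model flows $\mathcal{M}^{n,k,\ell}_\tau$, whose existence comes from an induction on dimension (Proposition~\ref{prop:nondegexist}).
\item It iterates a three-annulus lemma in several stages (Proposition~\ref{prop:perturb20}). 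This shows that a relative perturbation of size $\gamma$ survives any translation smaller than $r_0\gamma^{1-\kappa_1}$ in the degenerate directions. It then grows until it dominates the $O(\tau^{-1})$ distance to the cylinder, so there is no $(n,k)$-cylindrical singularity there.
\item The result is the separation estimate $|y_1''-y_2''|\ge r_2|\mathbf{a}_1-\mathbf{a}_2|^{1-\kappa_1}$ of Proposition~\ref{prop:perturb22}. A covering count then shows the bad parameters have measure zero, since $(k-\ell)(1-\kappa_1)<k-\ell$.
\end{enumerate}
This is an inverse-H\"older bound on where degenerate singularities can sit, not a submersion. It also removes only the $\ell$-degenerate $(n,k)$-singularities near the given point; less degenerate ones may remain.

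\textbf{Your Step 3 also fails as stated.} Canonical neighborhood theorems of Huisken--Sinestrari or Haslhofer--Kleiner type are available only under $2$-convexity; their absence when $k\ge2$ is exactly what the paper works around. It is also not true that every high-curvature region is neck-like. Finally, loss of area or volume does not bound the number of adjustments: each perturbation clears degenerate singularities only from a small parabolic neighborhood and may leave or create others elsewhere.

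In the paper, finiteness comes from compactness together with an induction over the hierarchy $\succ$ on types $(k,\ell)$.
\begin{enumerate}
\item Proposition~\ref{prop:singlimits} says that under limits $k$ can only increase and, for fixed $k$, $\ell$ can only decrease. Hence $\Sigma_{k,\ell}$ is closed when $\Sigma^>_{k,\ell}=\emptyset$, and small perturbations create nothing worse.
\item Compactness and Proposition~\ref{prop:nearbydegen} give a uniform scale $r_0$ at which all these singularities, including those of the perturbed flows, are $\ell$-degenerate. So each application of Proposition~\ref{prop:perturbdegen10} clears a parabolic neighborhood of definite size $\Phi(r_0/4)$.
\item Finitely many perturbations clear a time slab. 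The inductive hypothesis on shorter intervals handles the remaining less degenerate singularities.
\end{enumerate}
Concatenating these pieces with the perturbed time slices, slab by slab, is what produces an almost mean curvature flow rather than a genuine one.
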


An almost mean curvature flow is a mean curvature flow at all but
finitely many smooth times, while at these times it is slightly
adjusted by smooth isotopies. See Definition~\ref{defn:AMCF}
for the precise definition. This result yields a long sought-after surgery-type procedure 
applicable to the class of mean convex hypersurfaces, and thus opens
the way to topological applications to this wider class.

At the heart of the proof of Theorem~\ref{thm:main} is
Proposition~\ref{prop:perturbdegen10} which shows, roughly speaking, that if a mean
curvature flow $L_t$ has a degenerate cylindrical singularity at a
point $(x_0, t_0)$, then we can find arbitrarily small perturbations
of $L_t$, which have no degenerate singularities of the same type in a
neighborhood of $(x_0, t_0)$ of a definite size. This result is
obtained by a generalization of the technique introduced by the author
in \cite{Sz26} in order to deal with cylindrical singularities of the
form $S^1\times \mathbb{R}$. The advantage of having a
one-dimensional $\mathbb{R}$-factor in that work is that these
singularities satisfy a dichotomy: either they are nondegenerate, or
the corresponding rescaled mean curvature flow 
decays exponentially fast to the cylinder. This is no longer the case for
cylindrical singularities modeled on $S^{n-k}\times \mathbb{R}^k$ for
$k > 1$, because of the possibility that the behavior is nondegenerate
in certain directions in the $\mathbb{R}^k$ factor, but not
others. This is the substantial new difficulty that is addressed in this
paper.

The paper is organized as follows. In Section~\ref{sec:prelim} we
collect some basic definitions and preliminary results. In the next
two sections, \ref{sec:L2normal} and \ref{sec:geomnormal}, the main
goal is to identify 
the asymptotics of rescaled mean curvature flows converging to
cylinders.
Some of these results are stated in the work of Sun-Xue~\cite{SX22}, but we
need to obtain more precise quantitative control, and so we reprove most
of the ingredients of their normal form theorem. In Section~\ref{sec:geomnormal} we
also prove Proposition~\ref{prop:uqdecay}, which shows that the
derivatives $\partial_{y_j}u$ of the graphical function in the
degenerate directions decay exponentially. In Section~\ref{sec:perturb} we will prove the
main result, Proposition~\ref{prop:perturb20} that allows us to locally perturb away degenerate
cylindrical singularities. Finally in Section~\ref{sec:proof} we use
this local perturbation result to prove Theorem~\ref{thm:main}. 

\subsection*{Acknowledgements}
I would like to thank Nick Edelen for helpful discussions. This work was
supported in part by NSF grant DMS-2506325.

\section{Preliminaries}\label{sec:prelim}
Throughout the paper we fix the dimension of our hypersurfaces to be
$n$, and we assume a bound 
for the entropy of the initial hypersurface, introduced by
Colding-Minicozzi~\cite{CM12}. By Huisken's monotonicity
formula~\cite{Hui90}, the entropy is non-increasing along the
flow, which  implies that all
rescalings of all hypersurfaces that appear in our arguments will have
uniform entropy bounds, or equivalently, uniform bounds for the area
ratios at all scales. We will usually suppress dependence of our constants on this
entropy bound and the dimension. In estimates we will sometimes use the notation
$\Psi(\epsilon_1, \ldots, \epsilon_k\,|\, \delta_1, \ldots, \delta_l)$
to denote a function which converges to zero if the $\delta_i$ are
fixed and $\epsilon_i\to 0$. This function $\Psi$ may change depending
on the context. 

Suppose that $L_t \subset \mathbb{R}^{n+1}$ is a mean curvature flow
of mean convex hypersurfaces. This can be defined through
singularities as a Brakke flow~\cite{Brakke}, or equivalently, the level set
flow~\cite{ES91,CGG91}. Recall that by the mean convexity assumption,
these flows are
non-fattening and unique~\cite{ES91}.

The singularities of $L_t$ can be studied using 
tangent flows. Suppose that $(x_0, t_0)$ is a point in spacetime. A
tangent flow of $L_t$ at $(x_0, t_0)$ is defined as a subsequential
limit of a sequence of rescalings
\[ \lambda_k (L_{t_0 + \lambda_k^{-2}t} - x_0). \]
By Huisken~\cite{Hui90} (see also Ilmanen~\cite{Ilmanen94}), any tangent flow is given by a
self shrinking solution $\sqrt{-t}\Sigma$ of the mean curvature
flow. 
A convenient way to study tangent flows at $(x_0, t_0)$ is through the rescaled mean
curvature flow (RMCF) centered at $(x_0, t_0)$, which is defined to be the flow
\[ M_\tau = e^{\tau/2} (L_{t_0 - e^{-\tau}} - x_0). \]
This flow has normal variation given by 
\[ \frac{\partial x}{\partial \tau} = H + \frac{x^\perp}{2}, \]
and the self-shrinkers $\Sigma$ corresponding to the possible tangent
flows of $L_t$ at $(x_0, t_0)$ are given by the 
subsequential limits of $M_\tau$ as $\tau\to \infty$. 

Let us write
\[ \mathcal{C}_{n,k} = \mathbb{R}^k \times S^{n-k}(\sqrt{2(n-k)})
  \subset \mathbb{R}^{n+1} \]
for $k =  1,\ldots, n-1$. Throughout the paper we will always use
coordinates $y_1,\ldots y_k$ on the $\mathbb{R}^k$ factor, coordinates
$z_1,\ldots, z_{n+1-k}$ on the $\mathbb{R}^{n+1-k}$ factor, and $x$
for a point in $\mathbb{R}^{n+1}$.

In general the tangent flows may have
multiplicity, and singularities. However, by White~\cite{White02}, all tangent flows of the
mean convex flow $L_t$ have multiplicity one, and up to rotation are
given  by homothetically shrinking
cylinders $\sqrt{-t}\mathcal{C}_{n,k}$, or the sphere $\sqrt{-t}\mathcal{S}_n =
S^n(\sqrt{-2nt})$. We will say that $L_t$ has a
$\mathcal{C}_{n,k}$-singularity at a spacetime point $(x_0, t_0)$ if
the rescaled flow $M_\tau$ centered at $(x_0, t_0)$ converges, along a
subsequence, to $\mathcal{C}_{n,k}$ as $\tau\to\infty$. By the uniqueness result of
Colding-Minicozzi~\cite{CM15_1} in this case the whole flow $M_\tau$
converges to the same cylinder as $\tau\to\infty$. In this case we
will also say that $M_\tau$ has a $\mathcal{C}_{n,k}$-singularity at
infinity. More generally we will say that $L_t$ has an
$(n,k)$-cylindrical singularity at $(x_0, t_0)$ if the corresponding
rescaled flow converges to  a rotation $Q\mathcal{C}_{n,k}$.

If the rescaled flow $M_\tau$ converges to $\mathcal{C}_{n,k}$, then
as $\tau\to \infty$ we can write $M_\tau$ as the graph of a function
$u(x,\tau)$ over $\mathcal{C}_{n,k}$ on larger and larger regions. We
will say that $M_\tau$ is $\epsilon$-graphical over
$\mathcal{C}_{n,k}$ on the region $U$, if $M_\tau\cap U$ can be
written as the graph of a function $u$ with $\Vert u\Vert_{C^4} \leq
\epsilon$.

On several occasions we will consider a rescaled flow $M'_\tau$ that is
the normal graph of a function $u(x,\tau)$ over another rescaled flow
$M_\tau$. In this case $u$ satisfies an equation of the form
\[ \partial_\tau u = \mathcal{L}_{M_\tau} u + Q_{M_\tau}(u, \nabla u,
  \nabla^2 u), \]
where $\mathcal{L}$ is the linearized operator given by
\[ \mathcal{L}_{M_\tau}(u)  = \Delta u - \frac{1}{2} x\cdot \nabla u +
  \frac{1}{2} u + |A|^2 u, \]
and the quantities are computed with respect to $M_\tau$. For the
cylinder $\mathcal{C}_{n,k}$ we have
\[ \mathcal{L}_{\mathcal{C}} = \Delta - \frac{1}{2} x\cdot \nabla + 1, \]
and we will sometimes omit the subscript if from the context it is
clear which linearized operator we are using. We will denote by
$\sigma(\mathcal{C}_{n,k})$ the spectrum of
$\mathcal{L}_{\mathcal{C}_{n,k}}$ corresponding to the rates of
  homogeneous solutions of the linearized equation. As explained in
  \cite{SX22} we have
  $\sigma(\mathcal{C}_{n,k}) \cap [0,\infty) = \{0,1/2, 1\}$. The
  eigenfunctions corresponding to 1 are the constants; those
  corresponding to $1/2$ are the coordinate functions $y_i, z_\alpha$;
  the ones corresponding to $0$ are the quadratic functions
  $y_iz_\alpha$ corresponding to rotations, as well as $y_iy_j -
  2\delta_{ij}$. The next largest (negative) eigenvalue is
  $-\min\{\frac{1}{n-k}, \frac{1}{2}\}$ (see \cite[Table 1]{SX22}).

Sun-Xue~\cite{SX22} (see also \cite{AV97,Gang22,Gang24}) further analyzed cylindrical tangent
flows, and proved a normal form theorem for rescaled flows converging to
$\mathcal{C}_{n,k}$. They showed the following.
\begin{thm}[\cite{SX22}]\label{thm:SXnormal} Suppose that the rescaled flow $M_\tau$
  converges to the cylinder $\mathcal{C}_{n,k}$. Then there is a number
  $\ell \in \{0,\ldots, k\}$ such that up to a rotation in the
  $\mathbb{R}^k$-factor, for sufficiently large $\tau$ we have the
  following: for some $\alpha > 0$, we can write $M_\tau$ as the graph of a function
  $u(y,z,\tau)$ over the ball $B_{\tau^\alpha}$, such that
  \[\label{eq:SXnormal}  u(y,z,\tau) = \frac{\sqrt{2(n-k)}}{4\tau} \sum_{i=1}^\ell (y_i^2-2)
    + o(\tau^{-1}), \]
  as $\tau\to\infty$.
\end{thm}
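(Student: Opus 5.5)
We would prove Theorem~\ref{thm:SXnormal} by a center-manifold type analysis of the graph function $u$ over $\mathcal{C}_{n,k}$, decomposing $u$ along the spectrum of $\mathcal{L}_{\mathcal{C}}$ and identifying the dominant neutral mode.

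\textbf{Step 1: graphical setup and cutoff.} By the uniqueness result of Colding--Minicozzi~\cite{CM15_1}, $M_\tau\to\mathcal{C}_{n,k}$, and pseudolocality-type extension arguments show that $M_\tau$ is $\epsilon$-graphical over $\mathcal{C}_{n,k}\cap B_{R(\tau)}$ with $R(\tau)\to\infty$. Let $\tilde u=\eta u$ with a cutoff $\eta$ supported where graphicality holds. Then $\tilde u$ satisfies
\[
\partial_\tau \tilde u=\mathcal{L}_{\mathcal{C}}\tilde u+Q(\tilde u)+E,
\]
where $Q$ is quadratic and the cutoff error $E$ is bounded in Gaussian $L^2$ by $e^{-R(\tau)^2/8}$ times polynomial factors.

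\textbf{Step 2: spectral decomposition.} Split $\tilde u=u_++u_0+u_-$ according to eigenvalues $>0$, $=0$, and $<0$ in the Gaussian $L^2$ space. We remove the rotational neutral modes $y_iz_\alpha$ by re-choosing a rotation $Q(\tau)$ at each time; in the limit this is the fixed rotation allowed in the statement. The modes $1,y_i,z_\alpha$ are unstable. Since the flow converges to $\mathcal{C}$, a Merle--Zaag type ODE lemma shows that either $u_0$ dominates or $u_-$ dominates, and that $u_+$ is controlled by the square of the other modes. If $u_-$ dominates, then $u$ decays exponentially, which is the case $\ell=0$. Otherwise we write
\[
u_0=\sum a_{ij}(\tau)(y_iy_j-2\delta_{ij})
\]
with $|u_+|+|u_-|=o(|a|)$.

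\textbf{Step 3: the ODE for $a$.} Project the equation onto the neutral modes and compute the quadratic term. The $Q$-part of the equation should give
\[
\dot a=-c\,a^2+o(|a|^2),
\]
with $c=\frac{2}{\sqrt{2(n-k)}}$, up to normalization. Here $a^2$ denotes the matrix square, and the constant comes from the $H$ term, where the quadratic part of $H$ on the sphere of radius $\sqrt{2(n-k)}$ pairs against the Hermite polynomials $y_iy_j-2\delta_{ij}$. Analyzing this matrix Riccati ODE gives $a\ge 0$ asymptotically, since negative eigenvalues blow up and would contradict convergence. Diagonalizing by a rotation in $\mathbb{R}^k$ then gives
\[
a(\tau)=\frac{1}{c\tau}P+o(\tau^{-1}),
\]
where $P$ is the orthogonal projection onto an $\ell$-dimensional subspace. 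To see this, note that each eigenvalue $\mu$ satisfies $\dot\mu=-c\mu^2+o(|a|^2)$, so $\mu\sim 1/(c\tau)$ or $\mu=o(1/\tau)$. The eigenspaces stabilize because $\dot a=O(\tau^{-2})$, which is integrable. This yields exactly the coefficient $\frac{\sqrt{2(n-k)}}{4\tau}$, once the Hermite normalization is checked.

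\textbf{Step 4: from $L^2$ to pointwise on $B_{\tau^\alpha}$.} Upgrade the $L^2$ asymptotics to $C^0$ bounds on $B_{\tau^\alpha}$. We use the weighted $L^2$ bound together with interior parabolic estimates, which lose factors $e^{|y|^2/8}$; these are acceptable for $|y|\le\tau^\alpha$ with $\alpha$ small, since the neutral profile is only polynomial. The graphical radius is also bootstrapped, as in Angenent--Vel\'azquez.

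\textbf{Main obstacle.} We expect the main obstacle to be Steps 2--3 when $a$ has mixed behavior, with some directions of size $1/\tau$ and others $o(1/\tau)$. In that situation the error terms $o(|a|^2)$ must be shown small relative to the slow directions, not merely relative to $|a|$. First we would try to prove that the error is actually $O(|a|^3+|u_-|)$. We would then use the integrability of $\tau^{-3}$ to show that the slow eigenvalues are $o(\tau^{-1})$.
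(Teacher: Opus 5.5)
\textbf{Overall comparison.} The paper quotes this result from \cite{SX22}. Its own quantitative reproof is Proposition~\ref{prop:L2normalform}, together with Propositions~\ref{prop:tbetagraph11} and \ref{prop:ldegendecay}. Your architecture is the same: gauge away the $y_iz_\alpha$ modes, show the $\mathfrak{h}$-modes dominate, and reduce to a matrix Riccati equation. One difference is that the paper gets dominance of $\mathfrak{h}$ from the three-annulus lemma for the global distance $\mathbf{d}_{\mathcal{C}_{n,k}}$ and the Sun--Wang--Xue non-concentration estimate (Propositions~\ref{prop:3ann3} and \ref{prop:growthdominant}), rather than from Merle--Zaag. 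That avoids needing your absolute cutoff error to be $\le\epsilon\Vert u\Vert$.

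\textbf{The gap in Step 3.} You never obtain a polynomial rate, and Step 3 is false without one. If all you know is $\dot a + ca^2 = o(\tau^{-2})$, the nondegenerate eigenspace need not converge. For example, $a(\tau)=\frac{1}{c\tau}R_{\theta(\tau)}PR_{\theta(\tau)}^{T}$ with $\theta(\tau)=\log\log\tau$ has error $O(\tau^{-2}/\log\tau)$. Yet it is not $\frac{1}{c\tau}P_\infty+o(\tau^{-1})$ for any fixed projection $P_\infty$.

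Integrability of $\dot a$ is beside the point, because $a$ is itself $O(\tau^{-1})$. The eigenprojection moves at speed $\sim |E|/\mathrm{gap}\sim\tau|E|$, which is integrable only if $|E|=O(\tau^{-2-\delta})$. Your last paragraph senses that a better error is needed, but aims it at the slow eigenvalues, which do not need it. With $m=\tau\mu$ and $s=\log\tau$ one has $m_s=m-cm^2+o(1)$, which already forces $m\to 0$ or $m\to 1/c$. You also give no mechanism for the improved error.

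\textbf{How the paper gets the rate.} It makes a second pass once $\Vert A\Vert\sim\tau^{-1}$ is known. First, $\tilde u_{>0}$ and $\tilde u_{<0}$ are $O(\tau^{-2+4\gamma'})$, by integrating their linear differential inequalities from $\infty$ and from $T_0$ respectively. Second, $\tilde u_{rot}=O(\tau^{6\gamma'-2})$, because the quadratic interactions of the $\mathfrak{h}$- and rotation components are orthogonal to the $y_iz_\alpha$. This yields $\Vert \dot A+\nu_{n,k}A^2\Vert\le\tau^{6\gamma'-3}$. Barriers for the eigenvalues plus the Filippas--Liu principal-minor argument then give a fixed rotation with $O(\tau^{\kappa-2})$ error.

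Your gauge needs the same orthogonality. That gauge is not the rotation in the statement, which lies in the $\mathbb{R}^k$-factor. It must tend to the identity since $M_\tau\to\mathcal{C}_{n,k}$, and the crude bound $|\dot Q|\lesssim|a|^2$ only gives $|Q-\mathrm{Id}|=O(\tau^{-1})$, not $o(\tau^{-1})$.

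\textbf{The gap in Step 4.} A bound $|w(x)|\lesssim e^{|x|^2/8}\Vert w\Vert_{L^2}$ loses $e^{\tau^{2\alpha}/8}$ on $B_{\tau^\alpha}$. This is superpolynomial and swamps any $o(\tau^{-1})$. That the profile is polynomial is irrelevant, since the difficulty is the remainder.

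The paper's tool is hypercontractivity from Ecker's log-Sobolev inequality (Proposition~\ref{prop:logSobolev}, Corollary~\ref{cor:logSobolev}, Proposition~\ref{prop:subsolpointwise}). Flowing for time $\approx 2\beta\log\tau$ upgrades $L^2$ to $L^p$ with $p\approx\tau^{2\beta}$. Then the weight $e^{|x|^2/4p}$ is $O(1)$ on $B_{\tau^\beta}$, at only polynomial cost $\tau^{2\beta(1+\epsilon)}$. This is affordable precisely because the $L^2$ remainder is $O(\tau^{-19/10})$, which is again a rate.

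\textbf{The same tool is needed in Step 1.} The cutoff error $e^{-R^2/8}$ must be $O(\tau^{-2-\delta})$, so $R(\tau)$ must be a large multiple of $\sqrt{\log\tau}$. Neither pseudolocality alone nor the Colding--Minicozzi radius $\sqrt{\alpha\log\tau}$ (with small $\alpha$) provides this. The paper gets $\tau^{-\beta}$-graphicality on $B_{\tau^\beta}$ (Proposition~\ref{prop:tbetagraph11}) by combining the Colding--Minicozzi rate $\Vert u\Vert_{L^2}\le\tau^{-\alpha}$ with this estimate and pseudolocality. The same estimate also gives the bound $\Vert\tilde u\Vert_{C^4(B_{\tau^\gamma})}\le\tau^{2\gamma'-1}$ that controls $Q$ in the second pass.

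\textbf{Minor point.} With $u_0=\sum a_{ij}(y_iy_j-2\delta_{ij})$ and $Q_2=-\frac{1}{2\sqrt{2(n-k)}}u^2+\cdots$, the Riccati constant is $c=4/\sqrt{2(n-k)}$. Your value of $c$ would give twice the stated coefficient.
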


\begin{definition}\label{defn:nondeg}
Following Sun-Xue~\cite{SX22}, we say that the flow $L_t$ has a
\emph{non-degenerate} $\mathcal{C}_{n,k}$ singularity at $(x_0, t_0)$,
if up to rotation, in the normal form above for the rescaled flow
centered at $(x_0,t_0)$, we have $\ell=k$.
If $\ell < k$, we will say that the flow has an
\emph{$\ell$-degenerate} $\mathcal{C}_{n,k}$
singularity. We will also
say that $M_\tau$ has a non-degenerate or $\ell$-degenerate
$\mathcal{C}_{n,k}$ singularity at infinity. 

More generally, we will say that the flow has a
non-degenerate or degenerate $(n,k)$-cylindrical singularity at a
point, if up to a rotation it has a non-degenerate or degenerate
$\mathcal{C}_{n,k}$-singularity. When it is
clear from the context, we will drop the $(n,k)$ prefix or subscript. 
\end{definition}

The importance of non-degenerate cylindrical singularities is that, as
shown by Sun-Wang-Xue~\cite{SWX25}, we can understand the topological
change performed by the flow through them.
More precisely, they proved the following (see
\cite[Theorem 1.1]{SWX25}). 

\begin{thm}[\cite{SWX25}] \label{thm:SWXsurgery}
  Suppose that the flow $L_t$ has a non-degenerate $(n,k)$-cylindrical
  singularity at $(x_0, t_0)$. Then in a spacetime neighborhood
  \[ Q_r(x_0, t_0) := \{ (x, t) \, :\, |x-x_0| +  |t-t_0|^{1/2} < r\} \]
  the singularity is isolated. In addition if $t_-
  < t_0 < t_+$ are sufficiently close, then inside $B_r(x_0)$,
  $M_{t_+}$ is obtained topologically by performing
  an $(n-k)$-surgery on $M_{t_-}$.  In other words, $M_{t_+}\cap
  B_r(x_0)$ is obtained by removing a region diffeomorphic to 
$S^{n-k}\times B^k$ from $M_{t_-}\cap B_r(x_0)$, and gluing in
$B^{n-k+1}\times S^{k-1}$ along the  boundary, where $B^k$ denotes the
$k$-dimensional ball. 
\end{thm}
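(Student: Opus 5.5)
The statement is Theorem~\ref{thm:SWXsurgery}, quoted from Sun-Wang-Xue~\cite{SWX25}. The paper will presumably cite it rather than prove it, but here is how I would reconstruct the argument from the normal form in Theorem~\ref{thm:SXnormal} with $\ell=k$.

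\emph{Step 1: quantitative profile near $(x_0,t_0)$.} Work with the rescaled flow $M_\tau$ centered at $(x_0,t_0)$. Nondegeneracy gives, on $B_{\tau^\alpha}$,
\[ u = \frac{\sqrt{2(n-k)}}{4\tau}\bigl(|y|^2-2k\bigr) + o(\tau^{-1}). \]
I would first upgrade this to a description of $M_\tau$ on the larger parabolic region $|y|\lesssim \sqrt{\tau}$. There the profile behaves like the graph of radius $\rho(y,\tau)\approx \sqrt{2(n-k)}\,\sqrt{1+|y|^2/(4\tau)}$. I would get this by a barrier or pseudolocality argument, as in Angenent-Vel\'azquez and Gang's intermediate-region analysis.

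Undoing the rescaling, at time $t<t_0$ near $t_0$ the hypersurface near $x_0$ is a neck $S^{n-k}\times B^k$. Its radius increases radially in $|y|$, like $\sqrt{2(n-k)(t_0-t)+c|y|^2/\log(1/(t_0-t))}$. So the neck is pinched only at $y=0$, and in the unrescaled picture the radius is bounded below by $c|y-y_0|$ away from the center, modulo log factors.

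\emph{Step 2: isolatedness.} Suppose there were other singular points $(x_j,t_j)\to(x_0,t_0)$. Any such point must lie where the radius tends to zero, hence $|y|\ll$ parabolic scale. Rescaling around $(x_j,t_j)$ at scale $|x_j-x_0|+|t_j-t_0|^{1/2}$ gives a limit flow. Comparing with the Step~1 profile, this limit is a shrinking cylinder with radially growing neck, a translator-free ancient solution, and it has no singularities off the origin. This contradicts the existence of singular points at unit distance in the limit. The delicate part is points with $t_j>t_0$. For those I would use mean convexity and the monotone, nonincreasing inclusion of mean convex flows. After $t_0$ the region near $x_0$ has a hole of size $\sim\sqrt{t-t_0}$ in the $y$-directions, with the boundary a graph of smooth caps, so no further singularity forms nearby.

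\emph{Step 3: topology.} For $t_-<t_0$, Step~1 shows $M_{t_-}\cap B_r$ contains a neck region $S^{n-k}\times B^k$ outside of which the flow is smooth with curvature bounds, up to $t_+$. Since the singularity is isolated, the flow is a smooth isotopy on $B_r\setminus B_{r'}$ for any $r'>0$. The change is therefore concentrated in $B_{r'}$.

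After $t_0$, the mean convex domain $K_t$ near $x_0$ loses the segment where the neck pinched. By the radial profile, $K_{t_+}\cap B_{r'}$ is the complement in $B^{n-k+1}\times B^k$ of a tubular neighborhood of $\{0\}\times B^k$. Its boundary there is $B^{n-k+1}\times S^{k-1}$, which gives the $(n-k)$-surgery.

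\emph{Main obstacle.} The hardest step will be the description for $t>t_0$. It requires showing the post-singular flow near $x_0$ is smooth and graphical over $\{|y|\gtrsim\sqrt{t-t_0}\}$. I would first try to derive this from the pre-singular profile using mean convex noncollapsing and canonical neighborhood arguments (Haslhofer-Kleiner), which bound curvature by the inverse distance to the pinched point.
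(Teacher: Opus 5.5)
You are right that the paper gives no proof of this statement. It is quoted from \cite[Theorem 1.1]{SWX25}, with isolatedness also credited to \cite{SX22} in Section~\ref{sec:proof}. So your sketch has to stand on its own, and it has a genuine gap at the isolatedness step. Rescaling at scale $\lambda_j^{-1}=|x_j-x_0|+|t_j-t_0|^{1/2}$ produces the round shrinking cylinder $\sqrt{-t}\,\mathcal{C}_{n,k}$, not a ``cylinder with radially growing neck''. The logarithmic term $c|y|^2/\log\frac{1}{t_0-t}$ in your squared radius becomes $c|y|^2/(\log\lambda_j^2+O(1))\to0$ under parabolic rescaling. Singular points with $|x_j-x_0|\lesssim|t_j-t_0|^{1/2}$ are excluded trivially, because the limit is smooth for $t<0$ and empty for $t>0$. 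The dangerous ones have $|x_j-x_0|\gg|t_j-t_0|^{1/2}$, for instance $t_j=t_0$. These rescale to points on the axis at $t=0$, where the round cylinder has Gaussian density $\lambda(S^{n-k})>1$, so upper semicontinuity gives no contradiction.

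No argument using parabolic blow-ups alone can work here. Degenerate singularities have the same tangent flow and need not be isolated: a sufficiently thin torus of revolution in $\mathbb{R}^3$ is mean convex and collapses onto its whole core circle, with tangent flow $\mathcal{C}_{2,1}$ at each point. Nondegeneracy is only visible at the scale $\sqrt{(t_0-t)\log\frac{1}{t_0-t}}$, and it must be propagated outward point by point. For $Y$ near $x_0$, pick $t_Y<t_0$ at which $Y$ sits at rescaled distance $K\sqrt{\tau}$. Read off from the $C^1$ normal form \eqref{eq:SXC1} that near $Y$ the flow is close, on a region much larger than its radius, to a cylinder of radius about $\sqrt{1+K^2/2}$ times the central one. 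Then apply cylindrical pseudolocality as in Proposition~\ref{prop:modelpseudo}. This gives smoothness near $(Y,t)$ for $t\le t_0+c|Y-x_0|^2/\log\frac{1}{|Y-x_0|}$, which is what your Steps 1--2 can legitimately deliver.

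This argument gives nothing beyond that time, because the neck at $Y$ is then itself about to vanish. Whether it vanishes through a smooth cap sweeping past $Y$ or through a new pinch is exactly the post-singular question. Your treatment of $t_j>t_0$ assumes the answer, namely a hole bounded by smooth caps. Monotonicity of the mean convex domains $K_t$ does not prevent later pinches at points $Y\neq x_0$. The canonical neighbourhood theorem of Haslhofer--Kleiner \cite{HK16} is proved for 2-convex flows, i.e.\ $k=1$, and no comparable classification of the relevant ancient flows is available in general for $k\ge 2$. Moreover, the statement is for general, not necessarily mean convex, flows. This post-singular analysis is the real content of \cite{SWX25}, and Step 3 inherits the gap since it relies on it.

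There are also two smaller slips. The region removed from the solid neck should be a neighbourhood $B^{n-k+1}\times B^k_\rho$ of the cross-sectional disc $B^{n-k+1}\times\{0\}$, whose frontier is $B^{n-k+1}\times S^{k-1}$. Removing a neighbourhood of the axis $\{0\}\times B^k$ would instead create new boundary $S^{n-k}\times B^k$. Second, to match the coefficient $\frac{\sqrt{2(n-k)}}{4\tau}$, the radius should be $\sqrt{2(n-k)}\sqrt{1+|y|^2/(2\tau)}$, as in \eqref{eq:SXC1}.
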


An important ingredient in this paper is a more quantitative version
of the normal form result. The main goal is to show the following type
of result. Suppose that the flow
$L_t$ has an $\ell$-degenerate $\mathcal{C}_{n,k}$-singularity at a
point $(x_0, t_0)$, and the normal form asymptotics holds with good
enough error terms in the sense that the $o(\tau^{-1})$ term is
bounded by $\tau^{-19/10}$, say, on a time interval $[T_0,
T_0+L_0(T_0)]$. Suppose that a nearby flow $L_t'$ has an
$(n,k)$-cylindrical singularity at a nearby point $(x_0', t_0')$ (with
``nearby'' depending on $T_0$). We want to show that then this singularity is either
$\ell$-degenerate, with good error term estimates in its asymptotics
for $\tau > T_1$, where $T_1$ depends on $T_0$, or the singularity is
$\ell'$-degenerate for $\ell' > \ell$, or non-degenerate. 
The overall argument roughly follows the
strategy of \cite{SX22}, or earlier works such as
Velazquez~\cite{Vel93} and Filippas-Liu~\cite{FL93} on semilinear
parabolic PDE, however the details are somewhat different. This will
take up most of the next two sections. 

An important new result will be Proposition~\ref{prop:uqdecay} on the exponential decay
of derivatives in the degenerate $y$-directions. This is related to a
result of Choi-Haslhofer~\cite[Theorem 1.10]{CH24}, which studies
ancient flows in $\mathbb{R}^4$ asymptotic to a cylinder $S^1\times
\mathbb{R}^2$, and in particular deals with a situation analogous to
$1$-degenerate cylindrical singularities. 

The starting point is
\cite[Proposition 2.3]{SX22}, which is implicitly contained in the
work of Colding-Minicozzi~\cite{CM15_1}.
\begin{prop}\label{prop:CMgraphicality}
  There is an $\alpha > 0$, with the following property. Suppose that
  $M_\tau$ is a RMCF converging to $\mathcal{C}_{n,k}$, and $M_0$ is
  $\delta$-graphical 
over $\mathcal{C}_{n,k}$ on the ball $B_{\delta^{-1}}$, for
sufficiently small $\delta$. Then for all
$\tau >1$ we have the following. Defining $r(\tau) = \sqrt{\alpha
  \ln \tau}$, we have that $M_\tau$ is the graph of a
function $u(x,\tau)$ over $\mathcal{C}_{n,k}$ on $B_{r(\tau)}$,
with $\Vert u\Vert_{C^4}\leq \Psi(\delta)$, and $\Vert u(\cdot,
\tau)\Vert_{L^2(B_{r(\tau)})} \leq \tau^{-\alpha}$.
\end{prop}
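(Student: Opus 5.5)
The plan is to combine Colding--Minicozzi's \L{}ojasiewicz-type rate for the Gaussian distance to the cylinder with pseudolocality-type graphical extension, following the structure of \cite{CM15_1} as used in \cite[Proposition 2.3]{SX22}.

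\textbf{Step 1: uniform closeness at all later times.} Since $M_0$ is $\delta$-graphical over $\mathcal{C}_{n,k}$ on $B_{\delta^{-1}}$, the Gaussian area of $M_0$ is within $\Psi(\delta)$ of that of the cylinder, and Huisken's monotonicity gives $F(M_\tau)\le F(M_0)$. The limit $\mathcal{C}_{n,k}$ satisfies $F(\mathcal{C}_{n,k}) \le F(M_\tau)$. Hence $F(M_\tau)-F(\mathcal{C}_{n,k})\le \Psi(\delta)$ for all $\tau\ge 0$, and the total Gaussian-weighted $L^2$ norm of $H+x^\perp/2$ over $[0,\infty)$ is at most $\Psi(\delta)$.

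Combining this with the rigidity of cylinders among shrinkers of nearby entropy and with White's mean convex regularity, I would show that $M_\tau$ stays $\Psi(\delta)$-close in $C^4$ to $\mathcal{C}_{n,k}$ on a fixed large ball, say $B_{R}$, for every $\tau\ge0$. A possible rotation is ruled out, since the flow converges to $\mathcal{C}_{n,k}$ itself and the rotation can only drift by the $L^1$-in-time bound of the velocity, which Step 2 controls.

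\textbf{Step 2: polynomial decay via \L{}ojasiewicz.} On the region where $M_\tau$ is graphical over the cylinder, Colding--Minicozzi's \L{}ojasiewicz inequalities \cite{CM15_1} give
\[ F(M_\tau)-F(\mathcal{C}) \le C\,\Vert \phi\Vert^{1+\mu}, \qquad \phi=H+\tfrac{x^\perp}{2}, \]
up to error terms of size $e^{-cR^2}$ from cutting off at the graphical radius $R$. The ODE $\frac{d}{d\tau}(F-F_\infty)=-\Vert\phi\Vert^2$ then yields $F(M_\tau)-F_\infty\le C\tau^{-\beta}$. Consequently $\int_\tau^\infty\Vert\phi\Vert\,ds \le C\tau^{-\beta'}$, and this bounds the $L^2$ distance of $M_\tau$ to $\mathcal{C}$ in the Gaussian weight by a power $\tau^{-\alpha}$.

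\textbf{Step 3: graphical radius $r(\tau)=\sqrt{\alpha\ln\tau}$.} This is the main obstacle. Weighted $L^2$ smallness of size $\tau^{-\alpha}$ translates into unweighted smallness on $B_r$ only after losing $e^{r^2/4}$. The choice $r^2=\alpha\ln\tau$ with $\alpha$ small keeps $e^{r^2/4}\tau^{-\alpha}$ a negative power of $\tau$.

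To upgrade $L^2$ smallness to $C^4$ graphicality on $B_{r(\tau)}$, I would use the extension argument of \cite{CM15_1}. Local closeness on scale one at a point, together with pseudolocality or Brakke regularity (multiplicity one is guaranteed by White), propagates graphicality outward, with interior parabolic estimates giving $\Vert u\Vert_{C^4}\le\Psi(\delta)$. Running this inductively over dyadic $\tau$-intervals, with $\delta$ small enough that every error term is absorbed, yields both conclusions for all $\tau>1$, after possibly decreasing $\alpha$.
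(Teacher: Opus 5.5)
The paper does not prove this statement. It quotes it from \cite[Proposition 2.3]{SX22}, where it is extracted from \cite{CM15_1}. Reconstructing the Colding--Minicozzi argument is therefore the right plan, and your ingredients are the right ones. However, your outline runs that argument in the wrong order, and as written Step 2 does not close.

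The \L{}ojasiewicz inequalities of \cite{CM15_1} hold only up to cutoff errors of size roughly $e^{-cR^2}$ (times powers of $R$). Here $R$ is a radius on which $M_\tau$ is already known to be graphical over a cylinder with bounded geometry.

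\emph{Why Step 2 fails as written.} With the fixed radius $R$ from your Step 1, the cutoff error is a fixed positive constant. The identity $\frac{d}{d\tau}\bigl(F(M_\tau)-F(\mathcal{C}_{n,k})\bigr)=-\Vert\phi\Vert^2$ then only drives $F(M_\tau)-F(\mathcal{C}_{n,k})$ down to the level $e^{-cR^2}$. You get neither the decay $C\tau^{-\beta}$ nor any bound on $\int_\tau^\infty\Vert\phi\Vert\,ds$. Consequently you also lose the $L^1$-in-time control of the velocity that Step 1 relies on. That control is what keeps $M_\tau$ near $\mathcal{C}_{n,k}$ itself at intermediate times, rather than near a slowly drifting rotation of it. The bound $\int_0^\infty\Vert\phi\Vert^2\,d\tau\le\Psi(\delta)$ alone does not prevent such drift.

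\emph{The missing ingredient} is the shrinker-scale extension of \cite{CM15_1}. Smallness of $\phi$ in a Gaussian-weighted norm over the window $[\tau-1,\tau+1]$ is combined with the rigidity of shrinkers that are close to a cylinder on a sufficiently large ball (Colding--Ilmanen--Minicozzi). This, not a statement about shrinkers of nearby entropy, is the relevant rigidity. Together they give graphicality over a cylinder on $B_{R(\tau)}$, with $e^{-R(\tau)^2/C}$ bounded by a power of that local quantity.

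Only with this coupling do the cutoff errors become powers of $F(M_{\tau-1})-F(M_{\tau+1})$ and get absorbed. This produces a (discrete) differential inequality with polynomial decay. The same coupling then gives $R(\tau)\ge\sqrt{\alpha\ln\tau}$ directly, with the $C^4$ bounds coming from interior estimates.

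So the growth of the graphical radius that you postpone to Step 3 is an input to Step 2, not a consequence of it. Steps 1--3 must be run as one continuity argument, in which closeness on growing balls, the \L{}ojasiewicz inequalities, and the $L^1$ drift bound are established together.
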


Recall that we are suppressing the dependence of constants on the
assumed entropy bound and dimension. In general $\alpha, \delta$ would
depend on these too.

The next step is to extend the good graphicality
to balls of size $B_{\tau^\beta}$ for some $\beta > 0$. For this we will
use an approach based on Ecker's log-Sobolev
inequality~\cite{Ecker}. We also use
Brendle's sharp version of the log-Sobolev
inequality~\cite{Brendle20}, which simplifies some constants that
appear, although this is not strictly necessary. 

First recall the following consequence of the log-Sobolev
inequality from \cite{Ecker}, where we have an additional forcing
term.  We give the proof in the appendix. It is worth noting that while
the flow $M_\tau$ may have singularities, and might only satisfy the
rescaled mean curvature equation in a weak sense, in the following
results we will always consider functions that are supported on the
smooth part of the flow -- in practice they will be supported in a
region where the flow is close to a cylinder. 
\begin{prop}\label{prop:logSobolev}
  Suppose that $M_\tau$ is a rescaled mean curvature flow, and for
  some $\epsilon \geq 0$,  the nonnegative function
  $f(x,\tau)$ satisfies the differential inequality
  \[ \label{eq:driftheatineq}
    \partial_\tau f \leq \Delta f - \frac{1}{2} x\cdot \nabla f +
    a\cdot \nabla f + b f + \mathcal{E},  \]
  where $a(x, \tau)$ and $b(x,\tau)$ satisfy $|a| \leq \epsilon<1$ and
  $|b| \leq b_0$, and $\mathcal{E}(x,\tau) \geq 0$.
  Then we have 
   \[ \label{eq:subsol1} \Vert f(\cdot, \tau)\Vert_{L^{p(\tau)}} \leq
     e^{(\epsilon+b_0)\tau} \Vert 
     f(\cdot, 0)\Vert_{L^{2}} + \int_0^\tau e^{(\epsilon+b_0)(\tau -
       s)} \Vert \mathcal{E}(\cdot, s)\Vert_{L^{p(s)}}\, ds, \]
   where we define $p(\tau) = 1 + \epsilon + (1-\epsilon)e^\tau$ (so
   that $p(0)=2$). The $L^p$ norms here are taken using the weight
  $(4\pi)^{-n/2} e^{-|x|^2/4}$. 
\end{prop}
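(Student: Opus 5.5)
The plan is to follow Ecker's hypercontractivity argument: differentiate $\Vert f(\cdot,\tau)\Vert_{L^{p(\tau)}}$ along the rescaled flow, use the log-Sobolev inequality on $M_\tau$ (with Gaussian weight) to absorb the entropy term generated by the growing exponent $p(\tau)$, and then handle the forcing term $\mathcal{E}$ by a Duhamel/Gronwall argument. Write $d\mu = (4\pi)^{-n/2}e^{-|x|^2/4}d\mathcal{H}^n$ on $M_\tau$. Along the rescaled mean curvature flow $\partial_\tau x = H + x^\perp/2$, this weighted measure evolves by $\partial_\tau d\mu = -|H + x^\perp/2|^2 d\mu$, which is nonpositive; this is Huisken's monotonicity. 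For any nonnegative $g$ it gives $\frac{d}{d\tau}\int g\,d\mu \le \int \partial_\tau g\,d\mu$, where $\partial_\tau$ is the normal time derivative. I will use this freely, and since $f$ is supported in the smooth part the computation is justified.

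The key computation goes as follows. Set $F(\tau) = \int f^{p}\,d\mu$ with $p = p(\tau)$. Then
\[ \frac{d}{d\tau}F \le p'\int f^p\log f\,d\mu + p\int f^{p-1}\big(\Delta f - \tfrac12 x\cdot\nabla f + a\cdot\nabla f + bf + \mathcal{E}\big)d\mu. \]
The drift Laplacian is self-adjoint for $d\mu$, so the Laplacian terms give $-p(p-1)\int f^{p-2}|\nabla f|^2 d\mu = -\frac{4(p-1)}{p}\int|\nabla f^{p/2}|^2d\mu$. The gradient term is bounded by Cauchy--Schwarz:
\[ p\epsilon\int f^{p-1}|\nabla f|\,d\mu \le \tfrac{p\epsilon}{2}\Big(\int f^{p-2}|\nabla f|^2 + \int f^p\Big). \]
We need to keep a fraction $(p-1)-\epsilon/2$ of the gradient term. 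Now apply the Gaussian log-Sobolev inequality on $M_\tau$ to $g = f^{p/2}$, in Ecker's form for the weighted measure on a hypersurface, with Brendle's sharp constant:
\[ \int g^2\log g^2\,d\mu - \Big(\int g^2\Big)\log\int g^2 \le 4\int|\nabla g|^2\,d\mu + C\int g^2\,d\mu \]
with $C=0$ in the sharp version. Here I expect there are extra $H$-type terms that combine with the monotonicity term $-|H+x^\perp/2|^2$; I will check that Ecker's formulation absorbs them exactly. The exponent $p(\tau)$ is chosen so that the ODE $p' = (p-1-\epsilon)$, i.e.\ $p = 1+\epsilon+(1-\epsilon)e^\tau$, makes the entropy coefficient $p'/p$ match the available gradient coefficient $\frac{4}{p^2}\big(p(p-1)-p\epsilon\big)\cdot\frac{1}{4}$ after the Cauchy--Schwarz loss.

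This is the step I expect to be the main obstacle. One has to balance the constants precisely so that the $\epsilon$ lost in absorbing $a\cdot\nabla f$ is exactly what the choice $p'=p-1-\epsilon$ allows. Writing $N(\tau) = \Vert f\Vert_{L^{p(\tau)}} = F^{1/p}$, we have $\frac{d}{d\tau}\log N = -\frac{p'}{p^2}\log F + \frac{1}{pF}F'$. The $p'\int f^p\log f$ term combines with $-\frac{p'}{p}F\log F$ into $\frac{p'}{p}\big(\int g^2\log g^2 - F\log F\big)/p$, which log-Sobolev controls by the gradient term.

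Once the constants are matched, we are left with
\[ N' \le (\epsilon + b_0)N + \frac{1}{F^{(p-1)/p}}\int f^{p-1}\mathcal{E}\,d\mu \le (\epsilon+b_0)N + \Vert\mathcal{E}\Vert_{L^p}, \]
using H\"older for the last step. The bound $(\epsilon p/2)/p \le \epsilon$ accounts for the zeroth-order part of the Cauchy--Schwarz loss. Integrating this linear differential inequality with integrating factor $e^{-(\epsilon+b_0)\tau}$ gives the stated Duhamel bound, since $p(0)=2$. The inequality $N'\le\ldots$ should be read in the sense of upper Dini derivatives, or one can replace $f$ by $f+\eta$ for a small $\eta>0$ and let $\eta\to0$ to avoid issues where $F=0$ or where $\log f$ is singular.
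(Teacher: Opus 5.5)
Your strategy is the same as the paper's appendix proof: differentiate $\Vert f\Vert_{L^{p(\tau)}}$, control the entropy term with Brendle's sharp Gaussian log-Sobolev inequality, choose $p' = p-1-\epsilon$, and finish with H\"older on $\mathcal{E}$ and Gronwall. Your bookkeeping of the gradient and zeroth-order terms is fine.

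However, the step you flagged is a genuine gap as written. The inequality you state,
$\int g^2\log g^2\,d\mu - \big(\int g^2\,d\mu\big)\log\int g^2\,d\mu \le 4\int|\nabla g|^2\,d\mu$,
is false on a general hypersurface. On the hyperplane $\{x_{n+1}=d\}$ with $g\equiv 1$, the left side equals $\frac{d^2}{4}e^{-d^2/4}>0$ while the right side is $0$. Brendle's inequality has an additional term $+\int g^2|H+x^\perp/2|^2\,d\mu$ on the right, and this hyperplane is in fact an equality case. Moreover, your very first step, $\frac{d}{d\tau}\int g\,d\mu\le\int\partial_\tau g\,d\mu$, discards exactly the term $-\int f^p|H+x^\perp/2|^2\,d\mu$ that is needed to pay for it. So the argument cannot be closed in the form you wrote.

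The repair is to keep the exact identity
$\frac{d}{d\tau}\int f^p\,d\mu=\int\partial_\tau(f^p)\,d\mu-\int f^p|H+x^\perp/2|^2\,d\mu$.
Now apply Brendle's inequality to $\phi=f^p$. The entropy term $p'\int f^p\log f\,d\mu$ is then bounded by three contributions:
\begin{itemize}
\item $\frac{p'}{p}F\log F$;
\item $p'p\int f^{p-2}|\nabla f|^2\,d\mu$, which your choice of $p'$ absorbs into the Laplacian term;
\item $\frac{p'}{p}\int f^p|H+x^\perp/2|^2\,d\mu$.
\end{itemize}
The net coefficient of $\int f^p|H+x^\perp/2|^2\,d\mu$ in $F'$ is therefore $\frac{p'}{p}-1=-\frac{1+\epsilon}{p}<0$. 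This matches the paper's coefficient $\frac{\dot p}{p^2}-\frac1p$ at the level of $\partial_\tau\Vert f\Vert_p$.

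So these terms do not cancel exactly, as you anticipated. Instead they have a favorable sign because $p'<p$, and can be dropped. With this correction your computation gives $N'\le(b_0+\epsilon/2)N+\Vert\mathcal{E}\Vert_{L^{p}}$, and the stated Duhamel bound follows as you describe.
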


The following result leading to pointwise points is similar to \cite[Lemma
3.5]{LSSz22}, obtained by
applying the monotonicity formula centered at different
points. Compared to \cite{LSSz22} we have an additional error term. 

\begin{prop}\label{prop:L2pointwise}
Suppose that along the rescaled mean curvature flow $M_\tau$, the
nonnegative function $f(x,\tau)$ satisfies
\[ \partial_\tau f \leq \Delta f - \frac{1}{2} x\cdot \nabla f +
  \mathcal{E}, \]
for $\tau\in [0,1]$, where $\mathcal{E}\leq \delta$.
There is a constant $C$ (depending on the dimension and entropy
bounds) such that if $R > 1$ and
$\mathcal{E}=0$ on $B_R(x_0)$, then 
we have
\[ f(x_0, 1)\leq C e^{|x_0|^2/4} \int_{M_0} f(x, 0) e^{-|x|^2/4}\,
  d\mathcal{H}^n + C\delta e^{-R^2/15}. \]
\end{prop}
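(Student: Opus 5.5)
Undo the rescaling and work with the unrescaled mean curvature flow, where the inequality becomes a plain heat-type inequality and Huisken's monotonicity formula, centered at the point corresponding to $x_0$, can be applied.

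\emph{Unrescaling.} Write $L_t = \sqrt{-t}\,M_{-\ln(-t)}$ for $t\in[-1,-e^{-1}]$. Define $g(y,t) = f(y/\sqrt{-t}, -\ln(-t))$ on $L_t$. The drift term $-\frac12 x\cdot\nabla f$ and the normal velocity $x^\perp/2$ are exactly absorbed by the change of variables. With $\tau$-derivatives producing a factor $(-t)^{-1}\in[1,e]$, one gets
\[ \frac{d}{dt} g \le \Delta_{L_t} g + \tilde{\mathcal E}, \]
where $\frac{d}{dt}$ is the normal time derivative along $L_t$, $\tilde{\mathcal E}\le e\delta$, and $\tilde{\mathcal E}$ vanishes on the image of $B_R(x_0)$. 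That image is a ball of radius comparable to $R$ around $\sqrt{-t}\,x_0$. The target point is $y_0 = e^{-1/2}x_0$ at time $t_1=-e^{-1}$.

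\emph{Localized monotonicity.} Let $\rho(y,t) = (4\pi(t_1-t+\eta))^{-n/2}e^{-|y-y_0|^2/4(t_1-t+\eta)}$, centered slightly past $(y_0,t_1)$ (or take $\eta\to0$ using smoothness of $g$). Huisken's monotonicity gives
\[ \frac{d}{dt}\int_{L_t} g\rho \le \int_{L_t}\tilde{\mathcal E}\rho . \]
Integrating from $t=-1$ to $t_1$:
\begin{itemize}
\item The left side tends to $g(y_0,t_1)=f(x_0,1)$ as $\eta\to0$.
\item The initial term is $\int_{M_0} f(x,0)\,c\,e^{-|x-y_0|^2/4(1-e^{-1})}$. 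Since $4(1-e^{-1})<4$, the elementary bound $-\frac{|x-y_0|^2}{4(1-e^{-1})} \le -\frac{|x|^2}{4} + C|y_0|^2$ holds after completing the square. This needs an adjustment, because the constant in front of $|x_0|^2$ must come out as exactly $1/4$. Using the Gaussian centered at $e^{-1/2}x_0$ with variance $1-e^{-1}$, one checks $\sup_x \big(\tfrac{|x|^2}{4} - \tfrac{|x-e^{-1/2}x_0|^2}{4(1-e^{-1})}\big) = \tfrac{|x_0|^2}{4}$ exactly. This is the standard Mehler-kernel identity, so the constant is sharp.
\item The error integrand is supported where $|y-\sqrt{-t}x_0|\gtrsim R$. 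Since $|\sqrt{-t}x_0 - y_0|$ is controlled, the heat kernel there is at most $e^{-cR^2}$ times a density.
\end{itemize}

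\emph{Error term: the main obstacle.} I expect bounding the error term with the specific exponent $R^2/15$ to be the delicate step. The plan is to use the entropy bound: area ratios give $\int_{L_t\setminus B_r(p)} \rho \le C e^{-r^2/(4\cdot\text{scale})\cdot(1-\theta)}$. One has to reconcile the drift between the centers $\sqrt{-t}x_0$ and $y_0$, which differ by at most $(1-e^{-1/2})|x_0|$ relative to the scaled ball. If $|x_0|$ is large this mismatch is an issue. I would handle it by working in the rescaled picture directly: there the excluded set is exactly $M_\tau\setminus B_R(x_0)$, and the backward kernel from $(x_0,1)$ transported by the Ornstein–Uhlenbeck flow is centered at points $e^{-(1-\tau)/2}x_0$. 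These stay within distance $(1-e^{-1/2})|x_0|$ of $x_0$, which is a problem for large $|x_0|$.

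So I would instead center the kernel at $x_0$ at all times. I would restrict to the region where the kernel variance $\le 1-e^{-1}<1$, and accept a loss in the exponent. The constant $1/15$ leaves room: $R^2/(4\cdot 1)\cdot$(entropy-loss factor) comfortably beats $R^2/15$. If the center drift genuinely matters, a fallback is to shorten the time interval for the error analysis, which is allowed by the $\delta$ integrability over $[0,1]$.

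Finally, multiply through by constants to obtain $C\delta e^{-R^2/15}$.
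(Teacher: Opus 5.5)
Your setup matches the paper's. You unrescale to $L_t=\sqrt{-t}\,M_{-\ln(-t)}$, where $\partial_t f\le \Delta f+|t|^{-1}\mathcal E$ on $[-1,-e^{-1}]$. You apply Huisken's monotonicity with the backward heat kernel at $(\tilde x_0,t_0)=(e^{-1/2}x_0,-e^{-1})$. You bound the initial term by $\rho_{\tilde x_0,t_0}(\cdot,-1)/\rho_{0,0}(\cdot,-1)\le C_n e^{|x_0|^2/4}$, and your exact supremum computation is precisely this bound.

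The gap is in the error term. Your proposed repair, centering the kernel at $x_0$ for all times, breaks the argument. The inequality $\frac{d}{dt}\int_{L_t} g\rho\le \int_{L_t}\tilde{\mathcal E}\rho$ needs $\rho$ to solve the backward heat equation. That forces its center to be $\tilde x_0$ in unrescaled coordinates, i.e. $e^{-(1-\tau)/2}x_0$ in rescaled ones. A Gaussian whose center follows $\sqrt{-t}\,x_0$ instead produces an extra unsigned term involving $\dot c\cdot\nabla\rho$ with $|\dot c|\sim|x_0|$. In any case, only a kernel concentrating at $(\tilde x_0,t_0)$ returns $f(x_0,1)$. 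Shortening the time interval does not help either: you would then need $f$ at an intermediate time, which is controlled only through the error term you are trying to estimate.

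The missing ingredient is a ball containment, which the paper asserts without comment. Your first instinct, that the center mismatch is controlled, is exactly what it uses.

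On $L_t$ the set where $\mathcal E$ vanishes contains $B_{\sqrt{-t}R}(\sqrt{-t}\,x_0)$. Since $\sqrt{-t}\ge e^{-1/2}$, the triangle inequality gives $B_{e^{-1/2}R}(\tilde x_0)\subset B_{\sqrt{-t}R}(\sqrt{-t}\,x_0)$ as soon as $|x_0|\le R$. With $\mathcal E$ supported off $B_{e^{-1/2}R}(\tilde x_0)$, you combine the Gaussian tail with the area-ratio bound and $t_0-t\le 1-e^{-1}$. This gives $\int_{L_t}|t|^{-1}\mathcal E\rho\le C\delta e^{-e^{-1}R^2/5}\le C\delta e^{-R^2/15}$, with the kernel centered exactly where you first put it.

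Your worry about large $|x_0|$ is justified, and some restriction of this kind is necessary. Take a static hyperplane through $0$, $f(\cdot,0)=0$, and $\mathcal E=\delta\psi$ with $0\le\psi\le 1$, $\psi=0$ on $B_R(x_0)$, and $\psi=1$ off $B_{2R}(x_0)$. The Ornstein--Uhlenbeck representation gives $f(x_0,1)\ge \delta\int_0^1 P(|X_r-x_0|\ge 2R)\,dr$ with $X_r\sim N(e^{-r/2}x_0,\,2(1-e^{-r})I)$. This is at least $\delta/3$ when $|x_0|\ge 20R$ and $R$ is large, contradicting the claimed bound.

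So the hypothesis actually needed is that $\mathcal E(\cdot,\tau)$ vanishes on $B_{e^{-(1-\tau)/2}R}(e^{-(1-\tau)/2}x_0)$ for $\tau\in[0,1]$. This is guaranteed, e.g., by $|x_0|\le R$, or by $\mathcal E$ vanishing on $B_{|x_0|+R}(0)$. The latter is what holds in the proof of the pointwise subsolution estimate, where $\mathcal E$ is a cutoff error supported outside a centered ball of radius larger than $|x_0|+R$.
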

\begin{proof}
  The function $f$ satisfies the differential inequality
  \[ \partial_t f \leq \Delta f + \frac{1}{|t|}\mathcal{E}, \]
  for $t\in [-1, -e^{-1}]$ along the mean curvature flow $L_t$, where
  $L_{-1} = M_0$. We use the monotonicity formula centered at the point
  $(\tilde x_0, t_0)$, with $t_0 = -e^{-1}$, and $\tilde x_0 = e^{-1/2}x_0$:
  \[ \label{eq:mon1} \frac{d}{dt} \int_{L_t} f(x,t)\, \rho_{\tilde x_0, t_0}(x,t) \leq
    \int_{L_t} |t|^{-1}\mathcal{E}\, \rho_{\tilde x_0,t_0}(x,t), \]
  where $\rho_{\tilde x_0, t_0}$ is the backwards heat kernel 
  centered at $(\tilde x_0, t_0)$:
  \[ \rho_{\tilde x_0, t_0}(x,t) = \frac{1}{(4\pi(t_0-t))^{n/2}}
    e^{-\frac{|x-\tilde x_0|^2}{4(t_0-t)}}. \]
  Note that by assumption, on $L_t$ for $t < -e^{-1}$,
  the support of $\mathcal{E}$ is outside of the $e^{-1/2}R$-ball
  around $\tilde x_0$ (here $e^{-1/2}$ is the scaling factor between
  $L_{t_0}$ and $M_1$). It follows that for $t\in [-1, t_0]$ we have
  \[  \int_{L_t} |t|^{-1}\mathcal{E}\, \rho_{\tilde x_0,t_0}(x,t) \leq
    C \delta
    e^{-e^{-1}R^2/5}, \]
  if $R > 1$. Integrating \eqref{eq:mon1} we obtain
  \[ \label{eq:mon2}  f(\tilde x_0, t_0) \leq \int_{L_{-1}} f(x,t)
    \rho_{\tilde x_0, t_0}(x,t) + C \delta
    e^{-R^2/15}, \]
  where we are viewing $f$ as a function on $L_{t_0}$. 
  As in the proof of \cite[Lemma 3.5]{LSSz22} we have
  \[ \frac{\rho_{\tilde x_0, t_0}(x,-1)}{\rho_{0,0}(x,-1)} \leq C_n \exp\left(
    \frac{|\tilde x_0|^2}{4(-t_0)}\right) = C_n
  \exp\left(\frac{|x_0|^2}{4}\right).  \]
It follows that
  \[ \int_{L_{-1}} f(x,t) \rho_{\tilde x_0, t_0}(x,t) \leq C
    e^{|x_0|^2/4} \int_{M_0} f(x,0) e^{-|x|^2/4}. \]
   Putting this together with \eqref{eq:mon2}, we get the result. 
\end{proof}

Combining this with Proposition~\ref{prop:logSobolev} we have the
following. 
\begin{cor}\label{cor:logSobolev}
  Suppose that we have $f$ as in
  Proposition~\ref{prop:logSobolev} on the interval $[\tau_0, \tau]$,
  with $\tau \geq \tau_0+1$. Assume that $b_0 \geq 1/2$ and
  $\epsilon < 1$. In addition, suppose that for $s\in [\tau-1,\tau]$ we have
  $\mathcal{E}(x,s)\leq \delta$, and for a given $x_0$ we have
  $\mathcal{E}(x,s)=0$ for $x\in B_R(x_0)$. Then there is a constant $C=C(b_0)$
  such that 
  \[ \label{eq:cor9eq} f(x_0, \tau) &\leq C 
    e^{\frac{|x_0|^2}{4p(\tau-\tau_0-1)}}\left( e^{(\epsilon + b_0)(\tau-\tau_0)} \Vert f(\cdot,
    \tau_0)\Vert_{L^2} + \int_{\tau_0}^\tau e^{(\epsilon + b_0)(\tau
      - s)} \Vert \mathcal{E}(\cdot, s)\Vert_{L^{p(s-\tau_0)}}\,
    ds\right) \\
  &\quad + C \delta e^{-\frac{R^2}{15p(\tau-\tau_0-1)}}.  \]
\end{cor}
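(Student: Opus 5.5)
Combine the two preceding results: apply Proposition~\ref{prop:logSobolev} on $[\tau_0, \tau-1]$ to get an $L^{p}$ bound at time $\tau-1$, then apply a version of Proposition~\ref{prop:L2pointwise} on the unit interval $[\tau-1,\tau]$ to pass from that integral bound to a pointwise bound at $x_0$. Since Proposition~\ref{prop:L2pointwise} controls $f(x_0,\tau)$ by the weighted $L^1$ norm at time $\tau-1$, which only gives a factor $e^{|x_0|^2/4}$, we will instead use Hölder with exponent $p=p(\tau-\tau_0-1)$ inside the monotonicity argument to improve the Gaussian factor to $e^{|x_0|^2/(4p)}$.

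First we reduce to the pure drift heat inequality needed on $[\tau-1,\tau]$. Set $g = e^{-(\epsilon + b_0)(s-\tau+1)} f$ or, more simply, note that on the unit interval the terms $a\cdot\nabla f + bf$ can be absorbed: the $bf$ term by the factor $e^{b_0}$, and the drift $a\cdot\nabla f$ either by running the monotonicity computation with a perturbed kernel or by replacing $f$ with a power $f^q$ and using $|a|\le\epsilon<1$ together with Young's inequality against the $-|\nabla f|^2$ terms. A cleaner alternative is to apply Proposition~\ref{prop:logSobolev} itself with the Gaussian weight recentered at $\tilde x_0$ (by translation invariance of the monotonicity formula), which handles $a,b$ directly; I would try this first. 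Then, as in the proof of Proposition~\ref{prop:L2pointwise}, we get
\[ f(x_0,\tau) \le C\int_{M_{\tau-1}} f\,\rho_{\tilde x_0,t_0} + C\delta e^{-R^2/15} \]
with $C$ depending on $b_0$. The error term uses the hypotheses that $\mathcal{E}\le\delta$ on $[\tau-1,\tau]$ and $\mathcal{E}=0$ on $B_R(x_0)$.

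Next, write $\rho_{\tilde x_0,t_0} = h\,\rho_{0,0}$ with $h\le C\exp(|x-\tilde x_0|\text{-type terms})$ and apply Hölder in the weighted space: $\int f h\,\rho_{0,0} \le \|f\|_{L^{p}}\|h\|_{L^{p'}}$. The key computation is that $\|h\|_{L^{p'}(\rho_{0,0})}\le C e^{|x_0|^2/(4p)}$. This is a Gaussian integral: the exponent of $h^{p'}\rho_{0,0}$ is a quadratic in $x$ whose maximum over $x$ equals $p'$ times the $L^{p'}$-scaling of $|x_0|^2/4$, giving $(p'-1)|x_0|^2/(4p')\cdot(\ldots)=|x_0|^2/(4p)$ after taking the $p'$-th root. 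Entropy bounds let us integrate over $M_{\tau-1}$ with uniform constants. The same Hölder treatment of the error term replaces $e^{-R^2/15}$ by $e^{-R^2/(15p)}$; this is weaker than the original, so it is harmless, or alternatively comes out of rerunning the estimate with the modified kernel.

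Finally, insert the bound from Proposition~\ref{prop:logSobolev}, applied from $\tau_0$ to $\tau-1$ (shifting time so $p(0)=2$), for $\|f(\cdot,\tau-1)\|_{L^{p(\tau-\tau_0-1)}}$. The extra unit of time contributes only a factor $e^{\epsilon+b_0}$, absorbed into $C(b_0)$. The integral over $[\tau-1,\tau]$ of the forcing is likewise bounded by the stated integral. This yields \eqref{eq:cor9eq}.

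The main obstacle is the Gaussian Hölder computation giving exactly the factor $e^{|x_0|^2/(4p)}$, and checking that it remains finite, which requires $p'$ not too large; this holds since $p\ge 2$. A secondary obstacle is justifying the monotonicity step on $[\tau-1,\tau]$ with the drift $a$ and potential $b$ present, rather than the pure heat inequality of Proposition~\ref{prop:L2pointwise}.
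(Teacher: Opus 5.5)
Your route differs from the paper's and can be made to work, but as written it has a gap at the step you call secondary, and only one of the fixes you list closes it.

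\textbf{The gap.} The displayed bound $f(x_0,\tau)\le C\int_{M_{\tau-1}} f\rho_{\tilde x_0,t_0}+C\delta e^{-R^2/15}$, for $f$ itself, does not follow from the monotonicity formula when $a\neq 0$. The term $\int a\cdot\nabla f\,\rho$ has nothing to be absorbed into. The bound can also genuinely fail: for a drift depending only on time, the solution operator is a translated heat kernel, and its ratio to $\rho_{\tilde x_0,t_0}$ is unbounded.

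\textbf{Fixes that do not work.} Recentering Proposition~\ref{prop:logSobolev} does not give the bound either. That result controls $L^{p}$ norms, not point values. As stated, its factor $e^{(\epsilon+b_0)\tau}$ also blows up if you run the recentered time to infinity to localize at $\tilde x_0$.

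\textbf{The fix that works.} Use your power option, say with $F=f^2$. You absorb $2f\,a\cdot\nabla f$ with the term $-2|\nabla f|^2$, and use $2f\mathcal{E}\le F+\mathcal{E}^2$. This gives $\partial_\tau F\le \Delta F-\frac12 x\cdot\nabla F+C(b_0)F+\mathcal{E}^2$. The proof of Proposition~\ref{prop:L2pointwise} then yields $F(x_0,\tau)\le C\int F\rho_{\tilde x_0,t_0}+C\delta^2 e^{-R^2/15}$. Next apply H\"older with exponent $r=p/2\ge 1$, so that $\Vert F\Vert_{L^{p/2}}=\Vert f\Vert_{L^p}^2$, and finally take square roots.

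\textbf{Your Gaussian computation is correct.} Take $\sigma=1-e^{-1}$ and $h=\rho_{\tilde x_0,t_0}/\rho_{0,0}$ at $t=-1$. Completing the square gives $\Vert h\Vert_{L^{r'}}\le C\exp\big(e^{-1}|x_0|^2/(4(r-\sigma))\big)\le C e^{|x_0|^2/(4r)}$ for every $r\ge 1$; the last inequality is equivalent to $r\ge 1$. The leftover Gaussian is no wider than $e^{-|x|^2/(4\sigma)}$, so the entropy bound makes $C$ uniform. With $r=p/2$, the square root produces exactly $e^{|x_0|^2/(4p)}$. The error term $C\delta e^{-R^2/30}$ is even better than required, since $p\ge 2$.

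\textbf{Comparison with the paper.} The paper takes the power to be $p$ itself, so no H\"older step or Gaussian integral is needed. It applies Proposition~\ref{prop:L2pointwise}, with its crude factor $e^{|x_0|^2/4}$, to $F=f^p$, and uses $\Vert F\Vert_{L^1}=\Vert f\Vert_{L^p}^p$. The $p$-th root then does three things at once. It turns $e^{|x_0|^2/4}$ into $e^{|x_0|^2/(4p)}$ and $e^{Cb_0p}$ into $e^{Cb_0}$. It also turns $\delta^p e^{-R^2/15}$ into $\delta e^{-R^2/(15p)}$, which is why the statement has $15p$ in the error term. Your version separates the two jobs the power does: absorbing the drift, and sharpening the Gaussian weight. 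The price is the explicit Gaussian computation; the gain is a slightly stronger error term.
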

\begin{proof}
  First we apply Proposition~\ref{prop:logSobolev} to the flow
  translated in time by $\tau_0$, to estimate the $L^p$ norm of $f$ at
  time $\tau-1$. We find that
  \[ \label{eq:100} \Vert f(\cdot, \tau-1)\Vert_{L^{p(\tau-\tau_0-1)}} &\leq
     e^{(\epsilon+b_0)(\tau-\tau_0-1)} \Vert 
     f(\cdot, \tau_0)\Vert_{L^{2}}  \\
     &\qquad + \int_{\tau_0}^{\tau-1}
     e^{(\epsilon+b_0)(\tau-1 - 
       s)} \Vert \mathcal{E}(\cdot, s)\Vert_{L^{p(s-\tau_0)}}\, ds, \] 
   Next we consider the function $F = f^p$, where for simplicity we
   wrote $p=p(\tau-\tau_0-1) \geq 2$. The function $F$ satisfies the
   differential inequality
   \[ \partial_\tau F \leq \Delta F - \frac{1}{2}x\cdot \nabla F -
     p(p-1) f^{p-2}|\nabla f|^2 + p f^{p-1}a \cdot \nabla f + pb_0 f^p
     + p f^{p-1}\mathcal{E}. \]
   Using that $b_0\geq 1/2$ and $|a| < 1$, we can absorb the
   $pf^{p-1}a\cdot \nabla f$ term using the negative gradient squared,
   and the $f^p$ term. We also have $p f^{p-1}\mathcal{E} \leq
   (p-1)f^p + \mathcal{E}^p$. Therefore
   \[ \partial_\tau F \leq \Delta F - \frac{1}{2}x\cdot \nabla F + Cpb_0
     F + \mathcal{E}^p, \]
   and so if $\tilde{F} = e^{-Cb_0p\tau} F$, we get
   \[ \partial_\tau \tilde{F} \leq \Delta \tilde F - \frac{1}{2}x\cdot
     \nabla \tilde F + e^{-Cb_0p\tau} \mathcal{E}^p. \]
   We apply Proposition~\ref{prop:L2pointwise}, on the time interval
   $[\tau-1, \tau]$. The conclusion is that
   \[ e^{-Cb_0p} F(x_0, \tau) \leq Ce^{|x_0|^2/4} \Vert F(\cdot,
     \tau-1)\Vert_{L^1} + C \delta^p e^{-R^2/15}. \]
   We have $\Vert F(\cdot, \tau-1)\Vert_{L^1} = \Vert f(\cdot,
   \tau-1)\Vert_{L^p}^p$. Taking $p^{th}$ roots we get the required
   result. 
\end{proof}

We want to apply this result to a function $u(x,\tau)$, on the region where the flow
$M_\tau$ is the graph of $u(x,\tau)$ over $\mathcal{C}_{n,k}$. Such a
function $u$, cut off at a suitable scale, will satisfy a differential
inequality like \eqref{eq:driftheatineq} as long as $M_\tau$ has good
graphicality over $\mathcal{C}_{n,k}$. The following result is stated
in more generality than needed here, since we will also use it later.
\begin{prop}\label{prop:subsolpointwise}
Given $\epsilon, \beta_0 > 0$, there is a constant $C_0(\epsilon,
\beta_0) > 10$ with the following property. Suppose that $\beta,
\beta' \in (\beta_0, 1/2]$, with $\beta' \leq \beta$, $K\in (0,20)$, and
let $M_\tau$ be a RMCF for $\tau\in [T_0, T_1]$ such that $KT_0^{\beta'}
> C_0$. Suppose that on the $2K\tau^\beta$ ball the function $f \geq
0$ satisfies the differential inequality
\[ \partial_\tau f \leq  \Delta f - \frac{1}{2}x\cdot \nabla f +
  a \cdot \nabla f + b f+ \mathcal{E}, \]
such that $\mathcal{E}(x,\tau)\leq K^{-1}\tau^{-\beta}$ is supported on the
annulus where $|x|\in (4/3 K\tau^\beta, 3/2 K\tau^\beta)$. We assume
that $|a| < \epsilon, |b| < b_0$. Then for $\tau\in [\frac{5}{3}T_0, T_1]$, if
$|x| < 7/6 K\tau^{\beta'}$ we have
\[ f(x,\tau) \leq C\tau^{2\beta'(\epsilon+b_0)} \Vert f(\cdot,
  \tau_0)\Vert_{L^2} + C K^{-1}\tau^{-\beta} e^{-\frac{1}{10^4}
    \tau^{2(\beta-\beta')}}, \]
for a uniform constant $C$, and some $\tau_0\in [\tau/2, \tau-1]$. 
\end{prop}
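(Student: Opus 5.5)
The plan is to apply Corollary~\ref{cor:logSobolev} with a starting time $\tau_0$ chosen so that $\tau-\tau_0$ is about $\ln \tau$. Concretely, fix $\tau\in[\frac53 T_0,T_1]$ and set $\tau_0=\tau-\lambda\ln\tau$, where $\lambda$ is a constant depending on $\beta_0$, to be fixed below. Since $T_0$ is large (because $KT_0^{\beta'}>C_0$ and $K<20$), we have $\tau_0\in[\tau/2,\tau-1]$. On the whole interval $[\tau_0,\tau]$, the ball $B_{2K s^\beta}$ contains $B_{2K(\tau/2)^\beta}$, which is a fixed region of size comparable to $K\tau^\beta$. After multiplying $f$ by nothing (or by a fixed cutoff if the equation is only valid on the ball), we are exactly in the setting of Corollary~\ref{cor:logSobolev}. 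One must allow $b_0\ge 1/2$ and $\epsilon<1$; if $b_0<1/2$ we simply replace $b_0$ by $1/2$, which only worsens the exponent harmlessly. I would instead keep the stated exponent by noting that in the Corollary the constant $b_0$ enters only through $C(b_0)$.

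The first term to estimate is the Gaussian prefactor. We have $p(\tau-\tau_0-1)=1+\epsilon+(1-\epsilon)e^{\lambda\ln\tau-1}\approx c\,\tau^{\lambda}$. For $|x|<\frac76K\tau^{\beta'}$, the factor $e^{|x|^2/(4p)}$ is at most $\exp(C K^2\tau^{2\beta'-\lambda})$. Choosing $\lambda=2\beta'$, this is bounded by $e^{CK^2}\le C$ since $K<20$. With this choice, the homogeneous term becomes $e^{(\epsilon+b_0)(\tau-\tau_0)}\Vert f(\cdot,\tau_0)\Vert_{L^2}=\tau^{2\beta'(\epsilon+b_0)}\Vert f(\cdot,\tau_0)\Vert_{L^2}$, which matches the claimed bound. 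Note also that $\tau_0\ge\tau/2$ because $2\beta'\ln\tau\le\ln\tau\le\tau/2$.

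The second term is the error. Here $x_0=x$ with $|x|<\frac76K\tau^{\beta'}\le\frac76K\tau^\beta$, while $\mathcal{E}(\cdot,s)$ is supported where $|y|>\frac43Ks^\beta\ge\frac43K(\tau/2)^{\beta}$. This alone does not give a ball of radius comparable to $K\tau^\beta$ free of $\mathcal{E}$, because $\frac43\cdot 2^{-\beta}$ may be below $\frac76$. The remedy is to use $s\in[\tau-1,\tau]$ for the pointwise step: there the support is at $|y|\ge\frac43K(\tau-1)^\beta$, so $\mathcal{E}=0$ on $B_R(x)$ with $R\ge\frac16K\tau^\beta-C\ge\frac1{7}K\tau^\beta$. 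The last term of Corollary~\ref{cor:logSobolev} is then $C\delta e^{-R^2/(15p)}$ with $\delta=K^{-1}(\tau-1)^{-\beta}$ and $p\approx\tau^{2\beta'}$. This gives $CK^{-1}\tau^{-\beta}\exp(-cK^2\tau^{2\beta-2\beta'})$. The lower bound on $K$ is a concern here: since only $KT_0^{\beta'}>C_0$ is known, I would absorb this using $K\tau^{\beta'}\ge C_0$ to trade factors, or else track constants so that $10^{-4}$ comes out. This constant chase is where care is needed.

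The remaining piece, and the main obstacle, is the integral $\int_{\tau_0}^\tau e^{(\epsilon+b_0)(\tau-s)}\Vert\mathcal{E}(\cdot,s)\Vert_{L^{p(s-\tau_0)}}\,ds$. The $L^p$ norm is Gaussian weighted, and $\mathcal{E}$ lives where $|y|\gtrsim K\tau^\beta$. This gives
\[ \Vert\mathcal{E}\Vert_{L^p}\le K^{-1}\tau^{-\beta}\Big(\int_{|y|>cK\tau^\beta}e^{-|y|^2/4}\Big)^{1/p}\lesssim K^{-1}\tau^{-\beta}e^{-cK^2\tau^{2\beta}/p}, \]
with $p\le C\tau^{2\beta'}$. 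After multiplying by the prefactor $e^{|x|^2/(4p)}\le C$ and by $\tau^{2\beta'(\epsilon+b_0)}$, the exponential $e^{-cK^2\tau^{2\beta-2\beta'}}$ dominates any power of $\tau$ only when $\beta>\beta'$ strictly. When $\beta=\beta'$ it is merely a constant, and then the claim reduces to a bounded multiple of $K^{-1}\tau^{-\beta}$, which must be absorbed by enlarging $C$. I would handle the polynomial factor $\tau^{2\beta'(\epsilon+b_0)}\ln\tau$ by splitting the constant in the exponent. This uses $\tau^{2(\beta-\beta')}$ large whenever it is not bounded, and otherwise $C$ absorbs everything. This is the delicate step, but the constant $10^{-4}$ is generous enough that I expect it to go through.
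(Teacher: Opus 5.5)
Your framework is the same as the paper's: apply Corollary~\ref{cor:logSobolev} on $[\tau_0,\tau]$ with $\tau-\tau_0$ logarithmic in $\tau$, and use an $\mathcal{E}$-free ball of radius $\sim K\tau^\beta$ only for $s\in[\tau-1,\tau]$. There is, however, a genuine gap, and it comes from choosing $e^{\tau-\tau_0}=\tau^{2\beta'}$ independently of $K$. With this choice $p\approx\tau^{2\beta'}$. Then both the cutoff term $e^{-R^2/15p}$ and your bound on $\Vert\mathcal{E}\Vert_{L^p}$ only give $\exp(-cK^2\tau^{2(\beta-\beta')})$. This is not bounded by $C\exp(-10^{-4}\tau^{2(\beta-\beta')})$ uniformly in $K\in(0,20)$. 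Small $K$ is exactly the regime that matters: Proposition~\ref{prop:Claim2} uses $K=(3T_0)^{-\gamma/2}$. The hypothesis $K\tau^{\beta'}>C_0$ cannot be used to ``trade factors'', since it only gives $K^2\tau^{2(\beta-\beta')}\gtrsim C_0^2\tau^{2\beta-4\beta'}$, which is merely a constant when, say, $\beta=2\beta'$.

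The missing idea is to take $e^{\tau-\tau_0}=K^2\tau^{2\beta'}$. With this choice:
\begin{itemize}
\item $p(\tau-\tau_0-1)$ is comparable to $K^2\tau^{2\beta'}$, so the Gaussian prefactor is still $O(1)$ for $|x|<\frac76K\tau^{\beta'}$.
\item The growth factor is $(K^2\tau^{2\beta'})^{\epsilon+b_0}\le C\tau^{2\beta'(\epsilon+b_0)}$, because $K<20$.
\item In $R^2/15p$ with $R=\frac18K\tau^\beta$ the powers of $K$ cancel, giving $e^{-10^{-4}\tau^{2(\beta-\beta')}}$.
\end{itemize}
The role of $KT_0^{\beta'}>C_0$ is simply to guarantee $\tau-\tau_0\ge1$.

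Your treatment of the Duhamel integral also fails. You bound $p(s-\tau_0)$ by its maximum for every $s$, but then $\int_{\tau_0}^\tau e^{(\epsilon+b_0)(\tau-s)}\,ds$ has size $e^{(\epsilon+b_0)(\tau-\tau_0)}$, a positive power of $\tau$. This cannot be ``absorbed into $C$'' when $\beta=\beta'$, which is the case used in Proposition~\ref{prop:tbetagraph10}: an error $\tau^{2\beta(1+2\epsilon)-\beta}$ in place of $\tau^{-\beta}$ would be fatal there. Nor can it be absorbed uniformly when $\beta-\beta'$ is small, since $\sup_\tau \tau^{A}e^{-c\tau^{\eta}}\to\infty$ as $\eta\to0$.

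What you need is the $s$-dependence of the exponent. We have $p(s-\tau_0)\le 2e^{s-\tau_0}=2K^2\tau^{2\beta'}e^{-(\tau-s)}$, so, using $s\ge\tau/2$ and $e^u\ge 1+u^2/2$,
\[ \Vert\mathcal{E}(\cdot,s)\Vert_{L^{p(s-\tau_0)}}\le CK^{-1}\tau^{-\beta}\exp\Big(-\tfrac{1}{40}\tau^{2(\beta-\beta')}e^{\tau-s}\Big)\le CK^{-1}\tau^{-\beta}e^{-\frac{1}{40}\tau^{2(\beta-\beta')}}e^{-\frac{1}{80}(\tau-s)^2}. \]
This double-exponential decay in $\tau-s$ beats the weight $e^{(\epsilon+b_0)(\tau-s)}$. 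The integral term is then at most $CK^{-1}\tau^{-\beta}e^{-\frac{1}{40}\tau^{2(\beta-\beta')}}\int_0^\infty e^{(\epsilon+b_0)u-u^2/80}\,du$, with a constant independent of $\tau$, $K$, $\beta$, $\beta'$.

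Even this refinement would not rescue your $K$-independent $\tau_0$. With that choice the double exponential only begins to act once $e^{\tau-s}\gtrsim K^{-2}$, so the integral is of size $\min(K^{-2},\tau^{2\beta'})^{\epsilon+b_0}$.
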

\begin{proof}
  We will apply Corollary~\ref{cor:logSobolev}. We let
$\tau\in [2T_0, T_1]$, and choose $\tau_0$ so that
\[ e^{\tau-\tau_0} = K^2 \tau^{2\beta'}.\]
Since $K^2T_0^{2\beta'} > 10$, we have $\tau_0 < \tau-1$. At the same
time, we have
\[ \tau_0 = \tau -2\log K - 2\beta' \log \tau > \frac{2}{3}\tau, \]
if $T_0$ is sufficiently large since $K, \beta'$ are bounded above. So
$\tau_0\in [\frac{2}{3}\tau, \tau-1]$, and in particular we can ensure
that $\tau_0 > T_0+1$ if $\tau > \frac{5}{3} T_0$. 

For $\tau\in [T_0, T_1]$ we also have 
\[ \label{eq:pbound10} \frac{1}{4} e^{\tau-\tau_0} \leq p(\tau-\tau_0-1) = 1+\epsilon +
  (1-\epsilon) e^{\tau-\tau_0-1} \leq e^{\tau-\tau_0}, \]
if $\epsilon$ is small. It follows that as long as $|x|\leq K \tau^{\beta'}$,
we have
\[ \label{eq:101} e^{\frac{|x|^2}{4p(\tau-\tau_0-1)}} \leq e. \]

Note that for $s\in [\tau-1, \tau]$ we have that $\mathcal{E}\leq
K^{-1}\tau^{-\beta}$, and $\mathcal{E}$ is supported outside of the
$\frac{4}{3} K(\tau-1)^\beta$-ball. We apply Corollary~\ref{cor:logSobolev} at time
$\tau$, and $|x_0| < \frac{7}{6} K\tau^{\beta'}$. If $T_0$ is sufficiently large,
then for $s\in [\tau-1,\tau]$, $\mathcal{E}$ vanishes on $B_R(x_0)$
for $R = \frac{1}{8} K\tau^{\beta}$. For this $R$ we have
\[ e^{-\frac{R^2}{15 p(\tau-\tau_0-1)}} 
  \leq  e^{-\frac{K^2\tau^{2\beta}}{15\cdot 64 e^{\tau-\tau_0}}} 
  \leq  e^{-\frac{1}{10^4}\tau^{2(\beta - \beta')}}. \]

From Corollary~\ref{cor:logSobolev} we therefore get
\[ f(x_0,\tau)&\leq C e^{(b_0+ \epsilon)(\tau-\tau_0)}
  \Vert f(\cdot, \tau_0)\Vert_{L^2} + C\int_{\tau_0}^\tau e^{(b_0 + \epsilon)(\tau
    - s)} \Vert \mathcal{E}(\cdot, s)\Vert_{L^{p(s-\tau_0)}}\, ds \\
  &\quad + C K^{-1}\tau^{-\beta} e^{-\frac{1}{10^4}\tau^{2(\beta -
      \beta')}}, \]
for $|x_0| \leq \frac{7}{6} K\tau^{\beta'}$ and $\tau\in [\frac{5}{3}T_0,T_1]$.

By our bound for $\mathcal{E}$ we have
\[ \Vert \mathcal{E}(\cdot, s)\Vert_{L^{p(s-\tau_0)}} \leq C K^{-1}
  s^{-\beta} e^{-K^2 s^{2\beta} / 5p(s-\tau_0)} \]
We have $p(s-\tau_0) \leq 2e^{s-\tau_0}$, and also $s \geq \tau/2$, so
\[ \Vert \mathcal{E}(\cdot, s)\Vert_{L^{p(s-\tau_0)}} &\leq C K^{-1}
  \tau^{-\beta} e^{-\frac{K^2 \tau^{2\beta}}{40e^{\tau-\tau_0}e^{s-\tau}}}
  \\
  &= CK^{-1}\tau^{-\beta} e^{-\frac{1}{40} 
    \tau^{2(\beta-\beta')}e^{\tau-s}} \\
  &\leq CK^{-1}\tau^{-\beta} e^{-\frac{1}{40}\tau^{2(\beta-\beta')}}\exp\left( - \frac{1}{80}
    (\tau-s)^2\right),\]
where $\tau\geq s$ and we also used
$e^{\tau-s} \geq  1 + \frac{1}{2}(\tau-s)^2$.
Using this in the estimate for $f$ above, we get
\[ \label{eq:fest30} f(x,\tau) &\leq C (K^2\tau^{2\beta'})^{b_0
    +\epsilon} \Vert f(\cdot, \tau_0)\Vert_{L^2} +
  CK^{-1}\tau^{-\beta} e^{-\frac{1}{40}\tau^{2(\beta-\beta')}} \int_{\tau_0}^\tau
  e^{(b_0  +\epsilon)(\tau-s) - \frac{1}{80}(\tau-s)^2}\, ds \\
  &\qquad\qquad +  CK^{-1}\tau^{-\beta} e^{-\frac{1}{10^4}\tau^{2(\beta-\beta')}} \\
  &\leq C\tau^{2\beta' (b_0 + \epsilon)}\Vert f(\cdot,
  \tau_0)\Vert_{L^2}  + CK^{-1}\tau^{-\beta} e^{-\frac{1}{10^4}\tau^{2(\beta-\beta')}},\]
for $|x| <\frac{7}{6} K\tau^{\beta'}$, and $\tau\in [\frac{5}{3}T_0,
T_1]$.
\end{proof}

In order to apply this result to the graphicality function, we need
the following extension property, proven in
\cite[Proposition 10]{Sz26}.  It is a consequence of pseudolocality, see
Ilmanen-Neves-Schulze~\cite{INS19}, as well as the interior estimates
of Ecker-Huisken~\cite{EH91}. See also Sun-Xue~\cite[Theorem
2.4]{SX22}. 

\begin{prop}\label{prop:pseudo}
  Given $\epsilon > 0$ there exist $\delta_1(\epsilon), C_1(\epsilon)>
  0$,  satisfying the following. Suppose that $M_\tau$ is a
  rescaled mean curvature flow such that $M_0$ is a $\delta_1$-graph
  over $\mathcal{C}_{n,k}$ on the ball $B_R$ for some $R > C_1$. Then for
  $\tau\in [0,10]$, $M_\tau$ is an $\epsilon$-graph over $\mathcal{C}_{n,k}$
  on the ball $B_{e^{\tau/2}(R-C_1)}$. 
\end{prop}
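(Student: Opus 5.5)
Since $R > C_1$ and the flow $M_\tau$ and the unrescaled flow $L_t$ differ only by parabolic rescaling, the plan is to pass to the mean curvature flow $L_t$ with $M_\tau = e^{\tau/2}L_{-e^{-\tau}}$. We then use two ingredients on it: pseudolocality (Ilmanen--Neves--Schulze~\cite{INS19}) to keep the flow graphical over the shrinking cylinder, and the interior estimates of Ecker--Huisken~\cite{EH91} to upgrade this to $C^4$ control. At time $t=-1$, $L_{-1}=M_0$ is a $\delta_1$-graph over $\mathcal{C}_{n,k}$ on $B_R$. The target is that $L_t$ is a graph over $\sqrt{-t}\,\mathcal{C}_{n,k}$ on $B_{R-C_1}$ for $t\in[-1,-e^{-10}]$, with $C^4$ norm controlled by $\epsilon$ after rescaling. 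Rescaling by $e^{\tau/2}$ turns $B_{R-C_1}$ into $B_{e^{\tau/2}(R-C_1)}$, which is exactly the claimed region.

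The first step is local and has a fixed unit scale. Take any point $x_1\in B_{R-C_1}$; say $C_1\geq 10$, so $B_{C_1/2}(x_1)\subset B_R$. On the ball $B_{C_1/2}(x_1)$ the surface $M_0$ is $\delta_1$-close in $C^4$ to a piece of $\mathcal{C}_{n,k}$. The key point is that $\mathcal{C}_{n,k}$ has bounded geometry, with curvature $\sim (n-k)^{-1/2}$, so every such piece looks alike up to translation along $\mathbb{R}^k$ and rotation. Pseudolocality in the form of \cite{INS19} then applies: a hypersurface that is a Lipschitz graph with small constant at a fixed scale stays graphical with curvature bounds for a fixed time. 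Here we need the version for graphs over a smooth reference surface with bounded curvature, or equivalently we apply it at a scale $r_0\ll 1$, where the cylinder is nearly flat. The conclusion is that on $B_{r_0}(x_1)\times[-1,-e^{-10}]$ the flow stays a small graph with bounded second fundamental form.

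The second step is closeness. The exact shrinking cylinder $\sqrt{-t}\,\mathcal{C}_{n,k}$ solves the same equation, and it too satisfies the local curvature bounds. Let $\epsilon'$ be the closeness we want at the end. We argue by contradiction and compactness: if the statement failed for a sequence $\delta_1\to 0$ with points $x_1$, translate along $\mathbb{R}^k$ to move $x_1$ into a fixed compact set. The local uniform curvature bounds give $C^\infty$ subconvergence on compact sets. The limit is a flow that starts as the cylinder near $x_1$, and by uniqueness of smooth solutions with bounded curvature in a local sense (or via White's local regularity and the interior estimates) it is the shrinking cylinder there. This contradicts failure of $\epsilon'$-closeness.

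The main obstacle is locality. Information from outside $B_R$, where $M_0$ may be wild, can in principle influence the flow. This is exactly what pseudolocality controls, and it is why the radius shrinks by $C_1$. The constant $C_1(\epsilon)$ must be chosen large enough that the Gaussian-weighted influence from outside $B_R$ is negligible on the time interval $[0,10]$, and that is where I would spend care. Finally, the Ecker--Huisken interior estimates convert the curvature and graph bounds into $C^4$ bounds for the graph function, giving the $\epsilon$-graph conclusion.
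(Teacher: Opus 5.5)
Your ingredients are the ones the paper points to. The paper only cites \cite[Proposition 10]{Sz26} and says the result follows from the pseudolocality of \cite{INS19} and the interior estimates of \cite{EH91}. However, your Step 1 claims more than pseudolocality gives, and Step 2 depends on that claim.

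Pseudolocality in the form of \cite{INS19} says: if $L_{-1}\cap B_r(x_1)$ is a graph over a hyperplane with Lipschitz constant at most $\eta$, then near $x_1$ the flow stays graphical with bounded curvature only for $t\in[-1,-1+\delta(\eta)^2r^2]$. Since $\mathcal{C}_{n,k}$ is Lipschitz-flat only at scales $r_0\ll 1$, one application controls only a short initial interval. It does not cover $[-1,-e^{-10}]$, whose length is close to $1$. A version relative to a bounded-curvature reference surface, valid for the whole lifetime of the reference flow, is essentially the proposition itself, so it cannot be quoted. Hence the curvature bounds on $B_{r_0}(x_1)\times[-1,-e^{-10}]$ that your compactness step needs are not yet available.

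The missing idea is an iteration, or equivalently a first-failure-time continuity argument. At the end of each short interval, use the closeness to $\sqrt{-t}\,\mathcal{C}_{n,k}$ obtained so far to reapply pseudolocality from that time. On $\sqrt{-t}\,\mathcal{C}_{n,k}$ we have $|A|=(2|t|)^{-1/2}\le e^{5}/\sqrt2$ for $t\le -e^{-10}$, so the step length is bounded below and boundedly many steps suffice. Each step loses a bounded amount of radius and degrades the closeness in a controlled way. This, not an estimate of Gaussian tails from outside $B_R$, is what determines $C_1(\epsilon)$ and $\delta_1(\epsilon)$. The same iteration, together with sphere barriers in the region away from the cylinder, is what keeps additional sheets out of the ball. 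That is part of what it means to be an $\epsilon$-graph on $B_{e^{\tau/2}(R-C_1)}$.

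Second, the identification of the limit in Step 2 fails as stated. Mean curvature flow has no local uniqueness, so a limit that merely ``starts as the cylinder near $x_1$'' is not determined near $x_1$ at times of order one. In the contradiction argument you must let $C_1\to\infty$ along with $\delta_1\to 0$. Then, after translating along $\mathbb{R}^k$ (with failure points at large $|z|$ ruled out beforehand by barriers), the limit is a complete flow with curvature bounded uniformly in space and initial datum all of $\mathcal{C}_{n,k}$. A global statement then shows the limit is the shrinking cylinder, for example uniqueness of complete flows with bounded curvature, or two-sided avoidance against the level set flow $\sqrt{-t}\,\mathcal{C}_{n,k}$. With these two repairs your outline becomes a proof along the lines the paper intends.
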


Using this pseudolocality result and the estimate from
Corollary~\ref{cor:logSobolev}, we can 
obtain improved graphicality estimates for the flow $M_\tau$ in terms
of the $L^2$ bound from Proposition~\ref{prop:CMgraphicality}. We use
the following estimate, which we will also need later on, and
is stated in a more general form than needed here.

\begin{prop}\label{prop:tbetagraph10}
  Given small $\epsilon, \beta_0 > 0$, there are $\epsilon_0(\epsilon) > 0$ 
  and $C_0(\epsilon, \beta_0) > 10$ with the following properties. Suppose that
  $\beta\in (\beta_0, \alpha/2 - \beta_0)$, $K\in   (0,20)$, and let $M_\tau$ be a
  RMCF for $\tau\in [T_0, T_1]$ for some $T_0 >  C_0$, such that
  also $KT_0^\beta > C_0$. Suppose that
  $M_\tau$ is $\epsilon_0$-graphical over $\mathcal{C}_{n,k}$ on
  $B_{K\tau^\beta}$, and the graphicality function $u(x,\tau)$
  satisfies $\Vert u(\cdot, \tau)\Vert_{L^2} \leq \tau^{-\alpha}$ for
  some $\alpha\in (0,2)$. Then for $\tau\in [2T_0, T_1]$ we have
  \[\label{eq:niuest} |\nabla^i u(x, \tau)| \leq C \tau^{2\beta (2\epsilon+1) - \alpha}
    + CK^{-1} \tau^{-\beta}, \]
  for $i=0, \ldots, 4$, and $|x| \leq K\tau^\beta$.
\end{prop}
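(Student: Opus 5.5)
We derive a differential inequality for a cut-off version of $u$ and feed it into Proposition~\ref{prop:subsolpointwise} with $\beta'=\beta$. That gives the $C^0$ bound; the derivative bounds then come from interior parabolic estimates.

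\textbf{Cut-off equation.} Where $M_\tau$ is $\epsilon_0$-graphical over $\mathcal{C}_{n,k}$, the function $u$ satisfies
\[ \partial_\tau u = \mathcal{L}_{\mathcal{C}} u + Q(u,\nabla u,\nabla^2 u). \]
Since $\Vert u\Vert_{C^4}\le \epsilon_0$ is small, the quadratic term can be absorbed into the linear part. The result is an equation
\[ \partial_\tau u = \Delta u - \tfrac12 x\cdot\nabla u + a\cdot\nabla u + b u, \]
where $\Delta$ is now a slightly perturbed Laplacian. Here $|a|\le \Psi(\epsilon_0)|x|$ could be a problem because of the drift, so I would rather write the equation with respect to the geometry of $M_\tau$ itself. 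Then the drift $-\tfrac12 x\cdot\nabla$ is exactly the one in Proposition~\ref{prop:logSobolev}, and the remaining coefficients satisfy $|a|\le\epsilon$ and $|b|\le 1+\epsilon$ once $\epsilon_0=\epsilon_0(\epsilon)$ is small.

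Next choose a cutoff $\phi$ with $\phi=1$ on $B_{\frac43K\tau^\beta}$, supported in $B_{\frac32K\tau^\beta}$, with $|\nabla\phi|\le C(K\tau^\beta)^{-1}$ and $|\nabla^2\phi|\le C(K\tau^\beta)^{-2}$. Set $f=\phi\sqrt{u^2+\eta^2}$, or work with $\phi u_\pm$, and let $\eta\to0$. By Kato's inequality $f$ is a nonnegative subsolution with $b_0=1+\epsilon$. The error term $\mathcal{E}$ consists of $|u|\,(\partial_\tau-\Delta+\tfrac12 x\cdot\nabla)\phi$ and the cross term $2\nabla\phi\cdot\nabla u$. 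It is supported in the annulus, as Proposition~\ref{prop:subsolpointwise} requires. It obeys $\mathcal{E}\le C\epsilon_0\,(|x||\nabla\phi|+|\nabla\phi|+\ldots)\le C\epsilon_0 K^{-1}\tau^{-\beta}\cdot K\tau^\beta/(K\tau^\beta)$.

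The main obstacle is the $x\cdot\nabla\phi$ term, which is only $O(1)$ in size. I would handle it with a time-dependent cutoff $\phi(x/(K\tau^\beta))$. Its $\partial_\tau\phi$ term partially cancels $\tfrac12 x\cdot\nabla\phi$, but not completely. The fallback is to note that $\mathcal{E}$ is at most $C\epsilon_0$ in size and is supported where $|x|\sim K\tau^\beta$. In the weighted $L^{p(s)}$ norms appearing in Corollary~\ref{cor:logSobolev}, the Gaussian factor $e^{-K^2\tau^{2\beta}/(5p)}$ then dominates any polynomial prefactor. Rerunning the proof of Proposition~\ref{prop:subsolpointwise} with $\delta=C\epsilon_0$ in place of $K^{-1}\tau^{-\beta}$ gives an error $e^{-c\tau^{0}}$-type term. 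Because that proof uses $\beta'<\beta$ only through the $e^{-\tau^{2(\beta-\beta')}}$ factor, I would instead take the pointwise bound only on $|x|\le K\tau^\beta$ while cutting off at $2K\tau^\beta$. This yields an error of the form $C\epsilon_0 e^{-c}$, which is not small. So I expect to need to redo the exponent bookkeeping in Corollary~\ref{cor:logSobolev} carefully, choosing $e^{\tau-\tau_0}=K^2\tau^{2\beta}/A$ for a large constant $A$. With that choice the boundary-layer contribution is $\lesssim e^{-A/100}$ times the $\tau^{-\beta}K^{-1}$ scale, which is where the second term of \eqref{eq:niuest} comes from.

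\textbf{$C^0$ bound and exponents.} With that choice, the main term is $C\tau^{2\beta(\epsilon+b_0)}\Vert f(\cdot,\tau_0)\Vert_{L^2}\le C\tau^{2\beta(1+2\epsilon)}\tau_0^{-\alpha}$, where $b_0=1+\epsilon$. Since $\tau_0\ge\tau/2$, this gives the first term $C\tau^{2\beta(2\epsilon+1)-\alpha}$. The condition $\beta<\alpha/2-\beta_0$ guarantees that this term is small.

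\textbf{Derivatives.} For $i=1,\dots,4$, once $|u|$ is bounded by the right side $\kappa(\tau)$ of \eqref{eq:niuest} on a slightly larger ball over $[\tau-1,\tau]$, interior Schauder or Ecker--Huisken estimates on unit parabolic cylinders bound $|\nabla^iu|\le C\kappa$. This works because $u$ is $\epsilon_0$-small in $C^4$, so the equation is uniformly parabolic with bounded coefficients on unit scales, and the drift $\tfrac12 x\cdot\nabla$ can be removed locally by passing back to unrescaled MCF. Proposition~\ref{prop:pseudo} supplies the graphicality needed on the slightly larger ball near the edge. The factor $2T_0$ in the statement accommodates the range restriction $\tau\ge\tfrac53T_0$ together with this extra unit time step.
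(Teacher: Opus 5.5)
\textbf{There is a genuine gap: your argument does not produce the $CK^{-1}\tau^{-\beta}$ term.}

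Your overall route is the paper's. You get graphicality on $B_{\frac32K\tau^\beta}$ from pseudolocality, cut off between $\frac43K\tau^\beta$ and $\frac32K\tau^\beta$, and write the equation as $\partial_\tau u=\Delta u-\frac12x\cdot\nabla u+u+a_1\cdot\nabla u+a_2u$ with $|a_1|,|a_2|<\epsilon$. Then you apply Proposition~\ref{prop:subsolpointwise} with $\beta'=\beta$ and $b_0=1+\epsilon$, and finish with Schauder estimates.

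You correctly see that the cutoff produces the term $\frac12u\,x\cdot\nabla\phi$, which has size $\epsilon_1$ rather than $K^{-1}\tau^{-\beta}$. What you miss is that this term has the good sign. For $\phi=\chi(|x|/K\tau^\beta)$ with $\chi$ nonincreasing, $x\cdot\nabla\phi=\chi'\,|x^T|^2/(|x|K\tau^\beta)\le0$. In the inequality for $f=\phi g$, with $g=\sqrt{u^2+\eta^2}\ge0$ (or for $f=|\tilde u|$), it enters as $\frac12g\,x\cdot\nabla\phi\le0$, so it can simply be discarded. This is also why the residual after your partial cancellation with $\partial_\tau\phi=O(\tau^{-1})$ causes no trouble: it is nonpositive.

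The remaining cutoff terms are $g|\partial_\tau\phi|$, $g|\Delta\phi|$, $2|\nabla\phi||\nabla u|$ and $g|a_1||\nabla\phi|$. They are bounded by $C\epsilon_1(\tau^{-1}+K^{-1}\tau^{-\beta})\le K^{-1}\tau^{-\beta}$ for $\epsilon_1$ small and $\tau$ large. That is exactly the hypothesis of Proposition~\ref{prop:subsolpointwise}. With $\beta'=\beta$ the factor $e^{-10^{-4}\tau^{2(\beta-\beta')}}$ is just a constant, and the error $CK^{-1}\tau^{-\beta}$ comes out directly. This sign is what makes the paper's bound $\mathcal{E}\le K^{-1}\tau^{-\beta}$ on the annulus valid.

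\textbf{Your proposed workaround does not prove the statement.} Suppose the cutoff error is carried as a positive source of size $\sim\epsilon_1$ on $\{|x|\sim K\tau^\beta\}$. Taking $e^{\tau-\tau_0}=K^2\tau^{2\beta}/A$ with $A$ a fixed constant only reduces its contribution to $C\epsilon_1e^{-cA}$, which does not decay in $\tau$. It is not ``$e^{-A/100}$ times the $K^{-1}\tau^{-\beta}$ scale'', because nothing in that error carries a factor $K^{-1}\tau^{-\beta}$. If you let $A$ grow like a multiple of $\log(K\tau^\beta)$ to force decay, you pay the factor $e^{|x_0|^2/4p}\approx e^{cA}$ in front of the $L^2$ term. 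That factor is a positive power of $K\tau^\beta$, so it destroys the exponent $2\beta(2\epsilon+1)-\alpha$. The loss is inherent to the log-Sobolev route, which passes through weighted $L^p$ norms and cannot see that the outward drift keeps the edge from influencing the interior.

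\textbf{Two smaller points.}
\begin{itemize}
\item You need Proposition~\ref{prop:pseudo} before cutting off, not only in the derivative step. The hypothesis gives graphicality only on $B_{K\tau^\beta}$, while your cutoff lives on $B_{\frac32K\tau^\beta}$.
\item Your worry about a drift coefficient of size $|x|$ is unfounded. For radial graphs over $\mathcal{C}_{n,k}$ the term $-\frac12y\cdot\nabla_yu$ is exactly linear. So one can work on the static RMCF $\mathcal{C}_{n,k}$ with $|a_1|,|a_2|<\epsilon$, as the paper does.
\end{itemize}
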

It may be helpful to keep in mind that when we apply this result later
on, $\epsilon, \beta_0$ can be thought of as fixed very small constants,
and the important part is to control the estimate as
$K\to 0$. For our application at this point we will take $K=1$. 
\begin{proof}
First we apply pseudolocality, Proposition~\ref{prop:pseudo}.
Given $\epsilon_1 > 0$, we can assume
that $\epsilon_0$ is small enough so that we have a constant $C_1 > 0$
with the following property. Since for $\tau\in [T_0, T_1]$, $M_{\tau}$ is
$\epsilon_0$-graphical over $\mathcal{C}$ on $B_{K\tau^\beta}$, we
have that for all $\tau\in [T_0+1, T_1]$, $M_\tau$ is
$\epsilon_1$-graphical over $\mathcal{C}$ on $B_{R(\tau)}$,
where
\[ R(\tau) = e^{1/2} (K(\tau-1)^\beta - C_1). \]
If $KT_0^\beta$ is sufficiently large, depending on $C_1$ (which in
turn depends on $\epsilon_0$), then we will have $R(\tau) >
\frac{3}{2} K\tau^\beta$. So for $\tau\in [T_0+1, T_1]$, $M_\tau$ is
$\epsilon_1$-graphical on the larger balls 
$B_{1.5 K\tau^\beta}$.

Define a standard cutoff function $\chi$ such that $\chi(s) = 0$ for
$s > 3/2$, and $\chi(s)=1$ for $s\leq 4/3$. Let us write $M_\tau$ as the
graph of $u(x,\tau)$ over $\mathcal{C}$ on $B_{1.5K \tau^\beta}$, and
define
\[ \tilde u(x, \tau) = \chi(|x| / K\tau^\beta) u(x, \tau). \]
We can extend $\tilde u$ by zero outside of the $1.5 K
\tau^\beta$-ball. The function $u$ satisfies the equation
\[ \partial_\tau u = \Delta u - \frac{1}{2} x \cdot \nabla u + u +
  Q(u, \nabla u, \nabla^2 u), \]
where $Q$ contains terms that are at least quadratic in its entries,
but $\nabla^2 u$ appears at most linearly. 
Since $\Vert u\Vert_{C^4} \leq \epsilon_1$, if
$\epsilon_1$ is small enough, we can write the equation of $u$ over
$\mathcal{C}$ as
\[ \partial_\tau u = \Delta u - \frac{1}{2}x\cdot \nabla u + u +
  a_1\cdot \nabla u + a_2 u. \]
Given any $\epsilon > 0$ we can assume that $\epsilon_1$ is small
enough so that $a_1(x,\tau),a_2(x,\tau)$ satisfy $|a_1|, |a_2| <
\epsilon$. This determines $\epsilon_1$, and in turn $\epsilon_0$, in
terms of $\epsilon$.  It follows that $\tilde u$ satisfies the same equation, with some
additional terms when derivatives land on  the cutoff function:
\[ \label{eq:utildeeq} \partial_\tau \tilde{u} = \Delta \tilde{u} - \frac{1}{2}x\cdot \nabla \tilde{u} + \tilde{u}+
  a_1\cdot \nabla \tilde{u} + a_2 \tilde{u} + \mathcal{E}. \]
Note that
derivatives of $\chi(|x| / K\tau^\beta)$ are at most $C K^{-1}\tau^{-\beta}$,
and so $\mathcal{E}(x,\tau)$ is supported on the annulus $|x| \in (4/3 K
\tau^\beta, 3/2 K
\tau^\beta)$ and is bounded by $K^{-1} \tau^{-\beta}$ if $\epsilon_1$
is small enough.

We apply Proposition~\ref{prop:subsolpointwise} to
$f=|\tilde{u}|$, with $b_0 = 1+\epsilon$, and with $\beta'=\beta$,
$K=1$. 
The conclusion is that
\[ f(x,\tau) \leq C\tau^{2\beta(2\epsilon+1)-\alpha} + C
  \tau^{-\beta}. \]
This implies the derivative estimates \eqref{eq:niuest} using
parabolic Schauder estimates.  
\end{proof}

This implies the following estimate for the graphicality radius.
\begin{prop}\label{prop:tbetagraph11}
  There exist $\beta, \delta > 0$ with the following property. Suppose
  that $M_\tau$ is a RMCF converging to $\mathcal{C}_{n,k}$, and $M_0$
  is $\delta$-graphical over $\mathcal{C}_{n,k}$ on
  $B_{\delta^{-1}}$. Then $M_\tau$ is $\tau^{-\beta}$-graphical over
  $\mathcal{C}_{n,k}$ on $B_{\tau^\beta}$ for all $\tau > 0$.
\end{prop}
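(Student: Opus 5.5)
The plan is to combine Proposition~\ref{prop:CMgraphicality} with Proposition~\ref{prop:tbetagraph10} (taking $K=1$) in a continuity argument. The first result gives graphicality on the logarithmic balls $B_{r(\tau)}$, together with the decay $\Vert u(\cdot,\tau)\Vert_{L^2(B_{r(\tau)})}\leq \tau^{-\alpha}$. The second upgrades this $L^2$ decay to pointwise $C^4$ decay on $B_{\tau^\beta}$, provided we already know $\epsilon_0$-graphicality on $B_{\tau^\beta}$.

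\textbf{Choice of constants.} Fix a small $\epsilon$ and $\beta_0$, and choose $\beta\in(\beta_0,\alpha/2-\beta_0)$ so small that
\[ \gamma := \min\{\alpha - 2\beta(2\epsilon+1),\ \beta\} > 2\beta. \]
For instance $\beta<\alpha/10$ works when $\epsilon$ is small. Then Proposition~\ref{prop:tbetagraph10} yields $|\nabla^i u|\leq C\tau^{-\gamma}\leq \tau^{-\beta'}$ on $B_{\tau^\beta}$ for large $\tau$, with some $\beta'\geq\beta$. We then take $\delta$ small.

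\textbf{Initial interval.} On a bounded time interval $[0,T]$, where $T$ is large so that $T>C_0$ and $T^\beta>C_0$, the statement follows from smooth dependence of the flow on initial data and pseudolocality (Proposition~\ref{prop:pseudo}). For $\delta$ small relative to $T$, the flow $M_\tau$ stays $\Psi(\delta|T)$-close to the cylinder on $B_{2T}$ for $\tau\in[0,2T]$. So the conclusion holds there, and we also have $\epsilon_0$-graphicality on $B_{\tau^\beta}$.

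\textbf{Continuity step.} Let $T_1$ be the supremal time up to which $M_\tau$ is $\epsilon_0$-graphical on $B_{\tau^\beta}$ for $\tau\in[T,T_1]$. Two ingredients enter:
\begin{enumerate}[label=(\roman*)]
\item The $L^2$ norm of the graph function on $B_{\tau^\beta}$ is bounded by $\tau^{-\alpha}$ up to a harmless error. The part on $B_{r(\tau)}$ is controlled by Proposition~\ref{prop:CMgraphicality}. The part outside $B_{r(\tau)}$ is controlled by $\epsilon_0^2e^{-r(\tau)^2/4}$, which is of order $\tau^{-\alpha/4}$ after adjusting constants. So we may replace $\alpha$ by a smaller fixed exponent $\alpha'$ and choose $\beta$ relative to $\alpha'$.
\item Proposition~\ref{prop:tbetagraph10} on $[T,T_1]$ then gives $C^4$ bounds $\tau^{-\gamma}\ll\epsilon_0$ on $[2T,T_1]$; the interval $[T,2T]$ is already covered by the initial step.
\end{enumerate}
Then pseudolocality (Proposition~\ref{prop:pseudo}) extends $\epsilon_0$-graphicality to slightly larger balls $B_{e^{s/2}(\tau^\beta-C_1)}\supset B_{(\tau+s)^\beta}$ for $s\in[0,1]$. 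This contradicts the maximality of $T_1$ unless $T_1=\infty$.

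\textbf{Main obstacle.} The delicate point is the bookkeeping of exponents. The pointwise bound must beat $\tau^{-\beta}$, which requires $\alpha-2\beta(2\epsilon+1)\geq\beta$. The $L^2$ decay used must also genuinely hold on the larger ball rather than only on $B_{r(\tau)}$. I would first try simply shrinking $\beta$ to well below $\alpha/10$ and treating the outer $L^2$ contribution via the Gaussian weight as above, absorbing the loss into $\alpha$.
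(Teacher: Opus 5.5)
Your strategy is the paper's. It is a bootstrap that feeds the $L^2$ decay of Proposition~\ref{prop:CMgraphicality} into Proposition~\ref{prop:tbetagraph10} with $K=1$, and uses pseudolocality (Proposition~\ref{prop:pseudo}) to push $\epsilon_0$-graphicality on $B_{\tau^\beta}$ forward in time. The initial interval is handled by taking $\delta$ small. Your explicit control of the $L^2$ norm outside $B_{r(\tau)}$ via the Gaussian tail addresses a point the paper leaves implicit; the paper simply takes $\beta=\alpha/4$. Your treatment is fine, with one correction: it is the \emph{square} of the tail that is $O(\tau^{-\alpha/4})$ up to logarithms. The norm therefore only decays like $\tau^{-\alpha/8+o(1)}$, so you need $\alpha'<\alpha/8$.

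The exponent bookkeeping, however, does not work as written.
\begin{itemize}
\item Your condition $\min\{\alpha-2\beta(2\epsilon+1),\beta\}>2\beta$ can never hold.
\item You cannot get $|\nabla^i u|\le\tau^{-\beta'}$ on $B_{\tau^\beta}$ with $\beta'\ge\beta$. With $K=1$, the estimate of Proposition~\ref{prop:tbetagraph10} on $B_{\tau^\beta}$ always contains the term $C\tau^{-\beta}$, with $C$ not small. This term comes from derivatives landing on the cutoff $\chi(|x|/\tau^\beta)$, and no choice of $\beta$ removes it.
\end{itemize}
So what you call the main obstacle, making the pointwise bound beat $\tau^{-\beta}$, is neither achievable nor needed. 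The bootstrap only requires the $C^4$ bound to fall below the pseudolocality threshold, which $C\tau^{-\beta}$ does for large $\tau$. The output is $C\tau^{-\beta}$-graphicality on $B_{\tau^\beta}$. You then finish as the paper does, by passing to any $\beta'<\beta$. For $\tau\ge\tau_*(C,\beta,\beta')$ we have $C\tau^{-\beta}\le\tau^{-\beta'}$, and $B_{\tau^{\beta'}}\subset B_{\tau^\beta}$. The range $\tau\le\tau_*$ is absorbed into the initial interval by shrinking $\delta$.

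A smaller point concerns the continuation step. The containment $B_{e^{s/2}(\tau^\beta-C_1)}\supset B_{(\tau+s)^\beta}$ fails for $s$ near $0$. You should therefore apply pseudolocality from times $\tau\in[T_1-1,T_1]$, where the $C^4$ bound already holds, to $\tau+1$, as the paper does. Applying it from $T_1$ to $T_1+s$ does not work.
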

\begin{proof}
  Let $\beta  = \alpha/4$, for the $\alpha$ in
  Proposition~\ref{prop:CMgraphicality}. Given any $T_1 > 0$, if we choose $\delta$ 
  sufficiently small, then the conclusion holds for all $\tau \leq
  T_1$. We will show that if $\delta, T_1$ are chosen suitably, then
  the result can be extended for all $\tau > T_1$ as well. 

  We will apply Proposition~\ref{prop:tbetagraph10} with $\epsilon =
  \frac{1}{20}$, $\beta_0 = \alpha/8$, $\beta=\alpha/4$, and $K=1$. We let $T_0$ be
  large enough for the proposition to apply, and then assume that the
  result we are trying to prove already holds for all $\tau \leq T_1$, with $T_1 > 3T_0$. By
  choosing $\delta$ sufficiently small, we can ensure that $M_\tau$ is
  $\epsilon_0$-graphical over $\mathcal{C}_{n,k}$ on $B_{\tau^\beta}$
  for $\tau\leq T_1$, so the
  hypotheses of Proposition~\ref{prop:tbetagraph10} hold.
  We find that the graphicality function $u$ satisfies
  \[ \label{eq:nu20} |\nabla^i u(x, \tau)| &\leq C \tau^{2\beta(2\epsilon+1)-\alpha} +
    C\tau^{-\beta} \leq C\tau^{-\beta},\]
  for $|x| \leq \tau^\beta$, for $\tau\in [2T_0, T_1]$.  For any
  $\epsilon_1 > 0$, once $\tau$
  (i.e. $T_0$) is large enough depending on $\epsilon_1$,
  then this means that for $\tau\in [T_1 -1, T_1]$, $M_\tau$ is
  $\epsilon_1$-graphical over $\mathcal{C}$ on
  $B_{\tau^{\beta}}$. Choosing $\epsilon_1$ small, depending on $\epsilon_0$, we
  can then use pseudolocality, Proposition~\ref{prop:pseudo}, to show that for such $\tau$,
  $M_{\tau+1}$ is $\epsilon_0$-graphical on the ball of radius
  $e^{1/2}(\tau^\beta - C_1)$. If $\tau$ is large, this will contain
  the $(\tau+1)^\beta$ ball. Now we are again in the position to apply
  Proposition~\ref{prop:tbetagraph10}, on the larger time interval
  $[T_0, T_1+1]$, and obtain \eqref{eq:nu20}. Repeating this argument,
  we obtain that $M_\tau$ is $C\tau^{-\beta}$-graphical on
  $B_{\tau^\beta}$ for all $\tau\geq 1$. Letting $\beta' < \beta$, we
  have $C\tau^{-\beta} < \tau^{-\beta'}$ for large $\tau$, so by
  replacing $\beta$ with a smaller constant we can remove the $C$
  factor in the conclusion. 
\end{proof}

\section{The $L^2$ normal form}\label{sec:L2normal}
In the previous section we saw that if the rescaled flow $M_\tau$
has a cylindrical singularity at infinity, then the $L^2$-norm of the
graphicality function $u$ over the cylinder decays at a rate
$\tau^{-\alpha}$ for some $\alpha > 0$, which in turn is related to
good graphicality on balls of size $\tau^\beta$ for $\beta < \alpha /
2$. In this section our goal is to improve the $L^2$-asymptotics of the graphicality
function to obtain an error of order $o(\tau^{-1})$, as done in
\cite[Section 5]{SX22}. As can be seen from
the normal form, Theorem~\ref{thm:SXnormal}, for this we need to
identify the term of order $\tau^{-1}$ giving the leading order
behavior. It is worth noting that even if $M_\tau$ has good
graphicality over $\mathcal{C}_{n,k}$ on a ball of radius larger than
$\tau^\beta$, the contribution to the $L^2$-norm of $u$
from outside of $B_{\tau^\beta}$ is bounded
by $e^{-\tau^{2\beta}/9}$ for large
$\tau$, and this is of much lower order than $\tau^{-1}$. So at this
point we can still work on balls of radius $\tau^\beta$ for small
$\beta > 0$. Once we have the $L^2$-asymptotics identified up to an
$o(\tau^{-1})$ error, in the next section we will be able to obtain
graphicality on balls of radius $\tau^{1/2}$, over suitable models. 

To obtain an effective $o(\tau^{-1})$ estimate for the $L^2$-asymptotics, our
hypothesis can no longer be simply closeness to the cylinder
$\mathcal{C}_{n,k}$ at time zero. This is because as $\tau\to\infty$
the flow could become ``less degenerate'', with a larger value of
$\ell$, than what it appears to be
initially. In this case additional terms of size $(T_1 + \tau)^{-1}$
could appear, which would not be detectable at times $\tau \ll
T_1$. In addition we need to allow for rotations in the $y$-directions
to correctly identify the degenerate and nondegenerate directions. 

We have the following.
\begin{prop} \label{prop:L2normalform}
  Suppose that $M_\tau$ is a rescaled flow converging to
  $\mathcal{C}_{n,k}$ at infinity, and we can
  write $M_\tau$ as the graph of $u(\cdot, \tau)$
  over $\mathcal{C}_{n,k}$ on $B_{\tau^\beta}$, for all $\tau\geq 0$.
  For any $T_0 > 1$ there is an $L_0(T_0) > 0$ with the
  following property. For some $\ell \geq 0$ let us define
\[ \label{eq:L2decay10} E(\tau) := \left\Vert u(\cdot, \tau) -
      \frac{\sqrt{2(n-k)}}{4\tau}\sum_{i=1}^\ell
      (y_i^2-2)\right\Vert_{L^2(B_{\tau^\beta})}, \]
  and suppose that for all $\tau\in [T_0, T_0 + L_0]$
  we have $E(\tau) < \tau^{-3/2}$.   Then we have the following.
  \begin{itemize}
    \item[(i)] If $M_\tau$ has an $\ell'$-degenerate
      $(n,k)$-cylindrical singularity at infinity, then $\ell' \geq
      \ell$.
    \item[(ii)] If $M_\tau$ has an $\ell$-degenerate
      $(n,k)$-cylindrical singularity  at  infinity, then after a
      rotation of size $\Psi(T_0^{-1})$ in the $y$-coordinates, we
      have $E(\tau) < \tau^{-19/10}$ for all $\tau \geq T_0 + L_0$. In
      particular the rotated flow $QM_\tau$ has an $\ell$-degenerate
      $\mathcal{C}_{n,k}$-singularity at infinity. 
   \end{itemize}
\end{prop}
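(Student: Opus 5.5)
The plan is to decompose $u$ into spectral components of $\mathcal{L}_{\mathcal{C}}$ on the Gaussian-weighted $L^2$ space (cut off to $B_{\tau^\beta}$, with cutoff errors of size $e^{-\tau^{2\beta}/9}$), and to run a quantitative Merle--Zaag type ODE argument as in \cite[Section 5]{SX22}. Write $u = u_+ + u_0 + u_-$, where $u_+$ lies in the span of the eigenfunctions with eigenvalues $1, 1/2$, $u_0$ lies in the kernel (spanned by $y_iz_\alpha$ and $y_iy_j-2\delta_{ij}$), and $u_-$ lies in the negative spectrum, which has gap $\lambda_- = \min\{1/(n-k),1/2\}$. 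From the graph equation $\partial_\tau u = \mathcal{L}u + Q(u)$ with $|Q| \lesssim |u|(|u|+|\nabla u|+|\nabla^2u|)$, I would derive the differential inequalities
\[ \tfrac{d}{d\tau}\Vert u_+\Vert \geq \tfrac12\Vert u_+\Vert - C\mathcal{N}, \quad \bigl|\tfrac{d}{d\tau}\Vert u_0\Vert\bigr| \leq C\mathcal{N}, \quad \tfrac{d}{d\tau}\Vert u_-\Vert \leq -\lambda_-\Vert u_-\Vert + C\mathcal{N}, \]
where $\mathcal{N}\lesssim \Vert u\Vert^2$, up to polynomial losses. These losses are controlled by Propositions~\ref{prop:tbetagraph10} and \ref{prop:subsolpointwise}: they turn $L^2$ smallness on $B_{\tau^\beta}$ into pointwise $C^4$ smallness. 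Since $M_\tau \to \mathcal{C}$, $u_+$ cannot dominate. Hence $\Vert u_+\Vert + \Vert u_-\Vert \lesssim \Vert u_0\Vert^2$ eventually, and the neutral mode drives the dynamics.

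Next I project the equation onto the kernel. Write $u_0 = \sum_{ij} a_{ij}(y_iy_j-2\delta_{ij}) + \sum b_{i\alpha} y_iz_\alpha$. The rotation modes $b$ are controlled at higher order, because rotations are symmetries; I would absorb them into a slowly varying rotation of the cylinder. Computing the quadratic term of $Q$ in the $(y_iy_j-2\delta_{ij})$ directions gives the matrix ODE
\[ \dot A = -c\,A^2 + O(|A|^3 + \text{errors}), \qquad c = \tfrac{4}{\sqrt{2(n-k)}}. \]
This is the computation in \cite{SX22}. Its model solutions are $A = \frac{1}{c\tau}P$ with $P$ an orthogonal projection of rank $\ell'$, and this is exactly the normal form. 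The hypothesis $E(\tau) < \tau^{-3/2}$ on $[T_0, T_0+L_0]$ says that $A(\tau)$ is $\tau^{-3/2}$-close to $\frac{1}{c\tau}P_\ell$ with $P_\ell$ the rank-$\ell$ projection. Diagonalize $A$ by a rotation $Q(\tau)$. The ODE then gives eigenvalues $\mu_i$ satisfying $\dot\mu_i = -c\mu_i^2 + \text{error}$, and the eigenvectors move at rate $O(\tau^{-2+})$. Integrating this drift gives the total rotation $\Psi(T_0^{-1})$ claimed in (ii).

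For (i), I would use ODE comparison. The $\ell$ eigenvalues near $1/(c\tau)$ are stable attractors of $\dot\mu = -c\mu^2$ among positive values. Once $\mu \geq \frac{1}{2c\tau}$, it stays comparable to $1/(c\tau)$: writing $\mu = \frac{1}{c\tau}(1+\eta)$ gives $\dot\eta = -\tau^{-1}\eta(1+\eta) + \ldots$, which contracts. So these $\ell$ directions remain nondegenerate forever and $\ell' \geq \ell$. The length $L_0(T_0)$ is needed to ensure the errors have already settled into their eventual regime, so that the comparison can be started.

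For (ii), the remaining $k-\ell$ eigenvalues satisfy $|\mu| \lesssim \tau^{-3/2}$ initially. If the singularity is exactly $\ell$-degenerate, each such $\mu$ tends to $0$ faster than $1/\tau$. The equation $\dot\mu = -c\mu^2$ then shows the decay improves to $O(\tau^{-2+})$: any $\mu$ of size $\tau^{-1-\gamma}$ with $\gamma<1$ would either grow into an additional $1/\tau$ mode, contradicting $\ell$-degeneracy, or be forced down. Meanwhile the $\eta$ equation yields $\eta = O(\tau^{-1}\log\tau)$, so $E(\tau) \lesssim \tau^{-2}\log\tau < \tau^{-19/10}$.

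\textbf{Main obstacle.} The hard step is controlling the error terms uniformly, so that they are genuinely $O(\tau^{-2-})$ rather than $O(\tau^{-1-\alpha})$ for the small $\alpha$ of Proposition~\ref{prop:CMgraphicality}. This requires a bootstrap: assuming $E < \tau^{-3/2}$ up to time $\tau$, Propositions~\ref{prop:tbetagraph10} and \ref{prop:subsolpointwise} give $|\nabla^i u| \lesssim \tau^{-1+}$ on $B_{\tau^\beta}$. Then $\Vert u_\pm\Vert \lesssim \tau^{-2+}$, and the ODE improves the bound to $E < \tau^{-19/10}$, closing the loop. Choosing $L_0(T_0)$ large enough that the stable modes' transients decay, for instance $e^{-\lambda_- L_0} \ll T_0^{-2}$, is what makes the continuation argument start.
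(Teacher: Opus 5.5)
\textbf{Gap: the a priori bound that feeds your second pass.} Your second half essentially matches the paper's second pass. That is: splitting off $u_\pm$ and the rotation modes with absolute errors $O(\tau^{-2+})$, the Riccati equation $\dot A=-cA^2+O(\tau^{-3+})$, the eigenvalue dichotomy in the degenerate directions, and the eigenvector drift giving a $\Psi(T_0^{-1})$ rotation.

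Every one of those absolute bounds presupposes $\Vert u(\tau)\Vert_{L^2}\le C\tau^{-1}$ for \emph{all} $\tau\ge T_0$. The constant $C$ and the starting time must be uniform in the flow: the proposition is applied to sequences of flows in Propositions~\ref{prop:singlimits} and \ref{prop:nearbydegen}, so $L_0$ may depend only on $T_0$. The bound must also be proved independently of which alternative, (i) or (ii), occurs. Your bootstrap on $E<\tau^{-3/2}$ cannot provide it, for three reasons.
\begin{itemize}
\item In scenario (i), a degenerate eigenvalue that once exceeds $2\tau^{-2+}$ grows until it is at least $1/(2c\tau)$. Then $E$ becomes of order $\tau^{-1}$ and the bootstrap breaks, leaving no error control for your comparison argument on the $\ell$ nondegenerate eigenvalues.
\item In (ii), the dichotomy ``grow into a $1/\tau$ mode or be forced down'' (positive eigenvalues growing, negative ones driven to $-\infty$) requires following the ODE past the first time the bootstrap hypothesis could fail. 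That is circular.
\item The $\tau^{-2+}$ bound on $u_+$ comes from integrating backwards from $\tau=\infty$. A forward bootstrap does not give this near the end of its window.
\end{itemize}
The qualitative Merle--Zaag statement does not fill this gap. It is an alternative (the negative modes may dominate, as in the exponentially decaying regime for $\ell=0$), so ``$\Vert u_+\Vert+\Vert u_-\Vert\lesssim\Vert u_0\Vert^2$ eventually'' is not justified, and ``eventually'' carries no uniform start time. Nor do Propositions~\ref{prop:tbetagraph10} and \ref{prop:subsolpointwise} bound the nonlinearity relative to the current norm $\Vert u(\tau)\Vert$. They convert an a priori decay rate on a window $[\tau/2,\tau]$ into pointwise bounds.

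\textbf{What the paper does instead.} It first runs a pass that uses only relative errors and holds for all later times.
\begin{itemize}
\item The hypothesis on $[T_0,T_0+L_0]$ gives the slow-decay condition $\mathbf{d}_{\mathcal{C}_{n,k}}(\tilde M_{T_0+L_1})\ge e^{-\epsilon L_1}\mathbf{d}_{\mathcal{C}_{n,k}}(\tilde M_{T_0})$ for the rotation-gauged flow $\tilde M_\tau=S_\tau M_\tau$.
\item The three-annulus lemma (Proposition~\ref{prop:3ann3}) propagates this forever. A compactness argument (Proposition~\ref{prop:growthdominant}) then gives $\Vert\Pi_{\mathfrak h}v\Vert\ge(1-\lambda)\Vert v\Vert$ for all $\tau\ge T_0+L_0/2$.
\item Slow decay also gives $\Vert v(\cdot,\tau-10)\Vert\lesssim\Vert v(\cdot,\tau)\Vert$, hence pointwise bounds in terms of the current norm (Proposition~\ref{prop:L2ptwise}).
\item This yields $\Vert A'+\nu A^2\Vert\le C\lambda\Vert A\Vert^2+e^{-\tau^\gamma}$ for all later times, and \cite[Lemma 3.1]{FL93} gives $C^{-1}\tau^{-1}\le\Vert A\Vert\le C\tau^{-1}$.
\end{itemize}
Only after that does the absolute-error analysis you describe begin. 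A genuinely quantitative Merle--Zaag argument could replace the frequency monotonicity: forward invariance from $T_0$ of the cone $\{\Vert u_-\Vert\le\eta\Vert u_0\Vert\}$, with $u_+$ never dominant because $u\to 0$, and the nonlinearity controlled relative to the current norm. Even then, the bootstrap quantity should be $\Vert u\Vert\le C\tau^{-1}$, not $E<\tau^{-3/2}$.

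\textbf{A separate quantitative slip.} The neutral directions damp discrepancies only polynomially. Linearizing $\dot\mu=-c\mu^2$ at $1/(c\tau)$ gives $\dot\delta=-2\delta/\tau$. So a discrepancy $\tau_0^{-3/2}$ at time $\tau_0$, which is all the hypothesis allows, is only reduced to about $\tau_0^{1/2}\tau^{-2}$. That is below $\tau^{-19/10}$ only for $\tau\gtrsim\tau_0^5$. Your bound $\eta=O(\tau^{-1}\log\tau)$ hides this $\tau_0^{1/2}$ constant. Hence the comparison must start near $T_0$, and $L_0$ must be polynomially large in $T_0$; choosing $L_0$ so that $e^{-\lambda_- L_0}\ll T_0^{-2}$ is not enough.
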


In other words, if we have $\ell$ directions that look sufficiently
nondegenerate on a large range of scales, then actually for all
sufficiently large $\tau$ we must have at least $\ell$ nondegenerate
directions. And if no further nondegenerate directions appear, then we
have a good estimate in the nondegenerate directions. Note that when
$\ell = 0$, in case (ii) we actually have that $E(\tau)$ decays
exponentially fast (see \cite[Proposition 5]{Sz26}), but we do not
need this here. 

The proof of this will require a number of preliminary steps. We first recall the
discrete frequency monotonicity results in \cite{SWX25}, and in
particular the non-concentration estimate \cite[Corollary
3.3]{SWX25}. Let us define the $L^2$-distance from $\mathcal{C}_{n,k}$ by
\[ \mathbf{d}_{\mathcal{C}_{n,k}}(M)^2 = \int_M
  \overline{\mathrm{dist}}_{n,k}(x)^2\, e^{-|x|^2/4}\,
  d\mathcal{H}^n, \]
where $\overline{\mathrm{dist}}_{n,k}$ is a regularized distance
function from $\mathcal{C}_{n,k}$. For our purposes we can take
$\overline{\mathrm{dist}}_{n,k}(x) = \min\{ d(x, \mathcal{C}_{n,k}), 1\}$,
which is uniformly equivalent to the definition used in \cite{SWX25}. 
This $L^2$-distance satisfies the following
non-concentration estimate.

\begin{prop}[See Corollary 3.3 in \cite{SWX25}] \label{prop:SWXnonconc}
  There is a dimensional constant $C_n$ with the following
  property. If $M_\tau$ is a rescaled mean curvature flow then for $\tau\in [0,2]$ we have
  \[ \int_{M_\tau}\overline{\mathrm{dist}}_{n,k}(x)^2 ( 1+ \tau |x|^2)\, e^{-|x|^2/4}\,
    d\mathcal{H}^n \leq C_n \int_{M_0} \overline{\mathrm{dist}}_{n,k}(x)^2\,
    e^{-|x|^2/4}\, d\mathcal{H}^n. \]
\end{prop}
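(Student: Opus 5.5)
This estimate is the non-concentration estimate of Sun--Wang--Xue. The plan is a localized Huisken monotonicity computation applied to a weighted distance function, combined with a backwards-heat-kernel comparison that produces the extra factor $1+\tau|x|^2$. Write $D = \overline{\mathrm{dist}}_{n,k}$ and regard $M_\tau$, for $\tau\in[0,2]$, as the mean curvature flow $L_t$ with $t\in[-1,-e^{-2}]$. The starting observation is that $D$ is Lipschitz with $|\nabla D|\le 1$. Moreover, $\mathcal{C}_{n,k}$ is itself a self-shrinker, so $(\partial_t - \Delta_{L_t})$ applied to the distance from the shrinking cylinder $\sqrt{-t}\,\mathcal{C}_{n,k}$ is bounded in a controlled way. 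Where $D<1$ this reduces to an estimate depending only on the cylinder geometry, and where $D=1$ it is trivial. The working goal is a differential inequality of the form $(\partial_\tau - \Delta + \tfrac12 x\cdot\nabla)\, D^2 \le C D^2 + C$ near regions where $D$ is small, with the inhomogeneous part handled separately.

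The plan has three steps.

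First, the $\tau$-independent part $\int_{M_\tau} D^2 e^{-|x|^2/4} \le C\int_{M_0} D^2 e^{-|x|^2/4}$. For this, compute $\frac{d}{d\tau}\int_{M_\tau} D^2 e^{-|x|^2/4}$ using the first variation and integrate by parts. The drift Laplacian terms cancel against the variation of the Gaussian measure, which leaves $-\int |H+x^\perp/2|^2 D^2 e^{-|x|^2/4}$ plus cross terms. These cross terms involve $\langle \nabla D^2, H + x^\perp/2\rangle$ and the normal derivative of $D^2$ paired with $H+x^\perp/2$. Absorb them by Cauchy--Schwarz into the negative term plus $C\int D^2 e^{-|x|^2/4}$, using that the normal velocity of $\mathcal{C}_{n,k}$ vanishes (it is a shrinker). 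Grönwall on $[0,2]$ then gives the bound.

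Second, the weighted part $\tau\int_{M_\tau}|x|^2D^2e^{-|x|^2/4}$. Here the plan is to recenter. For each point $x_0$, use the monotonicity formula at $(x_0, 0)$ in unrescaled time, i.e.\ with a heat kernel $\rho_{x_0}$. The same computation as in the first step bounds $\int D^2\rho_{x_0}$ at time $\tau$ by the initial $L^2$ quantity. The key input is the comparison from the proof of Proposition~\ref{prop:L2pointwise}: the initial weights satisfy $\rho_{x_0}(x,-1)/\rho_{0,0}(x,-1)\le C e^{|x_0|^2/4}$. Averaging over centers $x_0$ (equivalently, using a Gaussian with slightly larger variance at the later time) trades the heat-kernel spreading over a time $\sim\tau$ for the polynomial gain $1+\tau|x|^2$. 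The mechanism is that at time $\tau$ the Gaussian $e^{-|x|^2/4}$ is dominated by $e^{-|x|^2(1-c\tau)/4}$ applied to the initial data, and $\tau|x|^2 e^{-|x|^2/4}\le C e^{-(1-c\tau)|x|^2/4}$ for small $\tau$. Finally, interpolate with the first step on $\tau\in[0,2]$.

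I expect the main obstacle to be the evolution inequality for $D^2$ where $D$ is neither small nor saturated, and near the singular set of the flow. There $D$ is only Lipschitz and the Laplacian of the distance to a cylinder involves the curvature of $\mathcal{C}_{n,k}$. I would first handle this by a smooth regularization of $\min\{d,1\}$ and by using that a Brakke flow satisfies the monotonicity inequality weakly. If that becomes messy, I would simply invoke \cite[Corollary 3.3]{SWX25} directly, noting that the regularized distance used there is uniformly comparable to ours, which only changes $C_n$.
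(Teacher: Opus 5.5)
The paper gives no argument for this statement. It quotes \cite[Corollary 3.3]{SWX25} and only remarks that $\min\{d(x,\mathcal{C}_{n,k}),1\}$ is uniformly comparable to the regularized distance used there, so your fallback is exactly the paper's treatment. Your self-contained sketch, however, does not work as written.

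\textbf{Step 1.} Write $V=\mathbf{H}+\frac12x^\perp$ for the velocity, $\bar\nabla$ for the ambient gradient and $\nabla^M$ for the tangential gradient. The first variation gives $\frac{d}{d\tau}\int_{M_\tau}D^2\,d\mu=\int_{M_\tau}\big(2D\langle\bar\nabla D,V\rangle-D^2|V|^2\big)\,d\mu$. Absorbing the cross term by Cauchy--Schwarz leaves $\int\langle\bar\nabla D,V\rangle^2d\mu$, which near the cylinder is essentially $\int_{\{D>0\}}|V|^2d\mu$. This is not bounded by $C\int D^2\,d\mu$. $D$ is a $C^0$ quantity while $V$ involves second derivatives: a small, highly oscillating graph over $\mathcal{C}_{n,k}$ has arbitrarily small $D$ and $|V|$ of order one. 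The fact that $\mathcal{C}_{n,k}$ is static gives no pointwise bound $|V|\le CD$.

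The cross term must instead be integrated by parts on $M_\tau$: $\int\langle\bar\nabla\phi,V\rangle\,d\mu=\int\big(\tfrac12x\cdot\bar\nabla\phi-\mathrm{tr}_M\bar\nabla^2\phi\big)\,d\mu$. One then needs the explicit computation for $s=|z|-\sqrt{2(n-k)}$:
\[ \tfrac12x\cdot\bar\nabla(s^2)-\mathrm{tr}_M\bar\nabla^2(s^2)\le\Big(1+\frac{\sqrt{2(n-k)}}{|z|}\Big)s^2-2\Big(1-\frac{|s|}{|z|}\Big)|\nabla^Ms|^2. \]
The term linear in $s$ cancels precisely because the radius satisfies $r^2=2(n-k)$; this is the actual use of the shrinker equation. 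The tilt of $T_xM$ relative to the cylinder enters only at order $|s||\nabla^Ms|^2$. After cutting off smoothly at a small level, the only inhomogeneous terms are supported where $D$ is bounded below, so they are $\le C\int D^2\,d\mu$. A free additive constant as in your ``$\le CD^2+C$'' would instead produce a multiple of the Gaussian area.

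\textbf{Step 2.} This step has a more basic problem. The kernel comparison in the proof of Proposition~\ref{prop:L2pointwise} requires the centre time $t_0<0$. For $t_0=0$ the ratio $\rho_{x_0,0}(x,-1)/\rho_{0,0}(x,-1)=e^{x\cdot x_0/2-|x_0|^2/4}$ is unbounded in $x$.

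More fundamentally, any argument that uses $D^2$ only as a subsolution tested against positive weights (monotonicity at various centres, then averaged) is linear in $D^2$. It would therefore also prove $\tau\int|x|^2g_\tau\,d\mu\le C\int g_0\,d\mu$ for every nonnegative subsolution $g$. That is false already on a static hyperplane. For the Ornstein--Uhlenbeck semigroup, $\int|x|^2P_\tau g_0\,d\mu=\int g_0\big(e^{-\tau}|x|^2+2n(1-e^{-\tau})\big)\,d\mu$, which is not $\le C\int g_0\,d\mu$ when $g_0$ is concentrated at distance $R\gg1$. Equivalently, a backward heat kernel that is wider than $e^{-|x|^2/4}$ at rescaled time $\tau$ is already wider at time $0$. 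So the Gaussian widening your reduction $\tau|x|^2e^{-|x|^2/4}\le Ce^{-(1-c\tau)|x|^2/4}$ requires cannot come from monotonicity.

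The factor $\tau|x|^2$ has to come from the dissipation terms $-c|\nabla^MD|^2$ and $-D^2|V|^2$ that Step 1 produces and your Step 2 discards. One way to use them: with $\mathcal{L}_0=\Delta-\frac12x^T\cdot\nabla$, one has $\mathcal{L}_0|x|^2=2n-|x|^2+2\langle x^\perp,V\rangle$. Integrating this against $D^2$ gives the Gaussian Hardy inequality $\int|x|^2D^2d\mu\le C\int\big(D^2+|\nabla^MD|^2+D^2|V|^2\big)\,d\mu$. By the corrected Step 1, the right side has time integral over $[0,2]$ bounded by $C\int_{M_0}D^2d\mu$. Combining this with $\frac{d}{d\tau}\int|x|^2D^2d\mu\le C\int(1+|x|^2)D^2d\mu$ (which again uses the good gradient term) yields the $\tau$-weighted bound. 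An $L^2\to L^{p(\tau)}$ estimate as in Proposition~\ref{prop:logSobolev} followed by H\"older would also work. That route, however, needs $D$ itself to satisfy a drift heat inequality with small drift, i.e.\ $C^1$-closeness to the cylinder.
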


As a consequence we have the following, which is essentially
\cite[Remark 3.8]{SWX25}. Note that in \cite[Remark 3.8]{SWX25} this result is stated for
  $L=1$, which is a stronger result, but may need a more careful
  definition of the distance function. 
\begin{prop}\label{prop:3ann2}
  Let $\epsilon\in (0,1/2)$. There exist $L_0, \delta > 0$, depending on
  $\epsilon$, such
  that if $L > L_0$ and  $M_\tau$ is a rescaled flow that is $\delta$-graphical over
  $\mathcal{C}_{n,k}$ on $B_{\delta^{-1}}$ for $\tau\in [0,2L]$, then
  \[ \mathbf{d}_{\mathcal{C}_{n,k}}(M_L) \geq e^{L\epsilon}
    \mathbf{d}_{\mathcal{C}_{n,k}}(M_0)\quad \text{ implies }\quad
    \mathbf{d}_{\mathcal{C}_{n,k}}(M_{2L}) \geq e^{L\epsilon} 
    \mathbf{d}_{\mathcal{C}_{n,k}}(M_L).  \]
\end{prop}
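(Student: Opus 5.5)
This is a three-annulus argument, proved by contradiction and compactness. The idea is that over long time intervals the flow is governed by the linearized operator $\mathcal{L}_{\mathcal{C}}$. The spectral gap of $\mathcal{L}_{\mathcal{C}}$ then forces the dichotomy.

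Suppose the statement fails for a fixed $L>L_0$ (with $L_0$ to be chosen), along a sequence $\delta_i\to 0$. This gives flows $M^i_\tau$, $\delta_i$-graphical on $B_{\delta_i^{-1}}$ for $\tau\in[0,2L]$, such that
\[ \dd(M^i_L)\geq e^{L\epsilon}\dd(M^i_0), \qquad \dd(M^i_{2L}) < e^{L\epsilon}\dd(M^i_L). \]
Set $d_i=\dd(M^i_L)$. By Proposition~\ref{prop:SWXnonconc}, applied on successive unit intervals, $\dd(M^i_\tau)$ changes by at most a bounded factor over each unit of time. Hence $\dd(M^i_\tau)$ is comparable to $d_i$ (with constants depending on $L$) for all $\tau\in[0,2L]$. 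For $d_i$ small, the graph function $u^i$ satisfies $|u^i|\approx \dist_{n,k}$ on $B_{\delta_i^{-1}}$. The normalized functions $v^i=u^i/d_i$ then satisfy the linearized equation up to an error that is $o(1)$ relative to $v^i$, because the quadratic terms are controlled by $\Vert u^i\Vert_{C^4}\to 0$.

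Interior parabolic estimates, together with the weighted non-concentration bound $\int \dist^2|x|^2 e^{-|x|^2/4}\leq C\dd^2$ from Proposition~\ref{prop:SWXnonconc}, show that no $L^2$-mass escapes to infinity. Therefore $v^i\to v$ strongly in weighted $L^2$ on $[\tau_1,2L]$ for any $\tau_1>0$, and $v$ solves $\partial_\tau v=\mathcal{L}_{\mathcal{C}}v$ with $\Vert v(L)\Vert=1$. The small time $\tau_1$ is handled by applying Proposition~\ref{prop:SWXnonconc} from time $0$. I expect this compactness step to be the main obstacle: the $L^2$-distance must be comparable to the $L^2$ norm of the graph function, and the contribution outside the graphical region must be negligible. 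That contribution is exponentially small in $\delta_i^{-2}$ relative to the weight, but it has to be compared with $d_i$, and this is exactly what the weighted non-concentration estimate controls.

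For the limit, expand $v(\tau)=\sum_j a_j e^{\lambda_j\tau}\phi_j$ in eigenfunctions of $\mathcal{L}_{\mathcal{C}}$. Set $N(\tau)=\Vert v(\tau)\Vert_{L^2}$ and note that $\log N$ is convex in $\tau$ (log-convexity for a self-adjoint semigroup). Passing to the limit in the two inequalities gives
\[ N(L)\geq e^{L\epsilon}N(\tau_1)\quad\text{(approximately)}, \qquad N(2L)\leq e^{L\epsilon}N(L). \]
The spectrum has a gap around every rate. The eigenvalues are $1,1/2,0,-\min\{1/(n-k),1/2\},\dots$, and $\epsilon\in(0,1/2)$ is fixed, so we may assume $\epsilon$ is not an eigenvalue (otherwise shrink it slightly). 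Let $P_+$ be the projection onto modes with $\lambda_j>\epsilon$, and write $\mu$ for the smallest such eigenvalue, so $\mu\geq\epsilon+\gamma$ for some gap $\gamma>0$.

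The first inequality forces the growth rate on $[\tau_1,L]$ to exceed $\epsilon$. Since the modes with $\lambda_j\leq\epsilon$ grow by at most $e^{\epsilon L}$, this forces $\Vert P_+v(L)\Vert\geq 1-Ce^{-\gamma L}$. On $[L,2L]$ those dominant modes then grow by at least $e^{\mu L}\geq e^{(\epsilon+\gamma)L}$. Hence $N(2L)\geq (1-Ce^{-\gamma L})e^{(\epsilon+\gamma)L}$, which is strictly greater than $e^{\epsilon L}$ once $L_0$ is large depending on $\epsilon$. This contradicts the second inequality, so the statement holds with this $L_0$.

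To make $\delta$ uniform for all $L>L_0$ rather than for each fixed $L$, I would run the contradiction argument with $L_i$ varying. Alternatively, one can reduce large $L$ to boundedly many steps of the log-convexity argument. If needed, I would first prove the result for $L\in[L_0,2L_0]$ and then iterate, since the hypothesis is assumed on all of $[0,2L]$.
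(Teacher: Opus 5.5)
Your proposal is correct and follows the paper's proof. The shared steps are: a contradiction argument at fixed $L$; normalizing by $\dd(M^i_L)$; using the non-concentration estimate both to control tails and to move from time $0$ to a positive time at an $L$-independent cost; and applying the spectral gap around $\epsilon$ to the limiting linearized solution. The paper runs this last step contrapositively, from slow growth on $[L,2L]$ back to slow growth on $[1,L]$, but this is the same argument.

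One bookkeeping fix is needed. To get $\Vert P_+ v(L)\Vert\geq 1-Ce^{-cL}$ you must use the gap \emph{below} $\epsilon$: the modes with $\lambda_j\le\epsilon$ actually have $\lambda_j\le 0$, so $\Vert P_- v(L)\Vert\le N(\tau_1)\le C_n e^{-\epsilon L}$. Growth of at most $e^{\epsilon L}$ alone would only give $\Vert P_- v(L)\Vert\le C_n$, which is not small. Also, $\epsilon\in(0,1/2)$ is never an eigenvalue, so no shrinking is needed.
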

\begin{proof}
  Let $L > 0$, and suppose for a contradiction that we have a sequence of
  flows $M^i_\tau$ converging to $\mathcal{C}_{n,k}$ locally smoothly
  on compact subsets of $[0,2L]\times \mathbb{R}^{n+1}$ such that
  \[ \mathbf{d}_{\mathcal{C}_{n,k}}(M^i_L) \geq e^{L\epsilon}
    \mathbf{d}_{\mathcal{C}_{n,k}}(M^i_0), \text{ and }
    \mathbf{d}_{\mathcal{C}_{n,k}}(M^i_{2L}) \leq e^{L\epsilon}
    \mathbf{d}_{\mathcal{C}_{n,k}}(M^i_L). \]
   we will show that if $L$ is large enough, then this is a
   contradiction. We can write the $M^i_\tau$ as graphs of $u^i(x,
   \tau)$ over larger and larger subsets of $\mathcal{C}_{n,k}$, and
   if we let $\mathbf{d}_i = \mathbf{d}_{\mathcal{C}_{n,k}}(M^i_L)$,
   then $\mathbf{d}_i^{-1}u^i$ converges locally smoothly on compact
   subsets of $(0,2L]\times\mathbb{R}^{n+1}$ to a solution $u^\infty$
   of the linearized equation $\partial_\tau u^\infty = \mathcal{L}_{\mathcal{C}}
   u^\infty$ on $\mathcal{C}_{n,k}$. Note that we may not have
   convergence at $\tau=0$. However from
   Proposition~\ref{prop:SWXnonconc} we do have
   $\mathbf{d}_{\mathcal{C}_{n,k}}(M^i_1) \leq C
   \mathbf{d}_{\mathcal{C}_{n,k}}(M^i_0)$, and so from our hypothesis
   we have
   \[ \mathbf{d}_i = \mathbf{d}_{\mathcal{C}_{n,k}}(M^i_L) \geq e^{L\epsilon}C^{-1}
     \mathbf{d}_{\mathcal{C}_{n,k}}(M^i_1). \]
   Using Proposition~\ref{prop:SWXnonconc} repeatedly, we also have
   $\mathbf{d}_{\mathcal{C}_{n,k}}(M^i_\tau) \leq C_L
   \mathbf{d}_{\mathcal{C}_{n,k}}(M^i_0)$ for all $\tau\in [0, 2L]$,
   where the constant depends on $L$.  Applying
   Proposition~\ref{prop:SWXnonconc} on the time interval $[L-1, L]$
   we also find that for every $R > 1$
   \[ \int_{M^i_L\setminus B_R} \dist(x)^2
     e^{-|x|^2/4}\,d\mathcal{H}^n \leq \frac{C_n}{R^2}  \dd(M_{L-1})^2
     \leq \frac{C_n C_L}{R^2} \mathbf{d}_i^2. \]
   It follows from this, by choosing larger and larger $R$ (we can
   view $L$ as fixed here), that $\Vert u^\infty(\cdot, L)\Vert_{L^2}
   = 1$, and at the same time $\Vert u^\infty(\cdot, 1)\Vert_{L^2}
   \leq \lim \mathbf{d}_i^{-1} \dd(M^i_1)$ and $\Vert u^\infty(\cdot,
   2L)\Vert_{L^2} \leq \lim \mathbf{d}_i^{-1} \dd(M^i_{2L})$. It
   follows that
   \[ \Vert u^\infty(\cdot, L)\Vert_{L^2} \geq e^{L\epsilon} C^{-1}
     \Vert u^\infty(\cdot, 1)\Vert_{L^2}, \text{ and } \Vert
     u^\infty(\cdot, 2L)\Vert_{L^2} \leq e^{L\epsilon} 
     \Vert u^\infty(\cdot, L)\Vert_{L^2}. \]
  Note, however, that since there are no homogeneous solutions of the
  linearized equation with growth rate $\epsilon$, there exists some
  $\epsilon' < \epsilon$ such that if
  $\Vert
     u^\infty(\cdot, L)\Vert_{L^2} \geq e^{-L\epsilon} 
     \Vert u^\infty(\cdot, 2L)\Vert_{L^2}$, then $\Vert
     u^\infty(\cdot, 1)\Vert_{L^2} \geq e^{-(L-1)\epsilon'} 
     \Vert u^\infty(\cdot, L)\Vert_{L^2}$. If we choose $L$ large
     enough so that $e^{(L-1)\epsilon'}< e^{L\epsilon}C^{-1}$, then we
     get the required contradiction. 
   \end{proof}

   An immediate corollary of this is the following (see
   \cite[Proposition 6]{Sz26}). 
   \begin{cor}\label{cor:slowgrowth}
     Let $\epsilon > 0$. There are $L_0, \delta > 0$ depending on
     $\epsilon$ such that if $M_0$ is $\delta$-graphical over
     $\mathcal{C}_{n,k}$ on $B_{\delta^{-1}}$, and $M_\tau$ converges
     to a rotation $QC_{n,k}$ at infinity, then
     \[ \dd(M_{\tau+L}) \leq e^{L\epsilon} \dd(M_\tau) \]
     for all $\tau \geq 0$ and $L \geq L_0$. 
   \end{cor}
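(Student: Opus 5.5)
We argue by contradiction, using Proposition~\ref{prop:3ann2} in its contrapositive form and chaining it forward in time. Fix $\epsilon$, and let $L_0,\delta$ be the constants from Proposition~\ref{prop:3ann2} for this $\epsilon$. By Proposition~\ref{prop:pseudo} and the fact that $M_\tau$ converges to a cylinder $Q\mathcal{C}_{n,k}$, we may shrink $\delta$ (replacing it by some $\delta'\ll\delta$ in the hypothesis on $M_0$). Then $M_\tau$ remains $\delta$-graphical over $\mathcal{C}_{n,k}$ on $B_{\delta^{-1}}$ for all $\tau\ge 0$. Indeed, Proposition~\ref{prop:tbetagraph11} gives $\tau^{-\beta}$-graphicality on $B_{\tau^\beta}$ for all $\tau>0$, and for $\tau$ small this follows from pseudolocality. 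In particular the limiting rotation $Q$ is $\Psi(\delta')$-close to the identity, so the graphicality hypothesis of Proposition~\ref{prop:3ann2} holds on every interval $[\tau,\tau+2L]$.

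Now suppose that for some $\tau_0\ge 0$ and $L\ge L_0$ we had $\dd(M_{\tau_0+L}) > e^{L\epsilon}\dd(M_{\tau_0})$. We apply Proposition~\ref{prop:3ann2} to the time-translated flow $M_{\tau_0+\cdot}$, and then to $M_{\tau_0+jL+\cdot}$ for $j=1,2,\dots$. By induction this gives
\[ \dd(M_{\tau_0+(j+1)L}) \geq e^{L\epsilon}\dd(M_{\tau_0+jL}) \]
for all $j\geq 0$, so $\dd(M_{\tau_0+jL}) \ge e^{jL\epsilon}\dd(M_{\tau_0})$. Since $\dd$ is bounded for all $\tau$ by the entropy bound, this is only possible if $\dd(M_{\tau_0})=0$. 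But then $M_{\tau_0}$ would coincide with $\mathcal{C}_{n,k}$ on its support up to measure zero. The flow would then stay equal to $\mathcal{C}_{n,k}$, and $\dd(M_{\tau_0+L})=0$ as well, contradicting the strict inequality. Hence $\dd(M_{\tau+L})\le e^{L\epsilon}\dd(M_\tau)$ for all $\tau\ge0$ and $L\ge L_0$.

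The main subtlety, and the step I expect to need care, is the role of the limit $Q\mathcal{C}_{n,k}$ rather than $\mathcal{C}_{n,k}$. If $Q\neq I$, then $\dd(M_\tau)$ tends to the positive constant $\dd(Q\mathcal{C}_{n,k})$ instead of to zero, and boundedness still kills geometric growth, so the argument is unaffected. What needs checking is that the graphicality hypothesis of Proposition~\ref{prop:3ann2} over the fixed cylinder $\mathcal{C}_{n,k}$ persists for all time. This is exactly why we first choose $\delta'$ small enough that the whole flow, including its limit, stays within the $\delta$-graphical regime via Propositions~\ref{prop:pseudo} and~\ref{prop:tbetagraph11}. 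If instead $\epsilon$ is required to be in $(0,1/2)$ as in Proposition~\ref{prop:3ann2}, larger $\epsilon$ follows trivially from the case $\epsilon<1/2$.
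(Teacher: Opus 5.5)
Your overall route is what the paper has in mind when it calls this an immediate corollary: iterate Proposition~\ref{prop:3ann2} forward in steps of $L$, then contradict the boundedness of $\mathbf{d}_{\mathcal{C}_{n,k}}$. The chaining, the entropy bound on $\mathbf{d}_{\mathcal{C}_{n,k}}$, and the reduction to $\epsilon<1/2$ are all fine. The gap is in the step you yourself single out: that $M_\tau$ stays $\delta$-graphical over $\mathcal{C}_{n,k}$ on $B_{\delta^{-1}}$ for \emph{all} $\tau\ge 0$. Proposition~\ref{prop:pseudo} only controls bounded time intervals. Proposition~\ref{prop:tbetagraph11}, like Proposition~\ref{prop:CMgraphicality} on which it rests, assumes the flow converges to $\mathcal{C}_{n,k}$ itself. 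For $Q=I$ your combination (pseudolocality on $[0,\delta^{-1/\beta}]$, then Proposition~\ref{prop:tbetagraph11}) works. For $Q\neq I$, however, the conclusion of Proposition~\ref{prop:tbetagraph11} is false: the graph of $Q\mathcal{C}_{n,k}$ over $\mathcal{C}_{n,k}$ has size about $|Q-I|\,\tau^{\beta}$ on $B_{\tau^\beta}$, not $\tau^{-\beta}$. Applying it to $Q^{-1}M_\tau$ instead would require $Q^{-1}M_0$ to be close to $\mathcal{C}_{n,k}$, i.e.\ $|Q-I|$ small. That is exactly what you then claim to deduce ``in particular'', so the argument is circular. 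Without this step, Proposition~\ref{prop:3ann2} can only be iterated until the flow leaves the $\delta$-graphical regime. That is no contradiction if the limit is a rotation far from the identity.

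The missing ingredient is the Colding--Minicozzi \L{}ojasiewicz stability, the input behind Proposition~\ref{prop:CMgraphicality} that the paper describes as implicit in \cite{CM15_1}. Write $F(M)=(4\pi)^{-n/2}\int_M e^{-|x|^2/4}\,d\mathcal{H}^n$. If $M_0$ is $\delta'$-graphical over $\mathcal{C}_{n,k}$ on $B_{\delta'^{-1}}$, the area ratio bounds give $F(M_0)\le \lambda(\mathcal{C}_{n,k})+\Psi(\delta')$. Convergence to $Q\mathcal{C}_{n,k}$ together with monotonicity gives $F(M_\tau)\ge \lambda(\mathcal{C}_{n,k})$ for all $\tau$. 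The \L{}ojasiewicz inequality then bounds the total distance the flow can travel while $F$ drops by at most $\Psi(\delta')$. Hence the flow stays $\delta$-graphical over $\mathcal{C}_{n,k}$ on $B_{\delta^{-1}}$ for all time, and $|Q-I|\le\Psi(\delta')$. With this in place your chaining argument goes through. A minor point: in the degenerate case $\mathbf{d}_{\mathcal{C}_{n,k}}(M_{\tau_0})=0$, Proposition~\ref{prop:SWXnonconc} applied repeatedly gives $\mathbf{d}_{\mathcal{C}_{n,k}}(M_\tau)=0$ for all $\tau\ge\tau_0$ directly. You do not need to argue that the flow stays equal to the cylinder.
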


Next we need to quotient out by the action of rotations. Suppose
that $M_\tau$ is a rescaled mean curvature flow for $\tau\geq 0$ which
is $\delta$-graphical over $\mathcal{C}_{n,k}$ on $B_{\delta^{-1}}$ for all
$\tau$, for some small $\delta > 0$. For each $\tau\geq 0$ we define
$\tilde{M}_\tau$ to be given by a rotation $S_\tau M_\tau$ in such a
way that the corresponding graphicality functions $v(x,
\tau)$ defined on $B_{\delta^{-1}}$ are $L^2$-orthogonal to the
eigenfunctions $y_i z_\alpha$ 
corresponding to rotations (which do not fix the cylinder). Note that we have
$\mathbf{d}_{\mathcal{C}_{n,k}}(\tilde{M}_\tau) \leq C
\mathbf{d}_{\mathcal{C}_{n,k}}(M_\tau)$ for a uniform $C$. It is worth
noting that the definition of $S_\tau$ depends on the choice of small
$\delta$, and we may choose this smaller as needed. 

We need the following variant of Proposition~\ref{prop:3ann2} that applies to
this modified flow. 
\begin{prop}\label{prop:3ann3}
  Let $-\epsilon \not\in \sigma(\mathcal{C}_{n,k})$. There exist $L_0, \delta > 0$
  depending on $\epsilon$ with the following property. Suppose that $M_\tau$ is
  $\delta$-graphical over $\mathcal{C}$ on $B_{\delta^{-1}}$ for
  $\tau\in [0,2L]$ and $L > L_0$. Let $\tilde{M}_\tau = S_\tau M_\tau$ as
  above. Then 
  \[ \mathbf{d}_{\mathcal{C}_{n,k}}(\tilde{M}_L) \geq e^{-\epsilon L}
    \mathbf{d}_{\mathcal{C}_{n,k}}(\tilde{M}_0). \]
  implies
  \[ \mathbf{d}_{\mathcal{C}_{n,k}}(\tilde{M}_{2L}) \geq
    e^{-\epsilon L} \mathbf{d}_{\mathcal{C}_{n,k}}(\tilde{M}_L). \]
\end{prop}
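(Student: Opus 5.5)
We argue by contradiction and compactness, following the proof of Proposition~\ref{prop:3ann2}, but now working with the rotated flows $\tilde M_\tau$ and a decay rate $-\epsilon$ in place of the growth rate $\epsilon$. Fix $L$ large, to be chosen at the end. Suppose we have flows $M^i_\tau$ that are $\delta_i$-graphical on $B_{\delta_i^{-1}}$ for $\tau\in[0,2L]$, with $\delta_i\to 0$, and such that
\[ \dd(\tilde M^i_L)\geq e^{-\epsilon L}\dd(\tilde M^i_0), \qquad \dd(\tilde M^i_{2L}) < e^{-\epsilon L}\dd(\tilde M^i_L). \]
Set $\mathbf{d}_i=\dd(\tilde M^i_L)$, and let $v^i$ be the graph functions of $\tilde M^i_\tau$. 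The first step is to show that $\mathbf{d}_i^{-1}v^i$ converges locally smoothly on $(0,2L]\times\mathcal{C}_{n,k}$ to a solution $v^\infty$ of the linearized equation $\partial_\tau v=\mathcal{L}_{\mathcal{C}}v$, and that $\Vert v^\infty(\cdot,L)\Vert_{L^2}=1$.

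For this we need the following ingredients. The rotations $S_\tau$ depend smoothly on $\tau$, since they are defined by an $L^2$-orthogonality condition on a fixed ball. Their speeds are controlled by $\dd(\tilde M_\tau)$, because the normal velocity of $M_\tau$ is $O(|u|)$ in $C^0$ on the ball after using interior estimates. Hence $v^i$ satisfies the linearized equation up to quadratic errors plus a term $\dot S_\tau x\cdot\nu$ that is linear in the rotation generator. After rescaling, this generator converges to some time-dependent rotation field $\sum_{i,\alpha} c_{i\alpha}(\tau)\,y_iz_\alpha$. To get the $L^2$ convergence needed to pass norms to the limit, we apply the non-concentration estimate, Proposition~\ref{prop:SWXnonconc}, to $M^i$. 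Rotations fix the Gaussian weight and change $\dd$ only by a bounded factor relative to the optimally rotated flow, so the estimate transfers to $\tilde M^i$ with a uniform constant. Iterating it over $[0,2L]$ gives $\dd(\tilde M^i_\tau)\leq C_L\dd(\tilde M^i_0)\leq C_Le^{\epsilon L}\mathbf{d}_i$. Tails outside $B_R$ are then bounded by $C_LR^{-2}\mathbf{d}_i^2$, exactly as in Proposition~\ref{prop:3ann2}. We also get $\dd(\tilde M^i_1)\leq C\dd(\tilde M^i_0)$.

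The second step identifies the limit. The limit function $w=v^\infty$ satisfies
\[ \partial_\tau w=\mathcal{L}_{\mathcal{C}}w+\sum_{i,\alpha} c_{i\alpha}(\tau)\,y_iz_\alpha, \qquad w\perp y_iz_\alpha \text{ for all } \tau. \]
Since $y_iz_\alpha$ are eigenfunctions of $\mathcal{L}_{\mathcal{C}}$ with eigenvalue $0$, projecting the equation onto them forces $c_{i\alpha}\equiv 0$. Thus $w$ is a genuine solution of the linearized equation with no components in the rotation modes. Passing the norm inequalities to the limit gives
\[ \Vert w(\cdot,L)\Vert_{L^2}\geq e^{-\epsilon L}C^{-1}\Vert w(\cdot,1)\Vert_{L^2}, \qquad \Vert w(\cdot,2L)\Vert_{L^2}\leq e^{-\epsilon L}\Vert w(\cdot,L)\Vert_{L^2}. \]
The first inequality uses $\Vert w(1)\Vert\leq\liminf\mathbf{d}_i^{-1}\dd(\tilde M^i_1)\leq C\liminf \mathbf{d}_i^{-1}\dd(\tilde M^i_0)$.

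The third step uses the spectrum. Expand $w$ in eigenfunctions of $\mathcal{L}_{\mathcal{C}}$ with rates $\lambda\in\sigma(\mathcal{C}_{n,k})$, excluding the $y_iz_\alpha$ part. Since $-\epsilon\notin\sigma$, there is a gap $\epsilon'>0$ such that no rate lies in $[-\epsilon-\epsilon',-\epsilon+\epsilon']$. The second inequality, $\Vert w(2L)\Vert\leq e^{-\epsilon L}\Vert w(L)\Vert$, forces the modes with $\lambda\geq-\epsilon+\epsilon'$ to be small at time $L$: their $L^2$ mass there is at most $e^{-\epsilon' L}$ times the total. The remaining modes have $\lambda\leq-\epsilon-\epsilon'$, so running backwards they satisfy $\Vert w(1)\Vert\gtrsim e^{(\epsilon+\epsilon')(L-1)}\Vert w(L)\Vert$. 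This contradicts the first inequality once $L$ is large enough that $e^{(\epsilon+\epsilon')(L-1)}\gg Ce^{\epsilon L}$.

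The main obstacle I expect is the first step. Controlling the rotation gauge $S_\tau$ uniformly requires bounding $|\dot S_\tau|$ by $C\dd(\tilde M_\tau)$ via interior estimates, so that the extra term converges and lies in the rotation modes. It also requires justifying that Proposition~\ref{prop:SWXnonconc} applies to $\tilde M^i$ with uniform constants.
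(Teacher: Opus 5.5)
Your proof is correct and shares the paper's overall scheme: contradiction, normalized blow-up of graph functions, a limit orthogonal to the $y_iz_\alpha$, and the spectral gap at $-\epsilon$. The difference is in how you handle the gauge $S_\tau$.

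You pass to the limit in the rotated graphs $v^i$ themselves. This requires a speed bound $|\dot S_\tau|\le C\dd(\tilde M_\tau)$ and produces a forcing term $c(\tau)y_iz_\alpha$, which you then kill by projection.

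The paper never differentiates $S_\tau$. It normalizes $S^i_0=\mathrm{Id}$ and takes the limit $u^\infty$ of the \emph{unrotated} graphs, which is a genuine solution of $\partial_\tau u=\mathcal{L}_{\mathcal{C}}u$ with no gauge term. It then uses only the zeroth-order bound $|S^i_\tau-\mathrm{Id}|\le C_L\mathbf{d}_i$ at the three times $1,L,2L$. This lets it write $v^i_\tau=u^i_\tau+\xi^i_\tau$ with $\mathbf{d}_i^{-1}\xi^i_\tau\to\xi^\infty_\tau$ in the rotation span. Because the $y_iz_\alpha$ lie in $\ker\mathcal{L}_{\mathcal{C}}$, the rotation component of $u^\infty$ is constant in $\tau$, so orthogonality forces $\xi^\infty_1=\xi^\infty_L=\xi^\infty_{2L}$. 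Non-concentration is applied only to the genuine flows $M^i$. The passage to the rotated quantity at time $1$ is just $\Vert u^\infty_1+\xi^\infty_L\Vert_{L^2}\le\Vert u^\infty_1\Vert_{L^2}$, since the former is an orthogonal projection of the latter.

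Both routes produce the same limit $w=u^\infty+\xi^\infty$. The paper's route removes exactly the obstacle you flag. Bounding $\dot S_\tau$ needs weighted control of $H+x^\perp/2$, hence of $\nabla^2 u$, on all of $B_{\delta^{-1}}$ up to its edge. That control must be in terms of $\dd$, not the a priori $C^4$ bound $\delta$, which can be much larger than $\dd$. It is achievable for $\tau$ bounded away from $0$: pseudolocality gives graphicality beyond $\partial B_{\delta^{-1}}$, and interior estimates then give pointwise bounds growing like $e^{|x|^2/8}\dd$, which are integrable against the Gaussian. Still, it is extra work.

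With only a bound on $\dot S_\tau$, your $v^i$ converge in $\tau$ in a Lipschitz/weak sense rather than smoothly. This is harmless for your projection argument. Your explicit eigenmode computation in the last step correctly spells out what the paper invokes by reference to Proposition~\ref{prop:3ann2}.
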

\begin{proof}
  We argue as above, supposing that we have a sequence $M^i_\tau$
  converging to $\mathcal{C}$, and we fix $L > 0$, to be chosen below. 
  Without loss of generality we can assume that $\tilde{M}^i_0 =
  M^i_0$, i.e. the corresponding rotations satisfy $S^i_0 = Id$. Let us define
  $\mathbf{d}_i = \mathbf{d}_{\mathcal{C}_{n,k}}(M^i_L)$, and suppose
  that 
 \[ \mathbf{d}_{\mathcal{C}_{n,k}}(\tilde{M}^i_L) \geq e^{-L\epsilon}
    \mathbf{d}_{\mathcal{C}_{n,k}}(M^i_0), \text{ and }
    \mathbf{d}_{\mathcal{C}_{n,k}}(\tilde{M}^i_{2L}) \leq e^{-L\epsilon}
    \mathbf{d}_{\mathcal{C}_{n,k}}(\tilde{M}^i_L). \]
   We have
  $|S^i_L - Id| \leq C\mathbf{d}_i$ and also, by using
  Proposition~\ref{prop:SWXnonconc} repeatedly as in the previous
  proof, we have 
  $\mathbf{d}_{\mathcal{C}_{n,k}}(M^i_\tau) \leq C_L \mathbf{d}_i$ for
  all $\tau\in [0,2L]$, with a constant depending on $L$. As a consequence 
  $|S^i_\tau- Id| \leq C_L \mathbf{d}_i$.
  As in the previous proof we also have
  \[ \mathbf{d}_{\mathcal{C}_{n,k}}(\tilde{M}^i_L) \geq e^{-L\epsilon}C^{-1}
    \mathbf{d}_{\mathcal{C}_{n,k}}(M^i_1). \]
  Write the $M^i_\tau$ and $\tilde{M}^i_\tau$ as graphs of $u^i$ and
  $v^i$ over $\mathcal{C}$ on larger and larger
  subsets of $(0, 2] \times \mathbb{R}^{n+1}$. Note that we can write
  $v^i_\tau = u^i_\tau + \xi^i_\tau$, where for each $\tau$, up to
  choosing a subsequence, 
  $\mathbf{d}_i^{-1}\xi^i_\tau$ converges to a Jacobi field
  $\xi^\infty_\tau$ in the
  span of the functions $z_\alpha y_i$. Up to choosing a subsequence,
  $\mathbf{d}_i^{-1}u^i_\tau$ converges to a solution $u^\infty_\tau$ of the
  linearized equation on compact subsets of $(0, 2L] \times
  \mathcal{C}$, and we have that $u^\infty_L + \xi^\infty_L$ is
  orthogonal to the $z_\alpha y_i$. But this implies that
  $u^\infty_{2L} + \xi^\infty_L$ and $u^\infty_1 + \xi^\infty_L$ are
  also orthogonal to the $z_\alpha 
  y_i$, and so $\xi^\infty_1 = \xi^\infty_L = \xi^\infty_{2L}$. From the distance
  estimates above together with the non-concentration estimate, we
  find that
  \[ \Vert u^\infty_{L} + \xi^\infty_L\Vert_{L^2} \geq e^{-L\epsilon} C^{-1}
    \Vert u^\infty_1\Vert_{L^2}, \text{ and } \Vert u^\infty_{2L} +
    \xi^\infty_L\Vert_{L^2} \leq e^{-L\epsilon}  \Vert u^\infty_L +
    \xi^\infty_L\Vert_{L^2}. \]
  Since $u^\infty_1 + \xi^\infty_L$ is
  orthogonal to the $z_\alpha y_i$, we have $\Vert u^\infty_1\Vert_{L^2}
  \geq \Vert u^\infty_1+ \xi^\infty_L \Vert_{L^2}$. We are now in the
  same situation as in the previous proof. For large $L$ this is a
  contradiction since there are no homogeneous Jacobi fields with rate
  of growth $-\epsilon$. 
\end{proof}

Finally, along these lines, we need the following, related to \cite[Corollary
3.7]{SWX25}. 

\begin{prop}\label{prop:growthdominant}
  There are $\epsilon_1, L_1 > 0$, and for any $\lambda >
  0$, there exist $\delta_1(\lambda), T_1(\lambda)$ with the following
  property. Suppose that 
  for some $\delta < \delta_1$, 
  $M_\tau$ is $\delta$-graphical over $\mathcal{C}$ on
  $B_{\delta^{-1}}$ for all $\tau \geq 0$, it converges to
  $\mathcal{C}_{n,k}$, and in addition, using the
  definition $\tilde{M}_\tau = S_\tau M_\tau$ above, we have
  \[\label{eq:slowgrow50}
    \mathbf{d}_{\mathcal{C}_{n,k}}(\tilde{M}_{L_1}) \geq
    e^{-\epsilon_1 L_1} 
    \mathbf{d}_{\mathcal{C}_{n,k}}(\tilde{M}_0). \]
  Then for all $\tau\geq T_1$ the graphicality
  functions $v$ of the $\tilde{M}_\tau$ satisfy
  \[ \label{eq:projh} \Vert \Pi_\mathfrak{h} v(\cdot, \tau)\Vert_{L^2(B_{\delta^{-1}})} \geq
    (1-\lambda) \Vert v(\cdot, \tau)\Vert_{L^2(B_{\delta^{-1}})}, \]
  where $\mathfrak{h}$ is the space spanned by $y_i^2-2$, and $y_iy_j$
  for $i\not=j$. 
\end{prop}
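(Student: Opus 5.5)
The idea is to pick $\epsilon_1$ so that $-\epsilon_1$ lies strictly between $0$ and the first negative eigenvalue $-\min\{\frac{1}{n-k},\frac12\}$. For example, take $\epsilon_1 = \frac12\min\{\frac{1}{n-k},\frac12\}$, and let $L_1$ be the $L_0(\epsilon_1)$ from Proposition~\ref{prop:3ann3}. Since $-\epsilon_1\notin\sigma(\mathcal{C}_{n,k})$, Proposition~\ref{prop:3ann3} applies. Iterating it on the intervals $[jL_1,(j+1)L_1]$, starting from \eqref{eq:slowgrow50}, gives
\[ \dd(\tilde M_{(j+1)L_1}) \geq e^{-\epsilon_1 L_1}\dd(\tilde M_{jL_1}) \]
for all $j\geq 0$. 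Combined with the non-concentration estimate Proposition~\ref{prop:SWXnonconc}, this says that $\tilde M_\tau$ decays at most like $e^{-\epsilon_1\tau}$, uniformly on all windows. On the other hand, $\dd(\tilde M_\tau)\to 0$ since $M_\tau$ converges to $\mathcal{C}_{n,k}$. Together these say the flow has a "slow" (rate at least $-\epsilon_1$) decay on every window of length $L_1$.

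I would then argue by contradiction and compactness. Suppose there are flows $M^i_\tau$ satisfying the hypotheses with $\delta_i\to0$, and times $\tau_i\to\infty$ at which \eqref{eq:projh} fails. Normalize by setting $\mathbf{d}_i=\dd(\tilde M^i_{\tau_i})$. Consider the time-translated flows $\tilde M^i_{\tau_i+s}$ for $s\in[-A,A]$, with $A$ large. By the slow-decay chain and Proposition~\ref{prop:SWXnonconc} we have $\dd(\tilde M^i_{\tau_i+s})\leq C e^{\epsilon_1 |s|+C}\mathbf{d}_i$ for $s<0$. Slow decay forwards also gives a lower bound $\dd(\tilde M^i_{\tau_i+s})\ge c\,e^{-\epsilon_1 s}\mathbf{d}_i$ for $s>0$. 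The rescaled functions $\mathbf{d}_i^{-1}v^i(\cdot,\tau_i+s)$ then converge, as in the proof of Proposition~\ref{prop:3ann3}, to an ancient-type solution $w$ of $\partial_s w=\mathcal{L}_{\mathcal{C}}w$ on $[-A,A]$. Here the rotation correction is a constant Jacobi field in the span of the $y_iz_\alpha$, and $w$ is orthogonal to these. Non-concentration gives $L^2$ convergence, so $\|w(0)\|_{L^2}=1$ and $\|\Pi_{\mathfrak h}w(0)\|<(1-\lambda)$.

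Now decompose $w$ into eigenmodes. The backward bound $\|w(s)\|\leq Ce^{\epsilon_1|s|}$ on $[-A,0]$ forces the components with eigenvalue $\leq -\min\{\frac1{n-k},\frac12\}<-\epsilon_1$ to be at most $Ce^{-(\mu-\epsilon_1)A}$ at $s=0$, hence small for $A$ large. The components with positive eigenvalues ($1$ and $1/2$: constants, $y_i$, $z_\alpha$) must be excluded using convergence to $\mathcal{C}_{n,k}$. The main obstacle is a growth argument on the positive modes. If the projection onto them were a definite fraction at time $\tau_i$, it would grow by a factor $e^{L/2}$ over a forward window. Iterating via Proposition~\ref{prop:3ann2}/Corollary~\ref{cor:slowgrowth}-type growth (the expanding-mode version with $\epsilon<1/2$) would make $\dd$ grow, contradicting $\dd(M_\tau)\to0$ and Corollary~\ref{cor:slowgrowth}. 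I would carry this out with a discrete three-annulus dichotomy, splitting modes into unstable (rates $1,\frac12$), neutral (rate $0$) and stable ones, and using that convergence to the cylinder precludes eventual unstable dominance. The neutral space modulo rotations is exactly $\mathfrak h$, so the limit $w(0)$ lies, up to small error, in $\mathfrak h$. This contradicts $\|\Pi_{\mathfrak h}w(0)\|<1-\lambda$. The threshold $T_1(\lambda)$ arises because the backward window $A$ must be available, i.e. $\tau_i\ge A(\lambda)$. The constant $\delta_1(\lambda)$ must also be small enough for the compactness argument on windows of length $2A$.
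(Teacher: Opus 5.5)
Your proposal is correct. It uses the same ingredients as the paper but organizes the compactness argument differently.

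\textbf{The paper's route.}
\begin{itemize}
\item It normalizes once, at the fixed time $L_1$.
\item It extracts a limit $v^\infty$ of the linearized equation defined for all $\tau\geq 0$. This limit inherits two per-window bounds: the lower one from propagating \eqref{eq:slowgrow50} via Proposition~\ref{prop:3ann3}, and the upper one \eqref{eq:103} from the growth version together with convergence to the cylinder.
\item It invokes log-convexity of $\Vert v^\infty(\cdot,\tau)\Vert_{L^2}$ (Colding--Minicozzi) to show that the time-translates of $v^\infty$ converge to a homogeneous solution. By the spectral gap this solution has degree $0$, so it lies in $\mathfrak{h}$.
\item This gives $\mathfrak{h}$-dominance on one window $[T_1,T_1+L_1]$. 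Non-concentration transfers it back to $v^i$, and persistence of slow decay extends it to all later times.
\end{itemize}

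\textbf{Your route.} You normalize at the bad time $\tau_i$ and take the limit $w$ on a two-sided window $[-A,A]$. You then use an explicit eigenmode decomposition. The backward bound $\Vert w(-A)\Vert\lesssim e^{\epsilon_1 A}$ kills the stable modes. A forward bound kills the unstable modes. The neutral part modulo rotations is exactly $\mathfrak{h}$. This buys you two things:
\begin{itemize}
\item You avoid both the log-convexity input and the final extension-to-all-later-times step.
\item The uniformity of $T_1(\lambda)$ becomes transparent. Writing $\mu_0=\min\{\frac{1}{n-k},\frac12\}$, it is essentially an $A$ with $Ce^{-(\mu_0-\epsilon_1)A}+Ce^{-(1/2-\epsilon)A}<\lambda$, where $C$ depends only on $L_1$. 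The paper leaves this point implicit, since its $T_1$ is produced for the particular limit $v^\infty$.
\end{itemize}

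\textbf{Two small remarks.}
\begin{itemize}
\item For the unstable modes you do not need any iteration. Apply Corollary~\ref{cor:slowgrowth} to the genuine flow $\tau\mapsto S_{\tau_i}M^i_{\tau_i+\tau}$, rotated by the fixed rotation $S_{\tau_i}$, whose distance at $\tau=0$ is exactly $\mathbf{d}_i$. By Fatou this gives $\Vert u^\infty(\cdot,A)\Vert_{L^2}\leq e^{\epsilon A}$ for $A\geq L_0(\epsilon)$. The positive-eigenvalue part of $u^\infty$ coincides with that of $w=u^\infty+\xi^\infty$, because the gauge correction $\xi^\infty$ lies in the kernel of $\mathcal{L}_{\mathcal{C}}$.
\item Proposition~\ref{prop:3ann3} requires $L>L_0$, so $L_1$ should be taken strictly larger than $L_0(\epsilon_1)$.
\end{itemize}
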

\begin{proof}
  We first choose $\epsilon_1, L_1, \delta_1$ so that 
  Proposition~\ref{prop:3ann3} can be applied to deduce that
 \eqref{eq:slowgrow50}  is preserved for later times.
 Similarly to Corollary~\ref{cor:slowgrowth}, 
 with suitable choices of $L_1, \delta_1$, we can also assume that 
 \[ \label{eq:103} \mathbf{d}_{\mathcal{C}_{n,k}}(\tilde{M}_{\tau+L_1}) \leq e^{\epsilon L}
   \mathbf{d}_{\mathcal{C}_{n,k}}(\tilde{M}_\tau), \]
 for all $\tau$, since otherwise by Proposition~\ref{prop:3ann3} this
 growth would persist, but then $M_\tau$ could not converge to
 $\mathcal{C}_{n,k}$ at infinity. We also choose $\epsilon_1$ small
 enough so that $\sigma(\mathcal{C}_{n,k})\cap [-\epsilon_1,
 \epsilon_1] = \{0\}$. 

 For a given $\lambda > 0$, suppose that no suitable $T_1$ and
 (smaller) $\delta_1$ exist. Then we can find a sequence flows $\tilde{M}^i_\tau$
 satisfying the above conditions, but \eqref{eq:projh} failing at
 later and later times. We argue similarly to the previous
 Proposition.  We can 
 write $\tilde{M}^i_\tau$ as graphs of $v^i$ on larger and
 larger sets, and normalizing by $\mathbf{d}_i =
 \dd(\tilde{M}^i_{L_1})$, we find that along a subsequence the
 $\mathbf{d}_i^{-1} v^i$ converge to a solution $v^\infty(x,\tau)$ of the
 linearized equation on $\mathcal{C}_{n,k}$ for all $\tau \geq 0$,
 which is orthogonal to the functions $z_\alpha y_i$, and still satisfies
 \[ \label{eq:201} \Vert v^\infty(\cdot, (k+1)L_1)\Vert_{L^2} \geq e^{-\epsilon_1 L_1}
   \Vert v^\infty(\cdot, kL_1)\Vert_{L^2}, \]
 and
 \[ \label{eq:202} \Vert v^\infty(\cdot, (k+1)L_1)\Vert_{L^2} \leq e^{\epsilon_1 L_1}
   \Vert v^\infty(\cdot, kL_1)\Vert_{L^2}, \]
 for all $k > 0$. Using the convexity of $\log \Vert v^\infty(\cdot,
 \tau)\Vert_{L^2}$ in $\tau$ (see Colding-Minicozzi~\cite{CM21}), it
 follows that the translated solutions $\tau\mapsto v^\infty(\cdot,
 \tau+T)$ converge to a homogeneous solution of the linearized
 equation as $T\to\infty$. Because of \eqref{eq:201} and
 \eqref{eq:202}, and the choice of $\epsilon_1$, this limit solution
 must be homogeneous of degree zero, and since it is orthogonal to the
 $z_\alpha y_i$, it lies in $\mathfrak{h}$. It follows from this that
 for the given $\lambda > 0$ there exists some $T_1$ such that
 \[ \label{eq:projh2} \Vert \Pi_{\mathfrak{h}} v^\infty(\cdot, \tau)\Vert_{L^2} \geq
   (1-\lambda/2) \Vert v^\infty(\cdot, \tau)\Vert_{L^2}, \]
 for all $\tau \in [T_1, T_1 + L_1]$. 

 Using the non-concentration estimate, we can see that for all
 $\tau \geq 1$ the 
 distance $\mathbf{d}_{\mathcal{C}_{n,k}}(\tilde{M}_\tau)$ is well approximated by the
 $L^2$-norm on $B_{\delta^{-1}}$, if $\delta$ is small enough. Indeed,
 note that using the slow decay property, and
 Proposition~\ref{prop:SWXnonconc},
 there is a constant $C_L$ such that $\dd(\tilde{M}_{\tau-1}) \leq C_{L_1}
 \dd(\tilde{M}_{\tau})$. Then, from Proposition~\ref{prop:SWXnonconc}
 we have
 \[\label{eq:deltaintest} \delta^{-2}\int_{M_\tau\setminus B_{\delta^{-1}}}
   \overline{\mathrm{dist}}_{n,k}(x)^2\, e^{-|x|^2/4}\, d\mathcal{H}^n
   \leq C_n \dd(\tilde{M}_{\tau-1}) \leq C_nC_{L_1}
   \dd(\tilde{M}_{\tau}). \]
  It follows that given any $\kappa > 0$, if $\delta$ is small enough
  (depending on $\kappa, L$), we have
  \[  \label{eq:ddelta} \int_{M_\tau \cap B_{\delta^{-1}}}
   \overline{\mathrm{dist}}_{n,k}(x)^2\, e^{-|x|^2/4}\, d\mathcal{H}^n
   \geq (1-\kappa) \dd(\tilde{M}_{\tau}). \]
   It follows that once $\delta_1$ is small enough, \eqref{eq:projh2}
   implies \eqref{eq:projh}, on the interval $[T_1, T_1+L_1]$.  

   To extend the result to all $\tau \geq T_1$, we can use that the
   slow decay condition \eqref{eq:slowgrow50} is preserved for later
   times if $\delta_1, \epsilon_1,L_1$ are chosen suitably. 
\end{proof}

Under the slow decay condition \eqref{eq:slowgrow50} we can find good
pointwise estimates for $v(\cdot, \tau)$ in terms of the corresponding
$L^2$-norm. 
\begin{prop}\label{prop:L2ptwise}
  Given $\epsilon_1, L_1 > 0$, if $\delta_1$ is sufficiently small, we
  have the following. Suppose that for $\delta < \delta_1$, $M_\tau$ is $\delta$-graphical over
  $\mathcal{C}_{n,k}$ on $B_{\delta^{-1}}$ for all $\tau \geq 0$, it
  converges to $\mathcal{C}_{n,k}$, and we have
  \[\label{eq:slowgrow51}
    \mathbf{d}_{\mathcal{C}_{n,k}}(\tilde{M}_{L_1}) \geq
    e^{-\epsilon_1 L_1} 
    \mathbf{d}_{\mathcal{C}_{n,k}}(\tilde{M}_0). \]
  Then for all $\tau \geq 10$, and a small $\gamma > 0$, we have the
  estimate
  \[\label{eq:tvbounds10} |\tilde{v}(x, \tau)|, |\nabla \tilde{v}(x, \tau)|, |\nabla^2
  \tilde{v}(x, \tau)| \leq C_{L_1} e^{\frac{|x|^2}{40} }\Vert \tilde{v}(\cdot,
  \tau)\Vert_{L^2(B_{\delta^{-1}})} + e^{-\tau^\gamma}, \]
  on $B_{\tau^\gamma}$. 
\end{prop}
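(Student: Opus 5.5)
The plan is to run the cutoff-and-log-Sobolev machinery of Propositions~\ref{prop:subsolpointwise} and \ref{prop:tbetagraph10}, applied to the graphicality function $\tilde v$ of the rotated flow $\tilde M_\tau$, but over a time window of \emph{bounded} length instead of a window of length comparable to $\log\tau$. The slow decay condition \eqref{eq:slowgrow51} is what allows this. By Proposition~\ref{prop:3ann3} (with $\delta_1$ small) the condition propagates to all later times. Together with Corollary~\ref{cor:slowgrowth} and the non-concentration estimate, this should give a two-sided comparison
\[ C_{L_1}^{-1}\Vert \tilde v(\cdot,s)\Vert_{L^2(B_{\delta^{-1}})}\le \Vert \tilde v(\cdot,\tau)\Vert_{L^2(B_{\delta^{-1}})}\le C_{L_1}\Vert \tilde v(\cdot,s)\Vert_{L^2(B_{\delta^{-1}})} \]
for $s\in[\tau-10,\tau]$. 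As in \eqref{eq:deltaintest}--\eqref{eq:ddelta}, the $L^2$ norm on $B_{\delta^{-1}}$ is comparable to $\dd(\tilde M_\tau)$.

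The steps are as follows.

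First, by Proposition~\ref{prop:tbetagraph11} (after shifting time and shrinking $\delta_1$), $M_\tau$, and hence $\tilde M_\tau$ since $S_\tau$ is close to the identity, is $\tau^{-\beta}$-graphical on $B_{\tau^\beta}$. Choose $\gamma<\beta/2$, and work on the ball $B_{2\tau^{2\gamma}}$, say.

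Second, cut off $\tilde v$ with $\chi(|x|/\tau^{2\gamma})$. This gives an inequality of the form \eqref{eq:utildeeq} for $f=|\tilde v|$, with $|a_1|,|a_2|\le\epsilon$ small, $b_0=1+\epsilon$, and an error $\mathcal E$ supported in an annulus of radius about $\tau^{2\gamma}$. The rotations $S_\tau$ depend on $\tau$, so they contribute an extra term $\dot S_\tau x\cdot\nu$. Its size is controlled by $|\dot S_\tau|\le C\,\dd(M_\tau)$, and it is linear in $x$. I would absorb it either into the $a\cdot\nabla f$ term or into the forcing term, after first integrating over a short time step.

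Third, apply Corollary~\ref{cor:logSobolev} on the interval $[\tau-2,\tau]$, so that $\tau-\tau_0$ is bounded. Since $p(1)>2$, the Gaussian factor $e^{|x_0|^2/4p}$ is at most $e^{|x|^2/40}$ provided $p\ge 10$. So I take $\tau-\tau_0=\log 20$, which lies within the allowed window. The $L^2$ term is $\Vert\tilde v(\cdot,\tau_0)\Vert_{L^2}\le C_{L_1}\Vert\tilde v(\cdot,\tau)\Vert_{L^2(B_{\delta^{-1}})}$, using the comparison from the first paragraph; this is where the constant $C_{L_1}$ enters. The cutoff error contributes $C\tau^{-2\gamma}e^{-c\tau^{4\gamma}}$ for $|x|\le\tau^\gamma$, and the tail terms contribute similarly. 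All of these are $\le e^{-\tau^\gamma}$ for large $\tau$. For $\tau\in[10,T]$ with $T$ bounded, the estimate is instead arranged by taking $\delta_1$ small, which makes the right-hand side large compared with $e^{-\tau^\gamma}$.

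Finally, the bounds on $\nabla\tilde v$ and $\nabla^2\tilde v$ follow from interior parabolic Schauder estimates on unit parabolic cylinders. The weight $e^{|x|^2/40}$ changes only by a bounded factor over such a cylinder.

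The main obstacle is the rotation term coming from $S_\tau$, because $S_\tau$ is defined through a projection at each time and need not be differentiable in $\tau$. If differentiating it is problematic, I would fix the rotation $S_{\tau_0}$ on the whole interval $[\tau_0,\tau]$. The function $\tilde v$ then satisfies the clean equation, and the difference between $S_{\tau_0}$ and $S_\tau$ over a bounded interval is $O(C_{L_1}\Vert\tilde v\Vert_{L^2})$ in the $z_\alpha y_i$ directions. Since those directions grow only linearly in $|x|$, this difference is absorbed into the $e^{|x|^2/40}$ term.
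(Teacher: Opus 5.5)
Your proposal is correct and is essentially the paper's argument. You propagate the slow-decay condition with Proposition~\ref{prop:3ann3} and combine it with non-concentration (Proposition~\ref{prop:SWXnonconc}) to get $\dd(\tilde M_{\tau_0})\le C_{L_1}\dd(\tilde M_\tau)$ for a time $\tau_0$ a bounded amount earlier. You then apply Corollary~\ref{cor:logSobolev} to the flow with the rotation frozen at $S_{\tau_0}$ and pass from $S_{\tau_0}M_\tau$ to $S_\tau M_\tau$; this fallback is exactly what the paper does, and it avoids needing any bound on $\dot S_\tau$.

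The only slip is numerical. The corollary's Gaussian factor involves $p(\tau-\tau_0-1)$, and $\tau-\tau_0=\log 20$ gives $p\approx 1+20/e<10$, so you do not get the $e^{|x|^2/40}$ weight. Use a longer window such as $\tau-\tau_0=10$, which is the paper's choice and gives $p(9)>10$.
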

\begin{proof}
First note that by Proposition~\ref{prop:3ann3}, the condition
\eqref{eq:slowgrow51} implies that
\[ \mathbf{d}_{\mathcal{C}_{n,k}}(\tilde{M}_{(i+1)L_1}) \geq
    e^{-\epsilon_1 L_1} 
    \mathbf{d}_{\mathcal{C}_{n,k}}(\tilde{M}_{iL_1}),\]
for all $i > 0$. For any $\tau > 0$ we let $i_0$ be the smallest
integer so that $i_0 L_1 \geq \tau$. Then using
Proposition~\ref{prop:SWXnonconc} we have a bound $\dd(\tilde{M}_{i_0
  L_1}) \leq C_{L_1} \dd(\tilde{M}_\tau)$. The slow growth condition
above then ensures that $\dd(\tilde{M}_{(i_0-2)L_1}) \leq C_{L_1}
\dd(\tilde{M}_\tau)$, and using Proposition~\ref{prop:SWXnonconc}
again we get $\dd(\tilde{M}_{\tau-10}) \leq C_{L_1}
\dd(\tilde{M}_\tau)$ for a possibly larger constant $C_{L_1}$,
depending on $L_1$. 
We can then apply Corollary~\ref{cor:logSobolev} on the interval $[\tau-10,
\tau]$ to obtain pointwise 
estimates for $\nabla^i v$ on the $\frac{1}{2}\tau^\gamma$ ball.
In  the notation of the corollary, we can ensure that $\epsilon <
1/2$, and then  we have $p(9) > 10$. First we can apply the corollary
to the ``ungauged'' flow starting with $\tilde{M}_{\tau-10}$. This
leads to a bound for the graphicality function of 
$S_{\tau-10}M_\tau$, and then from that we get bounds for $S_\tau
M_\tau$.  
The conclusion is that  up to
decreasing $\gamma$ slightly, we have
\[ |\tilde{v}(x, \tau)|, |\nabla \tilde{v}(x, \tau)|, |\nabla^2
  \tilde{v}(x, \tau)| \leq C_{L_1} e^{\frac{|x|^2}{40} }\Vert \tilde{v}(\cdot,
  \tau)\Vert_{L^2(B_{\delta^{-1}})} + e^{-\tau^\gamma}, \]
on the ball $B_{\tau^\gamma}$. 
\end{proof}

We are now ready to prove Proposition~\ref{prop:L2normalform}.
\begin{proof}[Proof of Proposition~\ref{prop:L2normalform}]
  By Proposition~\ref{prop:tbetagraph11} we can assume that for a small $\beta
  > 0$, $M_\tau$ is $\tau^{-\beta}$-graphical over
  $\mathcal{C}_{n,k}$ on $B_{\tau^\beta}$, for all $\tau \geq 1$. 
Let $T_0 > 1$, and suppose that we have 
\[ \label{eq:L2normal1} \left\Vert u(\cdot, \tau) -
      \frac{\sqrt{2(n-k)}}{4\tau}\sum_{i=1}^\ell
      (y_i^2-2)\right\Vert_{L^2(B_{\tau^\beta})} < \tau^{-3/2}, \]
  for all $\tau\in [T_0, T_0 + L_0]$, with $L_0$ to be chosen. Without loss of generality
  we can assume that $T_0$ is large. Let $\lambda > 0$ be small, to be
  chosen below, and $T_1(\lambda)$ the corresponding constant from
  Proposition~\ref{prop:growthdominant}. We can assume that for the
  $L_1$ in Proposition~\ref{prop:growthdominant} we have $L_0/2 > L_1$,
  and also $L_0/2 > T_1$. Note that $L_1$ is independent of the choice of
  $\lambda$, while $T_1$ also depends on $\lambda$, so for small
  $\lambda$ we will typically have $T_1 \gg L_1$. In particular, some constants
  below depend on $L_1$, but this can be ignored since $L_1$ is fixed
  from now. 

  We claim that if $T_0, \delta^{-1}$ are sufficiently large and
  $\ell > 0$, 
  then \eqref{eq:L2normal1} implies the hypothesis
  \eqref{eq:slowgrow50}, for the flow translated in time by $T_0$. 
 For this, note that for $\gamma > 0$ the contribution from outside of the
$\tau^\gamma$-ball is less than $e^{-\tau^{2\gamma}/9}$, which is much
smaller than any inverse power of $\tau$. As a result, in
\eqref{eq:L2normal1} we can replace the $\tau^\beta$-ball with the
$\tau^\gamma$-ball for any $\gamma < \beta$.  In particular for $\tau \in
[T_0, T_0 + L_0]$ we have, say
\[ \Big|\mathbf{d}_{\mathcal{C}_{n,k}}(M_\tau) - \Vert u(\cdot,
  \tau)\Vert_{L^2(B_{\tau^\gamma})} \Big| < \tau^{-10}, \]
if $T_0$ is large.

We define the modified flow $\tilde{M}_\tau = S_\tau M_\tau$ as above, for
a small $\delta$ to be chosen, so the corresponding
graphical functions $v(x,\tau)$ are orthogonal on $B_{\delta^{-1}}$ to the space
$\mathfrak{h}_{rot}$ of functions spanned by the $y_i
z_\alpha$. Using the bound \eqref{eq:L2normal1}, and the fact that the
$y_i^2-2$ are orthogonal to elements in $\mathfrak{h}_{rot}$, we find
that for $\tau\in [T_0, T_0 + L_0]$ we have $|S_\tau - Id| <
C\tau^{-3/2}$. Using this, we find that to leading order
$\dd(\tilde{M}_\tau)$ is equal to $\alpha_{n,k} \ell \tau^{-1}$, for a
constant $\alpha_{n,k}$. In particular, for any $\epsilon > 0$ and
$L_1 < L_0$ we
can choose $\delta$ small and $T_0$ large so that
\[ \label{eq:slowdec51} \dd(\tilde{M}_{T_0 + L_1}) \geq e^{-\epsilon L_1}
  \dd(\tilde{M}_{T_0}). \]
Since $L_0/2 > T_1$, Proposition~\ref{prop:growthdominant} implies
that for all $\tau \geq T_0+L_0/2$ we have
\[ \label{eq:mostlyh} \Vert \Pi_{\mathfrak{h}} v(\cdot, \tau)\Vert_{L^2(B_{\delta^{-1}})}
  \geq (1-\lambda) \Vert v(\cdot, \tau)\Vert_{L^2(B_{\delta^{-1}})}. \]
Note that using the $\tau^{-\beta}$ graphicality of $M_\tau$ we also
find that $|S_\tau - Id| < C\tau^{-\beta}$ for all $\tau > 1$,
although  this is a much worse estimate than the $C\tau^{-3/2}$ we have on
the interval $[T_0, T_0 + L_0]$.

The case when $\ell=0$ is slightly different. In that case it may
happen that for the same $\epsilon, L_1$ as above we have
\[ \label{eq:expdecay53} \dd(\tilde{M}_{T_0 + (j+1)L_1}) \leq e^{-\epsilon L_1}
  \dd(\tilde{M}_{T_0 + jL_1}),\]
for $j <  j_0$, and
\[ \dd(\tilde{M}_{T_0 + (j_0+1)L_1}) \geq e^{-\epsilon L_1}
  \dd(\tilde{M}_{T_0 + j_0 L_1}),\]
where this growth persists for all $j > j_0$ as well. Here
possibly $j_0=\infty$, so that we have decay for all $j$. The upshot
is that we have \eqref{eq:mostlyh} for all $\tau \geq T_1 = T_0 +
j_0L_1 + L_0/2$. So in the setting $\ell=0$, we could have exponential decay
for a certain (possibly for all) time, after which the
$\mathfrak{h}$-component dominates as in \eqref{eq:mostlyh}. We will
focus on the $\ell > 0$ case below, but the same results hold when
$\ell=0$, once we are in the slow decaying regime.

We can use the slow decay property together with non-concentration to
relate the $L^2$ norms of $v$ on different balls to each other. First note that arguing
similarly to the proof of Proposition~\ref{prop:growthdominant}, there
is a constant $C_{L_1}$ such that for $\tau\geq T_0+L_1$ we have
$\dd(\tilde{M}_{\tau-1}) \leq C_{L_1} \dd(\tilde{M}_\tau)$.  Then
similarly to \eqref{eq:deltaintest} and \eqref{eq:ddelta} we have
the following: for any $\lambda > 0$, if $\delta$ is sufficiently
small (depending on $\lambda, L_1$),
then for all $\tau$ such that $\tau^\gamma > \delta^{-1}$,  we have
\[ \label{eq:normcompare} \int_{B_{\delta^{-1}}} |v(\cdot, \tau)|^2\, e^{-|x|^2/4}\,
    d\mathcal{H}^n \geq (1-\lambda) \dd(\tilde{M}_\tau) \geq
    (1-\lambda) \int_{B_{\tau^\gamma}} |v(\cdot, \tau)|^2\,
    e^{-|x|^2/4}\, d\mathcal{H}, \]
so it does not matter whether we take the $L^2$-norm on $B_{\delta^{-1}}$
or on $B_{\tau^\gamma}$.

Let us now define the cut off graphical functions $\tilde{v}(x,\tau) =
\chi(|x| \tau^{-\gamma}) v(x, \tau)$. Similarly to
\eqref{eq:utildeeq}, $\tilde{v}$ satisfies the equation
\[ \label{eq:vtildeeq} \partial_\tau \tilde{v} = \Delta \tilde{v} - \frac{1}{2} x\cdot
  \nabla\tilde{v} + \tilde{v} + a_1\cdot \nabla\tilde{v} + a_2
  \tilde{v} + b_1(\xi_\tau)\cdot \nabla\tilde{v} + b_2(\xi_\tau)
  \tilde{v} + \chi\xi_\tau + \mathcal{E}, \]
where $\xi_\tau \in \mathfrak{h}_{rot}$ corresponds to the
infinitesimal rotation $S_\tau^{-1}\dot{S_\tau}$, and the $b_1,b_2$
terms account for the fact that the function $\xi_\tau$ may not
exactly equal the normal component of the corresponding vector
field along the graph of $\tilde{v}$. We will also write this as
\[ \label{eq:vtildeeq2} \partial_\tau \tilde{v} = \mathcal{L}\tilde{v} + Q(\tilde v,
  \nabla\tilde v, \nabla^2\tilde v) + \mathcal{R}_{\xi_\tau}(\tilde v,
  \nabla\tilde v) + \chi\xi_\tau + \mathcal{E}, \]
where $Q$ is the nonlinear part of the mean curvature of a graph as
before, and $\mathcal{R}_{\xi_\tau}$ is at least linear in $\tilde v,
\nabla\tilde v$ with coefficients
bounded by $|\xi_\tau|$.
Note that by adjusting the $\xi_\tau \in \mathfrak{h}_{rot}$
slightly, we can ensure that 
\[ \mathcal{R}_{\xi_\tau}(\tilde v, \nabla\tilde v) \perp \mathfrak{h}_{rot}, \]
for all $\tau$, and we will assume we chose the $\xi_\tau$ in this way. 
Then, taking the inner product with $\xi_\tau$, and noting that the
image of $\mathcal{L}$ is orthogonal to $\mathfrak{h}_{rot}$, we find that
\[ \label{eq:xibound} |\xi_\tau| \leq \Vert Q(\tilde v,
  \nabla\tilde v, \nabla^2\tilde v)\Vert_{L^2} + \Vert
  \mathcal{E}\Vert_{L^2} + |\xi_\tau|^{-1} \langle \xi_\tau,
  \partial_\tau \tilde{v}\rangle. \]
We have $\Vert \mathcal{E}\Vert_{L^2} \leq e^{-\tau^\gamma}$. By
construction we also have that the $L^2$-product of
$\partial_\tau\tilde{v}$ with $\xi_\tau$ on $B_{\delta^{-1}}$
vanishes, and using \eqref{eq:vtildeeq2} together with
\eqref{eq:tvbounds10} to bound the terms on the right hand side of
\eqref{eq:vtildeeq2} outside of the $\delta^{-1}$-ball,  
we can choose $\delta$ small, depending on $\lambda$, so that 
\[ \left|\int_{\mathcal{C}\setminus B_{\delta^{-1}}} \xi_\tau
    \partial_\tau \tilde{v} \, d\mu \right| \leq \lambda |\xi_\tau| \Vert
  \tilde{v}(\cdot, \tau)\Vert_{L^2} +
  e^{-\tau^\gamma} |\xi_\tau|. \]
Since each term in $Q(\tilde{v}, \nabla \tilde{v}, \nabla^2
\tilde{v})$ is at least quadratic in the entries,
Proposition~\ref{prop:L2ptwise}  implies that
\[ \int_{\mathcal{C}} Q(\tilde{v}, \nabla \tilde{v}, \nabla^2
\tilde{v})^2\, e^{-|x|^2/4}\, d\mathcal{H}^n \leq C_{L_1} \Vert
\tilde{v}(\cdot, \tau)\Vert_{L^2(B_{\delta^{-1}})}^4 +
e^{-\tau^\gamma}. \]
Using these estimates in \eqref{eq:xibound} we find that
\[\label{eq:xitaubound} |\xi_\tau| \leq C \Vert \tilde{v}(\cdot,
  \tau)\Vert_{L^2(B_{\tau^\delta})}^2 + \lambda \Vert \tilde{v}(\cdot,
  \tau)\Vert_{L^2(B_{\tau^\gamma})}+ e^{-\tau^\gamma}, \]
further decreasing $\gamma$ slightly. 
Note also that by \eqref{eq:normcompare} the $L^2$-norm here is
comparable to the $L^2$-norm on 
$B_{\delta^{-1}}$, or also to $\dd(\tilde{M}_\tau)$. 

Now we consider the evolution of the component of $\tilde{v}$ lying in
$\mathfrak{h}$. Let us therefore write
\[ \tilde{v} = \tilde{v}_{\mathfrak{h}} + \tilde{v}_{\perp}, \]
where $\tilde{v}_\perp$ is orthogonal to $\mathfrak{h}$. 
Further, following \cite[Section 2]{FL93} (see also
\cite[Section 3]{Vel93} or \cite[Lemma 5.6]{SX22}) we write  
\[ \label{eq:Av} \tilde{v}_{\mathfrak{h}} = \frac{1}{\sqrt{2}}
  \sum_{i=1}^k A_{ii}(\tau) h_{ii}(y) + \frac{1}{2}\sum_{i\not=j}
  A_{ij}(\tau) h_{ij}(y), \]
for a symmetric matrix $A(\tau)$. Here $h_{ii}(y) = c_1 (y_i^2-2)$ and
$h_{ij} = c_2 y_iy_j$ for suitable constants $c_1,c_2$ so that these
functions have unit $L^2$ norm on $\mathcal{C}_{n,k}$. 
Note that $\tilde{v}_{\perp}$ also satisfies pointwise bounds like in
\eqref{eq:tvbounds10} (with the norm of either $\tilde{v}$ or
$\tilde{v}_{\perp}$ on the
right), since the elements in the finite dimensional space
$\mathfrak{h}$ are explicit functions that satisfy such estimates.

Denote by $f\in\mathfrak{h}$ an element with unit norm, and consider
the evolution of the corresponding component of $\tilde{v}$. We will write $d\mu
= e^{-|x|^2 / 4} d\mathcal{H}^n$.  We have
\[ \label{eq:hevolve}\partial_\tau \int_{\mathcal{C}} f \tilde{v}\, d\mu &= \int_{\mathcal{C}} f
  (\mathcal{L}\tilde{v} + Q + \mathcal{R}_{\xi_\tau} + \xi_{\tau} +
  \mathcal{E})\, d\mu \\
  &= \int_{\mathcal{C}} f(Q + \mathcal{R}_{\xi_\tau} + \mathcal{E})\,
  d\mu, 
\]
where we used that $f$ is orthogonal to the image of $\mathcal{L}$,
and also to $\xi_\tau$. Let us further decompose $Q = Q_2 + Q_{>2}$
into the quadratic and higher order terms. We have (see
\cite[Proposition A.1]{SX22})
that
\[ \label{eq:Q2formula}
  Q_2 &= -\frac{1}{2\sqrt{2(n-k)}}\left(\tilde{v}^2 +
    2|\nabla_z\tilde{v}|^2 +4\tilde{v} \Delta_z
  \tilde{v}\right) \\
&= -\frac{1}{2\sqrt{2(n-k)}}\left(\tilde{v}^2 + 2|\nabla_z \tilde{v}_{\perp}|^2+ 4\tilde{v} \Delta_z \tilde{v}_{\perp}\right), \]
where the gradient and Laplacian are only taken in the sphere
direction of the cylinder, and these vanish on the
$\mathfrak{h}$-component. In order to deal with the norm squared
component, we integrate by parts, and note that the measure $d\mu$,
and $f$, are
invariant in the $z$-directions. So we have
\[ \int_{\mathcal{C}} f |\nabla_z \tilde{v}_{\perp}|^2\, d\mu =
  -\int_{\mathcal{C}} f \tilde{v}_{\perp} \Delta_z \tilde{v}_{\perp}\,
  d\mu. \]
We can also integrate the Laplacian term by parts twice to get that
$\tilde{v}\Delta_z \tilde{v}_{\perp}$ can be replaced by
$\tilde{v}_{\perp}\Delta_z \tilde{v}_{\perp}$. Using the pointwise
bounds \eqref{eq:tvbounds10} and integrating, we find that
\[ \label{eq:fv40} \left| \int_{\mathcal{C}} f \tilde{v}_{\perp} \Delta_z \tilde{v}_{\perp}\,
  d\mu\right| &\leq C \Vert \tilde{v}_\perp\Vert_{L^2}
(\Vert \tilde{v}\Vert_{L^2} + e^{-\tau^\gamma}) \\
&\leq C(\lambda \Vert \tilde{v}\Vert_{L^2}^2 + e^{-\tau^\gamma}),  \]
for $\tau\geq T_0 + L_0/2$, 
where we can take $\lambda > 0$ arbitrarily small by taking $L_0$
large. This deals with the
terms in $Q_2$ other than $\tilde{v}^2$. We also have
\[ \int_{\mathcal{C}} f \tilde{v}^2 \, d\mu = \int_{\mathcal{C}} f
  (\tilde{v}_{\mathfrak{h}}^2 +
  2\tilde{v}_{\mathfrak{h}}\tilde{v}_\perp + \tilde{v}_\perp^2)\,
  d\mu, \]
so
\[ \label{eq:fv41}  \left| \int_{\mathcal{C}} f \tilde{v}^2\, d\mu - \int_{\mathcal{C}}
    f \tilde{v}_{\mathfrak{h}}^2\, d\mu \right| \leq C\lambda \Vert
  \tilde{v}\Vert_{L^2}^2 e^{-\tau^\gamma}. \]

The terms in $Q_{>2}$ contain at least 3 factors of $\tilde{v}$ or its
derivatives, which we estimate using the  pointwise bounds \eqref{eq:tvbounds10}, and that $\Vert
\tilde{v}\Vert_{C^4}\leq \tau^{-\gamma}$ on
$B_{\tau^\gamma}$. Similarly the $\mathcal{R}_{\xi_\tau}$ term can be
estimated using \eqref{eq:xitaubound} and \eqref{eq:tvbounds10}. We get
\[ \left|\int_{\mathcal{C}} f (Q_{>2} + \mathcal{R}_{\xi_\tau}) \,d\mu \right| \leq C
  \tau^{-\gamma} \Vert \tilde{v}\Vert_{L^2}^2 +C\lambda
\Vert \tilde{v}\Vert_{L^2}^2+  e^{-\tau^\gamma}. \]
Finally, the cutoff term $\mathcal{E}$ is bounded, and supported where $|x|
\sim \tau^\gamma$. Combining everything, we find that for any $\lambda
> 0$ we can arrange that  for all $\tau\geq T_0 + L_0/2$, if $L_0$ is
chosen large enough, we have
\[ \left| \partial_\tau \int_{\mathcal{C}} f \tilde{v}_{\mathfrak{h}}\, d\mu +
    \frac{1}{2\sqrt{2(n-k)}} \int_{\mathcal{C}}f \tilde{v}_{\mathfrak{h}}^2\, d\mu\right| \leq
  C\lambda \Vert \tilde{v}_{\mathfrak{h}} \Vert^2_{L^2}+ e^{-\tau^\gamma}. \]
Note that we replaced $\tilde{v}$ by $\tilde{v}_{\mathfrak{h}}$ on the
left since $f\in \mathfrak{h}$, and on the right by the fact that the
$L^2$-norm of $\tilde{v}_{\mathfrak{h}}$ dominates the $L^2$-norm of
$\tilde{v}$.

This is now a
finite dimensional problem. Following the calculation in
\cite[Section 3]{Vel93} or \cite[Lemma 5.6]{SX22}, 
we can write it as the following ODE for the matrix $A$:
\[ \label{eq:AODE} \left\Vert \frac{d}{d\tau} A(\tau)  + \nu_{n,k} A(\tau)^2\right\Vert \leq
  C\lambda \Vert A(\tau)\Vert^2 + e^{-\tau^\gamma}, \]
where $\nu_{n,k} > 0$ is a fixed constant. 
We can arrange that $\lambda$ is small enough, so that $C\lambda <
\nu/4$. Moreover, note that on the interval $[T_0, T_0 + L_0]$ we have,
by \eqref{eq:L2normal1}, that 
\[ \label{eq:Aform} A_{ii} &= \frac{\nu_{n,k}^{-1}}{\tau} + O(\tau^{-3/2}), \text{ for } i
  \leq \ell, \\
  A_{ii} &= O(\tau^{-3/2}), \text{ for } i > \ell, \\
  A_{ij} &= O(\tau^{-3/2}), \text{ for } i\not=j. \]
If $T_0$ is chosen sufficiently large, then we also have
$e^{-\tau^\gamma} < \tau^{-3/2}$. Using this, together with the ODE
for $A$, we can use \cite[Lemma 3.1]{FL93} to
find that if $\ell > 0$, then  for $\tau \geq T_0$ we have
\[ C^{-1} \tau^{-1} \leq \Vert A(\tau)\Vert \leq C \tau^{-1} \]
for a uniform constant $C$. 
In particular this means that $\Vert \tilde{v}_{\mathfrak{h}}\Vert
\sim \Vert \tilde{v}\Vert_{L^2} \sim
\tau^{-1}$ for all $\tau \geq T_0$. When $\ell=0$, then by \eqref{eq:expdecay53} we have
exponential decay of $A(\tau)$ until eventually \eqref{eq:mostlyh}
begins to hold, and then the argument above still leads to $\Vert
A(\tau)\Vert \leq C\tau^{-1}$ for all $\tau \geq T_0$. 

To get more precise
asymptotics we need to return to the original flow $M_\tau$, not
modified by rotations. From \eqref{eq:xitaubound} we now have
$|\xi_\tau| \leq C\tau^{-2}$, which implies that $|S_\tau - Id| \leq
C\tau^{-1}$. Writing $M_\tau$ as the graph of $u(x,\tau)$ on
$B_{\tau^\gamma}$, we now have the improved $L^2$-estimate  $\Vert u\Vert_{L^2} \leq
C\tau^{-1}$. From Proposition~\ref{prop:tbetagraph10} we find that for small
$\gamma < \gamma'$ we have
$\Vert \tilde{u}\Vert_{C^4(B_{\tau^\gamma})} \leq \tau^{2\gamma'-1}$,
for $\tau \geq T_0+1$, if $T_0$ is large enough.   

Let $\tilde{u} = \chi u$ as before, cutting off
on $B_{\tau^\gamma}$. This satisfies the equation
\[ \partial_\tau \tilde{u} = \Delta \tilde{u} - \frac{1}{2}x\cdot \nabla \tilde{u} + \tilde{u}+
  a_1\cdot \nabla \tilde{u} + a_2 \tilde{u} + \mathcal{E}, \]
as before. 
Consider the decomposition
\[ \label{eq:tudecomp}
  \tilde{u} = \tilde{u}_0 + \tilde{u}_{>0} + \tilde{u}_{<0} =  \tilde{u}_{\mathfrak{h}} + \tilde{u}_{rot} +
  \tilde{u}_{>0} + \tilde{u}_{<0}, \]
according to the eigenspaces of $\mathcal{L}$ corresponding to
$\mathfrak{h}$, rotations, positive, and negative eigenvalues
respectively. 
We first show that $\tilde{u}_{<0}, \tilde{u}_{>0}$ are of lower
order. We have
\[ \partial_\tau \int_{\mathcal{C}} \tilde{u}_{>0}^2\,
  d\mu &= \int_{\mathcal{C}} 2 \tilde{u}_{> 0} (\mathcal{L}\tilde{u}_{>0}
  + Q(\tilde{u}, \nabla\tilde{u}, \nabla^2\tilde{u}) + \mathcal{E})\,
  d\mu \\
&\geq  \frac{1}{2} \int_{\mathcal{C}} \tilde{u}_{> 0}^2 \,d\mu -
C\tau^{8\gamma'-4} - e^{-\tau^\gamma},\]
using that the smallest positive eigenvalue of $\mathcal{L}$ is
$1/2$, and the term $\tilde{u}_{>0}Q$ is bounded by
\[ C|\tilde{u}_{>0}\tau^{4\gamma'-2}| \leq \frac{1}{2} \tilde{u}_{>0}^2
  + C^2 \tau^{8\gamma'-4}. \]
Letting $p(t) = \Vert \tilde{u}_{>0}\Vert_{L^2}^2$, we get
\[ p'(\tau) \geq \frac{1}{2}p(\tau) - C \tau^{8\gamma' - 4}, \]
for large $\tau$. Integrating this inequality from $\tau$ to $\infty$,
and using that $p(\tau)\to 0$, we find that
\[ p(\tau) &\leq C\int_\tau^\infty e^{\frac{1}{2}(\tau - s)}
  s^{8\gamma'-4}\, ds \leq C \tau^{8\gamma'-4}. 
\]
It follows that for any $\gamma' > 0$ we can arrange that 
$\Vert \tilde{u}_{>0} \Vert_{L^2} \leq C\tau^{-2 +
  4\gamma'}$ for large $\tau$.

Similarly we have
\[ \partial_\tau \int_{\mathcal{C}} \tilde{u}_{<0}^2\,
  d\mu &\leq - c_1 \int_{\mathcal{C}} \tilde{u}_{<  0}^2 \,d\mu +
C\tau^{8\gamma'-4} - e^{-\tau^\gamma},\]
where $c_1$ is the smallest negative eigenvalue of $\mathcal{L}$, so $q(\tau) =
\Vert \tilde{u}_{<0}\Vert_{L^2}$ satisfies
\[ q'(\tau) \leq -c_1 q(\tau) + C \tau^{8\gamma' - 4}. \]
Now we integrate from $T_0$ to $\tau$, and use that by
\eqref{eq:L2normal1} we have $\Vert \tilde{u}_{<0}(T_0)\Vert_{L^2} <
2T_0^{-3/2}$. We find that
\[ q(\tau) \leq 2T_0^{-3/2} e^{c_1(\tau-T_0)} + \int_{T_0}^\tau
  e^{c_1(s-t)} s^{-4+8\gamma'}\, ds. \]
If $L_0$ is chosen sufficiently large, depending on $T_0$, then this
implies that $\Vert \tilde{u}_{<0}\Vert_{L^2} \leq
C\tau^{-2+4\gamma'}$ for all $\tau \geq T_0 + L_0/2$. 

Now we consider the component $\tilde{u}_{rot}$ in
$\mathfrak{h}_{rot}$. Letting $h\in \mathfrak{h}_{rot}$ be an element
with norm 1, we have
\[ \label{eq:hcomp2} \partial_\tau \int_{\mathcal{C}} h \tilde{u}\, d\mu =
  \int_{\mathcal{C}} h (Q_2 + Q_{>2} + \mathcal{E})\, d\mu, \]
decomposing the nonlinear terms into quadratic and higher degree
pieces. The higher order terms and the cutoff error can be estimated
as before, using the bound
$\Vert\tilde{u}\Vert_{C^4(B_{r^\gamma})}\leq \tau^{2\gamma'-1}$:
\[ \label{eq:hcomp3} \left| \int_{\mathcal{C}} h (Q_{>2} + \mathcal{E})\, d\mu\right|
  \leq C\tau^{6\gamma' - 3}. \]
Let us substitute the decomposition \eqref{eq:tudecomp} into the formula
\eqref{eq:Q2formula} for $Q_2$. The terms arising purely from the
$\tilde{u}_{rot}$ component are orthogonal to $h$ -- this follows from
the fact that the Jacobi fields in $\mathfrak{h}_{rot}$ are
integrable. One can check explicitly that the other terms arising from
$\tilde{u}_{rot}$ and $\tilde{u}_{\mathfrak{h}}$ are also
orthogonal to $h$. The remaining terms in \eqref{eq:hcomp2} due to
$Q_2$ can all be bounded using $\Vert
\tilde{u}_{>0}\Vert_{L^2}$ and $\Vert \tilde{u}_{<0}\Vert_{L^2}$, which for
$\tau \geq T_0 + L_0/2$ are 
of order $\tau^{-2+4\gamma'}$, multiplied
by $\Vert \tilde{u}\Vert_{C^4(B_{\tau^\gamma})} \leq
\tau^{2\gamma'-1}$. Combined with \eqref{eq:hcomp3} we this implies that
\[ \left| \partial_\tau \int_{\mathcal{C}} h \tilde{u}\, d\mu \right|
  \leq C\tau^{6\gamma' - 3}. \]
Integrating this, using that as $\tau\to\infty$
the $h$ component decays to zero, we get $\Vert \tilde{u}_{rot}
\Vert_{L^2} \leq C \tau^{6\gamma'- 2}$, for $\tau \geq T_0+L_0/2$. 

Finally we return to the component $\tilde{u}_{\mathfrak{h}}$, and
define the matrix $A(\tau)$ as in \eqref{eq:Av}, but using
$\tilde{u}_{\mathfrak{h}}$. For $f\in \mathfrak{h}$ the inner product
of $f\tilde{u}$ evolves like \eqref{eq:hevolve}, except we no longer
have the $\mathcal{R}_{\xi_\tau}, \xi_\tau$ terms:
\[ \partial_\tau \int_{\mathcal{C}} f \tilde{u}\, d\mu =
  \int_{\mathcal{C}} f(Q_2 + Q_{>2} + \mathcal{E})\, d\mu. \]
We write $\tilde{u} = \tilde{u}_{\mathfrak{h}} + \tilde{u}_{\perp}$,
and use that $\Vert \tilde{u}_\perp\Vert_{L^2}\leq
C\tau^{6\gamma'-2}$. 
The terms arising from $Q_{>2}$ and $\mathcal{E}$ can be bounded by
$\tau^{6\gamma'-3}$ as in \eqref{eq:hcomp3}. We can estimate the $Q_2$
term as before, but now in \eqref{eq:fv40} and \eqref{eq:fv41} the
errors on the right hand side become
\[ C( \tau^{6\gamma'-2} \Vert \tilde{u}\Vert_{L^2} + e^{-\tau^\gamma})
  \leq C \tau^{6\gamma'-3}. \]
As a consequence, instead of \eqref{eq:AODE},
the equation satisfied by the matrix $A$ is now
\[ \label{eq:Aevolve2} \left\Vert \frac{d}{d\tau} A(\tau) + \nu_{n,k}
    A(\tau)^2 \right\Vert \leq 
  \tau^{6\gamma'-3}, \]
once $\tau\geq T_0+L_0/2$ is large enough. Note that we can replace
the constant $C$ by 1, by adjusting $\gamma'$ and taking $\tau$ large
enough. 

We can now argue similarly to \cite[Proposition 5.1]{FL93}
to analyze the ODE \eqref{eq:Aevolve2}. In their case the error is
$O(\tau^{-3})$, while in our case it is only $O(\tau^{6\gamma'-3})$
for any $\gamma' > 0$, and also we want better control of the error
terms, but the arguments are similar.

First we can show that for any $\kappa >
0$, once $T_0, L_0$ are large enough the eigenvalues
$\lambda_i(\tau)$ of $A(\tau)$ in decreasing order satisfy
\[ \label{eq:lambda30} \lambda_i(\tau) &= \frac{\nu_{n,k}^{-1}}{\tau} +
  O(\tau^{\kappa-2}), \text{ for } i\leq \ell, \\
  \lim_{\tau\to \infty} \tau \lambda_i(\tau) &> 0, \text{ for } \ell <
  i \leq \ell', \\
  \lambda_i(\tau) &= O(\tau^{\kappa-2}), \text{ for } i > \ell', \]
for some $\ell' \geq \ell$, and
all $\tau \geq T_0 + L_0$. To see this, note that for $\tau \geq
T_0 + L_0/2$  the eigenvalues satisfy the ODEs
\[\label{eq:lambdaode} |\lambda'(\tau)  +\nu_{n,k} \lambda(\tau)^2|
  \leq  \tau^{6\gamma'-3}, \]
where $\gamma' > 0$ can be as close to zero as we like (if $T_0, L_0$
are large).  Instead of arguing like \cite[Lemma 5.1]{FL93}, we
use barriers. The functions
\[ g_{\pm}(\tau) =
  \nu_{n,k}^{-1}\tau^{-1} \pm (\tau - L_0/2)^{7\gamma'-2}\]
are super- and
subsolutions of this equation once $\tau \geq T_0 + L_0/2$ and $T_0,
L_0$ are sufficiently large. At the same time, by \eqref{eq:L2normal1}
we have
\[ \left|\lambda_i(T_0 + L_0/2) - \frac{\nu_{n,k}^{-1}}{T_0 + L_0/2}
  \right| < (T_0 + L_0/2)^{-3/2}, \text{ for } i\leq\ell, \]
so we have
\[ \lambda_i(T_0 + L_0/2) \leq g_+(T_0+ L_0/2), \]
if $L_0$ is sufficiently large, depending on $T_0$ and $\gamma'$. By the
supersolution property of $g_+$, this inequality then holds for all
$\tau\geq T_0 + L_0/2$ as well. If $\tau \geq T_0 + L_0$, then 
\[ g_+(\tau) \leq \nu_{n,k}^{-1} \tau^{-1} + \tau^{8\gamma'-2}, \]
if $T_0+L_0$ is sufficiently large, and in particular the upper bound
in \eqref{eq:lambda30} holds for $i\leq \ell$ if $8\gamma' <
\kappa$. The lower bound is similar, using $g_-$.

To estimate the $\lambda_i$ for $i > \ell$ we can argue as
follows. First note that if $\lambda(\tau)$ satisfies
\eqref{eq:lambdaode}, then 
\[ (\tau\lambda)' \leq \tau^{-1}( \tau\lambda - 
  \tau^2\nu_{n,k}\lambda^2 + \tau^{6\gamma'-1}). \]
It follows that if $\tau_0\lambda(\tau_0) \leq -\tau_0^{6\gamma'-1}$
for some $\tau_0$, then
$|\tau\lambda|$ is increasing, and this inequality persists for all
$\tau \geq \tau_0$. Eventually this leads to $\lambda(\tau) \to
-\infty$ as $\tau\to\infty$, which contradicts that our flow
$M_\tau$ converges to $\mathcal{C}_{n,k}$. It follows that we have
$\lambda(\tau) \geq -\tau^{6\gamma'-2}$ for all $\tau \geq T_0 +
L_0/2$.

To bound the $\lambda_i$ from above for $i > \ell$, consider the
differential inequality
\[ \lambda'(\tau) \geq - \nu \lambda(\tau)^2 - \tau^{6\gamma'-3}. \]
Then
\[ (\tau\lambda)' \geq \tau^{-1}(\tau\lambda - \nu (\tau\lambda)^2 -
  \tau^{6\gamma'-1}). \]
If at some time $\tau_0$ we have the two conditions $\tau_0\lambda(\tau_0) >
2\tau_0^{6\gamma'-1}$ and $2\nu \tau_0\lambda(\tau_0) < 1$, then
$\tau\lambda(\tau)$ is increasing, and it keeps increasing until $2\nu
\tau \lambda(\tau) \geq 1$, and this inequality then persists for all
larger $\tau$. If this happens for $\lambda_i$ with some $i > \ell$,
then according to the normal form \eqref{eq:SXnormal} the singularity
of $M_\tau$ at infinity is $\ell'$-degenerate for some $\ell' >
\ell$. This is the alternative (i) in
Proposition~\ref{prop:L2normalform}.

The other possibility is that $\tau\lambda(\tau) \leq
2\tau^{6\gamma'-1}$ for all $\tau\geq T_0+L_0/2$, which applied to the
$\lambda_i$ for $i > \ell$ gives the
required estimate in \eqref{eq:lambda30}. Suppose from now that we are in this
setting. 

Once the bounds for the eigenvalues are established, one can consider
the evolution of the
different principal minors of $A$ as in \cite[Proposition
5.1]{FL93}, to find that in fact
\[ QA(\tau)Q^{-1} = \frac{\nu_{n,k}^{-1}}{\tau} \begin{pmatrix}
    I_{\ell} & 0 \\ 0 & 0_{k-\ell}\end{pmatrix}  +  O(\tau^{\kappa-2}), \]
for an orthogonal matrix $Q$, where $0_{k-\ell}$ is the $k-\ell$
dimensional zero matrix. Since for $\tau\in [T_0,
T_0+L_0]$ the matrix $A$ has the form \eqref{eq:Aform}, by choosing
$L_0$ large we can ensure that $Q$ is close to the identity when $\ell
> 0$. When $\ell=0$, we can take $Q=I$.  
\end{proof}

\section{Geometric normal form}\label{sec:geomnormal}
In the previous section we identified the second term in the normal
form for a rescaled flow $M_\tau$ with a cylindrical singularity at
infinity, in an $L^2$ sense. Recall that for
any $\beta > 0$ the
contribution of the region outside of the $\tau^\beta$-ball to the
$L^2$-norm of the graphicality function $u$ in
Proposition~\ref{prop:L2normalform} is negligible compared to
$\tau^{-1}$, so the $L^2$ normal form does not immediately say anything
about the geometry of $M_\tau$ outside of such balls. 
Our goal in this section is to obtain good graphicality of $M_\tau$
over suitable models over balls  of size
$B_{K_0\sqrt{\tau}}$ for large $K_0 > 0$, where in practice we can take $K_0
= 20$. The corresponding asymptotic result was obtained in the
$C^1$-normal form result of Sun-Xue~\cite[Theorem 1.3]{SX22}, but we
need a more quantitative version. We will also prove the exponential
decay of the derivatives $\partial_{y_q} u$ in the degenerate
$y_q$-directions in Proposition~\ref{prop:uqdecay}, which will play an
important role later. 

As explained in \cite{AV97}, \cite{SX22},
near the boundary $|x| = K_0\tau^{1/2}$ of the ball $B_{K_0\tau^{1/2}}$ the flow can
no longer be expected to have good graphicality over
$\mathcal{C}_{n,k}$, and we must use a corrected model. For the moment
consider the setting of a nondegenerate
$\mathcal{C}_{n,1}$-singularity, i.e. a nondegenerate neck-pinch of
hypersurfaces modeled on the cylinder $\mathbb{R}\times S^{n-1}$.  It was
observed by Angenent-Vel\'azquez~\cite{AV97} (see also
Sun-Xue~\cite[Theorem 1.3]{SX22}) that to leading order,
near $|y| = K_0\sqrt{\tau}$ as $\tau\to\infty$, the correct asymptotic
behavior is to consider the graph of
\[ \sqrt{2(n-1)}\left(\sqrt{1 + \frac{y^2}{2\tau}} - 1\right) \]
over $\mathcal{C}_{n,1}$. In \cite{AV97} it is briefly discussed how
one can construct a rotationally symmetric flow with a nondegenerate
neck-pinch by further refining this approximate solution, in analogy
to their construction of degenerate neck-pinches. As far as we know
this construction has not been carried out in detail,
however. Moreover we also need analogous ``model flows'' for
nondegenerate $\mathcal{C}_{n,k}$-singularities with $k > 1$.

In this paper we will use an indirect inductive argument to show that
flows developing non-degenerate $\mathcal{C}_{n,k}$-singularities
exist for all $n,k$, see Proposition~\ref{prop:nondegexist}. For each
$n,k$ we fix one such flow as our model, and use that asymptotically
as $\tau\to \infty$ the geometry of such non-degenerate singularities
is described by the $C^1$-normal form of \cite{SX22}. Taking a product
with a Euclidean factor we obtain suitable $\ell$-degenerate models as
well. We will then show that arbitrary $\ell$-degenerate flows have
good graphical behavior over these models.

Eventually we will work by induction on the dimension
$n$, and for now we assume that for all $n' < n$ and $0 < k' < n'$ we
have fixed a mean curvature flow $\mathcal{N}^{n', k'}_t$
which develops a nondegenerate $\mathcal{C}_{n', k'}$-singularity at
$(0,0)$. We write $\mathcal{M}^{n', k'}_\tau$ for the corresponding
rescaled flow. Let us recall the $H^1$ and $C^1$ normal form theorems
due to Sun-Xue~\cite[Theorem 1.3]{SX22}. These say that for any $K_0 > 0$
and $\gamma >0$ there exists a $T_0(K_0, \gamma)$, such that
for $\tau \geq T_0$ we can write $\mathcal{N}^{n', k'}_\tau$ as the
graph of $U(x, \tau)$ over $\mathcal{C}_{n',k'}$ on the ball
$B_{K_0\sqrt{\tau}}$, satisfying
\[ \label{eq:SXH1} \left\Vert U(x, \tau) - \frac{\sqrt{2(n'-k')}}{4\tau}
    \sum_{i=1}^{k'} (y_i^2-2) \right\Vert_{H^1} \leq \tau^{\gamma-2}, \]
and 
\[ \label{eq:SXC1} \left\Vert U(x,\tau) - \sqrt{2(n'-k')} \left(\sqrt{1 +
        \frac{1}{2\tau} \sum_{i=1}^{k'}
        (y_i^2-2)}-1\right)\right\Vert_{C^1} \leq \tau^{\gamma-1}. \]
Note that $|U(x,\tau)|$ is not small, since it is of order $K_0$
near the boundary of $B_{K_0\sqrt{\tau}}$, but $|\nabla_x U(x,\tau)|
\lesssim K_0 \tau^{-1/2}$, so for large $\tau$ the model is very close
to a cylinder of radius $\sqrt{2(n'-k')} + U((y_0,z_0), \tau)$ on the region
where $|y - y_0| \ll \tau^{1/2}$.  
The following statements result from this description. Let us define the function
\[ F(\xi) = \sqrt{1 + \frac{\xi^2}{2}}. \]
We emphasize that the following result refers to a specific model flow
$\mathcal{M}^{n,k,\ell}_\tau$, however in
Proposition~\ref{prop:ldegendecay} we will show that any rescaled flow
with an $\ell$-degenerate cylindrical singularity will have similar
behavior, if we have good
control of the $L^2$-norm of the graphicality function. 
\begin{prop}\label{prop:models}
  Let $K_0, \kappa_0 > 0$. There is a $T_0(K_0, \kappa_0) > 0$ such that for $\tau > T_0$
  we have the following properties of $\mathcal{M}^{n', k'}_\tau$ on
  the ball $B_{K_0\sqrt{\tau}}$:
  \begin{enumerate}
  \item For any $x_0 = (y_0, z_0)$, on the ball $B_{\tau^{1/3}}(x_0)$,
    $\mathcal{M}^{n',k'}_\tau$ is $\kappa_0$-graphical over the
    cylinder $S^{n'-k'}(r)\times \mathbb{R}^{k'}$ with $r =
    \sqrt{2(n'-k')} F(|y_0| \tau^{-1/2})$. In addition the second
    fundamental form satisfies $|A|^2\leq \frac{1}{2}  +\kappa_0$.
    Let $\mathbf{n}$ be the unit normal vector of
    $\mathcal{M}^{n', k'}_\tau$, and $\mathbf{n}_0$ be the unit
    radial vector in the $\mathbb{R}^{n-k+1}$-factor, i.e. the normal
    vector to $\mathcal{C}_{n,k}$, extended as a constant vector field
    radially. Then $\Vert\mathbf{n} -
    \mathbf{n}_0\Vert_{C^2} \leq C(K_0) \tau^{-1/2}$. 
  \item For small $\gamma > 0$, on the ball $B_{\tau^\gamma}$,
    $\mathcal{M}^{n',k'}_\tau$ is the graph of $w(x,\tau)$ over
    $\mathcal{C}_{n',k'}$, where
    \[ \label{eq:Btaugammaw} w(x,\tau) = \frac{\sqrt{2(n'-k')}}{4\tau}
      \sum_{i=1}^{k'} (y_i^2-2) + O(\tau^{-19/10}), \]
    with the error estimate being in $C^2$. 
    \item We identify functions on $\mathcal{M}^{n',k'}_\tau\cap
      B_{\tau^\gamma}$ with functions on $\mathcal{C}_{n',k'}$
      through the graphicality. Then we have the following comparison
      between the linearized operators on
      $\mathcal{M}^{n',k'}_\tau$ and $\mathcal{C}_{n',k'}$:
   \[ \Delta -
      \frac{1}{2}x\cdot \nabla + |A|^2 =
    \left(\mathcal{L}_{\mathcal{C}} - \frac{1}{2}\right) - \frac{1}{4\tau} \left(\sum_{i=1}^{k'}
   (y_i^2-2)\right) (2\Delta_z  + 1) + O(\tau^{-19/10}). \]
      On the left the operators are computed on $\mathcal{M}$, while
      on the right they are in terms of $\mathcal{C}_{n',k'}$. The Laplacian
      $\Delta_z$ is on the $S^{n'-k'}$-factors. 
  \end{enumerate}
\end{prop}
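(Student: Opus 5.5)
The plan is to derive all three properties from the Sun--Xue normal forms \eqref{eq:SXH1}, \eqref{eq:SXC1} for the fixed model flow $\mathcal{M}^{n',k'}_\tau$, upgraded to higher regularity by local parabolic estimates. Everything is local and $\tau$ is large, so the argument is essentially a calculus exercise once $C^1$ closeness has been promoted to $C^k$ closeness.

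For (1), fix $x_0=(y_0,z_0)$ with $|x_0|\le K_0\sqrt{\tau}$. By \eqref{eq:SXC1}, on $B_{2\tau^{1/3}}(x_0)$ the radius function is
\[ \sqrt{2(n'-k')}\,F(|y|\tau^{-1/2}) + O(\tau^{\gamma-1}+\tau^{-1}), \]
with gradient $O(\tau^{\gamma-1})+O(\tau^{-1/2})$ in $C^0$. On $|y-y_0|\le 2\tau^{1/3}$ we have $|F(|y|\tau^{-1/2})-F(|y_0|\tau^{-1/2})|\le C\tau^{1/3-1/2}$. So the hypersurface is $C^1$-close (error $\Psi(\tau^{-1})$) to the cylinder of radius $r=\sqrt{2(n'-k')}F(|y_0|\tau^{-1/2})$, and $r$ is bounded between $\sqrt{2(n'-k')}$ and a constant depending on $K_0$.

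To upgrade $C^1$ to $C^4$, I would use that $C^1$ closeness on a slightly earlier time interval $[\tau-1,\tau]$, where the same estimates hold, feeds into Ecker--Huisken interior estimates (or pseudolocality, Proposition~\ref{prop:pseudo}). The rescaled flow on unit-size parabolic neighborhoods is a small Lipschitz graph, so higher derivatives are controlled and interpolation gives $\kappa_0$-graphicality in $C^4$. The curvature bound then follows because a cylinder of radius $r\ge\sqrt{2(n'-k')}$ has
\[ |A|^2=\frac{n'-k'}{r^2}\le\frac12, \]
and the graphical perturbation adds at most $\kappa_0$. For the normal, the model's normal deviates from the radial direction by the slope $|\nabla_y U|\lesssim K_0\tau^{-1/2}$. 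Its higher derivatives are controlled by the same interior estimates, now applied to the scale-invariant graphical function $U-U(x_0)$ near $x_0$, using that the exact profile $F$ has $k$-th derivatives of order $\tau^{-k/2}$. This gives $\|\mathbf{n}-\mathbf{n}_0\|_{C^2}\le C(K_0)\tau^{-1/2}$.

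For (2), I would apply Proposition~\ref{prop:L2normalform}(ii) to the model itself; it is nondegenerate, so $\ell=k'$, and the rotation can be absorbed by choice of the model. This yields an $L^2$ error $\tau^{-19/10}$ (indeed \eqref{eq:SXH1} already gives $\tau^{\gamma-2}$). Setting $\tilde w=w-\frac{\sqrt{2(n'-k')}}{4\tau}\sum(y_i^2-2)$, this difference satisfies the linearized equation up to errors: the quadratic term $Q$ evaluated on the $O(\tau^{-1})$ profile is $O(\tau^{-2+})$ on $B_{\tau^\gamma}$, and the profile itself solves $\partial_\tau=\mathcal L$ up to $O(\tau^{-2})$. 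So Proposition~\ref{prop:subsolpointwise} (with $\beta'=\gamma$, $\beta$ slightly larger and $b_0\approx1$) converts the $L^2$ bound into a pointwise bound $C\tau^{2\gamma(1+\epsilon)}\tau^{\gamma-2}+\tau^{-2}$. This is $O(\tau^{-19/10})$ for small $\gamma$, and Schauder estimates give the $C^2$ version.

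For (3), I would compute the linearized operator of a graph of $w$ over the cylinder to first order in $w$. For a normal graph over $S^{n'-k'}(\rho)\times\mathbb R^{k'}$ with $\rho=\sqrt{2(n'-k')}+w$, the sphere-direction Laplacian scales as
\[ \Delta_z^{(\rho)}=\frac{2(n'-k')}{\rho^2}\Delta_z\approx\Bigl(1-\frac{2w}{\sqrt{2(n'-k')}}\Bigr)\Delta_z, \]
and $|A|^2=\frac{n'-k'}{\rho^2}+O(|\nabla^2w|)\approx\frac12-\frac{w}{\sqrt{2(n'-k')}}$. Terms involving $\nabla w,\nabla^2 w$ in the $y$-directions are $O(\tau^{-1})$ times bounded operators with coefficients depending only on $y$. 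The expected main obstacle is to check that these $y$-direction terms, together with the drift term $x\cdot\nabla$ (which picks up a normal component $w$), combine to exactly the stated form. The ambiguity in identifying functions via graphicality only produces terms of order $\tau^{-1}$ which must cancel or be absorbed, and the second-fundamental-form contributions $\nabla^2_y w=O(\tau^{-1})$ must be tracked. The resulting $O(\tau^{-1})$ correction is then expected to be $-\frac{1}{4\tau}\sum(y_i^2-2)(2\Delta_z+1)$, matching \cite[Proposition A.1]{SX22} through the formula \eqref{eq:Q2formula} for $Q_2$, with the remainder $O(\tau^{-19/10})$ coming from (2).
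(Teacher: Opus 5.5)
\textbf{Parts (1) and (2).} These follow the paper's route. For (1) you use the $C^1$ normal form \eqref{eq:SXC1} plus pseudolocality/interior estimates, and $|A|^2\le\frac12+\kappa_0$ comes from $F\ge 1$. For (2) you subtract $E=\frac{\sqrt{2(n'-k')}}{4\tau}\sum_i(y_i^2-2)$, use $\partial_\tau E=-\tau^{-1}E$ and $\mathcal{L}_{\mathcal{C}}E=0$, and convert an $L^2$ bound into a pointwise one with Proposition~\ref{prop:subsolpointwise}. Two details need fixing there.
\begin{itemize}
\item The input must be the $\tau^{\gamma-2}$ bound of \eqref{eq:SXH1}. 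An $L^2$ input of size $\tau^{-19/10}$ from Proposition~\ref{prop:L2normalform} would not survive the loss factor $\tau^{2\gamma(1+\epsilon)}$.
\item Proposition~\ref{prop:subsolpointwise} only allows forcing supported in the cutoff annulus. The $O(\tau^{4\gamma-2})$ inhomogeneity on $B_{\tau^\gamma}$ should therefore be absorbed, as the paper does, by applying the proposition to $f=|V|+C\tau^{4\gamma-2}$. Note also that $\tau^{-1}E$ is $O(\tau^{2\gamma-2})$ there, not $O(\tau^{-2})$.
\end{itemize}

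\textbf{Part (3): the genuine gap.} You get the $\Delta_z$ and zeroth-order corrections from the radius change $\rho=\rho_0+w$, with $\rho_0=\sqrt{2(n'-k')}$ and $m=n'-k'$. But you leave as an ``expected'' outcome exactly the part that carries the content, and the error bounds you write down are too weak to give the statement. On $B_{\tau^\gamma}$ we have $\nabla_y w\approx\frac{\rho_0}{2\tau}y$ and $\nabla^2_y w\approx\frac{\rho_0}{2\tau}I$. So each of the following would be of order $\tau^{-1}$ and could not be absorbed into $O(\tau^{-19/10})$:
\begin{itemize}
\item a surviving term $\nabla w\cdot\nabla$;
\item a correction $O(|\nabla^2 w|)$ to $|A|^2$, as you write;
\item an $O(\tau^{-1})$ error from the identification of functions.
\end{itemize}
The formula holds only because these contributions cancel or vanish identically at first order.

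\textbf{The missing computation (this is what the paper does).} The induced metric is $dy^2+dw\otimes dw+(1+w/\rho_0)^2g_{S^m(\rho_0)}$, with volume density $(1+w/\rho_0)^m(1+O(|\nabla w|^2))$. Decomposing $x=(y,(\rho_0+w)\theta)$ into tangential and normal parts gives $x^T=\sum_i(y_i+\rho_0\partial_{y_i}w+\dots)\partial_{y_i}$. Hence
\[ \Delta=\Delta_{\mathcal{C}}-\frac{2w}{\rho_0}\Delta_z+\frac{m}{\rho_0}\nabla w\cdot\nabla+\dots,\qquad x\cdot\nabla=x\cdot\nabla_{\mathcal{C}}+\rho_0\nabla w\cdot\nabla+\dots \]
Since $m/\rho_0=\rho_0/2$, the first-order terms cancel exactly in $\Delta-\frac12x\cdot\nabla$.

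For $|A|^2$, the only term linear in $\nabla^2w$ pairs it with $A_{\mathcal{C}}$. That term sees only $\nabla_z^2w=O(\tau^{-19/10})$, because $A_{\mathcal{C}}$ has no $yy$-block. So $|A|^2=\frac{m}{(\rho_0+w)^2}+\dots=\frac12-\frac{w}{\rho_0}+\dots$.

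Every remainder is either quadratic in $w$, hence $O(\tau^{4\gamma-2})$ on $B_{\tau^\gamma}$, or involves the $O(\tau^{-19/10})$ part of $w$ from (2). Substituting $w=E+O(\tau^{-19/10})$ then gives (3).

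\textbf{About your appeal to \eqref{eq:Q2formula}.} This can be made into an alternative proof. For $z$-independent $w$ one has $DQ_2(w)[\phi]=-\frac{w}{\rho_0}(2\Delta_z+1)\phi$, which is exactly the correction. But you would first have to show that, under the graphical identification, $\Delta-\frac12x\cdot\nabla+\frac12+|A|^2$ on $\mathcal{M}$ agrees with $\mathcal{L}_{\mathcal{C}}+DQ(w)$ up to quadratic errors. The change of normal and the base-point shift contribute only $O(|\nabla w|^2)$. The remaining discrepancies carry a factor of the normal speed of $\mathcal{M}_\tau$ or its gradient, which are small. None of these are the $O(\tau^{-1})$ terms you anticipate.
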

\begin{proof}
The statement (1) follows directly from the $C^1$-normal form
\eqref{eq:SXC1}, together with pseudolocality to obtain higher
derivative estimates. The bound $|A|^2 \leq \frac{1}{2} + \kappa_0$
follows from the fact that $F(\xi)\geq 1$, so at each point in
$B_{K_0\tau^{1/2}}$, $\mathcal{M}^{n', k'}_\tau$ has good graphicality
over a cylinder with radius at least $\sqrt{2(n'-k')}$.

To see the more precise formula \eqref{eq:Btaugammaw} on the smaller ball
$B_{\tau^\gamma}$ for small $\gamma > 0$, we show that on such a ball
the integral bound \eqref{eq:SXH1} can be improved to a $C^2$-bound,
using Proposition~\ref{prop:subsolpointwise}. We consider the
function
\[ V(x,\tau) = U(x,\tau) - \frac{\sqrt{2(n'-k')}}{4\tau}
  \sum_{i=1}^{k'} (y_i^2-2). \]
Note that the graph of $U$ satisfies the mean curvature flow, so we
have
\[ \partial_\tau( V +  E) = \mathcal{L}_{\mathcal{C}} (V + E) +
    Q(V+E), \]
  where we write
  \[ E = \frac{\sqrt{2(n'-k')}}{4\tau}  \sum_{i=1}^{k'} (y_i^2-2). \]
  We have $\partial_\tau E = -\tau^{-1}E$, and $\mathcal{L}(E) = 0$, so
\[ \label{eq:dtaue30} \partial_\tau V = \mathcal{L}_{\mathcal{C}}(V) + Q(V+E) +
  \tau^{-1}E. \]
 On $B_{\tau^\gamma}$ we have $\Vert E(\cdot, \tau)\Vert_{C^2} \leq
 \tau^{2\gamma-1}$. Using that the terms in $Q$ are all at least
 quadratic, and the second derivative $\nabla^2(V+E)$ appears at most
 linearly,  we can write
 \[ Q(V + E) = a_1\cdot \nabla V + a_2 V + O(\tau^{4\gamma -2}), \]
 where for any $\kappa_0 > 0$ we can ensure that $|a_1|, |a_2| <
 \kappa_0$ if $\tau$ is large enough. From \eqref{eq:dtaue30} we then
 have
 \[ \partial_\tau |V| \leq \Delta |V| - \frac{1}{2}x\cdot \nabla |V| +
   |V| + |a_1| |\nabla |V|| + |a_2| |V| + C\tau^{4\gamma -2}. \]
 In terms of $f = |V| + C\tau^{4\gamma -2}$ this implies
 \[ \partial_\tau f \leq \Delta f - \frac{1}{2} x\cdot \nabla f +
   (1+\kappa_0) f + |a_1| |\nabla f|. \]
 We let $\tilde{f}(x, \tau) = \chi(2|x| \tau^{-\gamma}) f(x, \tau)$
 for a cutoff function $\chi$ as before, so that $\tilde{f}$ is
 supported on $B_{\tau^\gamma}$. Using that we have the $L^2$-bound
 $\Vert f\Vert_{L^2} \leq C \tau^{4\gamma-2}$ from \eqref{eq:SXH1}, we
 can apply Proposition~\ref{prop:subsolpointwise} with $\beta=\gamma$,
 $\beta' = \gamma' < \gamma$, leading to the pointwise bound
 \[ f(x,\tau) \leq C \tau^{2\gamma'(1+\kappa_0) + 4\gamma-2}, \]
 on $B_{\tau^{\gamma'}}$, 
 for a $\gamma' < \gamma$. By choosing $\gamma, \gamma'$ sufficiently
 small, we can ensure that $|V|(x, \tau) \leq f(x,\tau) \leq \tau^{-19/10}$ for large
 $\tau$, and using Schauder estimates we find that the derivatives of
 $V$ are also $O(\tau^{-19/10})$.

 In particular this means that on $B_{\tau^\gamma}$, for large $\tau$,
 we can write $\mathcal{M}^{n',k'}_\tau$ as the graph of
 \[  w = \frac{\sqrt{2(n'-k')}}{4\tau}
   \sum_{i=1}^{k'} (y_i^2-2) + O(\tau^{-19/10}) \]
 over $\mathcal{C}_{n',k'}$, as required by (2).

 Using the formula for $w$ above, by direct calculation we can compare
 various operators on $\mathcal{M}^{n',k'}_\tau$ and
 $\mathcal{C}_{n',k'}$: 
    \[ \Delta  &= \Delta_{\mathcal{C}} -
        \frac{2w}{\sqrt{2(n'-k')}} \Delta_z +
        \frac{\sqrt{2(n'-k')}}{2}\nabla w\cdot \nabla 
            + O(\tau^{-19/10})\\
        x\cdot\nabla &= x\cdot \nabla_{\mathcal{C}} +
        \sqrt{2(n'-k')} \nabla w\cdot \nabla + O(\tau^{-19/10}) \\
        |A|^2 &= |A_{\mathcal{C}}|^2 - \frac{1}{\sqrt{2(n'-k')}} w +
        O(\tau^{-19/10}),
      \]
    where we view both sides as acting on functions on
    $\mathcal{C}_{n',k'}$, identifying this with
    $\mathcal{M}^{n',k'}_\tau$ by the graphicality. The comparison of
    the linearized operators in (3) follows from this. 
\end{proof}
    
We use these nondegenerate model flows to define $\ell$-degenerate
models converging to $\mathcal{C}_{n,k}$. Specifically, for $0 < k < n$
and $0< \ell < k$ we define
\[ \mathcal{M}^{n, k, \ell}_\tau = \mathbb{R}^{k-\ell}\times \mathcal{M}^{n-(k-\ell),
    \ell}_\tau. \]
Note that this rescaled flow converges to $\mathbb{R}^{k-\ell}\times
\mathbb{R}^\ell \times S^{n-k} = \mathcal{C}_{n,k}$ as $\tau\to
\infty$, and it has an $\ell$-degenerate
$\mathcal{C}_{n,k}$-singularity at infinity. Note also that since we take
$\ell < k$, in order to write down this $n$-dimensional flow, we only
need to have defined the nondegenerate models in lower
dimensions. Proposition~\ref{prop:models} implies an
analogous result for $\mathcal{M}^{n,k,\ell}_\tau$. For $\ell=0$ we
can define $\mathcal{M}^{n,k,0}_\tau = \mathcal{C}_{n,k}$. 

The fact that for
large $\tau$ the model is close to a cylinder on balls of radius
$o(\tau^{1/2})$ implies the following, using the pseudolocality
result, Proposition~\ref{prop:pseudo} (see also the proof of
\cite[Theorem 6.2]{SX22}). 
\begin{prop}\label{prop:modelpseudo}
  Given $\epsilon, K_0 > 0$ there exist $\delta_2(\epsilon), C_2(\epsilon),
  T_2(\epsilon, K_0)$ with the following property. Suppose that $M_\tau$ is
  a rescaled mean curvature flow, and for some $\tau_0 > T_2$, $M_{\tau_0}$
  is $\delta_2$-graphical over $\mathcal{M}^{n,k,\ell}_{\tau_0}$ on
  the ball $B_R$ for some $R < K_0\tau_0^{1/2}$. 
  Then for $\tau\in [\tau_0, \tau_0+10]$, and
  \[ R(\tau) = \min\{ K_0 \sqrt{\tau}, e^{(\tau-\tau_0)/2} (R - C_2)\}, \]
  we have
  that $M_\tau$ is $\epsilon$-graphical over
  $\mathcal{M}^{n,k,\ell}_{\tau}$ on the ball $B_{R(\tau)}$.
\end{prop}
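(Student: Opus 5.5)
The plan is to reduce to the cylindrical pseudolocality statement, Proposition~\ref{prop:pseudo}, applied on balls of radius $o(\tau_0^{1/2})$ centered at points of the model. By Proposition~\ref{prop:models}(1), applied to the nondegenerate factor $\mathcal{M}^{n-(k-\ell),\ell}$ and then crossed with $\mathbb{R}^{k-\ell}$, for $\tau\ge T_2(\epsilon,K_0)$ and any $x_0\in B_{K_0\sqrt\tau}$, the model $\mathcal{M}^{n,k,\ell}_\tau$ is $\kappa_0$-graphical on $B_{\tau^{1/3}}(x_0)$ over a cylinder $S^{n-k}(r)\times\mathbb{R}^k$. Here $r=\sqrt{2(n-k)}F(|y_0|\tau^{-1/2})$ lies between $\sqrt{2(n-k)}$ and a bound depending on $K_0$. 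Moreover the normal $\mathbf{n}$ is $C(K_0)\tau^{-1/2}$-close to $\mathbf{n}_0$ in $C^2$. So at scale $1$ near any point, both the model and the nearby flow $M_{\tau_0}$ look like a slightly enlarged cylinder.

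\textbf{Step 1.} I would localize and compare cylinders. Take a point $x_0\in B_{R(\tau)}$ and trace it back along the rescaling to $x_0' = e^{-(\tau-\tau_0)/2}x_0$, which lies in $B_{R-C_2}$. Consider the unrescaled mean curvature flow near $x_0'$, recentered at spacetime points of the form $(x_0',\cdot)$. After recentering, $M_{\tau_0}$ near $x_0'$ is $(\delta_2+\kappa_0)$-graphical on a ball of radius $\min(R-|x_0'|,\tau_0^{1/3})$ over a cylinder of radius $r\ge\sqrt{2(n-k)}$, translated so that its axis passes near $x_0'$. A cylinder of radius $r\neq\sqrt{2(n-k)}$ is not a shrinker, but it is a shrinker after a time shift. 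Equivalently, I would apply pseudolocality in the form of Ilmanen--Neves--Schulze~\cite{INS19} (Proposition~\ref{prop:pseudo} rescaled). Over a time interval of rescaled length $10$, the unrescaled time elapsed is bounded, and the exact shrinking cylinder stays smooth on it because $r$ is bounded below. Hence $M_\tau$ stays $\epsilon/2$-close to the exact evolving cylinder on a ball around $x_0$ of definite size, by an amount determined by $\delta_2$.

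\textbf{Step 2.} I would show the model itself follows the same evolving cylinder. The model flow is a mean curvature flow that is $\kappa_0$-close near $x_0'$ to the same cylinder at time $\tau_0$. Applying pseudolocality to it, or using Proposition~\ref{prop:models}(1) directly at time $\tau$, shows it is $\epsilon/2$-close at time $\tau$ to the same evolved cylinder near $x_0$. The two statements then combine to give that $M_\tau$ is $\epsilon$-graphical over $\mathcal{M}^{n,k,\ell}_\tau$ near $x_0$. Graphs over each other are controlled because both are close to a common cylinder with controlled normals, by the $\Vert\mathbf{n}-\mathbf{n}_0\Vert_{C^2}$ bound. Covering $B_{R(\tau)}$ by such balls finishes the argument. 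The radius loss $C_2$ comes from the fixed radius needed for pseudolocality, and the cap $K_0\sqrt\tau$ is where Proposition~\ref{prop:models} applies.

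\textbf{Main obstacle.} The main difficulty is uniformity: the $C_2$ and the closeness must not depend on $x_0$ or $K_0$, while only $T_2$ may depend on $K_0$. I would handle this by noting that, after translating $x_0'$ to the origin, the local picture is a cylinder with radius in a compact range. Its tilt relative to $\mathbf{n}_0$ is $O(\tau_0^{-1/2})$ and the drift term $x^\perp/2$ is harmless over bounded time. Pseudolocality constants are uniform over such a compact family, with $\epsilon$-dependence only. Taking $\tau_0>T_2$ large makes $\tau_0^{1/3}\gg C_2$ and the tilt negligible. If exact comparison is awkward, I would instead argue by contradiction and compactness: take a sequence of failures, recenter at the bad points, and pass to a limit. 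In the limit both flows converge to the same smooth shrinking cylinder, contradicting the failure of $\epsilon$-closeness.
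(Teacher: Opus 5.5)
Your proposal is correct and follows essentially the same route as the paper. Both arguments approximate the model near each point by the coaxial cylinder $r_0\mathcal{C}_{n,k}$, with $r_0=F(|y_0|\tau_0^{-1/2})\geq 1$, via Proposition~\ref{prop:models}(1); both then apply the translated and rescaled form of Proposition~\ref{prop:pseudo}, with constants uniform over the compact range of radii, and cover the ball, taking $C_2=C_1+1$.

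The only difference is in your Step 2. The paper checks by direct computation that the rescaled evolution of $r_0\mathcal{C}_{n,k}$ has radius factor $(1+e^{\tau-\tau_0}(r_0^2-1))^{1/2}$, and that this agrees to leading order with the radius $F(e^{(\tau-\tau_0)/2}|y_0|\tau^{-1/2})$ supplied by Proposition~\ref{prop:models}(1) at time $\tau$. Your alternative of applying pseudolocality to the model flow itself avoids that computation.
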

\begin{proof}
From Proposition~\ref{prop:models} we know that given $K_0, \kappa_0 >
0$, if $\tau$ is large enough, then on balls of radius $R <
\tau^{1/3}$ in $B_{K_0\tau^{1/2}}$, the model flow $\mathcal{M}^{n',k'}_\tau$ is
$\kappa_0$-graphical over a cylinder $r\mathcal{C}_{n',k'}$ for some $r\in [1,
(1+K_0^2/2)^{1/2}]$. We can use Proposition~\ref{prop:pseudo},
rescaled, and translated, to
obtain a pseudolocality result for such cylinders of radius $r
\sqrt{2(n'-k')}$: Given $\epsilon > 0$, there exist $\delta_1(\epsilon, r),
C_1(\epsilon, r) > 0$ with the following properties. Let $N_{r,\tau}$
denote the rescaled mean curvature flow with initial condition $N_{r,0} = 
r\mathcal{C}_{n',k'}$. If $M_0$ is $\delta_1$-graphical over $N_{r,0}$ on
the ball $B_{R_0}(x_0)$ for some $R_0 > C_1$, then for $\tau\in [0,10]$,
$M_\tau$ is $\epsilon$-graphical over $N_{r,\tau}$ on the ball
$B_{e^{\tau/2}(R_0-C_1)}(e^{\tau/2}x_0)$. The constants $\delta_1, C_1$ are uniform
in $r$ as long as $r$ is bounded away from $0$ and $\infty$, so for a
fixed $K_0$ we can apply this result uniformly for the cylinders that
model our flow. For a given $\epsilon > 0$ we choose $\delta_1, C_1$ for this
pseudolocality result to hold for such cylinders. We let $R_0 =
C_1+1$.

For $R > 0$ as in the proposition, if $|x_0| < R-R_0$, then by
assumption $M_{\tau_0}$ is $\delta_2$-graphical over
$\mathcal{M}^{n',k'}_{\tau_0}$ on the ball $B_{R_0}(x_0)$. At the same
time, if $\tau_0$ is sufficiently large, then on this ball
$\mathcal{M}^{n',k'}_{\tau_0}$ is $\kappa_0$-graphical over
$r_0\mathcal{C}_{n',k'}$ for $r_0 = F(|y_0|\tau_0^{-1/2})$. By the
pseudolocality result above, if $\delta_2, \kappa_0$ are sufficiently
small, we find that for $\tau\in [\tau_0, \tau_0+10]$, $M_\tau$ is
$\epsilon/2$-graphical over $N_{r_0, \tau-\tau_0}$ on
$e^{(\tau-\tau_0)/2}B_{1}(x_0)$. We have
\[ N_{r_0, \tau-\tau_0} = (1 + e^{\tau-\tau_0}(r_0^2-1))^{1/2}
  \mathcal{C}_{n',k'}, \]
and
\[ (1 + e^{\tau-\tau_0} (r_0^2-1))^{1/2} = \left(1 + \frac{1}{2\tau}
    e^{\tau-\tau_0}|y_0|^2 \right)^{1/2}.\]
At the same time, on $e^{(\tau-\tau_0)/2} B_1(x_0)$, we know from
Proposition~\ref{prop:models} that
$\mathcal{M}^{n', k'}_\tau$ is $\kappa_0$-graphical over $r
\mathcal{C}_{n',k'}$, where
\[ r = F( e^{(\tau-\tau_0)/2}|y_0| \tau^{-1/2}) = \left( 1 +
    \frac{1}{2\tau} e^{\tau-\tau_0} |y_0|^2\right)^{1/2}. \]
If $\kappa_0$ is small enough, then we find that on
$e^{(\tau-\tau_0)/2} B_1(x_0)$, $M_\tau$ is $\epsilon$-graphical over
$\mathcal{M}^{n',k'}_\tau$.

The ball $B_{e^{(\tau-\tau_0)/2}(R-R_0)}$ can be covered using
balls $e^{(\tau-\tau_0)/2}B_1(x_0)$, where $|x_0| < R-R_0$, so we can set
$C_2 = R_0$. 
\end{proof}

Suppose now that $M_\tau$ is a RMCF with an $\ell$-degenerate
$\mathcal{C}_{n,k}$-singularity at infinity. By the normal form
theorem of Sun-Xue~\cite{SX22},
we know that for large $\tau$ we can write $M_\tau$ as a graph
over $\mathcal{M}^{n,k,\ell}_\tau$, however we want to obtain a more
quantitative estimate, in terms of the $L^2$ bounds given by
Proposition~\ref{prop:L2normalform}. We will assume that we are in the
setting where the conclusion of this proposition holds, and so,
replacing $T_0$ by the larger value $T_0+L_0$ from the proposition, we will
suppose that on the $\tau^\gamma$-ball, for some $\gamma > 0$,
$M_\tau$ is the graph of $u_0$ over $\mathcal{C}_{n,k}$, satisfying
$E(\tau) < \frac{1}{2}\tau^{-19/10}$ for all $\tau \geq T_0$. Here
$E(\tau)$ is as in \eqref{eq:L2decay10}. 
Choosing $T_0$
larger if necessary, we can assume that the model flow $\mathcal{M}^{n,k,\ell}_\tau$
also satisfies the same $L^2$-estimate.  This implies that we can
write $M_\tau$ as the graph 
of a function $u$ over $\mathcal{M}^{n,k,\ell}_\tau$ on the
$\tau^\gamma$-balls for $\tau\geq T_0$, satisfying $\Vert u(\cdot,
\tau)\Vert_{L^2(B_{\tau^\gamma})} <  \tau^{-19/10}$.

For later use, we will prove a more general result, roughly
speaking showing that if the $L^2$-norm decays at the rate of
$\tau^{-\alpha}$, then we have good graphicality on the $\tau^\beta$
balls for $\beta < \alpha/2$. First we improve graphicality from
$K\tau^\beta$-balls for small $K$, to graphicality on the $19\tau^\beta$-balls.
\begin{prop}\label{prop:Claim1}
  Suppose that for $\tau\in [T_0, 3T_0]$, $M_\tau$ is
$\epsilon_0$-graphical over $\mathcal{M}^{n,k,\ell}_\tau$ on
$B_{K\tau^\beta}$, with $K\in (0,20)$, $\beta\in (\beta_0, \alpha/2-\beta_0)$, and
$\Vert u\Vert_{L^2} < \tau^{-\alpha}$ for all $\tau\geq T_0$, with
$\alpha\in (0,2)$. If $\epsilon_0$ is sufficiently small (depending on
$\beta_0$), and 
$KT_0^\beta$ is sufficiently large, then there exists a $T_1$,
depending on $\beta_0, T_0, K$, such that $M_\tau$
is $\epsilon_0$-graphical over $\mathcal{M}^{n,k,\ell}_\tau$ on the
possibly larger $19\tau^\beta$-balls, for all $\tau \geq T_1$.
\end{prop}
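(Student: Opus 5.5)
We run the argument of Proposition~\ref{prop:tbetagraph10} over the model flow $\mathcal{M}^{n,k,\ell}_\tau$ instead of $\mathcal{C}_{n,k}$. The graphicality radius is enlarged by a fixed factor with pseudolocality, and the graph is kept small with the pointwise bound from the $L^2$ norm. The key point is that pseudolocality over the model, Proposition~\ref{prop:modelpseudo}, grows the radius by $e^{1/2}$ per unit time, while the radius $K\tau^\beta$ grows only polynomially. So each pass multiplies the ratio $R(\tau)/\tau^\beta$ by roughly $e^{1/2}$. After finitely many passes, a number depending only on $K$, the radius exceeds $19\tau^\beta$.

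\textbf{Step 1 (one-step extension).} Suppose $M_\tau$ is $\epsilon_0$-graphical over $\mathcal{M}^{n,k,\ell}_\tau$ on $B_{K'\tau^\beta}$ for $\tau$ in an interval $[T,3T]$, with $K'\le 19$. Proposition~\ref{prop:modelpseudo}, with $\epsilon=\epsilon_1$ and $K_0$ large compared to $20\tau^{\beta-1/2}$ (note $\beta<1/2$, so $K'\tau^\beta\ll\sqrt\tau$), gives the following. For $\tau\in[T+1,3T]$, $M_\tau$ is $\epsilon_1$-graphical on $B_{R(\tau)}$ with
\[
R(\tau)=e^{1/2}\bigl(K'(\tau-1)^\beta-C_2\bigr)\ge \tfrac32\min\{K',20\}\tau^\beta
\]
once $K'T^\beta\gg C_2$. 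We need $\delta_2(\epsilon_1)\ge\epsilon_0$, so $\epsilon_0$ is chosen small.

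\textbf{Step 2 (restoring smallness).} We write $u$ for the graphicality function over the model on $B_{1.5K'\tau^\beta}$ and cut it off at scale $K'\tau^\beta$. The equation for $u$ is the linearization over $\mathcal{M}^{n,k,\ell}_\tau$, with operator $\Delta-\frac12 x\cdot\nabla+\frac12+|A|^2$ computed on the model, plus the nonlinear terms. Proposition~\ref{prop:models}(1) gives $|A|^2\le\frac12+\kappa_0$ on $B_{K_0\sqrt\tau}$ and $C^2$-control of the normal. Hence $|\tilde u|$ satisfies the inequality of Proposition~\ref{prop:subsolpointwise} with $b_0=1+2\epsilon$ and $|a|<\epsilon$. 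The cutoff error is bounded by $K'^{-1}\tau^{-\beta}$ and supported in the annulus. Using $\Vert u\Vert_{L^2}<\tau^{-\alpha}$, Proposition~\ref{prop:subsolpointwise} with $\beta'=\beta$, together with Schauder estimates on the model (uniform by Proposition~\ref{prop:models}(1)), gives
\[
|\nabla^i u|\le C\tau^{2\beta(1+3\epsilon)-\alpha}+CK'^{-1}\tau^{-\beta}
\]
on $B_{K'\tau^\beta}$ for $\tau\ge\frac53(T+1)$. Since $\beta<\alpha/2-\beta_0$, choosing $\epsilon$ small depending on $\beta_0$ makes the first term $\le\tau^{-\beta_0}$. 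So $u$ is $\epsilon_0$-small once $\tau$ is large depending on $K'$. This repeats Proposition~\ref{prop:tbetagraph11}, and it is what lets Step 1 be applied again on later intervals.

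\textbf{Step 3 (iteration, and the main obstacle).} Steps 1 and 2 give graphicality on $B_{\frac43 K\tau^\beta}$ for all $\tau\ge T'$. The time range is propagated forward by the unit-step pseudolocality-then-smallness loop of Proposition~\ref{prop:tbetagraph11}; this is where the hypothesis $\Vert u\Vert_{L^2}<\tau^{-\alpha}$ for all $\tau\ge T_0$ is used. We then replace $K$ by $\frac43K$ and repeat. After $N\approx\log(19/K)/\log(4/3)$ rounds we reach $19\tau^\beta$, with $T_1$ depending on $K,T_0,\beta_0$. I expect the delicate part to be Step 2 near the edge of the ball. There the model is a cylinder of radius $\sqrt{2(n-k)}F(|y|\tau^{-1/2})$ rather than $\mathcal{C}_{n,k}$, so the constants in the drift inequality and in the Schauder estimates must be checked to be uniform. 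They are, because $|y|\le 20\tau^\beta\ll\sqrt\tau$, so $F\approx 1$ and the normal is $\tau^{-1/2}$-close to $\mathbf n_0$. One must also ensure the $\Psi$-errors do not deteriorate across the finitely many rounds. This holds because $\epsilon_0,\epsilon_1$ are fixed once, and only $T$ increases from round to round.
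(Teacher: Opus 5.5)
Your proposal is correct and is essentially the paper's argument. You run the Proposition~\ref{prop:tbetagraph10} estimate over the model flow (using only $|A|^2<\frac12+\epsilon$ and the pseudolocality of Proposition~\ref{prop:modelpseudo}), alternate it with pseudolocality to push $\epsilon_0$-graphicality on the $K\tau^\beta$-balls forward in time, and then use the factor $e^{1/2}$ to enlarge the radius by a fixed factor finitely many times (the paper uses $1.1$, you use $\frac43<e^{1/2}$). One bookkeeping point: the enlarged $\epsilon_0$-graphical ball should come from applying Proposition~\ref{prop:modelpseudo} to the arbitrarily small graph from your Step 2 (smallness below $\delta_2(\epsilon_0)$, which $C\tau^{-\beta_0}+CK'^{-1}\tau^{-\beta}$ gives), not from Step 1 applied to an $\epsilon_0$-graph, which only yields $\epsilon_1$-graphicality on the larger ball.
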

\begin{proof}First we extend the
$\epsilon_0$-graphicality on the $K\tau^\beta$-balls for all $\tau
\geq T_0$. We can apply the same argument as the proof of
Proposition~\ref{prop:tbetagraph10}. For this note that
the graphical function $u(x,\tau)$ of $M_\tau$ over
$\mathcal{M}^{n,k,\ell}_\tau$ satisfies an equation of the form
\[ \label{eq:ptu50} \partial_\tau u = \Delta u - \frac{1}{2} x\cdot \nabla u +
  \left(\frac{1}{2} + |A|^2\right) u + Q(u, \nabla u, \nabla^2 u). \]
The argument of Proposition~\ref{prop:tbetagraph10} can be applied in
this setting too, with the model flow $\mathcal{M}^{n,k,\ell}_\tau$
replacing the static flow $\mathcal{C}_{n,k}$, for sufficiently large
$\tau$, since the only properties of the cylinder that we used were
the pseudolocality, and the fact that for any $\epsilon > 0$  the second fundamental form
satisfies $|A|^2 < \frac{1}{2} + \epsilon$. This holds for our model
flow as well, once $\tau$ is large, depending on $\epsilon$.

The argument of Proposition~\ref{prop:tbetagraph10} 
implies that on the $K\tau^\beta$-balls we have
\[ |\nabla^i u| \leq C \tau^{2\beta(2\epsilon+1) - \alpha} +
  CK^{-1}\tau^{-\beta}. \]
For any $\epsilon_1 > 0$ we can arrange that this quantity is less
than $\epsilon_1$ if $KT_0^\beta$ is sufficiently large. So we can
assume that for $\tau \in [2T_0, 3T_0]$ we have
$\epsilon_1$-graphicality on the $K\tau^\beta$-balls. If we choose
$\epsilon_1$ sufficiently small, Proposition~\ref{prop:modelpseudo}
implies that we have $\epsilon_0$-graphicality for $\tau\in[2T_0+1,
3T_0+1]$ on the balls of radius $e^{1/2}(K(\tau-1)^\beta-C_1)$.
This is larger than $1.1K\tau^\beta$, and in particular than
$K\tau^\beta$, if $K\tau^\beta$ is large 
enough. We can keep repeating this argument to get
$\epsilon_0$-graphicality on the $K\tau^\beta$-balls for all
$\tau\geq T_0$.

After this, we can apply the same argument again, to find that for all
$\tau \geq 2T_0 +1$ we have $\epsilon_0$-graphicality on the larger
balls of radius $1.1K\tau^\beta$. If $1.1K > 19$, we are
done. Otherwise we can keep repeating this argument
several more times to obtain $\epsilon_0$-graphicality on the $1.1^j
K\tau^\beta$-balls, for $\tau \geq T_{0,j}$, with suitable
$T_{0,j}$. After finitely many steps we find that we have
$\epsilon_0$-graphicality on the $19\tau^\beta$-balls, for all $\tau
\geq T_1$, for some $T_1$ depending on $T_0, K$. Note that as $K$ and
$\tau$ increase, the condition that $K\tau^\beta$ is sufficiently
large persists.  \end{proof}

The next result will be used to increase the power $\gamma$ when we
have good graphicality on the $\tau^\gamma$-balls. 

\begin{prop}\label{prop:Claim2}
Suppose that for $\tau\in [T_0, 3T_0]$, $M_\tau$
is $\epsilon_0$-graphical over $\mathcal{M}^{n,k,\ell}_\tau$
on $B_{\tau^\gamma}$, and in addition $\Vert
u\Vert_{L^2} < \tau^{-19/10}$, for all $\tau \geq T_0$. Then there
exists a $T_1$, depending on $T_0$, such that $M_\tau$ is
$\epsilon_0$-graphical on $B_{\tau^{\gamma'}}$ for all $\tau \geq
T_1$, where
\[ \gamma' = \min\{ \frac{3}{2}\gamma, \frac{1}{2}\}. \]
\end{prop}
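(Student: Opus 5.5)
The plan is to reduce the statement to Proposition~\ref{prop:Claim1}, applied with a smaller exponent $\beta$ and a small constant $K$. Fix $\beta_0>0$ small and set $\alpha = 19/10$, so the hypothesis $\Vert u\Vert_{L^2} < \tau^{-\alpha}$ of Proposition~\ref{prop:Claim1} holds for all $\tau\geq T_0$. The key observation is that the ball $B_{\tau^\gamma}$, on which we have $\epsilon_0$-graphicality, can be written as $B_{K\tau^{\beta}}$ with $\beta = \gamma'$ and $K = K(\tau)=\tau^{\gamma-\gamma'}$. This $K$ is not a constant, so the matching has to be done on a fixed time window.

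\textbf{Step 1 (matching on a window).} On the window $[T_0,3T_0]$ we have $\tau^{\gamma} \geq T_0^{\gamma}$, while $\tau^{\gamma'}\leq (3T_0)^{\gamma'}$. Choose $K = T_0^{\gamma}(3T_0)^{-\gamma'}$. Then $B_{K\tau^{\gamma'}}\subset B_{\tau^\gamma}$ for $\tau\in[T_0,3T_0]$, so $M_\tau$ is $\epsilon_0$-graphical over $\mathcal{M}^{n,k,\ell}_\tau$ on $B_{K\tau^{\gamma'}}$ there. We also need $K<20$, which is automatic for large $T_0$ since $\gamma'\geq\gamma$ unless $\gamma\ge 1/2$, a case that is trivial. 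The remaining requirement is that $KT_0^{\gamma'}$ be large. We compute
\[ KT_0^{\gamma'} = 3^{-\gamma'}T_0^{\gamma}, \]
which is large once $T_0$ is large. This is exactly why only a window of length comparable to $T_0$ can be used, and it is fine because Proposition~\ref{prop:Claim1} only needs the graphicality hypothesis on $[T_0,3T_0]$.

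\textbf{Step 2 (checking the exponent constraint).} We need $\beta=\gamma'\in(\beta_0,\alpha/2-\beta_0)$, i.e.\ $\gamma' < 19/20 - \beta_0$. Since $\gamma'\leq 1/2 < 19/20$, this holds for small $\beta_0$, and $\gamma'>\beta_0$ once $\beta_0<\gamma$. The role of the factor $3/2$ in $\gamma'$ is only to keep $K$ from being too small in Step 1. Any $\gamma'<\alpha/2$ would work, but $\gamma'$ is capped at $1/2$ because the model description of Proposition~\ref{prop:models} and the pseudolocality statement Proposition~\ref{prop:modelpseudo} are only valid on $B_{K_0\sqrt\tau}$.

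\textbf{Step 3 (applying Proposition~\ref{prop:Claim1}).} Proposition~\ref{prop:Claim1} then yields a $T_1$, depending on $T_0$ and $K$ (hence only on $T_0$), such that $M_\tau$ is $\epsilon_0$-graphical on $B_{19\tau^{\gamma'}}\supset B_{\tau^{\gamma'}}$ for all $\tau\geq T_1$.

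\textbf{Main obstacle.} The main difficulty is the legitimacy of Step 3 when $\gamma'=1/2$. There the iteration in Proposition~\ref{prop:Claim1} reaches radii comparable to $\sqrt\tau$, so one must check that the pointwise bound $C\tau^{2\beta(2\epsilon+1)-\alpha}+CK^{-1}\tau^{-\beta}$ stays small, and that Proposition~\ref{prop:modelpseudo} applies with $R<K_0\sqrt\tau$ for $K_0=20$. The first is fine, since $2\beta(2\epsilon+1)-\alpha\leq 1+4\epsilon\beta -19/10<0$. The second holds by taking $K_0=20>19$.
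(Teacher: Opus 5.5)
Your proposal is correct and follows the paper's proof. On the window $[T_0,3T_0]$ you shrink $B_{\tau^\gamma}$ to $B_{K\tau^{\gamma'}}$ with a $T_0$-dependent $K$; the paper takes $K=(3T_0)^{-\gamma/2}$ and you take $K=T_0^\gamma(3T_0)^{-\gamma'}$, and in both cases $KT_0^{\gamma'}\sim T_0^{\gamma}$ is large, so Proposition~\ref{prop:Claim1} applies with $\beta=\gamma'$ and $\alpha=19/10$. One small correction: calling $\gamma>1/2$ ``trivial'' is not right, since the hypothesis only holds on a window, but that case is handled by applying Proposition~\ref{prop:Claim1} with $K=1$ and $\beta=1/2$.
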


\begin{proof}
For simplicity we assume $\gamma'
= \frac{3}{2}\gamma$, since the case $\gamma'=1/2$ is similar. We
choose $K$ so that for $\tau\in[T_0, 3T_0]$ the $\tau^\gamma$-ball
contains the $K\tau^{3\gamma/2}$-ball. For this we can take $K =
(3T_0)^{-\frac{1}{2}\gamma}$, since
\[ (3T_0)^{-\frac{1}{2}\gamma} \tau^{\frac{3}{2}\gamma} \leq
  \tau^\gamma, \]
for $\tau \leq 3T_0$. We now apply Proposition~\ref{prop:Claim1}, with
$\beta=\gamma'$. Note that
\[ K T_0^{\gamma'} = 3^{-\frac{1}{2}\gamma} T_0^{-\frac{1}{2}\gamma +
    \frac{3}{2}\gamma} \]
will be large enough to apply that result, if $T_0$ is sufficiently
large. The conclusion is that $M_\tau$ is $\epsilon_0$-graphical over
$\mathcal{M}^{n,k,\ell}_\tau$ on the $\tau^{\frac{3}{2}\gamma}$-balls
for all $\tau \geq T_1$, with $T_1$ depending on $T_0$. 
\end{proof}

Using these two results, we can show good graphicality of the flow
$M_\tau$ over $\mathcal{M}^{n,k,\ell}_\tau$ on the larger
$19\tau^{1/2}$-balls.
\begin{prop}\label{prop:ldegendecay}
  Suppose that $M_\tau$ has an $\ell$-degenerate
  $\mathcal{C}_{n,k}$-singularity at infinity and let $\epsilon_0 >
  0$. Suppose that for some 
  $T_0 > 1$ we have that $M_\tau$ is $\tau^{-\gamma}$-graphical over
  $\mathcal{M}^{n,k,\ell}_\tau$ on $B_{\tau^\gamma}$, and the
  graphicality function $u$ satisfies $\Vert u\Vert_{L^2} \leq
  \tau^{-19/10}$ for all $\tau \geq T_0$.

  There exists a $T_1$, depending on $T_0, \gamma, \epsilon_0$, such
  that $M_\tau$ is $\epsilon_0$-graphical over
  $\mathcal{M}^{n,k,\ell}_\tau$ on the ball $B_{19\tau^{1/2}}$ for all
  $\tau \geq T_1$.   In addition on the ball $B_{\tau^\gamma}$ we have the improved
  estimate $\Vert u\Vert_{C^4} \leq \tau^{-3/2}$. 
\end{prop}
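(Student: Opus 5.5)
The plan is to bootstrap the graphicality radius from $\tau^\gamma$ up to $19\tau^{1/2}$ by alternating Proposition~\ref{prop:Claim2} and Proposition~\ref{prop:Claim1}. After that, a separate pointwise argument will give the $C^4$ improvement on $B_{\tau^\gamma}$.

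\emph{Step 1: iterate Proposition~\ref{prop:Claim2}.} By hypothesis $M_\tau$ is $\tau^{-\gamma}$-graphical, hence $\epsilon_0$-graphical, over $\mathcal{M}^{n,k,\ell}_\tau$ on $B_{\tau^\gamma}$ for all $\tau\geq T_0$ once $T_0$ is large. We also have $\Vert u\Vert_{L^2}<\tau^{-19/10}$, so Proposition~\ref{prop:Claim2} applies on $[T_0,3T_0]$. It gives $\epsilon_0$-graphicality on $B_{\tau^{\gamma_1}}$ with $\gamma_1=\min\{\frac32\gamma,\frac12\}$ for $\tau\geq T^{(1)}$. Repeating with $\gamma_{j+1}=\min\{\frac32\gamma_j,\frac12\}$, after finitely many steps (a number depending only on $\gamma$) we reach $\gamma_j=\frac12$. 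So $M_\tau$ is $\epsilon_0$-graphical on $B_{\tau^{1/2}}$ for $\tau\geq T^{(j)}$. The $L^2$ hypothesis holds for all later times, so each step is legitimate. We also need $\epsilon_0$ small enough for Propositions~\ref{prop:Claim1} and~\ref{prop:Claim2}; if the given $\epsilon_0$ is larger, we run the argument with a smaller one.

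\emph{Step 2: widen the ball with Proposition~\ref{prop:Claim1}.} Apply Proposition~\ref{prop:Claim1} with $\beta=\frac12$, $K=1$ and $\alpha=19/10$. Here is the delicate point: that proposition requires $\beta<\alpha/2-\beta_0$, and $\frac12<\frac{19}{20}-\beta_0$ holds for small $\beta_0$. It then upgrades graphicality to $B_{19\tau^{1/2}}$ for $\tau\geq T_1$. Note that Proposition~\ref{prop:Claim1} runs the pseudolocality argument of Proposition~\ref{prop:modelpseudo}, which only works while the ball stays inside $B_{K_0\sqrt\tau}$; taking $K_0=20$ there accommodates radius $19\tau^{1/2}$. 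This is the main obstacle. One must check that the estimate from Proposition~\ref{prop:tbetagraph10}, namely $C\tau^{2\beta(2\epsilon+1)-\alpha}+CK^{-1}\tau^{-\beta}$, is still small at $\beta=\frac12$. The exponent $1+2\epsilon-\frac{19}{10}<0$ for small $\epsilon$, so it is. One must also check that the model quantities $|A|^2\leq\frac12+\kappa_0$ and the normal comparison from Proposition~\ref{prop:models} hold uniformly up to radius $19\tau^{1/2}$, which they do since $K_0=20$.

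\emph{Step 3: the improved $C^4$ estimate on $B_{\tau^\gamma}$.} Now that we have graphicality on large balls, apply Proposition~\ref{prop:tbetagraph10}, in its model-flow version as in the proof of Proposition~\ref{prop:Claim1}, with $\alpha=19/10$. Take $\beta$ slightly larger than $\gamma$ (e.g.\ $\beta=2\gamma$, fine for small $\gamma$) and $K=1$, so that the error term is $\tau^{-\beta}$-type with a cutoff far out. Better still, use Proposition~\ref{prop:subsolpointwise} with $\beta=\frac12$, $\beta'=\gamma$, so the cutoff error is $\tau^{-1/2}e^{-10^{-4}\tau^{1-2\gamma}}$, which is negligible. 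This gives
\[ |u|\leq C\tau^{2\gamma(1+2\epsilon)}\Vert u\Vert_{L^2}+\text{(negligible)}\leq C\tau^{2\gamma(1+2\epsilon)-19/10}\leq \tau^{-3/2} \]
for small $\gamma$ and large $\tau$. Parabolic Schauder estimates then upgrade this to the $C^4$ bound on $B_{\tau^\gamma}$, after shrinking the ball slightly and readjusting $\gamma$.
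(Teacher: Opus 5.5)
Your proposal is correct and follows the paper's own proof. Like the paper, you iterate Proposition~\ref{prop:Claim2} up to the $\tau^{1/2}$-balls and then apply Proposition~\ref{prop:Claim1} with $\beta=\frac12$ to reach $B_{19\tau^{1/2}}$. You then use Proposition~\ref{prop:subsolpointwise} with $\beta=\frac12$, $\beta'=\gamma$, $b_0=1+\epsilon$, followed by Schauder estimates. One caveat: your first suggestion in Step 3, Proposition~\ref{prop:tbetagraph10} with $\beta=2\gamma$, would not work, because its cutoff term $CK^{-1}\tau^{-\beta}=C\tau^{-2\gamma}$ is much larger than $\tau^{-3/2}$, so the choice $\beta=\frac12,\ \beta'=\gamma$ is necessary rather than merely better.
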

\begin{proof}
We apply Proposition~\ref{prop:Claim2} multiple times until we reach good graphicality
on the  $\tau^{1/2}$-balls, for sufficiently large $\tau$. Then we can apply
Proposition~\ref{prop:Claim1} to improve graphicality to the
$19\tau^{1/2}$-balls, for even larger $\tau$. 

To obtain the better graphicality estimate on the smaller
$\tau^\gamma$-balls, we apply Proposition~\ref{prop:subsolpointwise}. We
set $\beta = 1/2$, $\beta' = \gamma$, $\alpha=19/10$, and $b_0 =
1+\epsilon$. The conclusion is that on the $\tau^\gamma$-balls we have
\[ |u| \leq C\tau^{2\gamma(2\epsilon+1)-19/10} <
  \tau^{-3/2} \]
if $\tau$ is large, and $\gamma< 1/5$, say. The derivative estimates
follow from parabolic Schauder estimates. 
\end{proof}

Note that the model flows $\mathcal{M}^{n,k,\ell}$ are invariant under
translations in the degenerate $y_i$ directions, i.e. for $i > \ell$. 
A crucial new ingredient in this paper is the following result, showing
that up to an exponentially small error, the same holds for any
rescaled flow $M_\tau$ that has an 
$\ell$-degenerate $\mathcal{C}_{n,k}$-singularity at infinity. In
other words, the derivatives of the graphicality function of $M_\tau$ in the degenerate
directions decay exponentially fast in $\tau$. A related result in the setting
of ancient flows was shown by Choi-Haslhofer~\cite[Theorem
1.10]{CH24}. While the main focus is the case $\ell > 0$, note that
the result holds for $\ell=0$ as well, and in fact $u$ itself decays
exponentially fast in that case (see \cite[Proposition 5]{Sz26}). 

\begin{prop}\label{prop:uqdecay}
  There exists $\epsilon_0, \kappa_0 > 0$, and for every $T_0 > 0$
  there is $T_1 > 0$ with the following properties. 
  Suppose that $M_\tau$ has an $\ell$-degenerate
  $\mathcal{C}_{n,k}$-singularity at infinity, and suppose that
  for all $\tau \geq T_0$ we have:
  \begin{itemize}
    \item The flow $M_\tau$ can be written
  as the graph of $u(\cdot, \tau)$ over the model
  $\mathcal{M}^{n,k,\ell}_\tau$ on the ball $B_{19\tau^{1/2}}$,
  with   $\Vert u\Vert_{C^4} < \epsilon_0$, and
$\Vert u(\cdot,  \tau)\Vert_{L^2(B_{10\tau^{1/2}})} <
\tau^{-19/10}$.
\end{itemize}
  Then for all $\tau > T_1$ and $q=\ell+1, \ldots,k$ we have the estimate
  \[ \Vert \partial_{y_q} u(\cdot, \tau)\Vert_{L^2(B_{10\tau^{1/2}})}
    \leq  e^{-\kappa_0 \tau}, \]
  for the $y_q$-derivative. 
\end{prop}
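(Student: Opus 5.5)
The plan is to differentiate the graphical equation in a degenerate direction and run a spectral argument for $w_q := \partial_{y_q} u$. The model $\mathcal{M}^{n,k,\ell}_\tau = \mathbb{R}^{k-\ell}\times\mathcal{M}^{n-(k-\ell),\ell}_\tau$ is invariant under translations in $y_q$ for $q>\ell$. Hence $\partial_{y_q}$ commutes with the geometry of the model, and differentiating \eqref{eq:ptu50} gives
\[ \partial_\tau w_q = \Delta w_q - \tfrac12 x\cdot\nabla w_q + \big(\tfrac12+|A|^2\big)w_q - \tfrac12 w_q + \partial_{y_q}Q(u,\nabla u,\nabla^2 u), \]
where the $-\tfrac12 w_q$ comes from the commutator $[\partial_{y_q}, x\cdot\nabla]=\partial_{y_q}$. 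Since $Q$ is at least quadratic, $\partial_{y_q}Q = a_1\cdot\nabla w_q + a_2 w_q + a_3\nabla^2 w_q$ with coefficients of size $O(\epsilon_0)$. The coefficients of the model do not depend on $y_q$, so no inhomogeneous term appears. Thus $w_q$ solves a linear equation whose principal part is the linearized operator of the model shifted by $-\tfrac12$. Heuristically, on $\mathcal{C}_{n,k}$, derivatives of eigenfunctions of $\mathcal{L}$ in $y_q$ are eigenfunctions with eigenvalue lowered by $\tfrac12$. The modes with eigenvalue $\geq 0$ for $w_q$ would therefore come from $u$-modes with eigenvalue $\geq \tfrac12$, which are ruled out by the $L^2$ decay of $u$.

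Concretely, I would cut off $w_q$ at scale $10\tau^{1/2}$, as in Proposition~\ref{prop:tbetagraph10}, and decompose it spectrally with respect to $\mathcal{L}_{\mathcal{C}}-\tfrac12$. Proposition~\ref{prop:models}(3) says the operator on the model differs from $\mathcal{L}_{\mathcal{C}}$ by $-\tfrac{1}{4\tau}\sum_{i\le \ell}(y_i^2-2)(2\Delta_z+1)+O(\tau^{-19/10})$, so this decomposition is legitimate on the inner region. The spectrum of $\mathcal{L}_{\mathcal{C}}-\tfrac12$ is $\{\tfrac12,0,-\tfrac12,\dots\}$. I would identify the non-negative modes explicitly: they are the constant and linear $y,z$ components of $w_q$, which are the $y_q$-derivatives of the $y_q$, $y_qy_j$, $y_qz_\alpha$ and $y_q^2-2$ components of $u$. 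For each of these I would write the ODE obtained by pairing with the eigenfunction, exactly as in the proof of Proposition~\ref{prop:L2normalform}. The linear $y_q$-mode of $u$ corresponds to the $\tfrac12$-eigenvalue. The quadratic modes $y_qy_j$ and $y_q^2-2$ are controlled, in the $\ell$-degenerate case, by the eigenvalue analysis \eqref{eq:lambda30}: the $A_{qj}$ entries are $O(\tau^{\kappa-2})$ and satisfy the ODE \eqref{eq:Aevolve2}. The key structural point is that quadratic self-interactions in the $y_q$-direction, such as $\nu A^2$, are themselves proportional to $y_q$-derivative quantities. These modes therefore satisfy linear ODEs $\dot a = -\nu(\text{entries of }A)\,a + O(\epsilon_0\Vert w_q\Vert)$ with coefficients $O(\tau^{-1})$, and they cannot grow. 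For the negative part I would use the energy inequality $\partial_\tau\Vert w_{q,<0}\Vert^2 \le -c\Vert w_{q,<0}\Vert^2 + C\epsilon_0\Vert w_q\Vert^2$. For the top mode $\tfrac12$, I would integrate backward from $\tau=\infty$, using that $w_q\to0$, as was done for $\tilde u_{>0}$.

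Putting this together, let $N(\tau)=\Vert w_q\Vert_{L^2(B_{10\tau^{1/2}})}$. I would aim for a discrete frequency / three-annulus statement in the style of Propositions~\ref{prop:3ann2} and \ref{prop:3ann3}: for the shifted operator, $0$ is an eigenvalue but it must be excluded. The issue is the zero-modes of $w_q$, which are the derivatives of $y_qy_j$ and $y_qz_\alpha$ components of $u$. I would handle the rotational ones $y_qz_\alpha$ by the gauge $S_\tau$ as before. For the $y_qy_j$ ones I would argue that they are $\tfrac{d}{d\tau}$-small: in the $\ell$-degenerate normal form they are $O(\tau^{\kappa-2})$ and evolve with rate $-\nu\lambda_j$. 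In the invariant picture, where the component of $u$ is genuinely in $y_q$, they would produce an $\ell'$-degenerate singularity with $\ell'>\ell$, contradicting the hypothesis. The outcome is a dichotomy: either $N$ decays like $e^{-\kappa_0\tau}$ with $\kappa_0<\tfrac12$, or $N$ decays at most polynomially and the dominant part lies in the zero-eigenspace of $\mathcal{L}_{\mathcal{C}}-\tfrac12$. The second alternative means that $u$ has a nontrivial $y_qy_j$ or $y_q^2$ component of size $\gtrsim\tau^{-C}$. Feeding this back into the matrix ODE \eqref{eq:Aevolve2} shows that some eigenvalue in the degenerate block must satisfy $\tau\lambda\not\to0$, which contradicts $\ell$-degeneracy. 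The error terms from the cutoff at radius $10\tau^{1/2}$ are handled using the $19\tau^{1/2}$ graphicality together with the non-concentration estimate, Proposition~\ref{prop:SWXnonconc}. These errors are of size $e^{-c\tau}$ in the Gaussian weight, which is compatible with exponential decay for small $\kappa_0$.

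The main obstacle is the outer region $|x|\sim\tau^{1/2}$. There the model is not close to $\mathcal{C}_{n,k}$, so the spectral decomposition must be taken with respect to the model's own drift Laplacian. My first attempt would be to use Proposition~\ref{prop:logSobolev} on $|w_q|$, with $b_0=|A|^2\le\tfrac12+\kappa_0$ minus the $\tfrac12$ shift. This gives a drift-heat subsolution with $b$ close to $0$ rather than $1$, and should show that the outer region contributes only a Gaussian-weighted amount dominated by the inner region. The spectral gap argument would then be done only on the inner ball, where Proposition~\ref{prop:models}(3) applies.
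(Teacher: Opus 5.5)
Your overall architecture matches the paper's: you differentiate in $y_q$, use the $y_q$-invariance of $\mathcal{M}^{n,k,\ell}_\tau$, and run a three-annulus dichotomy between exponential decay and a slow regime dominated by the neutral modes of $\mathcal{L}_{\mathcal{C}}-\frac12$. The step that excludes the slow regime fails, however. You assert that a neutral component of $u$ of size $\gtrsim\tau^{-C}$ forces an eigenvalue in the degenerate block with $\tau\lambda\not\to0$, contradicting $\ell$-degeneracy. It does not, for two reasons.

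\begin{itemize}
\item By \eqref{eq:lambda30}, the degenerate block is only known to be $O(\tau^{\kappa-2})$. A component of size $\tau^{-C}$ with unspecified $C$ therefore contradicts nothing.
\item More fundamentally, the $y_jy_q$-components with $j\le\ell$ come from rotating a nondegenerate direction into a degenerate one. Take a fixed rotation $R_\theta$ in the $y_jy_q$-plane. Then $R_\theta\mathcal{M}^{n,k,\ell}_\tau$ is still $\ell$-degenerate, yet its graph over $\mathcal{M}^{n,k,\ell}_\tau$ has $\partial_{y_q}u\approx c\,\theta\tau^{-1}y_j$. This decays only like $\tau^{-1}$, which is the rate $-\nu\lambda_j=-\tau^{-1}$ you identified yourself.
\end{itemize}

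What excludes such behaviour is the hypothesis $\Vert u\Vert_{L^2}<\tau^{-19/10}$, and your contradiction never uses it. Your opening heuristic is also off. $\partial_{y_q}$ intertwines $\mathcal{L}_{\mathcal{C}}$ with $\mathcal{L}_{\mathcal{C}}-\frac12$ without changing eigenvalues. So the nonnegative modes of $w_q$ come from the nonnegative modes of $u$, including the neutral ones, and those are exactly the difficulty.

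What you need is a quantitative lower bound, in the slow regime, on how fast the dominant neutral modes can decay, strong enough to beat $\tau^{-19/10}$. The paper obtains it in three steps.

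\begin{itemize}
\item It takes the rotations $S_\tau$ in the $y_jy_q$-planes with $j\le\ell$ (Propositions~\ref{prop:rotated3} and \ref{prop:rotated2}) to remove the $y_j$-components of $\partial_{y_q}u$. These, not the $y_qz_\alpha$-modes you propose to gauge, are the ones that must go. The model is not invariant under such rotations, and the $\tau^{-1}$-correction in Proposition~\ref{prop:models}(3) acts on them with a nonzero rate.
\item On the remaining neutral space $\mathfrak{h}_1$, spanned by the $z_\alpha$ and the $y_j$ with $j>\ell$, the correction $-\frac{1}{4\tau}\sum_{i\le\ell}(y_i^2-2)(2\Delta_z+1)$ has vanishing matrix elements by \eqref{eq:405}. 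The coefficients of the nonlinear term are $O(\tau^{-19/10})$ by \eqref{eq:407}.
\item This gives $g'\ge -C\kappa\tau^{-1}g-Ce^{-2\kappa_0\tau}$ for $g=\Vert\Pi_{\mathfrak{h}_1}v_q\Vert_{L^2}$. Hence $g\gtrsim\tau^{-C\kappa}$, contradicting $g\le C\tau^{-19/10}$.
\end{itemize}

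Your error term $O(\epsilon_0\Vert w_q\Vert)$ is too crude for this step. It only yields $g\gtrsim e^{-C\epsilon_0\tau}$, which is compatible with $\tau^{-19/10}$.

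Lastly, the outer region is not the real obstacle. The paper simply works on $B_{\kappa_0^{1/4}\tau^{1/2}}$, where the model is $\epsilon_0$-close to $\mathcal{C}_{n,k}$ and the Gaussian tail is $O(e^{-\kappa_0^{1/2}\tau/5})\ll e^{-\kappa_0\tau}$. Also, after differentiating, the zeroth-order coefficient is $|A|^2\approx\frac12$, not close to $0$.
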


To prove this we will first show some preliminary results, the overall
strategy being somewhat similar to the proof of
Proposition~\ref{prop:L2normalform} on the $L^2$ normal form.

Instead of computing the $L^2$-norm of $\partial_{y_q}u$
on the $10\tau^{1/2}$-ball, we can work on
the smaller $\kappa_0^{1/4}\tau^{1/2}$-ball, since the
contribution to the $L^2$-norm from outside of this ball is bounded by
$e^{-\kappa_0^{1/2}\tau/5}$, and if $\kappa_0$ is chosen small enough,
then this is of much lower order than $e^{-\kappa_0\tau}$. The
advantage of this is that for any given $\epsilon_0>0$,
if $\kappa_0$ is sufficiently small, then on the
$\kappa_0^{1/4}\tau^{1/2}$-ball the model flow
$\mathcal{M}^{n,k,\ell}_\tau$ is $\epsilon_0$-graphical over the
cylinder $\mathcal{C}_{n,k}$. Therefore the flow $M_\tau$ is also
$2\epsilon_0$-graphical over $\mathcal{C}_{n,k}$ on these balls. Note
that it will still be important to view $M_\tau$ as a graph over
$\mathcal{M}^{n,k,\ell}_\tau$, since relative to $\mathcal{C}_{n,k}$
we would not have the estimate $\Vert u\Vert_{L^2} < \tau^{-19/10}$, but
rather only $\Vert u\Vert_{L^2} = O(\tau^{-1})$ instead.

We differentiate \eqref{eq:ptu50} with respect to $y_q$, and note that
$\mathcal{M}^{n,k,\ell}_\tau$ is invariant in this direction, since $q
> \ell$. We find that $u_q = \partial_{y_q} u$ satisfies
\[ \label{eq:uqtildeeq} \partial_\tau u_q = \Delta u_q - \frac{1}{2} x \cdot \nabla u_q +
  |A|^2 u_q + \sum_{i=0}^2 Q_i \ast \nabla^i u_q, \]
where the $Q_i$ are derivatives of $Q$ with respect to its
entries. Note that $Q$ depends on $x$ too, but is invariant in the
$y_q$-direction -- otherwise we would have additional terms involving
$u$ or derivatives of $u$ other than the $y_q$-derivative. 
Let us write $\tilde{u}_q(x,\tau) = \chi(\kappa_0^{-1/4}\tau^{-1/2} |x|)
u_q(x,\tau)$, where $\chi$ is the cutoff function satisfying
$\chi(x)=0$ for $x >1$, and $\chi(x)=1$ for $x < 1/2$. Then 
$\tilde{u}$ is supported on $B_{\kappa_0^{1/4}\tau^{1/2}}$. Similarly to
\eqref{eq:utildeeq}, this function satisfies
\[\label{eq:uqtildeeq2}
  \partial_\tau \tilde{u}_q = \Delta \tilde{u}_q - \frac{1}{2} x \cdot \nabla \tilde{u}_q +
  |A|^2 \tilde{u}_q + \sum_{i=0}^2 Q_i \ast \nabla^i \tilde{u}_q +
  \mathcal{E}, \]
where $\mathcal{E}$ is supported outside of
$B_{\frac{1}{2}\kappa_0^{1/4}\tau^{1/2}}$. We have a uniform bound $|A|^2 <
\frac{1}{2}+\epsilon_0$, and the terms $Q_i$ are also of order
$\epsilon_0$, so we can apply Proposition~\ref{prop:logSobolev} and
Proposition~\ref{prop:L2pointwise} to obtain the following. We use
$e^{-2\kappa_0\tau}$ in the error terms. Note that this is of lower
order than $e^{-\kappa_0\tau}$, but it is larger than error terms of size
$e^{-\kappa_0^{1/2}\tau/20}$ that show up from the cutting off the
Gaussian weight outside the
$\frac{1}{2}\kappa_0^{1/4}\tau^{1/2}$-ball. 
\begin{prop}\label{prop:uqnonconc}
  There is a $C > 0$ satisfying the following.
  Suppose that $\tau_0 \geq T_0$ and $\tau_0 \leq \tau \leq \tau_0 +
  10$.  We have 
  \[ \Vert u_q(\cdot, \tau) \Vert_{L^2} \leq \Vert u_q(\cdot, \tau)
    \Vert_{L^{p(\tau-\tau_0)}} \leq e^{C(\tau-\tau_0)} \Vert 
    u_q(\cdot, \tau_0)\Vert_{L^2} + e^{-2\kappa_0\tau_0}. \]
  In addition, there is a $p > 1$, such that for $\tau\in [\tau_0+1,
  \tau_0+10]$, on the ball $B_{\kappa_0^{1/4}\tau^{1/2}}$ we have the
  pointwise estimates
  \[ |\nabla^i u_q| (x,\tau) &\leq  C e^{|x|^2 / 8p} (\Vert u_q(\cdot,
    \tau_0)\Vert_{L^2} + e^{-2\kappa_0 \tau_0}),  \]
  for $i\leq 3$. For the function $u$ itself we have
  \[\label{eq:410} |\nabla^i u|(x, \tau) \leq C\tau^{-19/10}
    e^{|x|^2/8p},  \]
  using the $L^2$-bound $\Vert u\Vert_{L^2} \leq \tau^{-19/10}$. 
\end{prop}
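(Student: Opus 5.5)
We work with the cut-off function $\tilde u_q$ satisfying \eqref{eq:uqtildeeq2}. The plan is to bring it into the form of Proposition~\ref{prop:logSobolev} and then use Corollary~\ref{cor:logSobolev} for the pointwise bounds, with the cutoff error $\mathcal{E}$ contributing the $e^{-2\kappa_0\tau_0}$ terms.

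\emph{Step 1: reduction to a scalar inequality.} Since $|A|^2<\frac12+\epsilon_0$ and the coefficients $Q_i$ are $O(\epsilon_0)$, Kato's inequality applied to \eqref{eq:uqtildeeq2} gives, for $f=|\tilde u_q|$ (or a smoothing $\sqrt{\tilde u_q^2+\eta^2}$),
\[ \partial_\tau f\le \Delta f-\tfrac12x\cdot\nabla f+a\cdot\nabla f+bf+|\mathcal{E}|, \]
with $|a|\le C\epsilon_0$ and $|b|\le \frac12+C\epsilon_0$. The second-order term $Q_2\ast\nabla^2u_q$ has a small coefficient, so it can be absorbed as a small perturbation of the Laplacian. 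Concretely, either the weighted $L^p$ energy argument in the appendix proof of Proposition~\ref{prop:logSobolev} tolerates a $(1+O(\epsilon_0))$-perturbed principal part, or one rewrites $Q_2\ast\nabla^2 u_q$ in divergence form, moving its lower-order remainder into $a$ and $b$. I expect this to be the only genuinely delicate point.

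\emph{Step 2: the $L^p$ bound.} The error $\mathcal{E}$ involves $u_q$ and $\nabla u_q$ times derivatives of the cutoff. These are bounded by $\epsilon_0$ from $\|u\|_{C^4}<\epsilon_0$, and they are supported where $|x|\ge \frac12\kappa_0^{1/4}\tau^{1/2}$. Hence for $s\in[\tau_0,\tau_0+10]$ and bounded $p$,
\[ \|\mathcal{E}(\cdot,s)\|_{L^{p}}\le C e^{-\kappa_0^{1/2}\tau_0/(20p)}, \]
which is at most $e^{-2\kappa_0\tau_0}$ once $\kappa_0$ is small. Proposition~\ref{prop:logSobolev} started at $\tau_0$ then gives the $L^{p(\tau-\tau_0)}$ bound with $C=b_0+\epsilon$. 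We replace $\tilde u_q$ by $u_q$ in the norms, since on the support of the cutoff the Gaussian tail outside the half-ball is again $O(e^{-2\kappa_0\tau_0})$ (here we interpret the norms on $B_{\kappa_0^{1/4}\tau^{1/2}}$). The inequality $\|\cdot\|_{L^2}\le\|\cdot\|_{L^p}$ holds because the weighted measure has bounded mass, after normalizing by the entropy constant.

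\emph{Step 3: pointwise bounds.} Apply Corollary~\ref{cor:logSobolev} on $[\tau_0,\tau]$ with $\tau\ge\tau_0+1$, and choose $R$ comparable to $\kappa_0^{1/4}\tau^{1/2}$ for $|x|\le\frac14\kappa_0^{1/4}\tau^{1/2}$. The exponential factor is $e^{|x|^2/4p(\tau-\tau_0-1)}$, and since $p\ge p(0)=2$ this is at most $e^{|x|^2/8p'}$ for some fixed $p'>1$. This gives $|u_q|\le Ce^{|x|^2/8p}(\|u_q(\tau_0)\|_{L^2}+e^{-2\kappa_0\tau_0})$. The derivatives $\nabla^iu_q$ for $i\le3$ follow from interior parabolic Schauder estimates on unit parabolic cylinders; these use the $C^4$ smallness of $u$, together with the comparable Gaussian factors $e^{|x|^2/8p}$ on neighbouring points, at the cost of slightly enlarging $p$'s complement.

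\emph{Step 4: estimate \eqref{eq:410}.} Repeat the argument for $u$ itself, using \eqref{eq:ptu50} linearized with small coefficients exactly as in Proposition~\ref{prop:tbetagraph10}, and the hypothesis $\|u\|_{L^2}\le\tau^{-19/10}$ at time $\tau-1$. The cutoff error is now bounded by $e^{-c\kappa_0^{1/2}\tau}\ll\tau^{-19/10}$. Schauder estimates then give the bounds on the derivatives.
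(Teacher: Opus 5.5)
Your proposal is correct and is essentially the paper's argument: the paper just applies Propositions~\ref{prop:logSobolev} and~\ref{prop:L2pointwise} to the cut-off equation for $\tilde u_q$ and tacitly treats $Q_2\ast\nabla^2u_q$ as harmless, whereas you rightly flag it. Note, though, that rewriting that term in divergence form leaves a drift $\frac{1}{2}a^{ij}x_i\nabla_j f$ of size $\epsilon_0|x|$, which cannot be absorbed into $a$. You would need a Gaussian Poincar\'e bound $\int|x^T|^2\phi^2\,d\mu\le C\int(|\nabla\phi|^2+\phi^2)\,d\mu$ to control it, or you can sidestep the issue by working with $g=\langle e_q,\nu\rangle$, which satisfies $\partial_\tau g=\Delta g-\frac{1}{2}x\cdot\nabla g+|A|^2g$ exactly on the flow and is pointwise comparable to $u_q$. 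One small slip: at $\tau=\tau_0+1$ you have $p(\tau-\tau_0-1)=p(0)=2$, which gives exactly $e^{|x|^2/8}$ rather than $e^{|x|^2/8p'}$ with $p'>1$, so take the pointwise step over a shorter final interval such as $[\tau-\frac{1}{2},\tau]$ to leave room for the Schauder step.
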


Next, we define a modified flow $\tilde{M}_\tau = S_\tau M_\tau$ using
suitable rotations $S_\tau$ in the $\mathbb{R}^k$-factor, in order to
remove the components $y_i$ of $u_q$ for the non-degenerate
directions $i\leq \ell$. Note that when $\ell=0$, then there are no
nondegenerate directions, so $\tilde{M}_\tau = M_\tau$.  For this we will use the following.
\begin{prop}\label{prop:rotated3}
  Let $\delta > 0$. We can let $T_0$ be large, depending on $\delta$,
  such that for $\tau \geq T_0$, we have the following.
Let $R_\theta$ denote the rotation by angle $\theta$
  in the $y_jy_q$-plane, for some $j\leq \ell$ and $q > \ell$. 
Then if $\theta \leq \tau^{-9/10}$,
  $R_\theta M_\tau$ is the graph of a function $v(x,\tau)$
  over $\mathcal{M}^{n,k,\ell}_\tau$ on the $\kappa_0^{1/4}\tau^{1/2}$-ball, where
  \[ \label{eq:420}|\nabla^i (v(x,\tau) - u(x,\tau))| \leq C \tau^{-1} |\theta| e^{|x|^2/8p},  \]
  for the $p > 1$ from Proposition~\ref{prop:uqnonconc}.
  On the smaller ball
  $B_{\tau^\gamma}$ for small $\gamma > 0$, we have the improved estimate
  \[\label{eq:421}  |\nabla^i (v(x,\tau) - u(x,\tau) - c_{n,k}
    \tau^{-1}\theta y_jy_q)| \leq C_\delta  \tau^{\gamma-3/2} |\theta|,  \]
  where $c_{n,k}$ is a fixed constant.
\end{prop}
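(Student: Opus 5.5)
The idea is to compare the rotated model $R_\theta\mathcal{M}^{n,k,\ell}_\tau$ with the unrotated model $\mathcal{M}^{n,k,\ell}_\tau$, and then transfer the comparison to $M_\tau$ via the graphicality of $M_\tau$ over the model. Write $M_\tau$ as the graph of $u$ over $\mathcal{M}_\tau:=\mathcal{M}^{n,k,\ell}_\tau$. Then $R_\theta M_\tau$ is the graph of $u\circ R_\theta^{-1}$ (suitably reparametrized) over $R_\theta\mathcal{M}_\tau$. So it suffices to do two things. First, write $R_\theta\mathcal{M}_\tau$ as a graph of a function $W_\theta$ over $\mathcal{M}_\tau$ on $B_{\kappa_0^{1/4}\tau^{1/2}}$. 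Second, compose the two graphs, using that $\Vert u\Vert_{C^4}<\epsilon_0$. The composition gives $v = u\circ\Phi_\theta + W_\theta + (\text{quadratic cross terms})$, where $\Phi_\theta$ is a diffeomorphism that is $C|\theta||x|$-close to the identity.

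For the rotation of the model, note that $R_\theta$ rotates the $y_jy_q$-plane, and the model is invariant in $y_q$ and depends on $y_j$ through the profile $\sqrt{2(n-k)}\,F(\cdot)$ of Proposition~\ref{prop:models}. The infinitesimal generator $y_q\partial_{y_j}-y_j\partial_{y_q}$ has normal component $y_q\,\partial_{y_j}U$ up to higher order, since $\partial_{y_q}U=0$. Here $U$ is the graph function of the model over $\mathcal{C}_{n,k}$. Hence $W_\theta=\theta\, y_q\partial_{y_j}U+O(\theta^2|x|^2|\nabla^2 U|)$. On $B_{\tau^\gamma}$, part (2) of Proposition~\ref{prop:models} gives $\partial_{y_j}U=\frac{\sqrt{2(n-k)}}{2\tau}y_j+O(\tau^{-19/10})$. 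This yields the leading term $c_{n,k}\tau^{-1}\theta y_jy_q$, with errors $O(\tau^{\gamma-19/10}|\theta|)$ and $O(\theta^2\tau^{2\gamma-1})$, and the latter is $\le\tau^{2\gamma-19/10}|\theta|$ since $|\theta|\le\tau^{-9/10}$. On the larger ball we use the $C^1$ normal form \eqref{eq:SXC1}, which gives $|\nabla U|\lesssim \tau^{-1/2}\,|y|\tau^{-1/2}$ for $|y|\lesssim\tau^{1/2}$. This gives $|W_\theta|\le C\tau^{-1}|\theta||x|^2$, which is absorbed into $C\tau^{-1}|\theta|e^{|x|^2/8p}$, and the same holds for derivatives by part (1).

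For the composition term $u\circ\Phi_\theta-u$, the mean value theorem bounds it by $|\theta||x|\,\sup|\nabla u|$ over a neighbourhood. By \eqref{eq:410} this is at most $C|\theta||x|\tau^{-19/10}e^{(|x|+1)^2/8p}$. After slightly enlarging $p$ (or using $|x|e^{|x|/4p}\lesssim e^{\epsilon|x|^2}$ together with $\tau^{-19/10}\le\tau^{-1}$), this fits into \eqref{eq:420}. The cross terms between $u$ and $W_\theta$ are bounded by products of the two estimates and are smaller. Higher derivatives follow in the same way from the derivative bounds in \eqref{eq:410} and part (1).

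The main obstacle is the improved estimate \eqref{eq:421}, because there the shift term $u\circ\Phi_\theta-u$ must be $O(\tau^{\gamma-3/2}|\theta|)$ and not merely $O(\tau^{-1}|\theta|)$. The plan is to use the improved bound $\Vert u\Vert_{C^4(B_{\tau^\gamma})}\le\tau^{-3/2}$ from Proposition~\ref{prop:ldegendecay}. Combined with the displacement $|\Phi_\theta-\mathrm{id}|\le|\theta|\tau^\gamma$, this gives exactly $C\tau^{\gamma-3/2}|\theta|$. The remaining model errors computed above are even smaller. The constant $C_\delta$ accounts for the choice of $T_0(\delta)$ needed for the graphicality and normal-form estimates to hold with the required smallness.
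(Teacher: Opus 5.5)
Your argument is correct, and it takes a different route from the paper's.

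\textbf{How the paper argues.} The paper works in $L^2$. It uses the $H^1$ normal form \eqref{eq:SXH1} to write $R_\theta\mathcal{M}^{n,k,\ell}_\tau$ as a graph over the model, with $L^2$ error $C|\theta|\tau^{\gamma-3/2}$ relative to $\frac{\sqrt{2(n-k)}}{2\tau}\theta y_jy_q$. It combines this with \eqref{eq:410} to get the $L^2$ bound \eqref{eq:vL251} for the graph function of $R_\theta M_\tau$ over $M_\tau$. It then upgrades this $L^2$ bound at time $\tau-1$ to the weighted pointwise bound at time $\tau$, using the log-Sobolev/monotonicity smoothing as in Proposition~\ref{prop:uqnonconc}.

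\textbf{How you argue.} You stay at a single time and compute pointwise. Rotations of the $\mathbb{R}^k$-factor preserve $\mathcal{C}_{n,k}$, so the model's graph function over the cylinder simply becomes $U\circ R_\theta^{-1}$. The $y_q$-invariance of the model then gives the leading term $\theta y_q\partial_{y_j}U$, and you handle the shift of $u$ by the mean value theorem. This makes the sources of error explicit. On $B_{\tau^\gamma}$ the dominant error $\tau^{\gamma-3/2}|\theta|$ comes from displacing $u$, via $\Vert u\Vert_{C^4(B_{\tau^\gamma})}\le\tau^{-3/2}$, while the model contributes only $O(\tau^{2\gamma-19/10}|\theta|)$. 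You therefore obtain \eqref{eq:421} directly, without the further pointwise upgrade the paper leaves implicit.

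\textbf{What the parabolic route buys, and two points to fix.} The paper's route gets the derivative bounds and the exact weight $e^{|x|^2/8p}$ for free. In your route two points need care.
\begin{enumerate}
\item On $\tau^\gamma\le|x|\le\kappa_0^{1/4}\tau^{1/2}$ your stated bounds are not literally available. The $C^1$ error in \eqref{eq:SXC1} is $\tau^{\gamma-1}$, and part (1) of Proposition~\ref{prop:models} gives only $O(\tau^{-1/2})$ control of the derivatives of $U$. Neither is of the form $C\tau^{-1}(1+|x|^2)$, so ``the same holds for derivatives by part (1)'' does not follow as written. The fix is to split at $|x|=\tau^\gamma$. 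Inside, use part (2), upgraded to $C^{i+1}$ by the same Schauder argument. Outside, absorb every polynomial loss in $\tau$ using $e^{|x|^2/8p}\ge e^{\tau^{2\gamma}/8p}$.
\item On the large ball $|\theta||x|^2$ can be as large as $\kappa_0^{1/2}\tau^{1/10}$. Composing \eqref{eq:410} with the displacement $\Phi_\theta$ therefore costs a factor $e^{C|\theta||x|^2}$, which the gain $\tau^{-9/10}$ cannot absorb. You must slightly \emph{decrease} $p$, not enlarge it, and obtain the estimate with some $1<p'<p$. This is harmless, since later uses only need integrability against $e^{-|x|^2/4}$.
\end{enumerate}
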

\begin{proof}
     First we examine the effect of the rotation on the model
     $\mathcal{M}^{n,k,\ell}_\tau$. Note that in the $H^1$ normal form
     \eqref{eq:SXH1}, the effect of rotating in the $y_jy_q$-plane by
     angle $\theta$ is to replace $y_j$ by $y_j + \theta y_q$ up to
     terms of order $|\theta|^2 |y|$. If $|\theta| <
     \tau^{-9/10}$, then $|\theta| |y| \ll 1$ for large $\tau$ in the
     ball $B_{\kappa_0^{1/4}\tau^{1/2}}$. It follows that the rotation
     $R_\theta\mathcal{M}^{n,k,\ell}_\tau$ is the graph of a
     function $W$ over $\mathcal{M}^{n,k,\ell}_\tau$, where
     \[ \left\Vert W(y,\tau) - \frac{\sqrt{2(n'-k')}}{2\tau} \theta y_j y_q
         \right\Vert_{L^2} \leq C \tau^{1/2} |\theta|
         \tau^{\gamma-2}. \]
     Note that $W$ is essentially $\theta y_q$ times the
     $y_j$-derivative of the graphicality function $U$ of
     $\mathcal{M}^{n,k,\ell}_\tau$. 

     Next we use that $M_\tau$ is the graph of $u$ over
     $\mathcal{M}^{n,k,\ell}_\tau$, where $u$ satisfies
     \eqref{eq:410}. The effect of rotating this function by
     $\theta$ can be bounded by $|y||\theta| \tau^{-19/10} e^{|x|^2/8p}
     \leq \tau^{-19/10} |\theta||x|e^{|x|^2/8p}$. Combining these two
     results it follows that we can write $R_\theta M_\tau$
     as the graph of a function $w$ over $M_\tau$, where 
 \[ \label{eq:vL251} \left\Vert w(y,\tau) - \frac{\sqrt{2(n'-k')}}{2\tau} \theta y_j y_q
   \right\Vert_{L^2} \leq |\theta| \tau^{\gamma -3/2}, \]
  for sufficiently large $\tau$, for any $\gamma > 0$. 

     We can now use the same techniques as before to upgrade this
     $L^2$-estimate at a time $\tau-1$ to a pointwise estimate at time
     $\tau$.  We find that
     \[ |\nabla^i w|(x,\tau) \leq C|\theta| \tau^{-1}
       e^{|x|^2/8p}, \]
     on the $\kappa_0^{1/4} \tau^{1/2}$-ball.
   \end{proof}

Using this result, we can construct rotations $S_\tau$ in the
$\mathbb{R}^k$ factor, so that the derivative $\partial_{y_q}v$ of
the graphicality function $v$ of
$S_\tau M_\tau$ over $\mathcal{M}^{n,k,\ell}_\tau$ is orthogonal to
the nondegenerate $y_j$ directions on the
$\kappa_0^{1/4}\tau^{1/2}$-ball. To simplify the later argument, we
also incorporate a cutoff function at this stage. 
\begin{prop}\label{prop:rotated2}
  Let $\delta > 0$. If $\tau$ is sufficiently large, depending on
  $\delta$, then we can find a rotation $S_\tau$ of $\mathbb{R}^k$,
  with $|S_\tau - Id| < C\tau^{-9/10}$, such that we have
  \begin{itemize}
    \item[(i)] $\tilde{M}_\tau = S_\tau M_\tau$ is
      the graph of $v(x, \tau)$ over $\mathcal{M}^{n,k,\ell}_\tau$ on
      $B_{10\sqrt{\tau}}$,
    \item[(ii)] With $\chi$ the cutoff function as before, define
      $\tilde{v}(x,\tau) =
      \chi(|x|\kappa_0^{-1/4}\tau^{-1/2})v(x,\tau)$. Then $\partial_{y_q}
      \tilde{v}(x, \tau)$ is $L^2$-orthogonal to each 
      $y_i$ for $i=1,\ldots, \ell$, when viewed
      as functions on $\mathcal{C}_{n,k}$.
    \item[(iii)] On the $\kappa_0^{1/4}\tau^{1/2}$-ball we have
      \[ \label{eq:407} |\nabla^i \tilde v(x,\tau)| \leq C\tau^{-19/10} e^{|x|^2/8p}, \]
      for $i=0,1,2$ and some fixed $p > 1$. 
    \end{itemize}
  \end{prop}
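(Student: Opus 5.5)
We construct $S_\tau$ by a finite-dimensional fixed point (or inverse function) argument in the rotation parameters, using Proposition~\ref{prop:rotated3} as the linearization. Parametrize rotations near the identity in the mixed planes by a matrix $\Theta=(\theta_{jq})_{j\le\ell<q}$, setting $S_\Theta=\exp\big(\sum_{j,q}\theta_{jq}J_{jq}\big)$, where $J_{jq}$ generates rotation in the $y_jy_q$-plane. Rotations within the degenerate or within the nondegenerate block are not needed, so we take $S_\tau$ of this form. Let $v_\Theta$ be the graphicality function of $S_\Theta M_\tau$ over $\mathcal{M}^{n,k,\ell}_\tau$, and $\tilde v_\Theta$ its cut-off. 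Consider the map
\[ \Phi(\Theta)_{jq} = \int_{\mathcal{C}} y_j\,\partial_{y_q}\tilde v_\Theta\, d\mu, \qquad j\le \ell<q. \]
Property (ii) is exactly $\Phi(\Theta)=0$. Property (i) for $|\Theta|\le C\tau^{-9/10}$ follows as in Proposition~\ref{prop:rotated3}: a small rotation moves points in $B_{10\sqrt\tau}$ by $O(\tau^{-2/5})$, and the model is invariant in the $y_q$-directions and has gradient $O(\tau^{-1/2})$. So $S_\Theta M_\tau$ remains a graph over the model on that ball, using the $\epsilon_0$-graphicality on $B_{19\sqrt\tau}$.

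For the linearization, by \eqref{eq:421} we have $v_\Theta = u + c_{n,k}\tau^{-1}\sum\theta_{jq}y_jy_q + O(C_\delta\tau^{\gamma-3/2}|\Theta|)$ on $B_{\tau^\gamma}$. Here the rotations in distinct planes combine to first order, with $O(|\Theta|^2)$ corrections that are negligible. Outside $B_{\tau^\gamma}$ the Gaussian weight together with \eqref{eq:420} makes the contribution $O(e^{-\tau^{2\gamma}/10}\tau^{-1}|\Theta|)$. Since $\partial_{y_q}(y_jy_{q'}) = \delta_{qq'}y_j$ and the $y_j$ are orthonormal up to a constant, this gives
\[ \Phi(\Theta) = \Phi(0) + c'\tau^{-1}\Theta + O(\tau^{\gamma-3/2}|\Theta|). \]
So $D\Phi$ is $c'\tau^{-1}$ times the identity up to a relative error $O(\tau^{\gamma-1/2})$, and $\Phi$ is a uniform contraction-type perturbation on the ball $|\Theta|\le C\tau^{-9/10}$. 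The required Lipschitz version of \eqref{eq:421} in $\Theta$ should follow by applying Proposition~\ref{prop:rotated3} to $S_{\Theta_1}M_\tau$ and rotating by $S_{\Theta_2}S_{\Theta_1}^{-1}$, since $S_{\Theta_1}M_\tau$ satisfies the same hypotheses with slightly worse constants.

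Next we bound $\Phi(0)$. By \eqref{eq:410}, $|\nabla u|\le C\tau^{-19/10}e^{|x|^2/8p}$, and since $p>1$ this is integrable against $y_j e^{-|x|^2/4}$. Hence $|\Phi(0)|\le C\tau^{-19/10}$. The equation $\Phi(\Theta)=0$ then has a solution with $|\Theta|\le C\tau\cdot\tau^{-19/10}=C\tau^{-9/10}$, by Brouwer or the contraction mapping applied to $\Theta\mapsto\Theta-(c'\tau^{-1})^{-1}\Phi(\Theta)$. This yields $|S_\tau-Id|\le C\tau^{-9/10}$. The constant $C$ must be chosen before the threshold for $\tau$, and the smallness $\theta\le\tau^{-9/10}$ in Proposition~\ref{prop:rotated3} is matched by working on $|\Theta|\le C\tau^{-9/10}$ with a rescaled version of that proposition, adjusting exponents slightly.

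Finally, for (iii), combine \eqref{eq:410} for $u$ with \eqref{eq:420}: $|\nabla^i(v-u)|\le C\tau^{-1}|\Theta|e^{|x|^2/8p}\le C\tau^{-19/10}e^{|x|^2/8p}$. The cutoff contributes derivatives of size $O(\kappa_0^{-1/4}\tau^{-1/2})$ multiplied by the same bound, which gives \eqref{eq:407}. The main obstacle is the quantitative Lipschitz dependence of $v_\Theta$ on $\Theta$ with the constants stated in Proposition~\ref{prop:rotated3}. We must ensure that the error $C_\delta\tau^{\gamma-3/2}$ is genuinely smaller than the main term $\tau^{-1}$ after choosing $\tau$ large depending on $\delta$, and we must handle composition of rotations in several planes simultaneously.
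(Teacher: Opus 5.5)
Your proposal is correct and follows the paper's argument. By \eqref{eq:421}, a rotation by $\theta$ in the $y_jy_q$-plane shifts the $y_j$-component of $\partial_{y_q}v$ by a nonzero multiple of $\tau^{-1}\theta$, so the $O(\tau^{-19/10})$ component of $\partial_{y_q}u$ can be removed with $|\theta|\le C\tau^{-9/10}$, and (iii) then follows from \eqref{eq:410} and \eqref{eq:420}. Your fixed-point formulation makes explicit the solvability step that the paper states in one line, and for that step the Brouwer version is enough: it needs only continuity of $\Phi$ and your estimate on $\Phi(\Theta)-\Phi(0)-c'\tau^{-1}\Theta$, so the Lipschitz dependence you flag as the main obstacle is not actually required.
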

  \begin{proof}
    Note that from Proposition~\ref{prop:uqnonconc} we have $\Vert
    \partial_{y_q} u\Vert_{L^2} \leq C\tau^{-19/10}$, and so,
    according to \eqref{eq:421} in
    Proposition~\ref{prop:rotated3}, we can use rotations by angles
    $|\theta| \leq C \tau^{1-19/10} = C\tau^{-9/10}$ to eliminate the 
    $y_i$-component of $\partial_{y_q}u$. This way we can define the
    rotations $S_\tau$, satisfying $|S_\tau - Id| < C
    \tau^{-9/10}$. Writing $\tilde{M}_\tau = S_\tau M_\tau$ as the
    graph of $v$ over $\mathcal{M}^{n,k,\ell}_\tau$, the estimates for
    $v$ follow from \eqref{eq:420} in Proposition~\ref{prop:rotated3}. 
  \end{proof}

Using
\eqref{eq:uqtildeeq}, the function $\tilde{v}(x,\tau) = \chi(\kappa_0^{-1/4}\tau^{-1/2} |x|)
v(x,\tau)$ above satisfies the following equation, similarly to 
\eqref{eq:vtildeeq2}: 
\[  \label{eq:vtildeeq10} \partial_\tau \tilde{v}_q = \left(\Delta -
    \frac{1}{2}x\cdot \nabla + |A|^2\right) \tilde{v}_q +
  \mathcal{Q}_{v}(\tilde{v}_q) + \mathcal{R}_{\xi_\tau}(\tilde{v}) +
    \chi \xi_\tau  + \mathcal{E}.  \]
Here, and below, we write $\tilde{v}_q = \partial_{y_q}\tilde{v}$, 
$|A|$ is the second fundamental form of
$\mathcal{M}^{n,k,\ell}_\tau$, and for each $\tau$, $\xi_\tau$ is a function in the
span of $y_i$ for $i\leq \ell$, corresponding to the derivative
$S_\tau^{-1}\dot S_\tau$. Note that the norm $|\xi_\tau|$ corresponds
to the infinitesimal change in the graphicality function due to our
rotations, and as in Proposition~\ref{prop:rotated3}, this is of order
$\tau^{-1} |S_\tau^{-1}\dot S_\tau|$. 
The cutoff error term $\mathcal{E}$ is
supported outside of the $\frac{1}{2}\kappa_0^{1/4} \tau^{1/2}$-ball,
and is bounded by $1$. The term $\mathcal{R}_{\xi_\tau}$ arises from the
lower order terms corresponding to the infinitesimal rotation
$S_\tau^{-1}\dot S_\tau$, and as a result we have
\[ \label{eq:Rxbound} \Vert \mathcal{R}_{\xi_\tau}\Vert_{L^2} \leq C \tau^{-9/10}
  |\xi_\tau|. \]
By modifying $\xi_\tau$ slightly, we can ensure that
$\mathcal{R}_{\xi_\tau}(\tilde{v})$ is orthogonal to
$\xi_\tau$. Throughout we view the functions that appear as functions on
  $\mathcal{C}$, and integrate using the measure $d\mu = e^{-|x|^2/4}
  d\mathcal{H}^n$ with the area form of $\mathcal{C}$.

The following is similar to Proposition~\ref{prop:L2ptwise}. 
\begin{prop}\label{prop:xitest30} 
  Suppose that for some $\tau \geq T_0+1$, and large $T_0$, we have 
  \[ \Vert v_q(\cdot, \tau-1)\Vert_{L^2} \leq 2 \Vert v_q(\cdot,
    \tau)\Vert_{L^2}. \]
  Then for the $p$ in Proposition~\ref{prop:uqnonconc} we have the estimate
 \[ \label{eq:401} | \nabla^i v_q|(x, \tau) \leq Ce^{|x|^2/8p} (\Vert
   v_q(\cdot, \tau)\Vert_{L^2} + e^{-2\kappa_0 \tau}),\]
 on the $\kappa_0^{1/4} \tau^{1/2}$-ball. In addition we have 
   \[ \label{eq:xitaubound5} |\xi_\tau| \leq C( \tau^{-1} \Vert v_q(\cdot, \tau)\Vert_{L^2} +
      e^{-2\kappa_0 \tau}). \] 
\end{prop}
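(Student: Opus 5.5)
The plan follows the proof of Proposition~\ref{prop:L2ptwise}, with Proposition~\ref{prop:uqnonconc} playing the role of Corollary~\ref{cor:logSobolev}. The pointwise estimate \eqref{eq:401} comes first, and the bound \eqref{eq:xitaubound5} on $\xi_\tau$ is then obtained by projecting the equation \eqref{eq:vtildeeq10} onto the span of $y_1,\ldots,y_\ell$.

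\textbf{Pointwise bound \eqref{eq:401}.} The function $\tilde v_q$ satisfies \eqref{eq:vtildeeq10}, so Proposition~\ref{prop:uqnonconc} does not apply to it verbatim, because of the gauge terms $\chi\xi_\tau$ and $\mathcal{R}_{\xi_\tau}$. As in the proof of Proposition~\ref{prop:L2ptwise}, I will instead work with the ungauged flow.

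First, fix the rotation $S_{\tau-1}$ and consider the flow $S_{\tau-1}M_s$ for $s\in[\tau-1,\tau]$. This is again a RMCF, graphical over $\mathcal{M}^{n,k,\ell}_s$, and its graph function agrees with $v$ at time $\tau-1$.

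Next, apply Proposition~\ref{prop:uqnonconc} with $\tau_0=\tau-1$. This gives, on the $\kappa_0^{1/4}\tau^{1/2}$-ball, pointwise bounds $Ce^{|x|^2/8p}(\Vert v_q(\cdot,\tau-1)\Vert_{L^2}+e^{-2\kappa_0\tau})$ for $\nabla^i$ of the $y_q$-derivative of the graph function of $S_{\tau-1}M_\tau$. By hypothesis, $\Vert v_q(\cdot,\tau-1)\Vert_{L^2}\le 2\Vert v_q(\cdot,\tau)\Vert_{L^2}$, so this becomes a bound in terms of $\Vert v_q(\cdot,\tau)\Vert_{L^2}$.

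Finally, pass from $S_{\tau-1}M_\tau$ to $S_\tau M_\tau$, which differ by the rotation $S_\tau S_{\tau-1}^{-1}$ in the $y_jy_q$-planes. By Proposition~\ref{prop:rotated3}, the change in the graph function is $c_{n,k}\tau^{-1}\theta y_jy_q$ plus a term controlled by \eqref{eq:420}. Its $y_q$-derivative is therefore $O(\tau^{-1}|\theta|e^{|x|^2/8p})$. The angle $\theta$ is determined by the orthogonality condition (ii) of Proposition~\ref{prop:rotated2}. Using \eqref{eq:421}, the $y_j$-component of the $y_q$-derivative changes by $c_{n,k}\tau^{-1}\theta$ to leading order. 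Hence $\tau^{-1}|\theta|$ is bounded by the $L^2$ size of the $y_j$-components of the ungauged $y_q$-derivative, which is $\le C(\Vert v_q(\cdot,\tau)\Vert_{L^2}+e^{-2\kappa_0\tau})$. This gives \eqref{eq:401}.

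\textbf{Bound \eqref{eq:xitaubound5} on $\xi_\tau$.} Take the $L^2(d\mu)$ inner product of \eqref{eq:vtildeeq10} with $\xi_\tau$, which lies in the span of the $y_i$, $i\le\ell$. By condition (ii), $\langle\tilde v_q,y_i\rangle=0$ for all $\tau$, so $\langle\partial_\tau\tilde v_q,\xi_\tau\rangle=0$; the time derivative of $\xi_\tau$ itself pairs with $\tilde v_q$, which is orthogonal. By construction, $\mathcal{R}_{\xi_\tau}\perp\xi_\tau$. The remaining terms are handled as follows.
\begin{itemize}
\item The linear operator: by Proposition~\ref{prop:models}(3), $\Delta-\frac12x\cdot\nabla+|A|^2$ equals $\mathcal{L}_{\mathcal{C}}-\frac12$ up to $O(\tau^{-1})$ corrections, with polynomial weights, near the origin. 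Since $\mathcal{L}_{\mathcal{C}}$ is self-adjoint and $y_i$ is an eigenfunction, $\langle(\mathcal{L}_{\mathcal{C}}-\frac12)\tilde v_q,y_i\rangle=0$. The correction terms contribute at most $C\tau^{-1}\Vert v_q\Vert_{L^2}|\xi_\tau|$. On the region where the model is not close to $\mathcal{C}$ up to $O(\tau^{-1})$, I will use the Gaussian weight together with \eqref{eq:401}.
\item The nonlinear term $\mathcal{Q}_v(\tilde v_q)$: its coefficients are derivatives of $Q$, of size $|\nabla^{\le2}v|\le C\tau^{-19/10}e^{|x|^2/8p}$ by \eqref{eq:407}. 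Combined with \eqref{eq:401}, the product $e^{|x|^2/4p}$ is integrable against $d\mu$ since $p>1$, giving a contribution $\le C\tau^{-19/10}(\Vert v_q\Vert_{L^2}+e^{-2\kappa_0\tau})|\xi_\tau|$.
\item The cutoff error $\mathcal{E}$: it is supported where $|x|\ge\frac12\kappa_0^{1/4}\tau^{1/2}$, so its pairing is $\le e^{-c\kappa_0^{1/2}\tau}|\xi_\tau|\le e^{-2\kappa_0\tau}|\xi_\tau|$ for small $\kappa_0$.
\end{itemize}
Since $\langle\chi\xi_\tau,\xi_\tau\rangle\ge\frac12|\xi_\tau|^2$, dividing by $|\xi_\tau|$ gives \eqref{eq:xitaubound5}.

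\textbf{Main obstacle.} The delicate step is the linear term. The operator is computed on $\mathcal{M}^{n,k,\ell}_\tau$, not on $\mathcal{C}$, and $y_i$ for $i\le\ell$ is exactly not invariant for the model. I need the commutator error to be $O(\tau^{-1})$ and not merely $o(1)$. I would split the integral at $|x|=\tau^\gamma$: inside, use Proposition~\ref{prop:models}(3); outside, use \eqref{eq:401} and Gaussian decay, which gives superpolynomially small errors.
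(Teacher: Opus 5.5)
Your proposal is correct and follows the paper's proof step for step. You normalize $S_{\tau-1}=Id$, apply Proposition~\ref{prop:uqnonconc} from time $\tau-1$ with the doubling hypothesis, and pass from $u_q$ to $v_q$ via Proposition~\ref{prop:rotated3} using a rotation of size $C\tau\Vert u_q(\cdot,\tau)\Vert_{L^2}$. You then pair \eqref{eq:vtildeeq10} with $\xi_\tau$, using \eqref{eq:41}, the fact that $\xi_\tau$ lies in the kernel of $\mathcal{L}_{\mathcal{C}}-\frac{1}{2}$, the coefficient bound \eqref{eq:407} for $\mathcal{Q}_v$, and $\int_{\mathcal{C}}\chi\xi_\tau^2\,d\mu=(1-\Psi(\tau^{-1}))|\xi_\tau|^2$; the only difference is that splitting the linear term at $|x|=\tau^\gamma$ justifies Proposition~\ref{prop:models}(3) more carefully than the paper, which applies it on the whole $\kappa_0^{1/4}\tau^{1/2}$-ball without comment.
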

\begin{proof}
  To prove \eqref{eq:401}, we can assume that $S_{\tau-1}=Id$. Then
  Proposition~\ref{prop:uqnonconc} implies that the estimate
  \eqref{eq:401} holds with $\nabla^i u_q$ on the left hand side. It
  follows from Proposition~\ref{prop:rotated3} that to get $S_\tau
  M_\tau$ from $M_\tau$, it is enough to consider a rotation $S_\tau$
  with $|S_\tau - Id| \leq C\tau \Vert u_q(\cdot, \tau)\Vert_{L^2}$,
  and the new graphicality function $v(x, \tau)$ satisfies
  \[ |\nabla^i(v(x, \tau) - u(x,\tau))|&\leq C \tau \Vert u_q(\cdot,
    \tau)\Vert_{L^2} \tau^{-1} e^{|x|^2/8p} \\
    &= C \Vert u_q(\cdot, \tau)\Vert_{L^2}  e^{|x|^2/8p},  \]
  on the $\kappa_0^{1/4}\tau^{1/2}$-ball. 
  From this it follows that if we have the estimate \eqref{eq:401}
  with $u_q$ on the left hand side, then it also holds with $v_q$. 

  To get the estimate for $\xi_\tau$, we multiply \eqref{eq:vtildeeq10} by $\xi_\tau$, and
  integrate. Note that by construction $\partial_\tau \tilde{v}_q$ and
  $\mathcal{R}_{\xi_\tau}(\tilde{v})$ are orthogonal to $\xi_\tau$. To
  estimate the linearized operator acting on $\tilde{v}_q$, we use
Proposition~\ref{prop:models} to find that
\[ \label{eq:41} \left(\Delta -
      \frac{1}{2}x\cdot \nabla + |A|^2\right) \tilde{v}_q =
    (\mathcal{L}_{\mathcal{C}} - \frac{1}{2}) \tilde{v}_q - \frac{1}{4\tau} \left(\sum_{i=1}^\ell
   (y_i^2-2)\right) (2\Delta_z \tilde{v}_q + \tilde{v}_q) + \mathcal{V}, \]
  where using \eqref{eq:401} from Proposition~\ref{prop:xitest30} we have 
  \[  \label{eq:Vxtaubound}|\mathcal{V}(x,\tau)| \leq C\tau^{-19/10} e^{|x|^2/8p} (\Vert v_q(\cdot,
    \tau)\Vert_{L^2} + e^{-2\kappa_0 \tau}). \]
  The inner product of $(\mathcal{L}_{\mathcal{C}} - \frac{1}{2})
  \tilde{v}_q$ with $\xi_\tau$ vanishes, because $\xi_\tau$ is in the
  kernel of $\mathcal{L}_{\mathcal{C}} - \frac{1}{2}$. Using
  \eqref{eq:401} we find that
  \[ \left| \int_{\mathcal{C}} \xi_\tau \left(\Delta - \frac{1}{2}
        x\cdot \nabla + |A|^2\right)\, d\mu \right| \leq C\tau^{-1}
    |\xi_\tau| \Vert v_q(\cdot, \tau)\Vert_{L^2} + C |\xi_\tau|
    e^{-2\kappa_0 \tau}. \]
  The term $\mathcal{Q}_v(\tilde{v}_q)$ is linear
  in $\tilde{v}_q, \nabla \tilde{v}_q, \nabla^2\tilde{v}_q$, and the
  coefficients are controlled by $\Vert v\Vert_{C^2}$. Using also that
  $\mathcal{E}$ is bounded and supported outside of the
  $\frac{1}{2}\kappa_0^{1/4} \tau^{1/2}$-ball, we get that
  \[ \int_{\mathcal{C}} \left|\mathcal{Q}_v(\tilde{v}_q) +
      \mathcal{E}\right| |\xi_\tau|\, d\mu  \leq C \tau^{-19/10}
    |\xi_\tau| \Vert v_q(\cdot, \tau)\Vert_{L^2} + 
    C |\xi_\tau| e^{-2\kappa_0 \tau}.  \]
  Finally the integral of $\chi \xi_\tau^2$ is $(1 - \Psi(\tau^{-1}))
  |\xi_\tau|^2$. Combining all of these results and taking the inner
  product of the equation \eqref{eq:vtildeeq10} with $\xi_\tau$, we
  find that
  \[ (1 - \Psi(\tau^{-1})) |\xi_\tau|^2 \leq C \tau^{-1} |\xi_\tau|
    \Vert v_q(\cdot, \tau)\Vert_{L^2} + C |\xi_\tau| e^{-2\kappa_0
      \tau}, \]
  and from this the estimate for $|\xi_\tau|$ follows once $\tau$ is
  large. 
\end{proof}

In a similar way to Proposition~\ref{prop:growthdominant} we also have
the following, which shows that under a slow decay condition the $z_\alpha$ and degenerate
$y_j$ components dominate $v_q$. 
\begin{prop}\label{prop:uqdominant2}
Let $\lambda > 0$. There exist $\epsilon > 0$,
depending on $\lambda$, such that if $T_0$ is sufficiently large,
then we have the following. Suppose that for some
$T_1 \geq T_0$ the
graphical function $v$ of the rotated flow $\tilde{M}_\tau$ defined
in Proposition~\ref{prop:rotated2} satisfies
\[ \label{eq:vqslow} \Vert v_q (\cdot, T_1 + 1)\Vert_{L^2} \geq
  e^{-\epsilon } \Vert v_q (\cdot,
  T_1)\Vert_{L^2}, \]
and $\Vert v_q(\cdot, T_1)\Vert_{L^2} \geq
e^{-2\kappa_0 T_1}$. Then we have
\[ \label{eq:vqslow2}\Vert v_q (\cdot, T_1 + 2)\Vert_{L^2} \geq
  e^{-\epsilon } \Vert v_q (\cdot,
  T_1+1)\Vert_{L^2}, \]
and in addition  for all $\tau \geq T_1+1$ we have
\[ \label{eq:vqdom20} \Vert \Pi_{\mathfrak{h}_1} v_q(\cdot,
  \tau)\Vert_{L^2(B_{\delta^{-1}})} \geq (1-\lambda) \Vert v_q(\cdot,
  \tau)\Vert_{L^2(B_{\delta^{-1}})}, \]
where $\mathfrak{h}_1$ is spanned by the coordinate functions
$z_\alpha$, and $y_j$ for $j > \ell$. 
\end{prop}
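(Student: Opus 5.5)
The plan is to follow the proofs of Propositions~\ref{prop:3ann3} and \ref{prop:growthdominant}: a compactness and contradiction argument in which normalized $v_q$ converge to a solution of a limiting linear equation on $\mathcal{C}_{n,k}$, combined with a spectral gap. The first step is to identify that limit equation. By \eqref{eq:vtildeeq10} and the comparison \eqref{eq:41} from Proposition~\ref{prop:models}, the operator acting on $\tilde v_q$ equals $\mathcal{L}_{\mathcal{C}} - \frac12$ up to errors of order $\tau^{-1}$ with Gaussian-controlled coefficients. The other contributions are as follows. The term $\mathcal{Q}_v(\tilde v_q)$ has coefficients bounded by $\tau^{-19/10}e^{|x|^2/8p}$ by \eqref{eq:407}. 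The term $\mathcal{R}_{\xi_\tau}$ is of size $\tau^{-9/10}|\xi_\tau|$. The rotation term has $|\xi_\tau|\le C(\tau^{-1}\Vert v_q\Vert_{L^2}+e^{-2\kappa_0\tau})$ by Proposition~\ref{prop:xitest30}. The cutoff error $\mathcal{E}$ is exponentially small in $\tau$ in $L^2$.

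Proposition~\ref{prop:xitest30} requires $\Vert v_q(\tau-1)\Vert\le 2\Vert v_q(\tau)\Vert$. This follows from the slow decay hypothesis \eqref{eq:vqslow} once $\epsilon<\log 2$. Together with $\Vert v_q(T_1)\Vert_{L^2}\ge e^{-2\kappa_0T_1}$, it means all error terms are $o(1)\cdot\Vert v_q\Vert_{L^2}$ as $T_0\to\infty$.

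The proof of \eqref{eq:vqslow2} is by contradiction. Take sequences $T_1^i\to\infty$ for which \eqref{eq:vqslow} holds but \eqref{eq:vqslow2} fails. Normalize $w^i(\cdot,s)=v_q(\cdot,T_1^i+s)/\Vert v_q(\cdot,T_1^i+1)\Vert_{L^2}$. The pointwise bounds \eqref{eq:401} and Proposition~\ref{prop:uqnonconc} give local smooth convergence on $(0,2]$ and Gaussian tail control, so the $L^2$ norms pass to the limit. The limit $w^\infty$ solves $\partial_s w=(\mathcal{L}_{\mathcal{C}}-\frac12)w$. It is orthogonal to $y_i$ for $i\le\ell$, by Proposition~\ref{prop:rotated2}(ii). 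The spectrum of $\mathcal{L}_{\mathcal{C}}-\frac12$ is $\{1/2,0,-1/2,\dots\}$, and the eigenvalue $1/2$ corresponds to constants. Since $v_q$ is a $y_q$-derivative, its constant component should also be negligible; I would try to show this by integrating by parts against the Gaussian, which gives $\int v_q\,d\mu=\frac12\int y_q v\,d\mu$, of size $O(\tau^{-19/10})$. That is not obviously small relative to $\Vert v_q\Vert$, and this is a point to check. If the constant mode cannot be excluded directly, I would instead note that its presence would force growth like $e^{s/2}$, which contradicts convergence to zero on long intervals.

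Excluding the constant mode, the only nonnegative rate left is $0$, with eigenspace spanned by the $z_\alpha$ and the $y_j$ for $j>\ell$, which is exactly $\mathfrak{h}_1$. Below that there is a gap of $-\min\{\frac1{n-k},\frac12\}$. The relation $\Vert w^\infty(1)\Vert\ge e^{-\epsilon}\Vert w^\infty(0)\Vert$, combined with log-convexity of the $L^2$ norm, forces $w^\infty$ to be nearly in $\mathfrak{h}_1$ on $[1,2]$ when $\epsilon$ is small. Then $\Vert w^\infty(2)\Vert\ge e^{-\epsilon}\Vert w^\infty(1)\Vert$, which is the required contradiction. The same limit gives \eqref{eq:vqdom20} on $[T_1+1,T_1+2]$, with the $B_{\delta^{-1}}$ norms comparable to full norms by the Gaussian tail bounds.

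Finally, iterating \eqref{eq:vqslow2} propagates slow decay to all later unit intervals. The lower bound $\Vert v_q\Vert\ge e^{-2\kappa_0\tau}$ also propagates for small $\epsilon$, since $e^{-\epsilon}>e^{-2\kappa_0}$ is not needed; one only needs the decay rate $\epsilon<2\kappa_0$, or else errors are dominated anyway. This yields \eqref{eq:vqdom20} for all $\tau\ge T_1+1$.

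The main obstacle is making the error terms uniformly small relative to $\Vert v_q\Vert$. This concerns especially the $\tau^{-1}(y_i^2-2)(2\Delta_z+1)$ term and the constant mode, which are not covered by a clean a priori gap.
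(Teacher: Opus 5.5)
Your proposal follows the paper's argument. You normalize, argue by compactness and contradiction, and get a limit solving $\partial_s w=(\mathcal{L}_{\mathcal{C}}-\tfrac12)w$ that is orthogonal to $y_1,\dots,y_\ell$. The absence of homogeneous solutions of rate $\pm\epsilon$ (your log-convexity step) then yields \eqref{eq:vqslow2}, and near-homogeneity of degree $0$ yields \eqref{eq:vqdom20}.

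\begin{itemize}
\item \textbf{The constant mode.} The paper excludes it exactly by your fallback. Growth by a factor $e^{\epsilon}$ over a unit interval, while $\Vert v_q\Vert\ge e^{-2\kappa_0\tau}$, would propagate to all later times and contradict $v_q\to 0$. This gives the two-sided bound $e^{-\epsilon}\le \Vert v_q(\tau+1)\Vert/\Vert v_q(\tau)\Vert\le e^{\epsilon}$, which forces degree-$0$ homogeneity in the limit. So the integration-by-parts route is not needed.
\item \textbf{The term $\tau^{-1}(y_i^2-2)(2\Delta_z+1)$.} This is not an obstacle here. It is $O(\tau^{-1})$ relative to $\Vert v_q\Vert$ and vanishes in the normalized limit.
\item \textbf{One slip.} The hypothesis of Proposition~\ref{prop:xitest30} follows from \eqref{eq:vqslow} only at $\tau=T_1+1$, not on the whole time window. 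The paper avoids needing it: it normalizes $S_{T_1}=\mathrm{Id}$ and takes the limit of $\mathbf{d}_i^{-1}u_q$, whose equation has no rotation term. The normalized offsets $v_q-u_q$ then converge to a time-independent element of $\mathrm{span}\{y_j\}_{j\le\ell}$.
\end{itemize}
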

\begin{proof}
  First we show that for a given small $\epsilon > 0$, if $T_0$ is
  sufficiently large, then the condition 
  \eqref{eq:vqslow} together with $\Vert v_q(\cdot, T_1)\Vert_{L^2}
  \geq e^{-2\kappa_0 T_1}$ implies \eqref{eq:vqslow2}. The proof is
  very similar to Proposition~\ref{prop:3ann3}, except we need to
  account for the error arising from the cutoff function. We argue by
  contradiction, assuming that we have a sequence of flows $\tilde{M}^i_\tau =
  S^i_\tau M^i_\tau$, with corresponding  $T^i_1 \to \infty$, such
  that the hypotheses \eqref{eq:vqslow} 
  and $\Vert v^i_q(\cdot, T^i_1)\Vert_{L^2} \geq e^{-2\kappa_0 T^i_1}$
  hold, but the conclusion \eqref{eq:vqslow2} fails, so
  \[ \label{eq:vqslow10} \Vert v^i_q(\cdot, T^i_1+2)\Vert_{L^2} \leq e^{-\epsilon} \Vert
    v^i_q(\cdot, T^i_1+1)\Vert_{L^2}. \]

  As before, by rotating the whole flow by a rotation of order
  $(T^i_1)^{-9/10}$  we can assume that
  $S^i_{T^i_1} = Id$, so $\tilde{M}^i_{T^i_1} = M^i_{T^i_1}$. Let us
  write $u^i$ and $u^i_q$ for the graphicality
  functions corresponding to $M^i_\tau$ and their $y_q$-derivatives. 
  We define $\mathbf{d}_i = \Vert v_q(\cdot,
  T^i_1+1)\Vert_{L^2}$. Then by our assumption we have
  \[ \Vert u_q^i(\cdot, T^i_1)\Vert = \Vert v_q^i(\cdot, T^i_1)\Vert_{L^2}
    \leq e^\epsilon \mathbf{d}_i. \]
  From Proposition~\ref{prop:uqnonconc}, for $\tau\in [T^i_1,
  T^i_1+2]$ we have
  \[ \label{eq:301} \Vert u_q^i(\cdot, \tau)\Vert_{L^2} &\leq e^{C(\tau - T^i_1)}
    e^{\epsilon}\mathbf{d}_i + C e^{-\kappa_0^{1/2} T^i_1/10} \\
    &\leq ( e^{C(\tau - T^i_1)}
    e^{\epsilon} + 2C e^{-\kappa_0^{1/2} T^i_1/10}) \mathbf{d}_i, \]
  for large $i$, where we used that $\mathbf{d}_i \geq e^{-2\kappa_0 T^i_1}$
 In addition, for some $p > 1$ and $j=0,1,2$, 
  \[ \label{eq:302} |\nabla^j u_q^i|(x,T^i_1+1) \leq C e^{|x|^2/8p}  (
    1+ e^{-\kappa_0^{1/2} T^i_1/10}) \mathbf{d}_i. \]

  After translating in time by $T^i_1$, we can
  extract a limit $\mathbf{d}_i^{-1}u^i_q \to u_q^\infty$ on $\mathcal{C}_{n,k}$ on the time
  interval $(0,2]$, converging smoothly on compact sets, satisfying the equation
  \[ \label{eq:lineq20} \partial_\tau u_q^\infty = \Delta u^\infty_q - \frac{1}{2} x\cdot
    \nabla u_q^\infty + \frac{1}{2} u^\infty_q. \]
  Note that we do not need to know that there is 
  a function $u^\infty$ whose $y_q$-derivative is $u^\infty_q$
  (although we could integrate $u^\infty_q$ to obtain one). 
  Arguing as
  in Proposition~\ref{prop:3ann3} we have that $v^i_q = u^i_q +
  \xi^i_\tau$, where for each $\tau$ (after translating in time by
  $T^i_1$), up to choosing a subsequence, the
  $\mathbf{d}_i^{-1}\xi^i_\tau$ converge to a function
  $\xi^\infty_\tau$ in the span of $y_j$ for $j\leq 
  \ell$. In addition we find that the limit $\xi^\infty_\tau =
  \xi^\infty$ is independent of $\tau$,
  and so the $\mathbf{d}_i^{-1}v^i_q$ converge to $v^\infty_q = u^\infty_q +
  \xi^\infty$, which are orthogonal to $y_j$ for $j\leq \ell$ and also
  solve the equation \eqref{eq:lineq20}.
  Because  of \eqref{eq:301} we have
  \[ \liminf_{\tau\to 0} \Vert v^\infty_q(\cdot, \tau)\Vert_{L^2} \leq
    \liminf_{\tau\to 0} \Vert u^\infty_q(\cdot, \tau)\Vert_{L^2} \leq
    e^{\epsilon}, \]
  and from \eqref{eq:vqslow10} we also have
  \[ \Vert v^\infty_q(\cdot, 2)\Vert_{L^2} \leq e^{-\epsilon}. \]
  From \eqref{eq:302}, by integrating on larger and larger balls, we
  find that
  \[ \Vert v^\infty_q(\cdot, 1)\Vert_{L^2} = 1. \]
  These three results contradict that there are no homogeneous
  solutions of \eqref{eq:lineq20} with growth rate $\epsilon$. It
  follows that for a given $\epsilon > 0$ we can choose $T_0$
  sufficiently large, so that the slow decay property
  \eqref{eq:vqslow} can be propagated to later times (if the
  $L^2$-norm is not too small). 

  To obtain the estimate \eqref{eq:vqdom20}, choose a $\lambda >
  0$. Then, for a given $\epsilon > 0$ we can argue as above, to show
  that for large enough  $T_0$ we must have either $\Vert
  v_q(\cdot, T_1)\Vert_{L^2} < e^{-2\kappa_0 T_1}$, or 
  \[ \Vert v_q(\cdot, T_1+1)\Vert_{L^2} \leq e^\epsilon \Vert
    v_q(\cdot, T_1)\Vert_{L^2}, \]
  since otherwise this growth could be propagated for all later times,
  contradicting that we have $v_q\to 0$. Consider a
  sequence of such flows as above, corresponding to larger and larger
  $T_0$. Normalizing, and taking the limit as above, after
  translating in time by $T^i_1$, we now obtain
  $v^\infty_q$, satisfying
  \[ \liminf_{\tau\to 0} \Vert v^\infty_q(\cdot, \tau)\Vert_{L^2} &\leq
    e^\epsilon \Vert v^\infty_q(\cdot, 1)\Vert_{L^2}, \\
    \Vert v^\infty_q(\cdot, 2)\Vert_{L^2} &\leq e^\epsilon \Vert
    v^\infty_q(\cdot, 1)\Vert_{L^2}. \]
  If this holds for sufficiently small $\epsilon$, depending on
  $\lambda$, then we have
  \[ \Vert \Pi_{\mathfrak{h}_1} v^\infty_q(\cdot, \tau)\Vert_{L^2}
    \geq (1-\lambda/2) \Vert v^\infty_q(\cdot, \tau)\Vert_{L^2} \]
  for $\tau\in [1,2]$, since in the limit as $\epsilon\to 0$,
  $v^\infty_q$ would be homogeneous of degree 0, but also orthogonal
  to $y_j$ for $j\leq \ell$, and $\mathfrak{h}_1$ consists of exactly
  such functions. The estimate \eqref{eq:302} implies that the for
  sufficiently large $i$ we have \eqref{eq:vqdom20} for $\tau\in
  [T_1+1, T_1+2]$. The estimate also follows for larger $\tau$ because
  \eqref{eq:vqslow} implies the same slow decay bound for larger
  $\tau$ as well. 
\end{proof}

We can now prove Proposition~\ref{prop:uqdecay}.
\begin{proof}[Proof of Proposition~\ref{prop:uqdecay}]
  Let $\lambda > 0$ be small, to be chosen, let $\epsilon > 0$ be
  the constant obtained from   Proposition~\ref{prop:uqdominant2}, and
  suppose that $T_0$ is sufficiently large to apply that
  result. 

  Let us suppose first that for all $T_1\geq
  T_0$ either the condition \eqref{eq:vqslow} fails, or we have $\Vert
  v_q(\cdot, T_1)\Vert_{L^2} < e^{-2\kappa_0 T_1}$. It follows from
  this, together with the estimate $\Vert v_q\Vert\leq
  C\tau^{-19/10}$, that for all $\tau \geq 1$ we have
  \[ \Vert v_q (\cdot, T_0 + \tau)\Vert_{L^2} \leq C
  e^{-\epsilon \tau} T_0^{-19/10} + e^{-2\kappa_0 (T_0+\tau)}, \]
  so $v_q$ decays exponentially fast. To relate this back to $u_q$, we
  can take the $L^2$ inner product of \eqref{eq:vtildeeq10} with
  $\xi_\tau$ to find that $\xi_\tau$ also decays exponentially fast, and as
  a consequence $u_q$ decays exponentially fast as well. In
  particular, if we choose $\kappa_0 < \epsilon$, then
  for some $T_1 > T_0$, depending on $T_0, \epsilon,
  \kappa_0$, we will have $\Vert u_q\Vert_{L^2} \leq e^{-\kappa_0 \tau}$ for
  $\tau > T_1$. 

  We will therefore assume that actually both hypotheses of
  Proposition~\ref{prop:uqdominant2} hold at some time $T_1\geq
  T_0$. Then \eqref{eq:vqslow2} holds, which ensures that the
  hypotheses continue to hold at times $T_0+m$ for $m > 0$, and the
  conclusion can be iterated for all $m$.  If we choose $\kappa_0 < \epsilon$, then
  we can find a subsequent time $T_2$ which satisfies
  \[ \label{eq:vqslow21} \Vert
  v_q\Vert_{L^2} \geq e^{-\kappa_0 \tau}, \text{ for all } \tau\geq
  T_2. \]
  We will derive a contradiction from this, using the
  decay $\Vert v_q\Vert\leq C\tau^{-19/10}$, and by analyzing
  the equation satisfied by $\tilde{v}_q$.

  The conclusion \eqref{eq:vqdom20} of Proposition~\ref{prop:uqdominant2}
  says that for all $\tau \geq T_2 +1$, the function $v_q$ is
  dominated by its $z_\alpha$ and $y_j$ components, for $j >
  \ell$. Let us consider the evolution of these components, and for
  simplicity we take the $y_j$ component for some $j > \ell$ since the
  $z_\alpha$ components are dealt with in the same way.

  In the calculation,  as in the proof of
  Proposition~\ref{prop:xitest30},
  we will view the  $\tilde{v}_q$ as functions on the cylinder $\mathcal{C}_{n,k}$, and
  we will take integrals using the measure $d\mu = e^{-|x|^2/4}
  d\mathcal{H}^n$.
  Recall that we are choosing $\kappa_0$ very small, so that
  on the $\kappa_0^{1/4}\tau^{1/2}$-ball the flow $M_\tau$ has good
  graphicality over $\mathcal{C}_{n,k}$.

  Using \eqref{eq:vtildeeq10} we have
  \[\label{eq:dtyjvtilde} \partial_\tau \int_{\mathcal{C}} y_j \tilde{v}_q d\mu =
    \int_{\mathcal{C}} y_j \left(\left(\Delta - \frac{1}{2} x\cdot \nabla +
      |A|^2\right) \tilde{v}_q + \mathcal{Q}_v(\tilde{v}_q) +
    \mathcal{R}_{\xi_\tau}(\tilde{v}) +  \chi\xi_\tau
    + \mathcal{E}\right)\, d\mu.
\]
It is worth noting that here, as before, $\Delta, x\cdot \nabla, |A|$ still denote
the corresponding objects on $\mathcal{M}$, and they are related to
the corresponding linear operator on $\mathcal{C}$ by
\eqref{eq:41}.  We claim that for any $\kappa > 0$, if $\tau$ is sufficiently large,
and $\lambda > 0$ above was chosen sufficiently small, then we have the estimate
\[ \label{eq:dtyv} \partial_\tau \int_{\mathcal{C}} y_j \tilde{v}_q\, d\mu \geq
  -\frac{\kappa}{\tau} \Vert \tilde{v}_q(\cdot, \tau)\Vert_{L^2} -
  Ce^{-2\kappa_0\tau}. \]
To see this, we consider each term in \eqref{eq:dtyjvtilde}. Since
$(\mathcal{L}_{\mathcal{C}}-\frac{1}{2})\tilde{v}_q$ is orthogonal to
$y_j$, from \eqref{eq:41} we get
\[ \label{eq:601} \int_{\mathcal{C}} y_j \left(\Delta - \frac{1}{2} x\cdot \nabla +
      |A|^2\right) \tilde{v}_q \, d\mu &\geq
    -\frac{1}{4\tau}\int_{\mathcal{C}} y_j \left(\sum_{i=1}^\ell (y_i^2-2)
      \right) (2\Delta_z\tilde{v}_q + \tilde{v}_q) \, d\mu \\
    &\qquad - C\tau^{-19/10} \Vert v_q(\cdot,
    \tau)\Vert_{L^2} - C e^{-2\kappa_0\tau} \]
  We will deal with the term involving the sum below. The other two
  terms satisfy \eqref{eq:dtyv}.

  Next consider the term involving $\mathcal{Q}_v$ in
  \eqref{eq:dtyjvtilde}. 
  The coefficients of the linear map $\mathcal{Q}_v$ are all
of order at most $\tau^{-19/10}$ by \eqref{eq:407}, and so using also
\eqref{eq:401}, we have
\[ \left|\int_{\mathcal{C}} y_j
  \mathcal{Q}_u(\tilde{v}_q)\, d\mu\right| \leq C\tau^{-19/10} \Vert
\tilde{v}_q\Vert_{L^2} + e^{-2\kappa_0\tau}. \]
Using \eqref{eq:Rxbound} and \eqref{eq:xitaubound5}, the term involving
$\mathcal{R}_{\xi_\tau}(\tilde{v})$ in \eqref{eq:dtyjvtilde} is bounded by
\[ \left| \int_{\mathcal{C}} y_j \mathcal{R}_{\xi_\tau}(\tilde{v}) \,
    d\mu \right| \leq C \tau^{-9/10} (\tau^{-1} \Vert v_q(\cdot,
  \tau)\Vert_{L^2} + e^{-2\kappa_0 \tau}) \leq C\tau^{-19/10} \Vert
  v_q(\cdot, \tau)\Vert_{L^2} + Ce^{-2\kappa_0 \tau}.  \]

The function $\xi_\tau$ is orthogonal to $y_j$ since $j > \ell$
and $\xi_\tau$ is in the span of $y_i$ for $i\leq \ell$. So we have
\[ \int_{\mathcal{C}} y_j \chi\xi_\tau \, d\mu \geq - C \Vert
  (1-\chi)\xi_\tau\Vert_{L^2}. \]
Using that $(1-\chi)$ is supported outside of the $\frac{1}{2}
\kappa_0^{1/4}\tau^{1/2}$-ball it follows that $\Vert
(1-\chi)\xi_\tau\Vert_{L^2} \leq e^{-2\kappa_0 \tau}$ for large
$\tau$. The integral of $y_j \mathcal{E}$ also satisfies the same
estimate.

Finally, we deal with the term involving the sum in \eqref{eq:601}. 
Consider one of the terms in the sum, given by 
\[ \label{eq:42}\frac{1}{\tau} \int_{\mathcal{C}} y_j (y_i^2-2)
  (2\Delta_z \tilde{v}_q + 
    \tilde{v}_q)\, d\mu, \]
where $i\leq \ell$ and $j > \ell$.  Note that if we integrate the
$\Delta_z$ term by parts, it vanishes.
If we put $f=z_\alpha$ or $f = y_k$, then
\[\label{eq:405} \int_{\mathcal{C}}
  y_j(y_i^2-2) f \, e^{-|x|^2/4}\, d\mathcal{H}^n = 0, \]
using again that $i\not=j$, so these components of $\tilde{v}_q$ do
not contribute. Using \eqref{eq:vqdom20} the other components of
$\tilde{v}_q$ are much smaller, and we obtain
\[ \label{eq:406} \left|\frac{1}{\tau} \int_{\mathcal{C}} y_j (y_i^2-2)
  (2\Delta_z \tilde{v}_q + 
    \tilde{v}_q)\, d\mu\right| \leq \frac{C}{\tau}\lambda \Vert v_q
\Vert_{L^2}. \]
We remark that in \eqref{eq:405} if we had $i=j$ and $f=y_i$, then the
integral would be nonzero. This is why it is important that we are
working with $\tilde{v}_q$, obtained by removing the $y_i$ components of
$u_q$.

Combining all these estimates, we get \eqref{eq:dtyv} if $\tau$ is
sufficiently large and $\lambda$ is chosen small (depending on
$\kappa$). The same argument leading to \eqref{eq:406}
also works if we replace $y_j$ with $z_\alpha$. For this, instead of
\eqref{eq:42}, we have the integral
\[ \frac{1}{\tau} \int_{\mathcal{C}} z_\alpha (y_i^2-2)
  (2\Delta_z \tilde{v}_q + 
  \tilde{v}_q)\, d\mu, \]
and integrating the $\Delta_z$ term by parts, using that $z_\alpha$ is
an eigenfunction, we need only consider
\[ \frac{1}{\tau} \int_{\mathcal{C}} z_\alpha (y_i^2-2)
  \tilde{v}_q\, d\mu. \]
If we replace $y_j$ in \eqref{eq:405} by $z_\alpha$, then the integral
still vanishes, so we get the estimate \eqref{eq:406} with $y_j$
replaced by $z_\alpha$.

Using \eqref{eq:vqdom20} again, it
follows that if we write $g(\tau) = \Vert \Pi_{\mathfrak{h}_1}
v_{q} \Vert_{L^2}$, then
\[ g'(\tau) &\geq -C\kappa \tau^{-1} g(\tau) - C e^{-2\kappa_0\tau}. \]
  
Integrating this from $\tau_0$ to $\tau$, for $\tau_0 > T_2$, we get
\[ g(\tau) &\geq \tau_0^{C\kappa} g(\tau_0) \tau^{-C\kappa} - C
  \tau^{-C\kappa} \int_{\tau_0}^\tau s^{C\kappa} e^{-2\kappa_0 s}\, ds \\
  &\geq \frac{1}{2} \tau_0^{C\kappa} \tau^{-C\kappa} e^{-\kappa_0
    \tau_0} - C'  \tau^{-C\kappa} \tau_0^{C\kappa} e^{-2\kappa_0
    \tau_0}, \]
where we also used \eqref{eq:vqslow21} to bound $g(\tau_0) \geq \frac{1}{2}
e^{-\kappa_0 \tau_0}$. If we choose $\tau_0$ sufficiently large, then
this implies
\[ g(\tau) \geq \frac{1}{4} \tau_0^{C\kappa} e^{-\kappa_0
    \tau_0 } \tau^{-C\kappa}, \]
for all $\tau > \tau_0$. If $\kappa$ is chosen sufficiently small, then
this contradicts that $g(\tau) \leq C\tau^{-19/10}$. 
\end{proof}

We will also need the following result expressing that under taking
limits, cylindrical singularities can not become less degenerate if
they remain the same type of cylinder in the limit. And if the type of
cylinder changes in the limit, then the spherical factor must decrease
in dimension. 
\begin{prop}\label{prop:singlimits}
  Let $L_t^i$ be a sequence of mean convex mean curvature flows for $t
  \geq 0$, converging in the sense of Brakke flows to a limit flow
  $L_t^\infty$. Suppose that for all $i$, $L_t^i$ has an
  $\ell_i$-degenerate $(n,k_i)$-cylindrical singularity at $(x_i,
  t_i)$, and that $(x_i, t_i)\to (x_\infty, t_\infty)$, with
  $t_\infty > 0$. Then $L_t^\infty$ has an $(n, k_\infty)$-cylindrical
  singularity at $(x_\infty, t_\infty)$, where
  \[ k_\infty \geq \limsup_{i\to\infty} k_i. \]
  Moreover, if after replacing the sequence with a
  subsequence we have $k_i = k_\infty$, then
  $L^\infty_t$ has an $\ell_\infty$-degenerate
  $(n,k_\infty)$-cylindrical singularity at $(x_\infty, t_\infty)$,
  where
  \[ \ell_\infty \leq \liminf_{i\to\infty} \ell_i. \]
\end{prop}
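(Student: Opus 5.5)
The plan is to prove the two assertions separately. The first rests on upper semicontinuity of Gaussian density together with White's classification of tangent flows. The second rests on the quantitative normal form results, Proposition~\ref{prop:L2normalform} together with the model graphicality of Section~\ref{sec:geomnormal}.

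\textbf{Step 1: the cylinder type.} The limit $L^\infty_t$ is again a mean convex flow, so by White~\cite{White02} each of its tangent flows is a multiplicity one sphere or a rotated cylinder $\mathcal{C}_{n,k}$. The Gaussian density $\Theta$ is upper semicontinuous under Brakke convergence and under convergence of base points with $t_\infty>0$. Hence
\[ \Theta(L^\infty, x_\infty, t_\infty) \geq \limsup_i \Theta(L^i, x_i, t_i) = \limsup_i \Theta(\mathcal{C}_{n,k_i}). \]
The entropy $\Theta(\mathcal{C}_{n,k})$ of $S^{n-k}\times\mathbb{R}^k$ is strictly decreasing in $k$, and it is smaller than the entropy of the sphere $\mathcal{S}_n$. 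It follows that the tangent flow at $(x_\infty,t_\infty)$ is not a sphere, and that it is $\mathcal{C}_{n,k_\infty}$ with $k_\infty \geq \limsup k_i$.

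The sphere case needs one extra word. The density bound already rules it out, except that an isolated round point would be a regular point with density 1 if the flow were smooth there. This cannot happen, because $\Theta\geq\Theta(\mathcal{C}_{n,k_i})>1$.

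\textbf{Step 2: degeneracy, assuming $k_i=k_\infty=k$.} Pass to a subsequence with $\ell_i=\ell$ constant, equal to $\liminf\ell_i$. Suppose for contradiction that the limit is $\ell_\infty$-degenerate with $\ell_\infty>\ell$. Rotate so that the limit rescaled flow $M^\infty_\tau$, centered at $(x_\infty,t_\infty)$, converges to $\mathcal{C}_{n,k}$ in normal form with $\ell_\infty$ nondegenerate directions.

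Fix a large $T_0$. By Theorem~\ref{thm:SXnormal} and Proposition~\ref{prop:L2normalform} applied to $M^\infty$, the quantity $E(\tau)$ with $\ell_\infty$ terms is below $\tau^{-19/10}$ on $[T_0,T_0+L_0(T_0)]$. The rescaled flows $M^i_\tau$ centered at $(x_i,t_i)$ converge smoothly to $M^\infty_\tau$ on compact subsets of $[0,T_0+L_0]\times\mathbb{R}^{n+1}$. This uses Brakke convergence plus local regularity near the cylinder, with White's regularity giving smooth convergence where the limit is smooth and close to $\mathcal{C}_{n,k}$.

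Hence for large $i$, $M^i_\tau$ is $\delta$-graphical over $\mathcal{C}_{n,k}$ on $B_{\tau^\beta}$. Its graph function satisfies $E_i(\tau)<\tau^{-3/2}$ on $[T_0,T_0+L_0]$ with the same $\ell_\infty$ quadratic terms, up to a rotation close to the identity. Since the tangent cylinder of $L^i$ at $(x_i,t_i)$ is of type $(n,k)$, Proposition~\ref{prop:L2normalform}(i) then forces the degeneracy of $M^i$ at infinity to be at least $\ell_\infty>\ell$. This contradicts $\ell_i=\ell$.

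\textbf{Main obstacle.} The delicate point is the hypotheses of Proposition~\ref{prop:L2normalform} for $M^i$. That proposition asks for graphicality on $B_{\tau^\beta}$ for all $\tau\geq0$, which is obtained from Proposition~\ref{prop:tbetagraph11}. This in turn needs $M^i_0$ to be $\delta$-graphical on $B_{\delta^{-1}}$ and $M^i$ to converge to $\mathcal{C}_{n,k}$ itself, rather than to a rotation $Q_i\mathcal{C}_{n,k}$.

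I would handle this as follows.
\begin{itemize}
\item Start the rescaled time at a large $\tau_*$ where $M^\infty$, and hence $M^i$, is very close to $\mathcal{C}_{n,k}$.
\item Use the non-concentration and slow-growth estimates, Corollary~\ref{cor:slowgrowth} and Proposition~\ref{prop:3ann3}. These show that the flows $M^i$, whose limits are cylinders of the same dimension, stay close to a rotation $Q_i\mathcal{C}_{n,k}$ with $Q_i\to \mathrm{Id}$.
\item Replace $M^i$ by $Q_i^{-1}M^i$. This preserves the $E<\tau^{-3/2}$ bound up to $O(|Q_i-\mathrm{Id}|)$, which can be made as small as needed.
\end{itemize}
Checking that the time shift by $\tau_*$ is compatible with the $1/\tau$ profile is the bookkeeping I expect to take the most care. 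Shifting time changes $\frac{1}{\tau}$ to $\frac{1}{\tau+\tau_*}$, which differs from it by $O(\tau_*\tau^{-2})$. This difference is below $\tau^{-3/2}$ once $T_0\gg\tau_*^2$.
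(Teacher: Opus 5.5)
Your structure matches the paper's. For the cylinder type you use upper semicontinuity of Gaussian density together with Stone's entropy computation. When $k_i=k_\infty$, you transport the normal-form bound $E(\tau)<\tau^{-3/2}$ from the limit rescaled flow to $M^i_\tau$ on $[T_0,T_0+L_0]$ and invoke Proposition~\ref{prop:L2normalform}(i). Step 2 is essentially the paper's argument. (The paper takes $E<\frac12\tau^{-3/2}$ for the limit directly from the Sun--Xue normal form, so applying Proposition~\ref{prop:L2normalform} to $M^\infty$ is unnecessary.)

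However, Step 1 states the entropy ordering backwards in both places, so its inferences do not follow as written. Stone's computation gives $\lambda(S^n)<\lambda(S^{n-1})<\cdots<\lambda(S^1)$. Hence $\lambda(\mathcal{C}_{n,k})=\lambda(S^{n-k})$ is strictly \emph{increasing} in $k$, and every $\mathcal{C}_{n,k}$ with $k\geq 1$ has entropy strictly \emph{larger} than $\lambda(\mathcal{S}_n)=\lambda(S^n)$. Under your stated ordering (decreasing in $k$, and below the sphere), the bound $\Theta(L^\infty,x_\infty,t_\infty)\geq\limsup_i\Theta(\mathcal{C}_{n,k_i})$ would give $k_\infty\leq\liminf k_i$, and it would not exclude a spherical tangent flow at all. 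With the correct ordering, the same bound gives $k_\infty\geq\limsup k_i$ and rules out spheres and regular points in one stroke. You should also drop the sentence about an isolated round point having density $1$: a round point has density $\lambda(S^n)>1$.

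On your ``main obstacle'': the paper does not discuss it. It simply rotates each $L^i$ so its tangent flow is exactly $\mathcal{C}_{n,k}$ and treats the limit as having tangent flow $\mathcal{C}_{n,k}$ too. Your instinct to arrange $Q_i\to\mathrm{Id}$ (modulo the stabilizer of $\mathcal{C}_{n,k}$) is right, but the tools you cite do not deliver it. Corollary~\ref{cor:slowgrowth} and Proposition~\ref{prop:3ann3} only control growth or decay rates of $\mathbf{d}_{\mathcal{C}_{n,k}}$, and do not by themselves prevent a slow drift toward a different rotated cylinder. Proposition~\ref{prop:3ann3} moreover assumes the flow stays $\delta$-graphical over $\mathcal{C}_{n,k}$, which is exactly the point at issue. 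The input that gives $Q_i\to\mathrm{Id}$ is the Colding--Minicozzi stability argument underlying Proposition~\ref{prop:CMgraphicality}. A rescaled flow that is very close to $\mathcal{C}_{n,k}$ on a large ball at one large time, and has a cylindrical singularity at infinity, converges to a cylinder close to $\mathcal{C}_{n,k}$. With that in hand, your time-shift bookkeeping ($T_0\gg\tau_*^2$) is fine.
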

\begin{proof}
  The claim about $k_\infty$ follows from considering 
 the entropy $\lambda(\mathcal{C}_{n,k})$ of the different cylindrical
 shrinkers. We have $\lambda(\mathcal{C}_{n,k}) =
 \lambda(S^{n-k}(\sqrt{2(n-k)}))$, and by the calculation of
 Stone~\cite{Stone94} we have
 \[ \lambda(S^n) < \lambda(S^{n-1}) < \ldots < \lambda(S^1). \]
 Using Huisken's monotonicity formula~\cite{Hui90}
 the entropies of the tangent flows at $(x_i, t_i)$ cannot
 decrease in the limit. It follows that a limit of $(n,k)$-cylindrical
 singularities must be an $(n,k')$-cylindrical singularity with
 $k'\geq k$.

 Suppose now that the $(x_i, t_i)$ are all $\ell$-degenerate
 $(n,k)$-cylindrical singularities, and they converge to an
 $\ell'$-degenerate $(n,k)$-cylindrical singularity $(x_\infty,
 t_\infty)$. We want to show that $\ell' \leq \ell$. To see this, note
 first that by translating and rotating the flows, we can assume that
 they are all $\mathcal{C}_{n,k}$-singularities at $(0,0)$. Let us
 write $M^i_\tau$ for the corresponding rescaled flows,
 $M^\infty_\tau$ for the limit as $i\to \infty$. We are assuming that
 $M^\infty_\tau$ has an $\ell'$-degenerate
 $\mathcal{C}_{n,k}$-singularity at infinity, and we want to show that
 $\ell \geq \ell'$. We can assume that $\ell' > 0$. Applying the
 normal form result, Theorem~\ref{thm:SXnormal} we
 have that for some $T_0$, and all $\tau \geq T_0$, $M^\infty_\tau$ is the graph
 of $u(y,z,\tau)$ over $\mathcal{C}_{n,k}$ on $B_{\tau^\beta}$ for
 some $\beta > 0$, such that
 \[ \label{eq:51} \left\Vert u(\cdot, \tau) - \frac{\sqrt{2(n-k)}}{4\tau}
     \sum_{i=1}^{\ell'} (y_i^2-2) \right\Vert_{L^2(\tau^\beta)} <
   \frac{1}{2} \tau^{-3/2}. \]
 From the convergence $M^i_\tau \to M^\infty_\tau$ it follows that for
 any $L_0 > 0$, we will have \eqref{eq:51} for the graphical function
 of $M^i_\tau$ for sufficiently large $i$, for all $\tau\in [T_0, T_0
 + L_0]$.  We can now apply Proposition~\ref{prop:L2normalform}. The
 conclusion is that for sufficiently large $i$ the flow $M^i_\tau$
 cannot have an $\ell$-degenerate $\mathcal{C}_{n,k}$-singularity at
 infinity, with $\ell < \ell'$, as required. 
\end{proof}

Let us introduce the following notion, in order to summarize the
results of this section.
\begin{definition}\label{defn:singscale}
  Suppose that $M_\tau$ is a rescaled flow, with an $\ell$-degenerate
  $\mathcal{C}_{n,k}$-singularity at infinity. Let $\kappa_0, \epsilon_0$ be as in
  Proposition~\ref{prop:uqdecay}. We say that $M_\tau$ has \emph{good
    asymptotics at time $T_1$}, if the hypotheses and conclusion of
  Proposition~\ref{prop:uqdecay} are satisfied for $\tau \geq T_1$. In
  other words, for all $\tau \geq T_1$,
  $M_\tau$ can be written as the graph of $u(\cdot, \tau)$
      over $\mathcal{M}^{n,k,\ell}_\tau$ on
      $B_{10\tau^{1/2}}$ and the
      function $u$ satisfies:
      \begin{itemize}
        \item    $\Vert u(\cdot, \tau) \Vert_{C^4} < \epsilon_0$ on the
          $10\tau^{1/2}$-ball, while $\Vert u(\cdot, \tau)\Vert_{C^4}
          < \tau^{-3/2}$ on the $\tau^{\gamma}$-ball for small $\gamma
          > 0$,
          \item $\Vert u(\cdot,
            \tau)\Vert_{L^2} < \tau^{-19/10}$,
         \item $\Vert \partial_{y_q} u(\cdot,
        \tau)\Vert_{L^2(B_{10\tau^{1/2}})} \leq e^{-\kappa_0 \tau}$
      for the degenerate directions $q=\ell+1, \ldots, k$.
     \end{itemize}

  Similarly, if $L_t$ is a mean curvature flow developing an
  $\ell$-degenerate $(n,k)$-cylindrical singularity at a point $(x_0,
  t_0)$, then we say that the singularity is \emph{$\ell$-degenerate
    at scale $r > 0$}, if after a suitable rotation (so that the
  tangent flow is $\mathcal{C}_{n,k}$), the rescaled flow centered at
  $(x_0, t_0)$, defined by
  \[ M_\tau = e^{\tau/2} (M_{t_0 - e^{-\tau}} - x_0), \]
  has good asymptotics at time $-2\ln r$. 
\end{definition}

The main results of this section can be summarized by the following.
\begin{prop}\label{prop:sec2main}
  For any $T_1 > 0$ there exist $L_1(T_1), T_2(T_1) > 0$ (also
  depending on the dimension and area ratio bounds), with the
  following property. Suppose that $M_\tau$ has an $\ell$-degenerate
  $\mathcal{C}_{n,k}$-singularity at infinity, and for the $\beta$ in
  Proposition~\ref{prop:tbetagraph11} we write $M_\tau$
  as the graph of $u(x,\tau)$ over $\mathcal{C}_{n,k}$ on the balls
  $B_{\tau^\beta}$ for $\tau\in [T_1, T_1+L_1]$. Suppose that for
  this range of $\tau$, $u$
  satisfies the $L^2$ bound $E(\tau) < \tau^{-3/2}$ for $E(\tau)$ as
  in \eqref{eq:L2decay10}. Then up to a rotation of size
  $\Psi(T_1^{-1})$, $M_\tau$ has good asymptotics at
  $\tau=T_2$.
\end{prop}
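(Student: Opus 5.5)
The plan is to chain together the main results of this section, with each step producing the hypotheses of the next. The starting point is Proposition~\ref{prop:L2normalform}. Given $T_1$, I would take $L_1 = L_0(T_1)$ from that proposition. The assumption $E(\tau) < \tau^{-3/2}$ on $[T_1, T_1+L_1]$ is exactly its hypothesis. Since $M_\tau$ is assumed to have an $\ell$-degenerate singularity at infinity, with the same $\ell$ as appears in $E$, alternative (ii) applies. So after a rotation $Q$ of size $\Psi(T_1^{-1})$ in the $y$-coordinates, we get $E(\tau) < \tau^{-19/10}$ for all $\tau \geq T_1 + L_1$ for the rotated flow. From now on we replace $M_\tau$ by $QM_\tau$. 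The small rotation only perturbs the graphicality over $\mathcal{C}_{n,k}$ on $B_{\tau^\beta}$ slightly, so Proposition~\ref{prop:tbetagraph11}-type graphicality persists.

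Next, I would pass from graphs over $\mathcal{C}_{n,k}$ to graphs over the model $\mathcal{M}^{n,k,\ell}_\tau$, as in the discussion preceding Proposition~\ref{prop:Claim1}.
\begin{itemize}
\item By Proposition~\ref{prop:models}(2), on $B_{\tau^\gamma}$ the model is the graph of $\frac{\sqrt{2(n-k)}}{4\tau}\sum_{i\le \ell}(y_i^2-2)+O(\tau^{-19/10})$ over $\mathcal{C}_{n,k}$.
\item Subtracting, the graphicality function $u$ of $M_\tau$ over the model satisfies $\Vert u\Vert_{L^2(B_{\tau^\gamma})}\lesssim \tau^{-19/10}$ for $\tau\ge T_1+L_1$.
\item To get the strict bound needed later, I would work with a slightly smaller exponent, or absorb the constant by enlarging the time threshold.
\item The $\tau^{-\beta}$-graphicality over $\mathcal{C}_{n,k}$ on $B_{\tau^\beta}$, together with $|w|\lesssim\tau^{2\gamma-1}$ for the model, gives $\tau^{-\gamma}$-graphicality over the model on $B_{\tau^\gamma}$ for $\gamma<\beta$ small.
\end{itemize}
The contribution to the $L^2$ norm from outside $B_{\tau^\gamma}$ is $O(e^{-\tau^{2\gamma}/9})$ once graphicality on larger balls is known. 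So the $L^2$ bound on the full ball follows once the next step is done.

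With these hypotheses in hand, I would apply Proposition~\ref{prop:ldegendecay}. It gives a time $T'$, depending on $T_1$ through $T_1+L_1$, such that for $\tau\ge T'$:
\begin{itemize}
\item $M_\tau$ is $\epsilon_0$-graphical over $\mathcal{M}^{n,k,\ell}_\tau$ on $B_{19\tau^{1/2}}$;
\item $\Vert u\Vert_{C^4}<\tau^{-3/2}$ on $B_{\tau^\gamma}$.
\end{itemize}
Here $\epsilon_0$ is chosen as the constant in Proposition~\ref{prop:uqdecay}. These are precisely the hypotheses of Proposition~\ref{prop:uqdecay}, with its $T_0$ taken to be $T'$. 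That result then yields $T_2 = T_2(T_1)$ such that $\Vert\partial_{y_q}u\Vert_{L^2(B_{10\tau^{1/2}})}\le e^{-\kappa_0\tau}$ for all $\tau>T_2$. Collecting the bullet points of Definition~\ref{defn:singscale} shows that the rotated flow has good asymptotics at $T_2$.

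The main obstacle is uniformity. The constants $L_1$ and $T_2$ must depend only on $T_1$ (and on the dimension and entropy bounds), not on the particular flow. I would check that each invoked proposition produces thresholds depending only on the previous threshold and on universal constants, which is how those results are stated.

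A second delicate point is the $L^2$ bound on $B_{10\tau^{1/2}}$ with a strict inequality $\tau^{-19/10}$, as opposed to $C\tau^{-19/10}$. I would handle this by running the argument with exponent $19/10+\eta$ in Proposition~\ref{prop:L2normalform}(ii), using its $O(\tau^{\kappa-2})$ output, and controlling the annular region using $\epsilon_0$-graphicality and the Gaussian weight.
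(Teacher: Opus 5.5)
Your proposal is correct and is essentially the argument the paper intends. The paper gives no separate proof: it presents the statement as a summary of the section, obtained by chaining Proposition~\ref{prop:L2normalform}(ii), the passage to graphs over $\mathcal{M}^{n,k,\ell}_\tau$ described before Proposition~\ref{prop:Claim1}, Proposition~\ref{prop:ldegendecay}, and Proposition~\ref{prop:uqdecay}, exactly as you do. Your handling of the strict bound $\tau^{-19/10}$, via the $O(\tau^{\kappa-2})$ output of the $L^2$ normal form proof, is if anything more careful than the paper, which simply asserts $E(\tau)<\frac{1}{2}\tau^{-19/10}$ at that step.
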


Note that by definition if $M_\tau$ has good asymptotics
at time $T_0$, then this holds for all larger times as well, and so if a
singularity is $\ell$-degenerate at scale $r_0 > 0$, then it is also
$\ell$-degenerate at all scales $r < r_0$. It is also worth noting
that this notion does not scale if we parabolically rescale our flow,
or equivalently,  if $M_\tau$ has good asymptotics at time $T_0$, then it does not
follow that the translated flow $M_{\tau+T_0}$ has good asymptotics at time $0$,
since the normal form does not behave well under translations in
$\tau$. On the other hand, using Proposition~\ref{prop:sec2main},
if a singularity is $\ell$-degenerate at
scale $r$, then all nearby $\ell$-degenerate singularities of sufficiently
close flows are also $\ell$-degenerate at some scale $r' > 0$,
depending on $r$. This is the content of the following.

\begin{prop}\label{prop:nearbydegen}
For sufficiently small $r > 0$ there exist $r', \delta > 0$, depending on $r$, with the
following property. Suppose that $L_t$ has an $\ell$-degenerate
$(n,k)$-cylindrical singularity at $(x_0, t_0)$, which is
$\ell$-degenerate at scale $r$. Suppose in addition that $L'_t$ is
another mean curvature flow, which is $\delta$-close to $L_t$ for a
metrization of Brakke convergence, and suppose that $L'_t$ has an
$\ell$-degenerate $(n,k)$-cylindrical singularity at a point $(x_0',
t_0')$ with $|x_0'-x_0| + |t_0' - t_0|^{1/2} < \delta$. Then the
singularity at $(x_0', t_0')$ is $\ell$-degenerate at scale $r'> 0$.
\end{prop}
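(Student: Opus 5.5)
The plan is to reduce the statement to Proposition~\ref{prop:sec2main}. Write $T_0 = -2\ln r$. After rotating so that the tangent flow of $L_t$ at $(x_0,t_0)$ is $\mathcal{C}_{n,k}$, the rescaled flow $M_\tau$ centered at $(x_0,t_0)$ has good asymptotics at time $T_0$. In particular, for $\tau \geq T_0$ it is the graph over $\mathcal{M}^{n,k,\ell}_\tau$ of a function $u$ with $\Vert u\Vert_{C^4} < \tau^{-3/2}$ on $B_{\tau^\gamma}$ and $\Vert u\Vert_{L^2} < \tau^{-19/10}$.

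Combining this with part (2) of Proposition~\ref{prop:models} for the model, $M_\tau$ is also a graph over $\mathcal{C}_{n,k}$ on $B_{\tau^\gamma}$. Its graph function $u_0$ satisfies
\[ E(\tau) = \Big\Vert u_0 - \frac{\sqrt{2(n-k)}}{4\tau}\sum_{i=1}^\ell (y_i^2-2)\Big\Vert_{L^2} \leq C\tau^{-19/10}, \]
up to the exponentially small contribution from outside the ball. Hence $E(\tau) < \frac12 \tau^{-3/2}$ for all $\tau \geq T_0$, assuming $r$ is small so that $T_0$ is large; this is the only use of ``sufficiently small $r$''. Replacing $B_{\tau^\gamma}$ by $B_{\tau^\beta}$ for the $\beta$ of Proposition~\ref{prop:tbetagraph11} costs only a negligible error. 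We may take $\beta \leq \gamma$, or otherwise use the graphicality from Proposition~\ref{prop:tbetagraph11} together with the Gaussian tail bound.

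Set $T_1 = T_0$, and let $L_1(T_1)$ and $T_2(T_1)$ be given by Proposition~\ref{prop:sec2main}. Consider the rescaled flow $M'_\tau$ of $L'_t$ centered at $(x_0',t_0')$, rotated by the same rotation. On the compact time interval $[T_1, T_1+L_1]$ this corresponds to the bounded time range $t \in [t_0' - e^{-T_1}, t_0' - e^{-T_1-L_1}]$, which is at definite distance from $t_0'$. Over this range, Brakke closeness of $L'_t$ to $L_t$ together with closeness of the centers implies that $M'_\tau$ is Brakke close to $M_\tau$ on a large compact set. Since $M_\tau$ is smooth and close to a cylinder there, Brakke's local regularity theorem (or White's) upgrades this to smooth closeness on $B_{2\tau^\beta}$. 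Choosing $\delta$ small depending on $T_1, L_1$ (hence on $r$), the graph function $u_0'$ of $M'_\tau$ over $\mathcal{C}_{n,k}$ is close to $u_0$ in $C^4(B_{\tau^\beta})$. Therefore $E'(\tau) < \tau^{-3/2}$ for $\tau \in [T_1, T_1+L_1]$.

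Since $L'_t$ has an $\ell$-degenerate $(n,k)$-cylindrical singularity at $(x_0',t_0')$, Proposition~\ref{prop:sec2main} applies, with the small rotation of size $\Psi(T_1^{-1})$ absorbed into the rotation allowed in Definition~\ref{defn:singscale}. It shows that $M'_\tau$ has good asymptotics at $T_2$. So the singularity is $\ell$-degenerate at scale $r' = e^{-T_2/2}$, which depends only on $r$.

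The main obstacle is the smooth-closeness step. The hypothesis that $M'_\tau$ converges to a cylinder is available only for the limit flow, so we must check that $M'_\tau$ is $\delta$-graphical on the ball $B_{\tau^\beta}$, not merely Brakke close. I would first try local regularity combined with the fact that the entropy bound passes to $L'_t$. A secondary point is that the tangent flow of $L'_t$ may be a slightly different rotation $Q\mathcal{C}_{n,k}$. The fixed rotation for $L_t$ is used for the closeness estimate, and the $\Psi(T_1^{-1})$ rotation from Proposition~\ref{prop:sec2main} accounts for the difference.
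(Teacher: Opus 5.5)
Your proposal is correct and follows essentially the same route as the paper. Both arguments use good asymptotics at scale $r$ together with the model's normal form to get $E(\tau)$ small relative to $\mathcal{C}_{n,k}$ for $\tau \ge -2\ln r$, transfer this bound to the recentered nearby flow on the compact window $[T_1, T_1+L_1]$ by taking $\delta$ small depending on $r$, and then invoke Proposition~\ref{prop:sec2main} to get good asymptotics at $T_2$, hence $r' = e^{-T_2/2}$; the paper, like you, treats the smooth-closeness step briefly and absorbs the possible small rotation of the tangent cylinder into the $\Psi(T_1^{-1})$ rotation of Proposition~\ref{prop:sec2main}.
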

\begin{proof}
  Up to translation and rotation, we can assume that $L_t$ has an
  $\ell$-degenerate $\mathcal{C}_{n,k}$-singularity at $(0,0)$, which
  is $\ell$-degenerate at scale $r$. Let $M_\tau$ be the corresponding
  rescaled flow, and let $T_1=-2\ln r$. By definition, for all
  $\tau\geq T_1$ the graphicality function $v$ of $M_\tau$ over the
  model flow $\mathcal{M}^{n,k,\ell}_\tau$ on the $10\tau^{1/2}$-balls
  satisfies $\Vert v\Vert_{L^2} < \tau^{-19/10}$. Using the
  asymptotics of $\mathcal{M}^{n,k,\ell}_\tau$, we can assume that
  $T_1$ is large enough so that on the smaller
  balls $B_{\tau^\beta}$ the graphicality function $u$ of $M_\tau$
  over $\mathcal{C}_{n,k}$ satisfies the $L^2$-bound $E(\tau) <
  \frac{1}{10} \tau^{-3/2}$ for $E$ defined in \eqref{eq:L2decay10}, for
  all $\tau \geq T_1$.

  Suppose now that $L_t'$ is another flow that is $\delta$-close to
  $L_t$, and has an $\ell$-degenerate $(n,k)$-cylindrical singularity
  at a point $(x_0', t_0')$ with $|x_0'| + |t_0'|^{1/2} <
  \delta$. Translating $L_t'$ we obtain a flow $L_t''$ that has an
  $\ell$-degenerate $(n,k)$-cylindrical singularity at $(0,0)$, and is
  still $\Psi(\delta)$-close to $L_t$ (where $\Psi(\delta)\to 0$ as
  $\delta\to 0$). Let $M''_\tau$ be the corresponding rescaled
  flow. If $\delta$ is sufficiently small (depending on $T_1$), then
  using the graphicality property of $M_\tau$ over $\mathcal{C}_{n,k}$
  we will have that $M''_\tau$ is the graph of a function $w$ over
  $\mathcal{C}_{n,k}$ on $B_{\tau^\beta}$, satisfying the $L^2$-bound
  $E(\tau) < \tau^{-3/2}$ for $\tau\in [T_1, T_1 + L_1]$, where $L_1$
  is determined by $T_1$ in Proposition~\ref{prop:sec2main}. Then
  Proposition~\ref{prop:sec2main} implies that up to a small rotation
  $M''_\tau$ has an $\ell$-degenerate $\mathcal{C}_{n,k}$-singularity
  at infinity, with good asymptotics at $\tau = T_2$. We can let $r' =
  e^{-T_2/2}$, and then the singularity of $L'_t$ at $(x_0, t_0)$ will
  be $\ell$-degenerate at scale $r'$ as required. 
\end{proof}

\section{Perturbing away degenerate singularities}\label{sec:perturb}
This section contains the heart of our perturbation argument,
which will be used in the next section to prove our main
results. Here we will focus on a mean convex flow $L_t$, which in practice will
already be a small perturbation of our original flow, translated in
time and space, and rotated, so that it has an
$\ell$-degenerate $\mathcal{C}_{n,k}$-singularity at $(0,0)$. Note
that we will not apply scalings to the original flow, only translations and rotations. Let us
assume furthermore that this singularity is $\ell$-degenerate at scale
$r_0 > 0$, as in Definition~\ref{defn:singscale}. Recall that then
the singularity is also $\ell$-degenerate at any scale $r < r_0$, and
so we will at times replace $r_0$ by a smaller constant. Let us write
$T_0 = -\ln r_0^2$, so the rescaled flow $M_\tau$ corresponding to
$L_t$ has good asymptotics at all times $\tau \geq T_0$ in the sense of
Definition~\ref{defn:singscale}.
In particular, $M_\tau$ satisfies the
following:
\begin{itemize}
  \item[(a)] For all $\tau \geq T_0$,  $M_\tau$ is the graph of
    $u(x,\tau)$ over the model $\mathcal{M}^{n,k,\ell}_\tau$ on the
    ball $B_{10\tau^{1/2}}$, with $\Vert u\Vert_{C^4}\leq \epsilon_0$, 
  \item[(b)] For any $q > \ell$ we have $\Vert \partial_{y_q} u(\cdot,
    \tau)\Vert_{L^2} <  e^{-\kappa_0 \tau}$.
\end{itemize}

We have the following, using the pseudolocality property
Proposition~\ref{prop:modelpseudo} for
$\mathcal{M}^{n,k,\ell}_\tau$. 
\begin{prop}\label{prop:pseudo10}
  Given $\epsilon > 0$ there are $\delta_1(\epsilon), C_1(\epsilon),
  T_0(\epsilon) > 0$ such that if $N_\tau$ is a RMCF such that
  $N_{\tau_0}$ is $\delta_1$-graphical over $M_{\tau_0}$ for some
  $\tau_0 > T_0$, on the ball $B_R$ for some $R > C_1$, then for all
  $\tau\in [\tau_0, \tau_0+10]$, and with
  \[  R(\tau) = \min\{e^{(\tau-\tau_0)/2}(R-C_1), 10\tau^{1/2}\}, \]
  $N_\tau$ is $\epsilon$-graphical over $M_\tau$ on the ball $B_{R(\tau)}$.
\end{prop}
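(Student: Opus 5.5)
The plan is to reduce Proposition~\ref{prop:pseudo10} to the model pseudolocality result, Proposition~\ref{prop:modelpseudo}, using property (a): $M_\tau$ is an $\epsilon_0$-graph over $\mathcal{M}^{n,k,\ell}_\tau$ on $B_{10\tau^{1/2}}$ for all $\tau\geq T_0$. The key observation is that graphicality composes. If $N$ is a $\delta$-graph over $M$, and $M$ is an $\epsilon_0$-graph over $\mathcal{M}$, then $N$ is a $C(\delta+\epsilon_0)$-graph over $\mathcal{M}$, on a slightly smaller ball. This holds because $M$ has uniformly bounded geometry: by Proposition~\ref{prop:models}, $\mathcal{M}$ is locally close to cylinders of radius in $[\sqrt{2(n-k)},\sqrt{2(n-k)}\,(1+50)^{1/2}]$ on $B_{10\tau^{1/2}}$. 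Conversely, if $N$ and $M$ are both small graphs over $\mathcal{M}$, then $N$ is a graph over $M$ with $C^4$ norm controlled by the difference of the two graph functions, plus terms small in $\epsilon_0$.

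The difficulty is that $\epsilon_0$ is fixed, while we need $\epsilon$-graphicality of $N$ over $M$ for arbitrary $\epsilon$. So I would not simply compare both flows to the model. Instead I would repeat the proof of Proposition~\ref{prop:modelpseudo} directly with $M_\tau$ in place of $\mathcal{M}$. Fix $\epsilon$ and a center $x_0$ with $|x_0|<R-R_0$. On $B_{R_0}(x_0)$ the flow $M_{\tau_0}$ is, by (a) together with Proposition~\ref{prop:models}(1), $\kappa$-graphical over a cylinder $r_0\mathcal{C}_{n,k}$, where $r_0=F(|y_0|\tau_0^{-1/2})$ and $\kappa\leq \Psi(\epsilon_0)+\kappa_0$.

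The rescaled Ecker--Huisken interior estimates, which are the content behind Proposition~\ref{prop:pseudo}, then give uniform $C^{m}$ bounds on $M_\tau$ over the spacetime region $e^{(\tau-\tau_0)/2}B_1(x_0)$ for $\tau\in[\tau_0,\tau_0+10]$. The same bounds hold for $N_\tau$, since $N_{\tau_0}$ is $\delta_1$-close to $M_{\tau_0}$ and hence also close to the cylinder there.

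With these bounds in hand, I would run a contradiction and compactness argument. Suppose there are sequences with $\delta_1\to 0$ but failure of $\epsilon$-graphicality at some point of $B_{R(\tau)}$ and some $\tau\in[\tau_0,\tau_0+10]$. Recenter at the failure point, pulled back to time $\tau_0$, which lies in some $B_{R_0}(x_0)$. By the uniform local bounds, both $N^i$ and $M^i$ subconverge smoothly on the relevant backward-controlled region to limits that agree at the initial time, since $\delta_1\to 0$. Uniqueness of smooth MCF with bounded curvature locally is not available globally, so here I would use the localized statement: the difference of the graph functions of $N$ and $M$ over the common nearby cylinder flow solves a linear uniformly parabolic equation. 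Interior parabolic estimates combined with a local maximum principle with cutoff, in the style of Ecker--Huisken, bound this difference on $e^{(\tau-\tau_0)/2}B_1(x_0)$ by $C(\delta_1+e^{-cR_0^2})$ times a constant. Choosing $R_0=C_1(\epsilon)$ large and then $\delta_1$ small makes the difference less than $\epsilon$.

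The main obstacle is exactly this localized stability estimate: controlling the growth of the difference on the expanding balls $e^{(\tau-\tau_0)/2}B_1(x_0)$ uniformly over a time of length $10$, with boundary influence suppressed by taking $C_1$ large. I would first try to quote pseudolocality in the form of Proposition~\ref{prop:pseudo}, applied to the two flows separately relative to the shrinking cylinder $N_{r_0,\tau-\tau_0}$, which gives $C^4$ closeness of each to that cylinder flow. The remaining task is to upgrade closeness of each to a common flow into $\epsilon$-closeness to each other, and the linear difference equation handles this. Finally, cover $B_{R(\tau)}$ by the balls $e^{(\tau-\tau_0)/2}B_1(x_0)$, and use the cap $10\tau^{1/2}$ in $R(\tau)$ so that we stay inside the region where (a) applies. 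The choice $T_0(\epsilon)$ large ensures that Proposition~\ref{prop:models} supplies the cylinder approximations with $\kappa_0$ as small as needed.
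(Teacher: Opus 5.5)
Your proposal is correct, but it takes a different route from the paper's. The paper gives no separate argument for Proposition~\ref{prop:pseudo10}. It presents it as a consequence of Proposition~\ref{prop:modelpseudo}, i.e.\ it passes through the model $\mathcal{M}^{n,k,\ell}_\tau$, which is exactly the composition you set aside.

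Your objection that $\epsilon_0$ is fixed is removed by the freedom to choose $T_0(\epsilon)$ large. Because $M_\tau$ has good asymptotics, in particular $\Vert u\Vert_{L^2}<\tau^{-19/10}$, Proposition~\ref{prop:ldegendecay} shows that for every $\eta>0$ the flow $M_\tau$ is $\eta$-graphical over $\mathcal{M}^{n,k,\ell}_\tau$ on $B_{19\tau^{1/2}}$ once $\tau$ is large. The composition then goes as follows:
\begin{itemize}
\item For $\tau_0\ge T_0(\epsilon)$, $N_{\tau_0}$ is $(\delta_1+C\eta)$-graphical over the model.
\item Proposition~\ref{prop:modelpseudo} makes $N_\tau$ an $\epsilon'$-graph over $\mathcal{M}^{n,k,\ell}_\tau$ on the expanding balls.
\item Composing with the $\eta$-graph $M_\tau$ gives $C(\epsilon'+\eta)\le\epsilon$ graphicality of $N_\tau$ over $M_\tau$.
\end{itemize}

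Your route instead compares $N$ and $M$ separately to a common local cylinder flow, via Proposition~\ref{prop:pseudo} rescaled to radius $r_0$. It then controls their difference through the linear uniformly parabolic equation that the difference of the two graph functions satisfies.

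What your route buys is robustness. It uses only that $M$ is locally uniformly close to cylinders with bounded geometry, from the fixed $\epsilon_0$ bound and Proposition~\ref{prop:models}. It does not need $M_\tau$ to converge to the model, so the constants do not depend on the rate implicit in Proposition~\ref{prop:ldegendecay}. The price is an extra localized stability estimate that the paper never needs, and you only sketch it.

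That estimate is routine. After parabolic rescaling by $e^{\tau_0/2}$, it is a unit-time problem on a ball of radius about $R_0$. A barrier of the form $e^{Kt}\exp\bigl((|X|^2-R_0^2)/R_0\bigr)$ with bounded $K$ is a supersolution there. It bounds the boundary influence near the center by $\lesssim e^{-cR_0}$, which suffices for choosing $C_1(\epsilon)=R_0$; Gaussian decay is not needed. Schauder estimates then give the $C^4$ control of the difference.

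Two smaller points. The compactness-and-contradiction framing is superfluous once you have this direct estimate. Also, the Ecker--Huisken interior bounds should be invoked only after pseudolocality, since pseudolocality is what supplies the spacetime gradient bounds those estimates need.
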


Suppose that 
$N_{\tau_0}$ is $\epsilon_0$-graphical over  $M_{\tau_0}$ on
$B_{10\tau_0^{1/2}}$. Then $N'_{\tau}$ is still $\epsilon_1$-graphical
over $M_\tau$ on $B_{10\tau^{1/2}}$ for $\tau\in [\tau_0 +1,
\tau_0+10]$. Let us write $N_\tau$ as the graph of $v$ over
$M_\tau$. As before, this satisfies the equation
\[ \partial_\tau v(x,\tau) = \mathcal{L}_{M_\tau} v(x,\tau) +
  Q_{M_\tau}(v, \nabla v, \nabla^2 v). \]
We let $\tilde{v}(x,\tau) = \chi(2|x|/ (10\tau^{1/2}))$ with the
cutoff function $\chi$ as before. Then $\tilde{v}$ satisfies an
equation similar to \eqref{eq:utildeeq}, with error supported outside
of the $10\tau^{1/2}/2$-ball. Note that by
Proposition~\ref{prop:ldegendecay} we
have uniform control of the geometry of $M_\tau$ on this ball, and
therefore we can apply Corollary~\ref{cor:logSobolev}, together with
parabolic Schauder estimates,  to obtain the
following. 
 
\begin{prop}\label{prop:nonconc30} 
 Let $\tau^{1/10} \leq R(\tau) \leq
  10\tau^{1/2}$ denote an increasing function such that for all $\tau\geq T_0$ we
  have $e^{1/2}(R(\tau)-C_1) > R(\tau+1)$ for the $C_1(\epsilon_0)$ in
  Proposition~\ref{prop:pseudo10}.
  Once $\tau_0$ is sufficiently large, for $\tau\in [\tau_0+1,
  \tau_0+10]$ we have
  \[ |v(x, \tau)| + |\nabla v(x, \tau)| + |\nabla^2 v(x, \tau)| \leq C
    e^{|x|^2/8p} ( \Vert v(\cdot, \tau_0)\Vert_{L^2(R(\tau_0))} + e^{-R(\tau_0)^2/5}), \]
  on $B_{R(\tau)}$.
\end{prop}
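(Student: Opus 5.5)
The plan is to run the argument of Proposition~\ref{prop:L2ptwise} and Proposition~\ref{prop:tbetagraph10}, with $M_\tau$ playing the role of the static cylinder. Three steps are involved.

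\emph{Graphicality on the time interval.} By Proposition~\ref{prop:pseudo10}, applied repeatedly on unit time steps, $N_\tau$ stays $\epsilon_1$-graphical over $M_\tau$ on $B_{R(\tau)}$ for all $\tau\in[\tau_0,\tau_0+10]$. This uses the hypothesis $e^{1/2}(R(\tau)-C_1)>R(\tau+1)$ at each step, together with $R\le 10\tau^{1/2}$.

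\emph{The cut-off equation.} Set $\tilde v=\chi(|x|/R(\tau))\,v$, with $\chi$ the cutoff used earlier. On $B_{R(\tau)}$ the surface $M_\tau$ is $\epsilon_0$-graphical over $\mathcal{M}^{n,k,\ell}_\tau$, so Propositions~\ref{prop:models} and~\ref{prop:ldegendecay} give $|A|^2\le \tfrac12+\kappa_0$ and uniform control of the geometry there. Since $\|v\|_{C^4}\le\epsilon_1$ is small, the nonlinear term can be written as $Q=a_1\cdot\nabla v+a_2v$ with $|a_1|,|a_2|$ small. Hence $f=|\tilde v|$ satisfies
\[ \partial_\tau f\le \Delta f-\tfrac12 x\cdot\nabla f+a\cdot\nabla f+b f+\mathcal{E}, \qquad |a|<\epsilon,\ |b|\le 1+\epsilon. \]
The cutoff error $\mathcal{E}$ is bounded by a constant and supported in the annulus $|x|\in(\tfrac12R,R)$ for the relevant times.

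\emph{Pointwise bound.} Apply Corollary~\ref{cor:logSobolev} on $[\tau_0,\tau]$, with $\tau-\tau_0\in[1,10]$. The initial term $\|f(\cdot,\tau_0)\|_{L^2}$ is bounded by $\|v(\cdot,\tau_0)\|_{L^2(R(\tau_0))}$, because $\tilde v(\cdot,\tau_0)$ is supported there. The exponential factors $e^{(\epsilon+b_0)(\tau-\tau_0)}$ are bounded by a constant. The prefactor satisfies
\[ e^{|x|^2/4p(\tau-\tau_0-1)}\le e^{|x|^2/8p} \]
once $p(\tau-\tau_0-1)\ge 2p$. This holds for $\tau-\tau_0$ larger than a fixed amount; for $\tau\in[\tau_0+1,\tau_0+2]$ one instead uses a fixed $p>1$, as in Proposition~\ref{prop:uqnonconc}, so that the exponent is at worst $|x|^2/8p$.

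The forcing terms come from the Gaussian weight on the support of $\mathcal{E}$, where $|x|\ge R/2$. With $p\le e^{10}$ bounded, they contribute
\[ \|\mathcal{E}\|_{L^{p}}\lesssim e^{-R^2/16p}. \]
The localized term $C\delta e^{-R^2/15p}$ is of the same size, provided the point $x$ lies at distance $\gtrsim R$ from the support of $\mathcal{E}$.

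The main obstacle is getting the error $e^{-R(\tau_0)^2/5}$ with the stated constant. The Gaussian tail of the cutoff at radius $\sim R/2$, weighted through $L^p$ with $p$ close to $1$, gives $e^{-R^2/(16p)}$ and not $e^{-R^2/5}$. To fix this, I would place the cutoff near the outer edge, for instance $\chi$ switching between $0.9R$ and $R$. This is possible because graphicality actually holds on a slightly larger ball, by the pseudolocality slack $e^{1/2}(R-C_1)>R(\tau+1)$. Since $0.81/4>1/5$, the weighted tail is then $\le e^{-R^2/5}$ when $p$ is chosen close to $1$. Points $x$ near the edge of $B_{R(\tau)}$, where $|x|^2/8p$ already exceeds the threshold, are absorbed by the prefactor $e^{|x|^2/8p}$.

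Finally, the derivative bounds for $\nabla v$ and $\nabla^2 v$ follow from interior parabolic Schauder estimates for the equation of $v$ on unit parabolic cylinders, using the $C^0$ bound on a slightly earlier time interval.
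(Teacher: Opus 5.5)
Your outline matches the paper's argument, which the paper gives only as a sketch. It uses Proposition~\ref{prop:pseudo10} to keep $N_\tau$ graphical over $M_\tau$, cuts off $v$, uses the bound $|A|^2\le\frac12+\kappa$ from the model so that Corollary~\ref{cor:logSobolev} applies with $b_0=1+\epsilon$, and finishes with Schauder estimates. One small fix: apply Proposition~\ref{prop:pseudo10} once on $[\tau_0,\tau_0+10]$ rather than iterating unit steps, since each application degrades $\delta_1$ to $\epsilon$. The radius it produces contains $B_{R(\tau)}$ only for $\tau\ge\tau_0+1$, so the cut-off radius must depend on time on $[\tau_0,\tau_0+1]$.

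The step that fails is your derivation of the tail $e^{-R(\tau_0)^2/5}$.

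\textbf{The exponent is not free.} In Proposition~\ref{prop:logSobolev} and Corollary~\ref{cor:logSobolev} the Lebesgue exponent of the forcing is $p(s-\tau_0)$, with $p(0)=2$ and $\dot p=p-1-\epsilon$. The same exponent produces the prefactor $e^{|x|^2/4p(\tau-\tau_0-1)}$. Pushing it toward $1$ would therefore turn the prefactor into $e^{|x|^2/4}$, not $e^{|x|^2/8p}$ with $p>1$.

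\textbf{What the annulus at $[0.9R,R]$ actually gives.} The forcing contributes at best $e^{-0.81R^2/8}$, from $s$ near $\tau_0$. If you run the Corollary over all of $[\tau_0,\tau]$ with $\tau$ near $\tau_0+10$ and $R$ growing slowly, you get only about $e^{-0.81R^2/4p(9)}$, from $s$ near $\tau-1$.

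\textbf{The case $R=10\tau^{1/2}$.} This is the case used in Proposition~\ref{prop:perturb20}. Here there is no room to put the annulus outside $B_{R(\tau)}$, because Proposition~\ref{prop:pseudo10} caps the radius at $10\tau^{1/2}$. Edge points also cannot be absorbed by the prefactor, since there $|x|^2/8p\le R(\tau)^2/8p<R(\tau_0)^2/5$.

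\textbf{What is actually proved.} Your argument, like the paper's sketch, yields the estimate with a tail $e^{-cR(\tau_0)^2}$ for some smaller fixed $c>0$. That is all the later applications need, provided $c$ does not degenerate with $\tau-\tau_0$; thresholds such as $e^{-R^2/9}$ in Proposition~\ref{prop:3ann50} then have to be stated in terms of $c$.

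\textbf{How to get such a $c$.}
\begin{enumerate}
\item Propagate the plain weighted $L^2$ norm from $\tau_0$ to $\tau-1$. This is the computation in the Appendix with $\dot p=0$; it needs no log-Sobolev inequality, and an annulus at radius $\theta R$ costs only $e^{-\theta^2R^2/8}$.
\item Use the hypercontractive and monotonicity steps only on the final unit interval.
\item Split that interval unevenly, for instance into $[\tau-1,\tau-\frac12]$ and $[\tau-\frac12,\tau]$. This gives a genuine $p>1$ in the prefactor even at $\tau=\tau_0+1$, where the Corollary as stated yields only $e^{|x|^2/8}$.
\end{enumerate}
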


Similarly to Proposition~\ref{prop:uqdominant2} we have the
following three annulus lemma. Below we will need the flexibility of
working on balls with different radii, so we state the result in that
generality. 
\begin{prop}\label{prop:3ann50}
  Given $\lambda\in(0,1/2)$ there exists $T_0, \epsilon_0 > 0$ such that if
  $T > T_0$, then we have the following. Let $R(\tau)$ be a function  as
  in Proposition~\ref{prop:nonconc30}. Suppose that for $i=0,1,2$ we
  have $|\nabla^j v(x,
  \tau)|\leq \delta < \epsilon_0$, on $B_{R(\tau)}$ for $\tau\in
  [T, T+2]$, and we have
  \[ \Vert v(\cdot, T+1)\Vert_{L^2(B_{R(T+1)})} \geq
    e^{\frac{1}{2}-\lambda} \Vert v(\cdot, T)\Vert_{L^2(B_{R(T)})}, \]
  as well as $\Vert v(\cdot, T)\Vert_{L^2} \geq \delta e^{-R(\tau)^2/9}$. Then
  \[ \Vert v(\cdot, T+2)\Vert_{L^2(B_{R(T+2)})} \geq e^{\frac{1}{2}-\lambda}
    \Vert v(\cdot, T+1)\Vert_{L^2(B_{R(T+1)})}. \]
\end{prop}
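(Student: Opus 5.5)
We argue by contradiction and compactness, in the same way as Propositions~\ref{prop:3ann2} and~\ref{prop:uqdominant2}. Fix $\lambda$ and suppose there are flows $N^i_\tau$, graphs of $v^i$ over $M^i_\tau$, and times $T_i\to\infty$, together with $\delta_i\to 0$. We may take $\epsilon_0$ to be $1/i$. Assume the hypotheses hold, but
\[ \Vert v^i(\cdot, T_i+2)\Vert_{L^2(B_{R(T_i+2)})} < e^{\frac12-\lambda}\Vert v^i(\cdot,T_i+1)\Vert_{L^2(B_{R(T_i+1)})}. \]
Translate in time by $T_i$ and normalize by $\mathbf{d}_i=\Vert v^i(\cdot,T_i+1)\Vert_{L^2(B_{R(T_i+1)})}$.

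The lower bound $\Vert v^i(\cdot,T_i)\Vert_{L^2}\geq \delta_i e^{-R(T_i)^2/9}$ ensures that the cutoff errors are negligible after normalization. In Proposition~\ref{prop:nonconc30} these errors are of size $e^{-R^2/5}$, which is $o(\mathbf{d}_i)$ up to the factor $\delta_i$; this uses $\mathbf{d}_i\geq e^{1/2-\lambda}\Vert v^i(\cdot,T_i)\Vert$. Proposition~\ref{prop:nonconc30} then gives, for $\tau\in[1,2]$, the Gaussian-type pointwise bound
\[ |\nabla^j \mathbf{d}_i^{-1}v^i|\leq Ce^{|x|^2/8p}\,\mathbf{d}_i^{-1}\bigl(\Vert v^i(\cdot,T_i)\Vert+e^{-R^2/5}\bigr)\leq C'e^{|x|^2/8p}. \]
The energy estimate of Proposition~\ref{prop:logSobolev} controls the $L^2$ norm on $[0,1]$.

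Since $T_i\to\infty$, the background flows $M^i_{T_i+\tau}$ converge on compact sets to $\mathcal{C}_{n,k}$. This follows from Proposition~\ref{prop:ldegendecay} and Proposition~\ref{prop:models}: on $B_{\tau^{1/3}}$ the model is a cylinder with $|A|^2\to 1/2$, and the correction terms are $O(\tau^{-1})$. The equation for $v^i$ has quadratic terms of size $\delta_i\to 0$. Hence $\mathbf{d}_i^{-1}v^i$ converges smoothly on compact subsets of $(0,2]\times\mathcal{C}_{n,k}$ to a solution $v^\infty$ of $\partial_\tau v^\infty=\mathcal{L}_{\mathcal{C}}v^\infty$.

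The Gaussian bound with $p>1$ gives uniform integrability of $|v|^2e^{-|x|^2/4}$ outside large balls. Because $R(T_i+\tau)\geq T_i^{1/10}\to\infty$, the norms on $B_{R(T_i+\tau)}$ converge to full $L^2(\mathcal{C})$ norms at $\tau=1,2$. At $\tau=0$ we only get lower semicontinuity, exactly as in Proposition~\ref{prop:3ann2}. This yields
\[ \Vert v^\infty(\cdot,1)\Vert=1,\qquad \Vert v^\infty(\cdot,2)\Vert\leq e^{\frac12-\lambda},\qquad \liminf_{\tau\to0}\Vert v^\infty(\cdot,\tau)\Vert\leq e^{-\frac12+\lambda}. \]

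Expand $v^\infty$ in eigenfunctions of $\mathcal{L}_{\mathcal{C}}$, with rates $\mu_m\in\sigma(\mathcal{C}_{n,k})$. The function $\log\Vert v^\infty(\cdot,\tau)\Vert$ is convex. The first and third facts force an average growth rate of at least $\frac12-\lambda$ on $(0,1]$. The second forces the rate to be at most $\frac12-\lambda$ on $[1,2]$. By convexity, the rate equals $\frac12-\lambda$ on $(0,2]$, and so $v^\infty$ is a homogeneous solution with rate $\frac12-\lambda$. This is impossible since $\lambda\in(0,1/2)$ and $\frac12-\lambda\notin\sigma(\mathcal{C}_{n,k})\cap[0,\infty)=\{0,\frac12,1\}$.

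The main obstacle is the compactness step near $\tau=0$ and at spatial infinity. We need the non-concentration and pointwise bounds on the varying balls $B_{R(\tau)}$, which uses the growth condition $e^{1/2}(R(\tau)-C_1)>R(\tau+1)$ together with pseudolocality (Proposition~\ref{prop:pseudo10}). We also need the background $M^i_\tau$ to be close enough to $\mathcal{C}$ that the limit operator is $\mathcal{L}_{\mathcal{C}}$. Convergence only on $(0,2]$ is handled, as in Proposition~\ref{prop:3ann2}, by applying the pointwise estimate starting from time $T_i$.
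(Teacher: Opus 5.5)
Your proposal is correct and is essentially the paper's argument. You argue by contradiction and compactness, normalize by $\mathbf{d}_i$, use Proposition~\ref{prop:nonconc30} for non-concentration, and pass to a limit solution of the linearized equation on $\mathcal{C}_{n,k}$ that would have to be homogeneous with rate $\frac12-\lambda\notin\sigma(\mathcal{C}_{n,k})$. Your explicit log-convexity step, your use of the lower bound $\Vert v(\cdot,T)\Vert\geq\delta e^{-R^2/9}$ to absorb the cutoff errors, and letting $\epsilon_0\to 0$ along the sequence fill in details the paper leaves implicit.
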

\begin{proof}
We argue by contradiction, similarly to Proposition~\ref{prop:uqdominant2}.
Suppose that we have a sequence $N^i_\tau$, and $T_i \to \infty$,
such that the corresponding graphicality functions $v^i$ on satisfy
\[ \Vert v^i(\cdot, T_i)\Vert_{L^2} &\leq e^{-(\frac{1}{2}-\lambda)}
  \Vert v^i(\cdot, T_i+1)\Vert_{L^2}, \\
  \Vert v^i(\cdot, T_i+2)\Vert_{L^2} &\leq e^{(\frac{1}{2}-\lambda)}
  \Vert v^i(\cdot, T_i+1)\Vert_{L^2}. \]
Each $L^2$-norm here is taken on the appropriate ball $B_{R(\tau)}$. 
We define $\mathbf{d}_i = \Vert v^i(\cdot, T_i+1)\Vert_{L^2}$. As
$i\to\infty$, we can extract a limit $v^\infty$ of the time translated
and normalized functions
$\mathbf{d}_i^{-1} v^i(\cdot, T_i+\tau)$. The limit $v^\infty$ is a
solution of the linearized equation on $\mathcal{C}_{n,k}$, and the
convergence is smooth on compact subsets of $(0,2] \times
\mathbb{R}^{n+1}$. 
Similarly to before, Proposition~\ref{prop:nonconc30} implies that the
limit $v^\infty$ would be nonzero and homogeneous of degree $\frac{1}{2}-\lambda$,
but no such solution of the linearized equation exists. 
\end{proof}
Although we did not make this explicit in the proof, note that the
constants we obtain are independent of the flow $M_\tau$, as long as
it is a rescaled mean curvature flow with an $\ell$-degenerate
$\mathcal{C}_{n,k}$-singularity, that has good asymptotics at time $T_0$.

The following is the main ingredient that we use to perturb away
degenerate singularities.  Here we will denote by
$\Vert u\Vert_{R}$ the $L^2$ norm on the $R$-ball. 
\begin{prop}\label{prop:perturb20}
  Let $c_1\in (0,1)$. Choose $\kappa_1 = \kappa_0/4$ for the $\kappa_0$ in
  Proposition~\ref{prop:uqdecay}, and suppose that $\epsilon_0, T_0$
  are chosen according to Proposition~\ref{prop:3ann50}, so that the
  result holds for $\lambda=\kappa_1, 2\kappa_1, 3\kappa_1, 1/4$. For any $T_1 \geq
  T_0$ there exists $r_0=r_0(c_1, T_1)$ with the following property. 
  Suppose that the flow $N_\tau$ satisfies the following conditions
  for some $\gamma > 0$: 
  \begin{itemize}
     \item[(i)] For $\tau \geq T_1$, as long as $c_1^{-1}\gamma T_1 e^{\tau-T_1} < \epsilon_0$,
       $N_\tau$ is the graph of $v$ over $M_\tau$ on the
       $10\tau^{1/2}$-balls, and $\Vert v\Vert_{C^4} \leq
       c_1^{-1} T_1\gamma e^{\tau-T_1}$. Suppose that $\gamma$ is small
       enough so that this holds in
       particular for $\tau\in [T_1, 3T_1]$, i.e. $\gamma < \epsilon_0
       c_1 T_1^{-1} e^{-2T_1}$. 
     \item[(ii)] $\Vert v(\cdot, T_1)\Vert_{10T_1^{1/2}} \geq c_1 \gamma$.
     \item[(iii)] $\Vert v(\cdot, T_1+ 1)\Vert_{10(T_1+1)^{1/2}} \geq
         e^{\frac{1}{2}-\kappa_1} \Vert v(\cdot, T_1)\Vert_{10T_1^{1/2}}$.
   \end{itemize}
  Let $x_0\in \mathbb{R}^{n+1}$ be a vector with no $z_\alpha$ or
  $y_i$ component for $i\leq \ell$ (i.e. it is only nonzero in the
  degenerate directions) such that $|x_0| < r_0 \gamma^{1-\kappa_1}$.
  Then the translated flow $N'_\tau = N_\tau + e^{(\tau-T_1)/2}x_0$ does not
  have an $(n,k)$-cylindrical singularity at infinity. 
\end{prop}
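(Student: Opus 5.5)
The idea is that the translation by $e^{(\tau-T_1)/2}x_0$ in a degenerate direction barely changes the graphical description of $N_\tau$ over $M_\tau$. Both $\mathcal{M}^{n,k,\ell}_\tau$ and $\mathcal{C}_{n,k}$ are invariant in the $y_q$-directions, $q>\ell$, and by Proposition~\ref{prop:uqdecay} (property (b)) $M_\tau$ is invariant up to errors of size $e^{-\kappa_0\tau}$. So $N'_\tau$ inherits the unstable growth of $v$, which condition (iii) says is at rate essentially $\tfrac12$. The plan is to propagate this growth with Proposition~\ref{prop:3ann50} until $N'_\tau$ is a definite distance $\sim\epsilon_0$ from $M_\tau$. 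At that point it is dominated by a translation mode, which is incompatible with a cylindrical singularity at infinity.

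\emph{Step 1 (graph of the translated flow).} For $\tau$ with $c_1^{-1}\gamma T_1e^{\tau-T_1}<\epsilon_0$, write $N'_\tau$ as the graph of $v'=v+w$ over $M_\tau$ on $B_{R(\tau)}$, for a radius function $R(\tau)$ as in Proposition~\ref{prop:nonconc30}. To leading order $w$ is the normal component of the translation vector $e^{(\tau-T_1)/2}x_0$ along $M_\tau$. Since $x_0$ has no $z_\alpha$ or $y_i$ ($i\le\ell$) components, this normal component is controlled by $|x_0|e^{(\tau-T_1)/2}|\partial_{y_q}u|$, plus a quadratic correction of size $|x_0|e^{(\tau-T_1)/2}|\nabla v|$.

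Using the $L^2$ bound $e^{-\kappa_0\tau}$ on $\partial_{y_q}u$, upgraded to weighted pointwise bounds as in Proposition~\ref{prop:uqnonconc}, I expect
\[ \Vert w(\cdot,\tau)\Vert_{R(\tau)} \le C r_0\gamma^{1-\kappa_1} e^{(\tau-T_1)/2}\big(e^{-\kappa_0\tau}+c_1^{-1}T_1\gamma e^{\tau-T_1}\big)+e^{-R(\tau)^2/9}. \]
Taking $r_0$ small depending on $c_1,T_1$, and using $\kappa_0=4\kappa_1$ together with $e^{\tau-T_1}\lesssim \epsilon_0c_1/(\gamma T_1)$, I want
\[ \Vert w\Vert_{R(\tau)}\ll c_1\gamma e^{(\frac12-2\kappa_1)(\tau-T_1)}, \]
or at least a bound good enough to combine with the growth of $v$.

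\emph{Step 2 (growth of $v$).} By (ii), (iii) and Proposition~\ref{prop:3ann50} with $\lambda=\kappa_1$, iterated as long as (i) holds, we get
\[ \Vert v(\cdot,\tau)\Vert_{R(\tau)}\ge c_1\gamma e^{(\frac12-\kappa_1)(\tau-T_1)}. \]
The lower bound hypothesis $\Vert v\Vert\ge\delta e^{-R^2/9}$ of that proposition holds since $\gamma\ll1$ is fixed while the right side is tiny once $T_0$ is large.

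\emph{Step 3 (growth of $v'$).} From Steps 1 and 2, at $\tau=T_1,T_1+1$ we get the slow-growth hypothesis for $v'$ with $\lambda=2\kappa_1$, after losing a factor. Proposition~\ref{prop:3ann50} with $\lambda=2\kappa_1$ (and $3\kappa_1$ to absorb the accumulated error) then propagates $\Vert v'(\cdot,\tau+1)\Vert\ge e^{\frac12-3\kappa_1}\Vert v'(\cdot,\tau)\Vert$ up to the first time $\tau^*$ at which $\Vert v'\Vert_{C^2}$ reaches a small fixed multiple of $\epsilon_0$.

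\emph{Step 4 (contradiction).} Suppose $N'_\tau$ had an $(n,k)$-cylindrical singularity at infinity. After the times $\tau\ge\tau^*$ it would then have to stay close to cylinders. The case $\lambda=1/4$ of Proposition~\ref{prop:3ann50}, or equivalently the version of Proposition~\ref{prop:3ann2} relative to $M_\tau$, shows that growth at rate at least $\tfrac14$ persists as long as graphicality holds, and the limiting profile lies in the $\tfrac12$-eigenspace spanned by the $y_i,z_\alpha$. So $N'$ would have to leave every small neighbourhood of $M_\tau$, hence of the cylinders $Q\mathcal{C}_{n,k}$, through translations. This contradicts Corollary~\ref{cor:slowgrowth}, which forces $\dd$-growth at most $e^{L\epsilon}$ for a flow converging to a rotated cylinder. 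The passage between distance to $M_\tau$ and distance to $\mathcal{C}_{n,k}$ costs only $O(\tau^{-1})$, which is negligible compared to $\epsilon_0$.

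I expect the main obstacle to be Step 1 combined with Step 3, namely controlling $w$ relative to $v$ uniformly over the whole range $[T_1,\tau^*]$. At early times $|x_0|$ may exceed $\gamma$, so the smallness of $w$ must come entirely from the exponential decay of $\partial_{y_q}u$. This is why the exponent $1-\kappa_1$ and the choice $\kappa_1=\kappa_0/4$ enter. If the direct comparison fails at early times, I would treat $w$ as a separate component that grows at rate at most $\tfrac12-\kappa_0+\kappa_1$. I would then run the three-annulus argument with the intermediate levels $\lambda=2\kappa_1,3\kappa_1$ to show that the $v$-part dominates after a bounded time and never loses dominance.
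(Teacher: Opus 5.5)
Your overall mechanism is the right one: propagate the rate-$\frac12$ growth with Proposition~\ref{prop:3ann50}, control the translation through $\Vert\partial_{y_q}u\Vert\le e^{-\kappa_0\tau}$, and finish with Corollary~\ref{cor:slowgrowth}. Your early-time fallback is also essentially the paper's fix.

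At $T_1$ your bound for $\Vert w\Vert/\Vert v\Vert$ is of order $r_0e^{-\kappa_0T_1}\gamma^{-\kappa_1}/c_1$, which is unbounded as $\gamma\to0$. So the Step~3 hypothesis cannot be verified there. The paper instead iterates Proposition~\ref{prop:3ann50} on $v$ alone until the first time $T_2=T_1+k_0$ with $\Vert v(\cdot,T_2+1)\Vert\ge e^{-2\kappa_1(T_2+1)}\gg e^{-\kappa_0T_2}$. From $e^{-2\kappa_1T_2}\ge\Vert v(\cdot,T_2)\Vert\ge e^{k_0(\frac12-\kappa_1)}c_1\gamma$ one gets $e^{-k_0/2}\ge c_1\gamma^{1-\kappa_1}$. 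Hence the translation at time $T_2$ has size $e^{k_0/2}|x_0|<r_0/c_1$, and this is where $\gamma^{1-\kappa_1}$ and $\kappa_1=\kappa_0/4$ enter. The waiting time is of order $\log\gamma^{-1}$, not bounded. After $T_2$ you need no further comparison of $w$ with $v$: one runs the three-annulus lemma directly on the graph function $\tilde v$ of $N'_\tau$ over $M_\tau$.

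The genuine gap is in Steps~3--4. Corollary~\ref{cor:slowgrowth} gives a contradiction only once $\Vert\tilde v\Vert_{L^2}$ dominates the $O(\tau^{-1})$ distance from $M_\tau$ to $\mathcal{C}_{n,k}$, while $N'_\tau$ is still graphical over $M_\tau$ on growing balls. Nothing in your plan produces such a time.

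\textbf{Hypothesis (i) is not enough.} Its pointwise bound grows at rate $1$ (it comes from time-translation barriers), while the $L^2$-norm may grow only at rate $\frac12$. A pure degenerate $y$-mode does this, which is exactly the situation in Proposition~\ref{prop:perturb22}. At the end of the range of (i) you therefore only know $\Vert v\Vert_{L^2}\gtrsim\gamma^{1/2+\kappa_1}$, far below $\tau^{-1}\approx(\log\gamma^{-1})^{-1}$.

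\textbf{The $C^2$-threshold time $\tau^*$ does not help.} The weighted bounds $|v|\lesssim e^{|x|^2/8p}\Vert v\Vert$ allow that threshold to be reached near $|x|\sim\tau^{1/2}$ while the Gaussian $L^2$-norm is still exponentially small. At $\tau^*$ there is then nothing to contradict, and ``growth persists as long as graphicality holds'' is circular.

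\textbf{The missing ingredient} is the bootstrap of Proposition~\ref{prop:Claim1}: an $L^2$ bound $\tau^{-\alpha}$ propagates graphicality on balls of radius $\sim\tau^{\beta}$ for $\beta<\alpha/2$. The paper uses it in stages with shrinking radii:
\begin{itemize}
\item on $B_{9\tau^{1/2}}$ while $\Vert\tilde v\Vert\le\tau^{-2}$;
\item then on $B_{\tau^{1/5}}$ while $\Vert\tilde v\Vert\le\tau^{-1/2}$.
\end{itemize}
This is why the levels $\lambda=2\kappa_1,3\kappa_1$ appear.

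The radius cannot be fixed once and for all. The hypothesis $\Vert v\Vert\ge\delta e^{-R^2/9}$ of Proposition~\ref{prop:3ann50} forces $R\sim\tau^{1/2}$ while $\Vert v\Vert$ is exponentially small. Early on it also needs $\delta\propto\gamma$ from (i). Your justification that ``$\gamma$ is fixed while the right side is tiny once $T_0$ is large'' is backwards, since $\gamma$ is arbitrarily small relative to $T_1$.

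Only at the resulting time $T_4$, where $\Vert\tilde v\Vert\ge T_4^{-1/2}\gg T_4^{-1}$, do $L_0$ further three-annulus steps give $\dd(N'_{T_4+L_0})\ge e^{L_0/4}\dd(N'_{T_4})$. This contradicts Corollary~\ref{cor:slowgrowth}. Aiming for an $L^2$-size of order $\epsilon_0$, as you propose, is unnecessary and cannot be reached while keeping graphicality on growing balls.
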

\begin{proof}
  The proof is somewhat similar to \cite[Proposition 13]{Sz26}, but it
  is more involved because we need to iterate the three annulus lemma in
  several stages before we reach a time at which the distance from our
  translated flow $N'_\tau$ to $M_\tau$ dominates the $O(1/\tau)$ distance from
  $M_\tau$ to $\mathcal{C}_{n,k}$. We first give an outline of the
  argument in four steps:
  \begin{itemize}[itemsep=10pt]
    \item[(1)]   To start, 
  we will use Proposition~\ref{prop:3ann50} on $10\tau^{1/2}$-balls
  to propagate the growth of $v$ until we find a time $T_2$, where
  $\Vert v(\cdot, T_2+1)\Vert > C_2 e^{-\kappa_0 T_2}$ for some large
  $C_2$. This will ensure that $v$ dominates
  $\partial_{y_q} u$ for any degenerate direction $q > \ell$.
  Note that to leading order $N_\tau$ is the graph of $u+v$ over the model flow
  $\mathcal{M}^{n,k,\ell}_\tau$. We will find that  even if we
  replace $N_\tau$ with the translated flow $N'_\tau = N_\tau +
  e^{(\tau-T_2)/2} X_0$ for a vector $X_0$ that is only nonzero in the
  degenerate $y$-components and $|X_0| < r_0$ for a small $r_0 > 0$,
  then the corresponding graphicality
  function $w$ of $N'_\tau$ over $M_\tau$ still grows at some rate
  on the time interval $[T_2, T_2+1]$. Note that translating $N_\tau$
  by $X_0$ at time $T_2$ corresponds to translating it by $x_0 =
  e^{-(T_2-T_1)/2}X_0$ at time $T_1$, so we have $|x_0| < r_0
  e^{-(T_2-T_1)/2}$, and we will find that $e^{-(T_2-T_1)/2} \geq
  \gamma^{1-\kappa_1}$. In \cite{Sz26} the argument
  ended here, since in that case the distance from $M_\tau$ to
  $\mathcal{C}_{n,k}$ was exponentially small.
  \item[(2)] Next we need to show
  that   the distance from $N'_\tau$ to $M_\tau$ keeps growing. 
  We can continue to apply
  Proposition~\ref{prop:3ann50} to the translated flow $N'_\tau$, as long as we
  maintain sufficient graphicality over $M_\tau$ on the
  $9\tau^{1/2}$-balls. We will show that this will hold as long as
  we have $\Vert w\Vert \leq \tau^{-2}$ using the same argument as in 
  Proposition~\ref{prop:ldegendecay}. The basic idea to keep in mind
  is that if $\Vert w\Vert_{L^2} < \tau^{-\alpha}$, then we can expect
  good graphicality on the $\tau^\beta$-balls, for $\beta <
  \alpha/2$. In this way we will find a
  first time $T_3$ at which $\Vert w(\cdot T_3+1)\Vert >
  (T_3+1)^{-2}$.
  \item[(3)]   At this point the $L^2$-norm is no longer
  exponentially small, and so we can continue to use
  Proposition~\ref{prop:3ann50} on smaller balls. We continue
  propagating the growth on the balls of radius $\tau^{1/5}$, which
  we can do as long as $\Vert w\Vert_{L^2} < \tau^{-1/2}$. We find a time
  $T_4$, such that $\Vert w(\cdot, T_4+1)\Vert >
  (T_4+1)^{-1/2}$.
\item[(4)] The distance from $N'_\tau$ to $M_\tau$ now
  dominates the distance $O(T_4^{-1})$ between
  $M_\tau$ and $\mathcal{C}_{n,k}$. Using this we find that the
  $L^2$-distance $\mathbf{d}_{\mathcal{C}_{n,k}}(N'_\tau)$
  from $\mathcal{C}_{n,k}$ to $N'_\tau$ is also
  growing at some rate.  Corollary~\ref{cor:slowgrowth} implies that $N'_\tau$
  cannot have a $(n,k)$-cylindrical singularity (possibly a rotation
  of $\mathcal{C}_{n,k}$) at infinity. 
\end{itemize}

We will now give the detailed proof following these steps. 

\bigskip

 \noindent{\bf Step 1.} By our assumption, $N_\tau$ is
  $\epsilon_0$-graphical over $M_\tau$ on the $10\tau^{1/2}$-balls for
  $\tau\in [T_1, 3T_1]$. If $T_1$ is large enough, then from
  Proposition~\ref{prop:Claim1} we know
  that as long as $\Vert v(\cdot, \tau) \Vert_{L^2(B_{\tau^\beta})}<
  \tau^{-19/10}$, we have that $N_\tau$ remains $\epsilon_0$-graphical
  over $M_\tau$ on $B_{10\tau^{1/2}}$. In particular this holds as
  long as $\Vert v(\cdot, \tau)\Vert_{L^2(B_{10\tau^{1/2}})} \leq
  e^{-2\kappa_1\tau}$. We claim that we can apply
  Proposition~\ref{prop:3ann50} to obtain
  \[ \label{eq:g101} \Vert v(\cdot, T_1+k+1)\Vert_{R(T_1+k+1)} \geq
    e^{\frac{1}{2}-\kappa_1} \Vert v(\cdot,
    T_1+k)\Vert_{R(T_1+k)}, \]
  as long as
  \[  \Vert v(\cdot, T_1+k)\Vert_{R(T_1+k)} \leq e^{-2\kappa_1
      (T_1+k)}, \]
  where we set $R(\tau) = 10\tau^{1/2}$. We already have
  $\epsilon_0$-graphicality on these balls,  but we
  also need to ensure that $|\nabla^i v(x, \tau)|
  \leq \delta$ on $B_{10\tau^{1/2}}$ for $\delta > 0$ such that
  \[ \label{eq:g200} \Vert v(\cdot, \tau)\Vert_{R(\tau)} \geq \delta
    e^{-R(\tau)^2/9}. \]
  By assumption we can take $\delta = c_1^{-1}\gamma T_1 e^{\tau-T_1}$. 
  Then, as long as \eqref{eq:g101} holds, we will have
  \[ \Vert v(\cdot, T_1 +k+1) \Vert_{R(T_1+k+1)} \geq
    e^{(k+1)(\frac{1}{2}-\kappa_1)} \Vert v(\cdot, T_1)\Vert_{R(T_1)} \geq
    e^{(k+1)(\frac{1}{2}-\kappa_1)} c_1\gamma, \]
  while
  \[ \delta e^{-R(T_1+k+1)^2/9} < c_1^{-1} \gamma T_1 e^{k+1} e^{ - 10
      (T_1+k+1)},  \]
  which is much smaller than $\Vert v(\cdot, T_1 +k+1)
  \Vert_{R(T_1+k+1)}$ if $T_1$ is large (depending on $c_1$). It
  follows that we can keep applying Proposition~\ref{prop:3ann50}, and
  \eqref{eq:g101} holds, until we find a value $k=k_0$ for which
  $T_2=T_1+k_0$ satisfies both
  \[ \label{eq:vT2lower} \Vert v(\cdot, T_2+1)\Vert_{R(T_2+1)} \geq e^{-2\kappa_1
      (T_2+1)}, \]
  and
  \[ \Vert v(\cdot, T_2+1)\Vert_{R(T_2+1)} \geq
    e^{\frac{1}{2}-\kappa_1} \Vert v(\cdot, T_2)\Vert_{R(T_2)}. \]
  At the same time we have
  \[ e^{-2\kappa_1 (T_1+k_0)} \geq \Vert v(\cdot, T_2)\Vert_{R(T_2)} \geq
    e^{k_0(\frac{1}{2}-\kappa_1)} \Vert v(\cdot, T_1)\Vert_{R(T_1)}
\geq  e^{k_0(\frac{1}{2}-\kappa_1)} c_1\gamma. 
\]
Rearranging, this means
\[ e^{-\frac{k_0}{2}(1+2\kappa_1)} \geq c_1\gamma e^{2\kappa_1 T_1}
  \geq c_1\gamma, \]
and so
\[ e^{-k_0/2} \geq c_1 \gamma^{1-\kappa_1}, \]
where we used that $c_1 < 1$, and $(1+2\kappa_1)^{-1} < 1-\kappa_1$ if
$\kappa_1$ is small.

We now introduce the translated flow $N'_\tau = N_\tau +
e^{(\tau-T_2)/2} x_1$, where $|x_1| < r_1$ for a small $r_1 > 0$ to be
chosen, and $x_1$ is nonzero only in the degenerate
$y_i$-components. Note that we have $e^{(\tau-T_2)/2}x_1 =
e^{(\tau-T_1)/2} e^{(T_1-T_2)/2}x_1$, so in terms of the notation in
the statement of the Proposition, we have $x_0 = e^{(T_1-T_2)/2}x_1 =
e^{-k_0/2}x_1$. By the estimates above, the requirement for $x_0$ is $|x_0| < r_1
c_1\gamma^{1-\kappa_1}$. We will show that if $r_1$ is chosen small
enough, then the translated flow $N'_\tau$ has no $(n,k)$-cylindrical
singularity at infinity.

Let us focus on the time interval $\tau\in [T_2, T_2+1]$. On the
$9\tau^{1/2}$-ball we can still write $N'_\tau$ as the graph of a
function $\tilde{v}$ over $M_\tau$. We also write $M_\tau$ as the
graph of the function $u$ over $\mathcal{M}^{n,k,\ell}_\tau$ on these
balls. In order to estimate $\tilde{v}$ it is convenient to write 
$N_\tau$ as the graph of $V$ over $\mathcal{M}^{n,k,\ell}_\tau$. Then
to leading order we have that $V=u+v$. Since
$\mathcal{M}^{n,k,\ell}_\tau$ is invariant in the degenerate
$y$-directions, the translated surface $N'_\tau$ is the graph of the
translated function $V(x - e^{(\tau-T_2)/2}x_1, \tau)$ over
$\mathcal{M}^{n,k,\ell}_\tau$, and so to leading order $N'_\tau$ is
the graph of the function
\[ (u+v)(x - e^{(\tau-T_2)/2}x_1, \tau) - u(x,\tau) \]
over $M_\tau$. For simplicity let us work with a fixed $\tau$, and
write $X_1=e^{(\tau-T_2)/2}x_1$. Using the mean value theorem we can write
\[ \label{eq:u+v} (u+v)(x - X_1, \tau) - u(x,\tau) = v(x, \tau) - X_1\cdot \nabla (u+v)
  \left(x - f(x) X_1, \tau\right), \]
  where $|f(x)|\leq 1$. We want to estimate the $L^2$-norm of the
  second term. Since $X_1$ only has degenerate $y$-components, we can
  estimate $X_1\cdot \nabla u$ using our assumption that $\Vert
  \partial_{y_q} u\Vert_{L^2} \leq e^{-\kappa_0 \tau}$. Using
  Proposition~\ref{prop:uqnonconc} this implies that
  \[ |X_1 \cdot \nabla u (x - f(x) X_1, \tau)| \leq C |X_1| e^{-\kappa_0
      \tau} e^{|x-f(x) X_1|^2/ 8p}, \]
  for a fixed $p > 1$. For any $\delta > 0$ we have
  \[ |x-f(x)X_1|^2 \leq (1+\delta)|x|^2 + C_\delta |X_1|^2, \]
  so we can choose a suitable $\delta > 0$ so that for some $1 < p' <
  p$, and a suitable constant $C$, we have 
  \[ e^{|x-f(x) X_1|^2/ 8p} \leq e^{C|X_1|^2} e^{|x|^2/8p'}. \]
  If $r_1$, and therefore $|X_1|$, are sufficiently small, then this means that 
  \[ \label{eq:x1vest}  |X_1 \cdot \nabla u (x - f(x) X_1, \tau)| \leq
    C |X_1| e^{-\kappa_0
      \tau} e^{|x|^2/8p'}. \]
   We can similarly estimate the derivative of $v(x, \tau)$ in terms
   of $\Vert v(\cdot, \tau-1)\Vert_{R(\tau-1)}$ using
   Proposition~\ref{prop:logSobolev} and
   Proposition~\ref{prop:L2pointwise}, but because the $L^2$-norm of
   $v$ is growing by \eqref{eq:g101}, we actually obtain
   \[ |\nabla^i v(x, \tau)| \leq C (e^{|x|^2/8p} \Vert v(\cdot,
     \tau)\Vert_{R(\tau)} + e^{-R(\tau)^2/9}), \]
   on the $R(\tau)$-ball. 
   Similarly to the argument for \eqref{eq:x1vest}, we get
   \[ |X_1 \cdot \nabla v(x-f(x)X_1, \tau)| \leq C |X_1| (
     e^{|x|^2/8p'} \Vert v(\cdot, \tau)\Vert_{R(\tau)} +
     e^{-10\tau}). \]
   Using this and \eqref{eq:x1vest} in \eqref{eq:u+v} we get
   \[ \Vert (\tilde{v} - v)(\cdot, \tau) \Vert_{9\tau^{1/2}} \leq
     C|X_1| (\Vert v(\cdot, \tau)\Vert_{R(\tau)} + e^{-\kappa_0
       \tau}). \]
   Note that by \eqref{eq:vT2lower} and the choice
   $\kappa_1=\kappa_0/4$, $\Vert v(\cdot, T_2+1)\Vert$ is much larger
   than $e^{-\kappa_0 T_2}$. Therefore for any $\lambda > 0$, we can 
 choose  $|X_1|$ sufficiently small, so that 
   \[ \Vert (\tilde{v} - v)(\cdot, \tau) \Vert_{9\tau^{1/2}} \leq
     \lambda \Vert v(\cdot, \tau)\Vert_{R(\tau)} + \lambda \Vert
     v(\cdot, T_2+1)\Vert_{R(T_2+1)}. \]
   It follows that
   \[ \Vert \tilde{v}(\cdot, T_2+1)\Vert_{9(T_2+1)^{1/2}} &\geq
     (1-\lambda) \Vert \tilde{v}(\cdot, T_2+1)\Vert_{R(T_2+1)} , \\
       \Vert \tilde{v}(T_2)\Vert_{9T_2^{1/2}} &\leq \Vert
       v(T_2)\Vert_{R(T_2)} + \lambda \Vert \tilde{v}(\cdot,
       T_2+1)\Vert_{R(T_2+1)} \\
       &\leq e^{-(\frac{1}{2}-\kappa_1)}\Vert v(\cdot,
       T_2+1)\Vert_{R(T_2+1)} + \lambda \Vert \tilde{v}(\cdot,
       T_2+1)\Vert_{R(T_2+1)} \\
       &\leq e^{-(\frac{1}{2} - 2\kappa_1)}\Vert
       \tilde{v}(\cdot, T_2+1)\Vert_{9(T_2+1)^{1/2}}, \]
     if $\lambda$ is chosen small enough, depending on
     $\kappa_1$. This shows that the translated flow $N'_\tau$ still
     has a growing behavior relative to $M_\tau$.

     \bigskip
     \noindent{\bf Step 2.}
     Next we want to keep applying Proposition~\ref{prop:3ann50}, with
     $\lambda = 2\kappa_1$ to propagate this growth to later
     times. We will now work on the balls of radius $R_2(\tau) =
     9\tau^{1/2}$. We can keep applying the Proposition to obtain
     \[ \label{eq:702} \Vert \tilde{v}(\cdot, T_2+k+1)\Vert_{R_2(T_2+k+1)} \geq
       e^{\frac{1}{2}-2\kappa_1} \Vert \tilde{v}(\cdot,
       T_2+k)\Vert_{R_2(T_2+k)}, \]
     as long as we maintain $\epsilon_0$-graphicality on these balls,
     as well as
     \[ \label{eq:701} \Vert \tilde{v}(\cdot, T_2+k)\Vert_{R_2(T_2+k)} \geq \epsilon_0
       e^{-R_2(T_2+k)^2/9}. \]
     The right hand side is bounded above by $\epsilon_0 e^{-9\tau}$. 
     Recall that we have the lower bound \eqref{eq:vT2lower}, and so
     because the $L^2$-norm is growing, for \eqref{eq:701} to hold, it is enough to ensure that
     \[ e^{-\kappa_1(T_2+1)} \geq \epsilon_0 e^{-9\tau}, \]
     and as long as $T_2$ is large enough, this will hold for all
     $\tau \geq T_2$. So we will have \eqref{eq:702} as long as we
     maintain $\epsilon_0$-graphicality on $B_{R_2(\tau)}$. We claim
     that Proposition~\ref{prop:Claim1} can be applied in this
     setting, viewing $N_\tau$ as a graph over $M_\tau$. Indeed the
     properties of $\mathcal{M}^{n,k,\ell}_\tau$ that were required
     are the bound $|A|^2 < \frac{1}{2} +\epsilon$ on
     $B_{10\tau^{1/2}}$ for large $\tau$, and the pseudolocality
     property of Proposition~\ref{prop:modelpseudo}. Since by
     assumption (a) at the beginning of the section $M_\tau$ has good
     graphicality over $\mathcal{M}^{n,k,\ell}_\tau$ on the
     $10\tau^{1/2}$-balls for large $\tau$, these properties will also
     hold for $M_\tau$.

     It follows that $N_\tau$ has good graphicality over $M_\tau$ on
     the $R_2(\tau)$-balls, as long as $\Vert \tilde{v}
     \Vert_{R_2(\tau)} \leq \tau^{-2}$. Iterating \eqref{eq:702}, we
     end up with a first time $T_3= T_2 + k_1$ such that we have both
     \[ \Vert \tilde{v}(\cdot, T_3+1)\Vert_{R_2(T_3+1)} \geq
       (T_3+1)^{-2}, \]
     and
     \[ \Vert \tilde{v}(\cdot, T_3+1)\Vert_{R_2(T_3+1)} \geq
       e^{\frac{1}{2}- 2\kappa_1} \Vert \tilde{v}(\cdot, T_3)
       \Vert_{R_2(T_3)}. \]

     \bigskip
     \noindent{\bf Step 3.}
     At this point the $L^2$-norm is no longer exponentially small,
     and so we can transition to working with balls with radii that
     are of smaller order than $\tau^{1/2}$. We define $R_3(\tau) =
     \tau^{1/5}$. As long as we maintain $\epsilon_0$-graphicality,
     the contribution to the $L^2$-norm from outside of the
     $\tau^{1/5}$-ball is of order $e^{-\tau^{2/5}/9}$, which is much
     smaller than $\tau^{-2}$. We will therefore have
     \[  \Vert \tilde{v}(\cdot, T_3+1)\Vert_{R_3(T_3+1)} \geq
       e^{\frac{1}{2}- 3\kappa_1} \Vert \tilde{v}(\cdot, T_3)
       \Vert_{R_3(T_3)}. \]

     We can apply Proposition~\ref{prop:3ann50} as long as we maintain
     $\epsilon_0$-graphicality on the $R_3(\tau)$-balls, and again
     using the argument of 
     Proposition~\ref{prop:Claim1}, this will
     be the case as long as $\Vert \tilde{v}\Vert_{R_3(\tau)} \leq
     \tau^{-1/2}$. In this way we end up with $T_4= T_3 + k_2$, such
     that
     \[ \label{eq:704} \Vert \tilde{v}(\cdot, T_4+1)\Vert_{R_3(T_4+1)} \geq
       (T_4+1)^{-1/2}, \]
     and
     \[ \label{eq:705}\Vert \tilde{v}(\cdot, T_4+1)\Vert_{R_3(T_4+1)} \geq
       e^{\frac{1}{2}- 3\kappa_1} \Vert \tilde{v}(\cdot, T_4)
       \Vert_{R_3(T_4)}. \]
  
     \bigskip
     \noindent{\bf Step 4.}
     At this time we have that the $L^2$-distance from $M_\tau$ to
     $N'_\tau$ is much larger than the $L^2$-distance from $M_\tau$ to
     $\mathcal{C}_{n,k}$, which is of order $\tau^{-1}$, by
     Proposition~\ref{prop:L2normalform}. 
     We will use Corollary~\ref{cor:slowgrowth}
     with $\epsilon=\frac{1}{4}$ (applied to the time translated flow
     $\tau\mapsto N'_ {\tau+T_4}$), to show that $N'_\tau$ cannot
     converge to a rotation $Q\mathcal{C}_{n,k}$ at infinity. From the
     Corollary we obtain a corresponding $L_0, \delta$. Note that
     using the pseudolocality result, Proposition~\ref{prop:pseudo10},
     there exists an $\epsilon_1 > 0$ depending on $L_0, \epsilon_1$,
     such that if $T_4$ is large enough and $N'_{T_4}$ is
     $\epsilon_1$-graphical over $M_{T_4}$ on $B_{R_3(T_4)}$, then for
     all $\tau\in [T_4+1, T_4 + L_0]$ we have that $N'_\tau$ is
     $\epsilon_0$-graphical over $M_\tau$ on
     $B_{R_3(\tau)}$. Moreover, using the argument of 
     Proposition~\ref{prop:Claim1} we can ensure that $N'_\tau$
     is actually $\epsilon_1$-graphical over $M_\tau$ as long as $\Vert
     \tilde{v}\Vert_{L^2} < \tau^{-1/2}$ if $T_0$ is large enough
     (depending on $\epsilon_1$), not just $\epsilon_0$-graphical as
     used above. In particular this applies up to $\tau=T_4$, and so
     we have $\epsilon_1$-graphicality on the $R_3(\tau)$-balls up to
     $\tau = T_4+L_0$. Then Proposition~\ref{prop:3ann50} can be
     applied $L_0$ more times, and \eqref{eq:704}, \eqref{eq:705} imply
     \[\Vert \tilde{v}(\cdot, T_4+L_0)\Vert_{R_3(T_4+L_0)} &\geq
       (T_4+L_0)^{-1/2}, \\
       \Vert \tilde{v}(\cdot, T_4+L_0)\Vert_{R_3(T_4+L_0)} &\geq
       e^{L_0(\frac{1}{2}- 3\kappa_1)} \Vert \tilde{v}(\cdot, T_4)
       \Vert_{R_3(T_4)}. \]

     We know that
     $N'_{T_4}$ is $\epsilon_0$-graphical on the $R_3(T_4) =
     T_4^{1/5}$-ball over $M_{T_4}$, and $M_{T_4}$ is
     $T_4^{-\beta}$-graphical over $\mathcal{C}_{n,k}$ on
     $B_{T_4^\beta}$, for the $\beta$ in
     Proposition~\ref{prop:tbetagraph11}. So for the given $\delta$ in
     Corollary~\ref{cor:slowgrowth}
     we can arrange that $N'_{T_4}$ is $\delta$-graphical over
     $\mathcal{C}_{n,k}$ on $B_{\delta^{-1}}$, if $T_4$ is large
     enough. To apply the corollary, it remains to verify the growth of
     $\mathbf{d}_{\mathcal{C}_{n,k}}$. 

     Let us write $N'_\tau$ as the graph of
     $\tilde{V}$ over
     $\mathcal{C}_{n,k}$ on the $\tau^\beta$-ball, for the $\beta$ in
     Proposition~\ref{prop:tbetagraph11}. To leading order we have
     $\tilde{V} = u + \tilde{v}$. The contribution to $\dd(N'_{T_4})$
     and $\dd(N'_{T_4+L_0})$ from outside of the $\tau^\beta$-balls is
     bounded by $e^{-\tau^{2\beta}/9}$, so is much smaller than
     $\tau^{-1/2}$. Similarly the $L^2$-norm of $u$ is of order
     $\tau^{-1}$. Therefore, for any $\lambda > 0$, if $T_4$ is
     sufficiently large (i.e. if $L_0$ is chosen large),  we have
     \[ \dd(N'_{T_4+L_0}) &\geq (1-\lambda) \Vert \tilde{v}(T_4+L_0)
       \Vert_{R_3(T_4+L_0)}  \\
         \dd(N'_{T_4}) &\leq \Vert \tilde{v}(T_4)\Vert_{R_3(T_4)} +
         \lambda  (T_4+L_0)^{-1/2} \\
       &\leq e^{-L_0( \frac{1}{2}-3\kappa_1)} \Vert
       \tilde{v}(T_4+L_0)\Vert_{R_3(T_4+L_0)} + \lambda \Vert
       \tilde{v}(T_4+L_0)\Vert_{R_3(T_4+L_0)} \\
     &\leq (1-\lambda)^{-1} \left(e^{-L_0( \frac{1}{2}-3\kappa_1)}  +
       \lambda\right) \dd(N'_{T_4+L_0}) \\
    &\leq e^{-\frac{1}{4}L_0} \dd(N'_{T_4+L_0}),\]
    if $\lambda$ is sufficiently small (depending on $L_0$). Using
    Corollary~\ref{cor:slowgrowth}, this implies that $N'_\tau$ cannot
    converge to a rotation $Q\mathcal{C}_{n,k}$ at infinity. 
\end{proof}

Let us consider the flow $L_t$ from the beginning of the section
again. Recall that this is a small perturbation of a translation and
rotation of the original mean curvature flow that we started
with, such that $L_t$ has an $\ell$-degenerate
$\mathcal{C}_{n,k}$-singularity at the origin. We assume that this
singularity is $\ell$-degenerate at scale $r > 0$, so by
Proposition~\ref{prop:nearbydegen} there is an $r' > 0$ such that all
of the $\ell$-degenerate $(n,k)$-singularities near the origin
of sufficiently nearby flows are $\ell$-degenerate at scale
$r'$. Letting $T_1 = -2\ln r'$ this means that all of the
corresponding rescaled flows have good asymptotics at time $T_1$. 

Let $M_\tau$ be the rescaled flow, centered at the origin,
corresponding to $L_t$, so $M_\tau$ has an $\ell$-degenerate
$\mathcal{C}_{n,k}$-singularity at infinity. For all sufficiently
small $\mathbf{a}\in \mathbb{R}^{k-\ell}$ (how small can depend on the particular
flow $M_\tau$), we define the perturbation $M^{\mathbf{a}}_{T_1+1}$ by
taking the graph of $\chi(|x| T_1^{-\beta/2}) \mathbf{a} \cdot y$ over
$M_{T_1+1}$, where $\beta$ is from
Proposition~\ref{prop:tbetagraph11}.
Here $\chi$ is a standard cutoff function as before,
such that $\chi(s) = 0$ for $s > 3/2$ and $\chi(s)=1$ for $s < 1$,
and by an abuse of notation we are writing
\[ \mathbf{a}\cdot y = \sum_{j=\ell+1}^k a_{j-\ell} y_j, \]
where the $a_1, \ldots, a_{k-\ell}$ are the components of
$\mathbf{a}$. In other words we
have a $(k-\ell)$-dimensional family of perturbation of $M_{T_1+1}$,
corresponding to the degenerate directions. We define
$M^{\mathbf{a}}_\tau$ for $\tau \geq T_1+1$ to be the corresponding
rescaled flow, and $L^{\mathbf{a}}_t$ the corresponding mean curvature
flow (for $t \geq -e^{-T_1-1}$).

We next follow a similar argument to \cite[Proposition 20]{Sz26} to show
that if $L^{\mathbf{a}_1}_t$ and $L^{\mathbf{a}_2}_t$ both have
$\ell$-degenerate $(n,k)$-cylindrical singularities near $(0,0)$, then
the degenerate $y$-components must be at least
$r_0 |\mathbf{a}_1-\mathbf{a}_2|^{1-\kappa_1}$ apart, where $r_0,
\kappa_1$ are as in Proposition~\ref{prop:perturb20}. The constant
$c_1$ used in that proposition will be determined below, and recall that
$r_0$ depends on $c_1$, as well as on $T_1$, which in turn depends on the scale $r$ at
which the singularity at $(0,0)$ was $\ell$-degenerate. 

In order to deal with translations in the nondegenerate directions, it
is useful to introduce the following notation. Suppose that $\beta\in
\mathbb{R}^{\ell-k}$, $\alpha_1\in \mathbb{R}^\ell,\alpha_2\in
\mathbb{R}^{n-k+1}$ and $s\in \mathbb{R}$. We write $\Phi = (\alpha_1,
\alpha_2, s)$, and define the translated flows
\[ L^{\mathbf{a}, \Phi, \beta}_t = L^{\mathbf{a}}_{t+s} + (\alpha_1,
  \beta, \alpha_2). \]
Here we view $(\alpha_1, \beta, \alpha_2) \in \mathbb{R}^{\ell}\times
\mathbb{R}^{k-\ell}\times \mathbb{R}^{n-k+1} = \mathbb{R}^{n+1}$. Note
that $\beta$ encodes the translation in the degenerate directions, 
$\alpha_1, \alpha_2$ encodes the remaining space directions, and
$\Phi$ also includes the time direction. 
We have the following.
\begin{prop}\label{prop:perturb22}
  Suppose that $T_1$ is sufficiently large (depending on the dimension
  and entropy bound). There is an $r_2=r_2(T_1) > 0$, and
  $a_0 > 0$ depending on the particular flow $L_t$, with the
  following property. Suppose that $|\mathbf{a}_1|, |\mathbf{a}_2| <
  a_0$, and $L^{\mathbf{a}_1}_t$, $L^{\mathbf{a}_2}_t$ have
  $\ell$-degenerate $(n,k)$-cylindrical singularities at the points
  $(y_i, z_i, t_i)$, in the $r_2$-neighborhood of
  $(0,0,0)$. Decompose $y_i = (y_i', y_i'')$ into the nondegenerate and
  degenerate components according to $\mathbb{R}^k = \mathbb{R}^\ell
  \times \mathbb{R}^{k-\ell}$. Then the degenerate components satisfy
  $|y_1'' - y_2''| \geq r_2 |\mathbf{a}_1 -
  \mathbf{a}_2|^{1-\kappa_1}$, for the $\kappa_1$ in
  Proposition~\ref{prop:perturb20}. 
\end{prop}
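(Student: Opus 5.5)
We reduce the statement to Proposition~\ref{prop:perturb20}, applied with $N_\tau$ a suitably recentred version of $M^{\mathbf{a}_2}_\tau$ and $M_\tau$ the rescaled flow of $L^{\mathbf{a}_1}_t$. The approach mirrors \cite[Proposition 20]{Sz26}.

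\emph{Step 1: recentring.} Let $(y_1,z_1,t_1)$ be the $\ell$-degenerate singularity of $L^{\mathbf{a}_1}_t$. Choose $\Phi=(\alpha_1,\alpha_2,s)$ and $\beta$ so that $L^{\mathbf{a}_1,\Phi,\beta}_t$ has its singularity at the origin. After a small rotation, which Proposition~\ref{prop:sec2main} gives of size $\Psi(T_1^{-1})$, the tangent flow there is $\mathcal{C}_{n,k}$. By Proposition~\ref{prop:nearbydegen}, if $r_2$ and $a_0$ are small, this singularity is $\ell$-degenerate at scale $r'$. Hence its rescaled flow $M_\tau$ has good asymptotics at $T_1=-2\ln r'$, and properties (a), (b) at the start of the section hold.

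Apply the same recentring to $L^{\mathbf{a}_2}_t$, translating by the point of flow $1$ rather than flow $2$. Call the resulting rescaled flow $N_\tau$. The translation $(\alpha_1,\beta,\alpha_2,s)$ is a $C^\infty$-small diffeomorphism perturbation. Its effect on the perturbation data $\chi\,\mathbf{a}\cdot y$ at time $T_1+1$ is of order $|\mathbf{a}_i||\Phi|$, which is lower order.

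\emph{Step 2: verifying (i)--(iii) with $\gamma\sim|\mathbf{a}_1-\mathbf{a}_2|$.} At time $T_1+1$, the surface $N$ is the graph over $M$ of a function $v$ whose leading part is $\chi(|x|T_1^{-\beta/2})(\mathbf{a}_2-\mathbf{a}_1)\cdot y$, up to errors. Since $y_j$ for $j>\ell$ is an eigenfunction of $\mathcal{L}$ with eigenvalue $1/2$, linear theory gives:
\begin{itemize}
\item $\|v(\cdot,\tau)\|\approx c\,|\mathbf{a}_1-\mathbf{a}_2|\,e^{(\tau-T_1)/2}$ for a uniform $c$ on the early times, which yields (ii) and (iii) with $\kappa_1$ to spare;
\item the graphical $C^4$ bound (i) grows at most like $e^{\tau-T_1}$, via pseudolocality (Proposition~\ref{prop:pseudo10}) and the estimates of Proposition~\ref{prop:nonconc30}.
\end{itemize}
Here $c_1$ is fixed from these uniform constants, which depend on $T_1$ but not on the flows. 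The errors to control come from three sources: the cutoff, the nonlinearity (quadratic in $|\mathbf{a}_2-\mathbf{a}_1|$ once $a_0$ is small), and the nondegenerate part of the recentring.

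The recentring needs care. The two singular points differ in the nondegenerate and time directions, so $\Phi$ introduces components in the $z_\alpha$, constant and $y_i$ ($i\le\ell$) eigenspaces. I plan to absorb these by choosing, for flow $2$, its own nondegenerate translation $\Phi_2$, leaving only the degenerate difference $\beta_2-\beta_1=y_1''-y_2''$. Equivalently, $N'_\tau=N_\tau+e^{(\tau-T_1)/2}x_0$ with $x_0=y_2''-y_1''$ rescaled by $e^{-T_1/2}$, which is purely degenerate. Then $N'_\tau$ is exactly the rescaled flow of $L^{\mathbf{a}_2}$ at its own singular point, so $N'$ \emph{does} have an $(n,k)$-cylindrical singularity at infinity.

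\emph{Step 3: conclusion and main obstacle.} Proposition~\ref{prop:perturb20} forbids this unless $|x_0|\ge r_0\gamma^{1-\kappa_1}$. Undoing the scaling gives $|y_1''-y_2''|\ge r_2|\mathbf{a}_1-\mathbf{a}_2|^{1-\kappa_1}$ with $r_2\sim r_0e^{-T_1/2}c^{1-\kappa_1}$.

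The main obstacle is Step 2's bookkeeping, specifically making (ii) hold uniformly. The difference $\Phi_2-\Phi_1$ of nondegenerate translations could cancel or swamp the $(\mathbf{a}_2-\mathbf{a}_1)\cdot y$ component. However, it lies in eigenspaces orthogonal to the degenerate $y_j$, including rotations, which I would quotient out. So I would project onto $\mathrm{span}\{y_j:j>\ell\}$ to get the lower bound in (ii). Growth (iii) follows because the $y_j$-projection evolves at rate exactly $1/2$ up to errors. The other eigencomponents, e.g.\ the constant mode, grow faster, and that only helps (iii) provided (i) still holds. For (i), we may need to bound $|\Phi_2-\Phi_1|\lesssim|\mathbf{a}_1-\mathbf{a}_2|$, which I would get from the isolatedness and stability of the singular point under the perturbation.
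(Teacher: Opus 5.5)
\paragraph{Main gap: the nondegenerate and time offsets.} You take $\gamma\sim|\mathbf{a}_1-\mathbf{a}_2|$, and you plan to get hypothesis (i) of Proposition~\ref{prop:perturb20} from a bound $|\Phi_2-\Phi_1|\lesssim|\mathbf{a}_1-\mathbf{a}_2|$ based on ``isolatedness and stability''. No such bound is available.

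\begin{itemize}
\item $\ell$-degenerate singularities with $\ell<k$ are not isolated. For $\mathbb{R}^{k-\ell}\times\mathcal{N}^{n-(k-\ell),\ell}_t$, every point $(y'',0,0)$ is singular.
\item Nothing forces the singular set to keep $t$, $z$, $y'$ fixed as $y''$ varies. Already for $\mathbf{a}_1=\mathbf{a}_2$, nothing rules out two $\ell$-degenerate points of the same flow at different times.
\item In the regime you must exclude, $|y_1''-y_2''|<r_2|\mathbf{a}_1-\mathbf{a}_2|^{1-\kappa_1}$ may still be much larger than $|\mathbf{a}_1-\mathbf{a}_2|$. So the offsets can be much larger too.
\end{itemize}

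In the graph function over $M_\tau$, a time shift $s_0$ contributes $\sim e^{\tau}|s_0|$, a shift $z_0$ contributes $\sim e^{\tau/2}|z_0|$, and a shift $y_0'$ contributes $\sim\tau^{-1}e^{\tau/2}|y_0'|$. With your $\gamma$, hypothesis (i) therefore fails for every fixed $c_1$.

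The missing idea is to build the offsets into $\gamma$. The paper takes
\[
\gamma=T_1^{-1}e^{T_1/2}|y_0'|+|\mathbf{a}_2-\mathbf{a}_1|+e^{T_1/2}|z_0|+e^{T_1}|s_0|.
\]
Since $\gamma\ge|\mathbf{a}_2-\mathbf{a}_1|$, the conclusion $e^{(T_1+2)/2}|y_0''|\ge r_0\gamma^{1-\kappa_1}$ of Proposition~\ref{prop:perturb20} still gives the claim. Conditions (ii) and (iii) hold because on $B_{R_0}$, for $\tau\in[T_1+2,T_1+3]$, the leading part of $\tilde v$ is
\[
(\mathbf{a}_2-\mathbf{a}_1)\cdot y\,e^{(\tau-T_1-1)/2}+c'_{n,k}\tau^{-1}e^{\tau/2}(y_0',0)\cdot y+e^{\tau/2}z_0\cdot z-c_{n,k}s_0e^{\tau}.
\]
These terms are mutually $L^2$-orthogonal. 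Each is comparable to its share of $\gamma$ and grows at rate at least $\frac12-O(T_1^{-1})$. No projection onto the degenerate $y_j$ is needed. No quotienting by rotations is needed either: translations create no rotation modes, and Proposition~\ref{prop:perturb20} is not stated modulo rotations.

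\paragraph{Second gap: the pointwise bound (i).} Your route to (i) would fail even if the offsets were small. Hypothesis (i) asks for $\Vert v\Vert_{C^4}\le c_1^{-1}T_1\gamma e^{\tau-T_1}$ on all of $B_{10\tau^{1/2}}$. It must hold over the whole time range where this quantity is below $\epsilon_0$, which has length about $\log\gamma^{-1}$.

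\begin{itemize}
\item Pseudolocality only covers bounded time intervals and does not keep the quantitative constant.
\item The estimates of Proposition~\ref{prop:nonconc30} carry the weight $e^{|x|^2/8p}$, which is useless near $|x|\sim10\tau^{1/2}$.
\end{itemize}

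The paper instead uses mean convexity, as follows.
\begin{itemize}
\item The time-translates $\widetilde M^{\pm\bar s}_\tau$ of $M_\tau$ are nested.
\item The cut-off perturbation $\chi(|x|T_1^{-\beta/2})(\mathbf{a}_2-\mathbf{a}_1)\cdot y$ has sup norm at most $2T_1^{\beta/2}|\mathbf{a}_2-\mathbf{a}_1|$. Hence $N_{T_1+1}$ lies between $\widetilde M^{\pm\bar s}_{T_1+1}$ with $\bar s\sim T_1^{\beta/2}\gamma$.
\item Using these flows as barriers keeps $N_\tau$ between them. This gives $|v|\lesssim e^{\tau-T_1}\bar s$ for all later times, with derivatives controlled by interior estimates.
\item The translations by $(y_0',0,z_0)$ and $s_0$ then add at most $\lesssim T_1e^{\tau-T_1}\gamma$.
\end{itemize}
The cut-off size and these translations together account for the factor $T_1$ in (i).
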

\begin{proof}
  From Proposition~\ref{prop:nearbydegen} we have that if
  $L_t^{\mathbf{a}}$ has an $\ell$-degenerate $(n,k)$-cylindrical singularity at $(x_0,
  t_0)$ with $|x_0| + |t_0|^{1/2} < r_2$, then we have a rotation
  $Q$ with $|Q - Id| < \Psi(r_2|T_1)$ such that $QL_t^{\mathbf{a}}$
  has an $\ell$-degenerate $\mathcal{C}_{n,k}$-singularity. It follows
  that once $r_2$ is small enough, the $\mathbb{R}^{k-\ell}$
  subspace corresponding to the degenerate directions of the different $\ell$-degenerate
  singularities that show up are all small perturbations of each
  other.

  After a translation of size at most $r_2$, and a
  $\Psi(r_2|T_1)$-rotation, we can suppose that
  $L_t^{\mathbf{a}_1}$ has an $\ell$-degenerate
  $\mathcal{C}_{n,k}$-singularity at $(0,0)$. It is enough to
  show that $L^{\mathbf{a}_2}_t$ has no $(n,k)$-cylindrical
  singularity in the $2r_2$-neighborhood of the origin, with
  degenerate $y$-components at most
  $r_2 |\mathbf{a}_2-\mathbf{a}_1|^{1-\kappa_1}$. Suppose
 therefore that $L^{\mathbf{a}_2}_t$ has an $(n,k)$-cylindrical
  singularity at a point $(y_0', y_0'', z_0, s_0)$, where
  \[  |y_0'| + |y_0''| + |z_0| + |s_0|^{1/2} &< 2r_2. \]
  We want to show that $|y_0''| \geq r_2 |\mathbf{a}_2-\mathbf{a}_1|^{1-\kappa_1}$.
   Let us define the
  translated flow
  \[ \label{eq:730}  \widetilde{L}^{\mathbf{a}_2}_t = L^{\mathbf{a}_2}_{t+s_0} + (y_0',
    0, z_0), \]
  which has an  $(n,k)$-singularity at $(0,y_0'',
  0,0)$.
  
  We let $M_\tau$ be the
  rescaled flow corresponding to $L^{\mathbf{a}_1}_t$ (centered at the
  origin), and $\widetilde{N}_\tau$ the rescaled flow corresponding to
  $\widetilde{L}^{\mathbf{a}_2}_t$. We will verify that the hypotheses
  of Proposition~\ref{prop:perturb20} hold for suitable $c_1$, once $T_1$ is
  sufficiently large and $r_2$ is small, viewing
  $\widetilde{N}_\tau$ as a graph over $M_\tau$. We will consider
  conditions (ii), (iii) at times $T_1+2$ and $T_1+3$. Note that our
  construction above defined $M^{\mathbf{a}}_\tau$ for $\tau \geq
  T_1+1$, and $\widetilde{L}^{\mathbf{a}_2}_t$ involved a further
  translation in time by $s_0$. If $s_0$ is small enough, the corresponding rescaled flow
  $\widetilde{N}_\tau$ will be defined for all $\tau \geq T_1+2$. 

  In Proposition~\ref{prop:perturb20} we will use
  \[ \gamma = T_1^{-1} e^{T_1/2} |y_0'| + |\mathbf{a}_2 -
    \mathbf{a}_1| + e^{T_1/2} |z_0| + e^{T_1} |s_0|.\]
  As we will see below, this $\gamma$ measures the size of the graphicality function of
  $\widetilde{N}_\tau$ over $M_\tau$ for $\tau\in [T_1+2, T_1+3]$, in
  an $L^2$-sense. In
  particular the $T_1^{-1}$ factor is related to
  the fact that by the normal form \eqref{eq:SXC1} of $\mathcal{M}^{n,k,\ell}_\tau$ a
  translation by $e^{T_1/2} y_0'$ in the nondegenerate directions only has an effect
  of order $T_1^{-1}e^{T_1/2} |y_0'|$ on the level of the graphicality
  function. 
  
  Let us consider the hypothesis
  (i), which needs pointwise bounds for the graphicality function.
  For this we first compare $L^{\mathbf{a}_1}_t$ to
  $L^{\mathbf{a}_2}_t$. Let us denote by $N_\tau$ the rescaled flow,
  centered at the origin, corresponding to $L^{\mathbf{a}_2}_t$. By
  construction of the flows $M^{\mathbf{a}}_\tau$ we have that $N_{T_1+1}$
  is, to leading order, the graph of $\chi(|x| T_1^{-\beta/2})
  (\mathbf{a}_2 - \mathbf{a}_1)\cdot y$ over $M_{T_1+1}$.

  For $\tau\in [T_1, T_1+3]$ we know from
  Proposition~\ref{prop:tbetagraph11} that $M_\tau$ is
  $\tau^{-\beta}$-graphical over $\mathcal{C}_{n,k}$ on the
  $\tau^\beta$ ball. For small $s$, consider the flow
  $\widetilde{M}^s_\tau$ defined by
  \[\label{eq:Mstdefn} \widetilde{M}^s_\tau = (1 - e^{\tau-T_1-1}s)^{1/2} M_{\tau - \log(1
      - e^{\tau-T_1-1}s)}, \]
  which corresponds to translating the ``unrescaled'' flow
  $L^{\mathbf{a}_1}_t$ in time by $s$. The graphicality bounds for
  $M_\tau$ over $\mathcal{C}_{n,k}$ imply that for $\tau\in [T_1,
  T_1+3]$,  on $B_{\tau^\beta}$ 
  for small $s > 0$, the hypersurface $\widetilde{M}^s_\tau$ is on the
  negative side of the graph of $-c_n|s|$, while for $s < 0$,
  $\widetilde{M}^s_\tau$ is on the positive side of the graph of $c_n
  |s|$. Here $c_n > 0$ is a dimensional constant (using that the radius of
  $\mathcal{C}_{n,k}$ is $\sqrt{2(n-k)}$). We have
  \[ \left|\chi(|x| T_1^{-\beta/2}) (\mathbf{a}_2 - \mathbf{a}_1)\cdot y
    \right| \leq 2T_1^{\beta/2} |\mathbf{a}_2 - \mathbf{a}_1| \leq
    2 T_1^{\beta/2} \gamma, \]
  and this function is supported inside the $T_1^\beta$ ball. 
  So if we set $\bar s = c_n^{-1} 2 T_1^{\beta/2} \gamma$, then
  $N_{T_1+1}$ is contained between 
  the two hypersurfaces $\widetilde{M}^{\pm \bar s}_{T_1+1}$ (globally, not
  just on the $T_1^\beta$-ball). We can use these flows as barriers,
  and it follows that for all later times
  $N_{\tau}$ is contained between $\widetilde{M}^{\pm
    \bar s}_\tau$. Because $M_\tau$ has good asymptotics at time $T_1$, we
  know that for $\tau \geq T_1$, $M_\tau$ is $\epsilon_0$-graphical
  over $\mathcal{M}^{n,k,\ell}_\tau$ on the
  $10\tau^{1/2}$-ball. Recall from Proposition~\ref{prop:models} that
  on this ball $\mathcal{M}^{n,k,\ell}_\tau$ is close to cylinders of
  different radii, uniformly bounded away from 0 and from
  above. Therefore, from
  \eqref{eq:Mstdefn} we see that as long as $e^{\tau-T_1-1}\bar s$ is
  sufficiently small, the hypersurfaces $\widetilde{M}^{\pm \bar s}_\tau$
  lie inside of the graphs of $\pm c_n^{-1} e^{\tau-T_1}\bar s$ over
  $M_\tau$, decreasing $c_n$ if necessary. It follows that
  for this range of $\tau$, $N_\tau$ is the graph of $v$ over $M_\tau$
  on the $10\tau^{1/2}$-ball, where
  \[\label{eq:731} |v(x, \tau)| \leq c_n^{-1} e^{\tau-T_1} \bar s = 2c_n^{-2} T_1^{\beta/2}
    e^{\tau-T_1} \gamma.\]
  Using interior regularity, and replacing $c_n$ with a smaller
  constant, the same estimate holds for derivatives of $v$ as well.

  It remains to compare $\widetilde{N}_\tau$ to $N_\tau$. By
  definition, using \eqref{eq:730} we have
  \[ \label{eq:732} \widetilde{N}_\tau = (1 - e^\tau s_0)^{1/2} N_{\tau -
      \log(1-e^\tau s_0)} + e^{\tau/2}(y_0', 0, z_0). \]
  As long as $e^\tau
  |s_0|$ and $e^{\tau/2}|(y_0',0,z_0)|$ are sufficiently small, it follows from \eqref{eq:732}
  that $\widetilde{N}_\tau$ is the graph of a function $w$ over
  $N_\tau$ on the $10\tau^{1/2}$-ball, with
  \[ |w| &\leq c_n^{-1} e^\tau |s_0| + e^{\tau/2} |(y_0', 0, z_0)| \leq
    c_n^{-1} e^{\tau-T_1} \gamma + T_1 e^{(\tau-T_1)/2}\gamma \leq
    c_n^{-1} T_1 e^{\tau-T_1}\gamma, \]
  if $c_n^{-1}, T_1 > 2$. 
  Combining this with \eqref{eq:731} we find that as long as $c_n^{-2}
  T_1 e^{\tau-T_1} \gamma$ is
  sufficiently small, then on the
  $10\tau^{1/2}$-ball,   $\widetilde{N}_\tau$ is the graph of a
  function $\widetilde{v}$ over $M_\tau$, with
  \[ \label{eq:741} |\widetilde{v}(x, \tau)| \leq 2c_n^{-2} T_1 e^{\tau-T_1}
    \gamma. \]
  If we choose $c_1$ so that $c_1^{-1} >  2c_n^{-2}$, then condition
  (i) in Proposition~\ref{prop:perturb20} will hold.

  To see conditions (ii), (iii), we need to study the graphicality
  function $\tilde{v}$ of $\widetilde{N}_\tau$ over $M_\tau$ for
  $\tau\in [T_1+2, T_1+3]$ in $L^2$. First we will argue that for
  large $R_0 > 0$ we can ensure that the contribution to the
  $L^2$-norm from outside of $B_{R_0}$ is negligible compared to
  $\gamma$ (independently of $T_1$).
  Note that the pointwise estimate \eqref{eq:741} is not
  quite enough for this because of the $T_1$ factor. Recall that
  $N_{T_1+1}$ is given by the graph of $\chi(|x|T_1^{-\beta/2})
  (\mathbf{a}_2-\mathbf{a}_1)\cdot y$ over $M_{T_1+1}$. So the
  graphicality function $v$ satisfies $\Vert v(\cdot,
  T_1+1)\Vert_{L^2} \leq C |\mathbf{a}_2-\mathbf{a}_1|$. Using
  \eqref{eq:732} we can find $\widetilde{T_1}$ with $|\widetilde{T_1} -
  T_1| \leq C\gamma$,  such that
  \[ \widetilde{N}_{\widetilde{T_1}+1} = e^{(\widetilde{T_1}-T_1)/2}
    N_{T_1+1} + e^{(\widetilde{T_1}+1)/2} (y_0', 0, z_0). \]
  Arguing similarly to above, it follows that the graphicality
  function $\widetilde{v}$ of
  $\widetilde{N}_\tau$ over $M_\tau$ satisfies
  \[ \Vert \tilde{v}(\cdot, \widetilde{T_1} + 1)\Vert_{L^2} \leq C
    \gamma. \]
  We can now use Proposition~\ref{prop:nonconc30} to get a pointwise
  estimate
  \[ |\tilde{v}(x, \tau)| \leq C\gamma e^{|x|^2/8p} \]
  for some $p > 1$, for $\tau\in [T_1+2, T_1+3]$. From this it follows
  using H\"older's inequality that for $\tau\in [T_1+2, T_1+3]$
  the contribution to the $L^2$-norm
  of $\tilde{v}$ from outside of $B_{R_0}$ is $\gamma \Psi(R_0^{-1})$.  As
  a result, for checking (ii), (iii), it is enough to work on a ball
  $B_{R_0}$ for a large, but fixed $R_0$. Recall that on this ball  the flow
  $M_\tau$ is $\tau^{-\beta}$-graphical over $\mathcal{C}_{n,k}$. 

  Consider first $N_\tau$ as the graph of
  $v$ over  $M_\tau$, for $\tau\in [T_1+2, T_1+3]$. If $T_1$ is
  sufficiently large, and $\mathbf{a}_1, \mathbf{a}_2$ are
  sufficiently small, then to leading order, on $B_{R_0}$,
   $v$ is given by the solution $(\mathbf{a}_2-\mathbf{a}_1)\cdot y
  e^{(\tau-T_1-1)/2}$ of the linearized equation on
  $\mathcal{C}_{n,k}$. More precisely, as we let $T_1\to \infty$, and
  $\mathbf{a}_1, \mathbf{a}_2\to 0$, then along a subsequence (after shifting time by
  $T_1+1$), the normalized functions $|\mathbf{a}_2-\mathbf{a}_1|^{-1}
  v$ converge uniformly on $B_{R_0}$
  to $\mathbf{a}\cdot y e^{(\tau-1)/2}$, where $\mathbf{a}$ is
  the limit of the normalized vectors
  $|\mathbf{a}_2-\mathbf{a}_1|^{-1}(\mathbf{a}_2-\mathbf{a}_1)$.
   Similarly, the leading order contribution to the graphicality
   function of  translating by $(0,0,z_0) e^{\tau/2}$ in space
  is $z_0\cdot z e^{\tau/2}$, while the contribution of 
  translating the unrescaled flow in time by $s_0$ is 
  $-c_{n,k} s_0 e^\tau$ for a constant $c_{n,k} > 0$.
  
  Translating by $e^{\tau/2}(y_0',0, 0)$ is more
  subtle, since the cone $\mathcal{C}_{n,k}$ is invariant under
  translations in this  direction. However we still have a lower order
  effect using that in these
  directions $M_\tau$ is non-degenerate. Using the $C^1$-normal form
  of $\mathcal{M}^{n,k,\ell}_\tau$ and the $\tau^{-3/2}$-graphicality
  of $M_\tau$ over $\mathcal{M}^{n,k,\ell}_\tau$ on $B_{R_0}$ for
  large $\tau$ (depending on $R_0$), we see that to leading order,
  translating $N_\tau$ by $e^{\tau/2}(y_0',0, 0)$ changes the
  graphicality function by $c_{n,k}' \tau^{-1} e^{\tau/2} (y_0',
  0)\cdot y$, for some $c_{n,k}' > 0$. 

  Overall we find that to leading order the graphicality function
  $\tilde v$ of
  $\widetilde{N}_\tau$ over $M_\tau$, for $\tau\in [T_1+2, T_1+3]$ on
  $B_{R_0}$ is given by
  \[ (\mathbf{a}_2-\mathbf{a}_1)\cdot y e^{(\tau-T_1-1)/2} + c_{n,k}'
    \tau^{-1} e^{\tau/2} (y_0', 0)\cdot y + e^{\tau/2} z_0\cdot z
    -c_{n,k} s_0 e^\tau, \]
  If $R_0, T_1$ are sufficiently large, and $\gamma$ is small, then
  the conditions (ii), (iii) for the $L^2$-behavior of $v$ follow.

  The conclusion of Proposition~\ref{prop:perturb20} is that
  $\widetilde{N}'_\tau = \widetilde{N}_\tau + e^{(\tau-T_1-2)/2}x_0$
  does not have an $(n,k)$-cylindrical singularity at infinity, if
  $|x_0| < r_0 \gamma^{1-\kappa_1}$, and $x_0$ is only nonzero in the
  degenerate $y$-directions. Since the flow
  $\widetilde{L}^{\mathbf{a}_2}_t$ in \eqref{eq:730}, corresponding to
  $\widetilde{N}_\tau$, has an $(n,k)$-singularity at $(0,y_0'', 0,0)$,
  it follows that $\widetilde{N}_\tau - e^{\tau/2}(0,y_0'',0)$ has an
  $(n,k)$-cylindrical singularity at infinity. Therefore
  \[ e^{(T_1+2)/2} |y_0''| \geq r_0 \gamma^{1-\kappa_1} \geq r_0
    |\mathbf{a}_2-\mathbf{a}_1|^{1-\kappa_1}. \]
  Setting $r_2 = e^{-(T_1+2)/2}r_0$, this shows $|y_0''| \geq r_2
  |\mathbf{a}_2-\mathbf{a}_1|^{1-\kappa_1}$, as required.   
\end{proof}

Using this result, we can show similarly to \cite[Proposition
21]{Sz26}, that for almost every $\mathbf{a}$ the perturbed flow
$L^{\mathbf{a}}_t$ has no degenerate $(n,k)$-singularity in the
$\epsilon$-neighborhood of the origin.
\begin{prop}\label{prop:perturb52}
   There is an $\underline r_0  > 0$ depending on the dimension and entropy bounds
   with the following property. 
    Suppose that $L_t$ has an $\ell$-degenerate
    $\mathcal{C}_{n,k}$-singularity at the origin, which is
    $\ell$-degenerate at scale $r$ for some  $r < \underline r_0$. There exists an
    $\epsilon=\epsilon(r)$, and an $a_0 > 0$ depending on $L_t$,
    such that for almost every $\mathbf{a}\in \mathbb{R}^{k-\ell}$
    with $|\mathbf{a}| < a_0$ 
    the flow $L^{\mathbf{a}}_t$ has no $\ell$-degenerate
    $(n,k)$-cylindrical singularity in the $\epsilon$-neighborhood of
    the origin.
  \end{prop}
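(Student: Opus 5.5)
The plan is a measure-theoretic covering argument built on Proposition~\ref{prop:perturb22}, in the spirit of \cite[Proposition 21]{Sz26}. Fix $r<\underline r_0$, with $\underline r_0$ small enough that Proposition~\ref{prop:nearbydegen} applies. Let $T_1=-2\ln r'$ as in the discussion before Proposition~\ref{prop:perturb22}, and take $r_2=r_2(T_1)$ and $a_0$ from that proposition. Set $\epsilon=r_2$. Consider the set
\[ A = \{\mathbf{a} : |\mathbf{a}|<a_0,\ L^{\mathbf{a}}_t \text{ has an } \ell\text{-degenerate } (n,k)\text{-cylindrical singularity in the } \epsilon\text{-neighborhood of } (0,0)\}. \]
For each $\mathbf{a}\in A$ choose one such singular point and record its degenerate component $y''(\mathbf{a})\in\mathbb{R}^{k-\ell}$. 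Here $|y''(\mathbf{a})|<\epsilon$ up to the small rotation ambiguity, which Proposition~\ref{prop:perturb22} already handles. That proposition then gives, for $\mathbf{a}_1,\mathbf{a}_2\in A$,
\[ |y''(\mathbf{a}_1)-y''(\mathbf{a}_2)| \geq r_2|\mathbf{a}_1-\mathbf{a}_2|^{1-\kappa_1}. \]
So the map $y'':A\to B_\epsilon\subset\mathbb{R}^{k-\ell}$ is injective, and its inverse $g:y''(A)\to A$ is H\"older continuous with exponent $(1-\kappa_1)^{-1}>1$:
\[ |g(p)-g(q)|\leq r_2^{-1/(1-\kappa_1)}|p-q|^{1/(1-\kappa_1)}. \]

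The measure estimate is then a direct covering argument. Put $m=k-\ell$ and $s=1/(1-\kappa_1)>1$. For small $\rho>0$, cover $y''(A)\subset B_\epsilon$ by $N\le C\rho^{-m}$ balls of radius $\rho$, keeping only those that meet $y''(A)$. The image under $g$ of each such ball has diameter at most $C\rho^{s}$, so $A$ is covered by $N$ sets of diameter $C\rho^s$. This gives
\[ \mathcal{H}^m_\infty(A)\le C\rho^{-m}\rho^{sm}=C\rho^{m(s-1)}\to 0 \quad\text{as } \rho\to 0. \]
Hence the outer Lebesgue measure of $A$ is zero, with no measurability of $A$ needed. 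Almost every $\mathbf{a}$ with $|\mathbf{a}|<a_0$ therefore lies outside $A$, which is the claim.

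The main obstacle is verifying that Proposition~\ref{prop:perturb22} applies uniformly to every $\mathbf{a}\in A$. First, $\epsilon$ must be no larger than the $r_2$-neighborhood used there. Second, the degenerate components must be measured in one fixed splitting $\mathbb{R}^k=\mathbb{R}^\ell\times\mathbb{R}^{k-\ell}$, even though each singularity has its own degenerate subspace, rotated by $\Psi(r_2|T_1)$. I would handle both by invoking Proposition~\ref{prop:nearbydegen}: $L^{\mathbf{a}}_t$ is $\delta$-close to $L_t$ in the Brakke sense once $a_0$ is small (depending on $L_t$), so every such singularity is $\ell$-degenerate at the fixed scale $r'$. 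This makes $T_1$, $r_2$ and $\epsilon$ depend only on $r$, and leaves only $a_0$ depending on $L_t$. Fixing the splitting to be that of the origin singularity, the comparison in Proposition~\ref{prop:perturb22} is stated in these fixed coordinates, and the conclusion of the covering argument above follows.
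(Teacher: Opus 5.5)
Your proposal is correct and is essentially the paper's argument. The paper takes a maximal $\delta$-separated family $\mathbf{a}_i$ among the bad parameters, uses Proposition~\ref{prop:perturb22} to see that the corresponding degenerate components $y_i''$ are $\epsilon\delta^{1-\kappa_1}$-separated in $B_\epsilon\subset\mathbb{R}^{k-\ell}$, and so covers the bad set by $C\delta^{-(k-\ell)(1-\kappa_1)}$ balls of radius $\delta$; this is just the dual of your covering of $y''(A)$ pulled back by the H\"older inverse, and you should additionally state explicitly (as the paper does) that $\underline r_0$ is chosen small enough that $T_1$ is large enough for Proposition~\ref{prop:perturb22} to apply.
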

  \begin{proof}
    The choice of $\underline r_0$ is determined so that if we set
    $T_1 > -2\log \underline r_0$, then $T_1$ is large enough for
    Proposition~\ref{prop:perturb22}. Given $r < \underline{r}_0$ we
    let $T_1 = -2\log r$, and then let $r_2, a_0$ be
    determined by Proposition~\ref{prop:perturb22}.  We set
    $\epsilon=r_2$. Let us define
    \[ \mathcal{D} = \{ (\mathbf{a}, y_0'') \,:\, &|\mathbf{a}| < a_0,
      |y_0''| < \epsilon ,  L^{\mathbf{a}}_t \text{ has an }\ell\text{-degenerate }\\
         &\qquad (n,k)\text{-cylindrical singularity at } (x_0,
         t_0), \text{ where } \\
         &\qquad |x_0| + |t_0|^{1/2} < \epsilon , \text{ and } x_0 =
         (y_0', y_0'', z_0) \text{ has } y_0'' \\
         &\qquad \text{as its degenerate
           component}\}. \]
       We also define the projection onto the parameter space:
     \[ \mathcal{S} = \{\mathbf{a}\,:\, (\mathbf{a}, y_0'')\in
       \mathcal{D}\} \subset \mathbb{R}^{k-\ell}. \]
     Let $\delta > 0$ be small, and find a maximal collection of pairs
     $(\mathbf{a}_i, y_i'')\in \mathcal{D}$, such that $|\mathbf{a}_i
     - \mathbf{a}_j| > \delta$ for $i\not=j$. By
     Proposition~\ref{prop:perturb22} we then have $|y_i'' - y_j''| \geq
     \epsilon  \delta^{1-\kappa_1}$. Since $|y_i''| < \epsilon$ and $y_i'' \in
     \mathbb{R}^{k-\ell}$, there can be at
     most $\delta^{-(k-\ell)(1-\kappa_1)}$ distinct such pairs. It
     follows that $\mathcal{S}$ can be covered with
     $\delta^{-(k-\ell)(1-\kappa_1)}$ balls of radius $\delta$, so the
     Lebesgue measure  $\mu(\mathcal{S}) \leq C \delta^{-(k-\ell)(1-\kappa_1) +
       (k-\ell)}=C\delta^{\kappa_1(k-\ell)}$. Since $\ell < k$, and
     $\delta > 0$ can be taken arbitrarily small, we get
     $\mu(\mathcal{S})=0$.  
 \end{proof}

The following summarizes our main result on perturbing away degenerate
cylindrical singularities.
\begin{prop}\label{prop:perturbdegen10}
  For sufficiently small $r > 0$ there exists a function $\Phi(r) > 0$, depending on the
  dimension and the entropy bound for the flows that appear, with the following
  property. Suppose that the flow $L_t$ has an $\ell$-degenerate
  $(n,k)$-cylindrical singularity at a point $(x,T)$, and this
  singularity is $\ell$-degenerate at scale $2r > 0$. Then there are arbitrarily small
  $C^\infty$ perturbations $L'_{T-r^2}$ of $L_{T-r^2}$ such that the
  corresponding flow $L'_t$ has no $\ell'$-degenerate
  $(n,k')$-cylindrical singularity in the parabolic neighborhood
  $Q_{\Phi(r)}(x, T)$ if either $k=k'$ and $\ell' \leq \ell$, or
  if $k' > k$.
\end{prop}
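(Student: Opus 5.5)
The plan is to combine Proposition~\ref{prop:perturb52} with Proposition~\ref{prop:singlimits} through a compactness/contradiction argument. First translate and rotate so that $L_t$ has an $\ell$-degenerate $\mathcal{C}_{n,k}$-singularity at $(0,0)$, $\ell$-degenerate at scale $2r$. The perturbations $L^{\mathbf{a}}_t$ of Section~\ref{sec:perturb} are defined by modifying the rescaled slice $M_{T_1+1}$. For the flow at time $T-r^2$ we will instead perturb at time $-r^2$, i.e.\ rescaled time $\tau=-2\ln r$. The scale $2r$ gives good asymptotics at $-2\ln(2r) < -2\ln r$, with some room to spare. The resulting perturbation is a normal graph of $\chi\,\mathbf{a}\cdot y$, compactly supported and smooth, so it is $C^\infty$-small as $\mathbf{a}\to 0$. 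The flow $L'_t=L^{\mathbf{a}}_t$ for $t\ge T-r^2$ is then given by the unique mean convex level set flow. Mean convexity is preserved for small $\mathbf{a}$, since the perturbation is smooth and compactly supported in a region where $H>0$ is bounded below.

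Next we exclude singularities with $k'=k$ and $\ell'=\ell$. Proposition~\ref{prop:perturb52}, applied with scale comparable to $r$, gives $\epsilon(r)$ and $a_0$ such that for a.e.\ $|\mathbf{a}|<a_0$ the flow $L^{\mathbf{a}}_t$ has no $\ell$-degenerate $(n,k)$-singularity in $Q_{\epsilon}(0,0)$. We set $\Phi(r)\le\epsilon(r)/2$. This $\Phi$ depends only on $r$, the dimension and the entropy bound, since $r_2$ in Proposition~\ref{prop:perturb22} depends only on $T_1$.

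The remaining cases, $k'=k$ with $\ell'<\ell$ and $k'>k$, should be excluded for all sufficiently small $\mathbf{a}$ by contradiction. Suppose $\mathbf{a}_i\to 0$ and $L^{\mathbf{a}_i}_t$ has such a singularity at $(x_i,t_i)\in \overline{Q_{\Phi(r)}(0,0)}$. Choose $\Phi(r)$ small enough that $t_i$ stays strictly after the perturbation time $-r^2$. Uniqueness and stability of mean convex flows give Brakke convergence $L^{\mathbf{a}_i}_t\to L_t$. Passing to a subsequence, $(x_i,t_i)\to(x_\infty,t_\infty)$, and Proposition~\ref{prop:singlimits} says $L_t$ has an $(n,k_\infty)$-singularity there with $k_\infty\ge k'$. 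If $k'>k$ this is a singularity of $L_t$ of type $k_\infty>k$ arbitrarily close to the origin. If $k'=k$ and $k_\infty=k$, it is $\ell_\infty$-degenerate with $\ell_\infty\le\ell'<\ell$.

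So we need a preliminary fact: for $\Phi(r)$ small, depending only on $r$, $L_t$ itself has no $(n,k'')$-singularity with $k''>k$ in $Q_{2\Phi(r)}(0,0)$, and no $(n,k)$-singularity that is less than $\ell$-degenerate. The plan is to get this from good asymptotics at scale $2r$. The rescaled flow at $(0,0)$ is $\epsilon_0$-graphical over $\mathcal{M}^{n,k,\ell}_\tau$ on $B_{10\tau^{1/2}}$. Hence nearby points see, at scales just below $r$, a flow close to $\mathcal{C}_{n,k}$ with $L^2$ asymptotics $E(\tau)<\tau^{-3/2}$ on a long time window. Then:
\begin{itemize}
\item Proposition~\ref{prop:L2normalform}(i) forbids $(n,k)$-singularities less degenerate than $\ell$.
\item The entropy is below that of $\mathcal{C}_{n,k''}$, $k''>k$, at these scales, by Stone's inequality and Huisken monotonicity, and this forbids $k''>k$.
\end{itemize}
This is the step I expect to be the main obstacle. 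One must check that the nearby basepoints' rescaled flows inherit the hypotheses of Proposition~\ref{prop:L2normalform} uniformly, as in the proof of Proposition~\ref{prop:nearbydegen}. For the sub-case $k'=k$ with $k_\infty>k$, the same entropy argument already applies. Finally, choosing $\mathbf{a}$ in the full-measure set of the second paragraph and small enough for the contradiction argument yields the required perturbation.
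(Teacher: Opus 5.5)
Your proposal is correct and follows the paper's proof. Proposition~\ref{prop:perturb52}, applied so that the perturbation is made at $L_{-r^2}$ (the paper uses good asymptotics at scale $e^{1/2}r<2r$, so that $T_1+1=-2\ln r$), removes the $\ell$-degenerate $(n,k)$-singularities for almost every small $\mathbf{a}$, and Proposition~\ref{prop:singlimits} excludes the worse types $(k',\ell')\succ(k,\ell)$ for all sufficiently small $\mathbf{a}$.

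The paper leaves implicit your preliminary fact that $L_t$ itself has no worse singularities near $(0,0)$; in its application this is guaranteed by condition (\dag). Your quantitative justification via Proposition~\ref{prop:L2normalform}(i) and Stone's entropy gap is a sound way to supply it. Note only a wording slip: Proposition~\ref{prop:L2normalform}(i) excludes \emph{more} degenerate singularities ($\ell'<\ell$), not ``less degenerate'' ones as you wrote.
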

\begin{proof}
  Translating the flow in spacetime we can assume that
  $(x,T)=(0,0)$. We can assume that $r < \underline{r}_0$ for the
  $\underline{r}_0$ in Proposition~\ref{prop:perturb52}, and consider
  the $\epsilon(r)$ given by the proposition. The
  conclusion of Proposition~\ref{prop:perturb52} is that there are arbitrarily small
  $\mathbf{a}$ for which the perturbed flow $L^{\mathbf{a}}_t$ has no
  $\ell$-degenerate $(n,k)$-cylindrical singularity in
  $Q_{\epsilon(r)}(0,0)$. Note that by definition $L^{\mathbf{a}}_t$ is
  obtained by perturbing $M_{T_1+1}$, where $M_\tau =
  e^{\tau/2}L_{-e^{-\tau}}$, and $T_1 = -2\log r'$, where $M_\tau$ has
  good asymptotics at time $T_1$. We can apply this with suitable $r'
  < 2r$ so that $M_{T_1+1}$ corresponds to $L_{-r^2}$, i.e. $e^{2\log
    r' -1} = r^2$. Then Proposition~\ref{prop:perturb52} gives rise to
  a perturbation $L'_{-r^2}$ of $L_{-r^2}$ such that the corresponding
  flow $L'_t$ has no $\ell$-degenerate
  $(n,k)$-cylindrical singularity in $Q_{\epsilon(r)}(0,0)$. Using
  Proposition~\ref{prop:singlimits} we also have that if the
  perturbation is sufficiently small, then for a suitable $\Phi(r)  <\epsilon(r)$
  the perturbed flow $L'_t$ can also not have
  $\ell'$-degenerate $(n,k')$-cylindrical singularities in
  $Q_{\Phi(r)}(0,0)$ for $k' > k$, or $k'=k$ and $\ell' \leq \ell$. 
\end{proof}

\section{Proof of the main theorem}\label{sec:proof}
In this section we will prove our main result. We first define the
notion of \emph{almost mean curvature flow}, or AMCF (see also the notion
of piece-wise MCF used by Colding-Minicozzi~\cite{CM12}). 

\begin{definition}\label{defn:AMCF}
  An almost mean curvature flow in $\mathbb{R}^{n+1}$ over an interval
  $I=[T_1,T_2]$ is a 
  family of Radon measures $\nu_t$ for all $t\in I$, such that there
  is a finite set of times $T_1=t_0 < t_1 < t_2 < \ldots < t_N = T_2$ 
  with the following properties:
  \begin{enumerate}
    \item For each $0 \leq j < N$, the $\nu_t$ define an
      $n$-dimensional integral Brakke flow on the interval $(t_j,
      t_{j+1})$. 
    \item These flows are smooth embedded flows of
      hypersurfaces $L_t^j$ with uniformly bounded second
      fundamental form for $t$ near the endpoints $t_j$ and $t_{j+1}$,
      and as $t\to t_j$ and $t\to t_{j+1}$ we have smooth embedded limit
      hypersurfaces $L^j_{t_j}$ and $L^j_{t_{j+1}}$.
    \item For $j=1, \ldots, N-1$ the smooth hypersurfaces
      $L^{j-1}_{t_j}$ and $L^j_{t_j}$ are smoothly isotopic.
    \end{enumerate}
  We say that the AMCF $\nu_t$ has only nondegenerate cylindrical
  singularities, if this is true on each interval $(t_j, t_{j+1})$. 
\end{definition}

In other words an AMCF is a mean curvature flow which is modified using smooth
isotopies at finitely many times. We will only be concerned with AMCFs
with nondegenerate cylindrical singularities. For these each $\nu_t$
is given by the Hausdorff measure on a hypersurface $L_t$, which is
smooth away from finitely many points in spacetime. We expect that eventually this
definition will not be needed, and it is simply designed to
encode what we can prove at the moment. Let us restate the main result
of the paper, Theorem~\ref{thm:main}, with an additional statement
about $q$-convex flows. 

\begin{thm}\label{thm:mainAMCF}
  If $L_0\subset \mathbb{R}^{n+1}$ is a smooth, compact, embedded, mean convex
  hypersurface, then there exists an almost mean curvature flow $L_t$
  with initial condition $L_0$,  that has only nondegenerate
  cylindrical singularities, and becomes extinct in finite time. If
  the initial hypersurface is $q$-convex for some $q=1,2,\ldots, n$, then the same holds for all
  time.  
\end{thm}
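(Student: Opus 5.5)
We will argue by induction on the dimension $n$, combined with an inner induction that removes degenerate singularities by type, using Proposition~\ref{prop:perturbdegen10} as the local engine. The induction on $n$ is needed because the model flows $\mathcal{M}^{n,k,\ell}_\tau$ of Section~\ref{sec:geomnormal} require nondegenerate models $\mathcal{N}^{n',k'}$ in lower dimensions. So the first step is to prove the existence statement (Proposition~\ref{prop:nondegexist}): in dimension $n$, for each $0<k<n$, there is a flow with a nondegenerate $\mathcal{C}_{n,k}$-singularity. I would start from a suitable rotationally symmetric mean convex hypersurface (for instance a thin tubular neighborhood of a round $S^{k-1}$ with $S^{n-k}$ cross sections, or a dumbbell-type surface) which pinches along a $\mathcal{C}_{n,k}$-type singularity. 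I would then apply the perturbation scheme below, valid once lower-dimensional models exist, to make all $(n,k)$-singularities nondegenerate. Because degenerate-type singularities are perturbed away while the flow still passes near $\mathcal{C}_{n,k}$, some nondegenerate $\mathcal{C}_{n,k}$-singularity must survive; this is checked via Proposition~\ref{prop:singlimits} together with the entropy ordering.

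For the main statement, fix the mean convex flow $L_t$ from $L_0$. It is unique, non-fattening, and becomes extinct at some finite time $T_{ext}$. Order the singularity types $(k,\ell)$ with $0\le \ell<k\le n-1$ so that larger $k$ comes first, and for fixed $k$ smaller $\ell$ comes first. The goal is an AMCF with no degenerate singularity of any type, built by removing one type at a time. Suppose the current AMCF has no degenerate singularities of types preceding $(k,\ell)$.

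The set $\Sigma$ of $\ell$-degenerate $(n,k)$-singular points is handled by compactness. For each point of $\Sigma$ there is a scale $2r>0$ at which it is $\ell$-degenerate. Proposition~\ref{prop:nearbydegen} makes this scale locally uniform for nearby flows, and Proposition~\ref{prop:singlimits} makes the set of points of types $\preceq(k,\ell)$ closed. Hence $\Sigma$ is compact and is covered by finitely many neighborhoods $Q_{\Phi(r_i)}(x_i,T_i)$. Treat these neighborhoods in increasing order of time. At time $T_i-r_i^2$, which we may take to be a smooth time by nudging it, replace the hypersurface by the small $C^\infty$ perturbation given by Proposition~\ref{prop:perturbdegen10}. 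A small perturbation of a smooth embedded hypersurface is smoothly isotopic to it and stays mean convex, which is exactly the adjustment permitted in Definition~\ref{defn:AMCF}. We must also ensure that the later perturbations do not reintroduce degenerate singularities near earlier points or in neighborhoods already cleaned. I would take each successive perturbation small enough, using Brakke-flow stability of mean convex flows and the openness, from Proposition~\ref{prop:singlimits}, of the condition ``no singularity of type $\preceq(k,\ell)$ in a compact set''. Finitely many steps remove type $(k,\ell)$, and finitely many types exhaust everything. The result is an AMCF with only spherical and nondegenerate cylindrical singularities, which by Theorem~\ref{thm:SWXsurgery} are isolated; extinction in finite time persists since small perturbations keep the flow inside a large shrinking sphere.

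For the $q$-convex claim, I would note that $q$-convexity (meaning $\lambda_1+\dots+\lambda_q>0$) is preserved by mean curvature flow, and that it is an open condition at smooth times, so sufficiently small perturbations keep it. The main obstacle I expect is the covering and ordering argument. The perturbation at one neighborhood shifts the whole subsequent flow, so the compactness of $\Sigma$ must be redone for each perturbed flow. The key point to secure is that the parameter $\Phi(r)$ and the perturbation sizes depend only on scales that are stable under small perturbations, so that the number of steps stays finite. I would try first to control this with the uniformity in Proposition~\ref{prop:nearbydegen}.
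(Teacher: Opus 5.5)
Your overall architecture matches the paper's. You use induction on $n$ to supply the lower-dimensional nondegenerate models. You then remove degenerate types from worst to mildest with Proposition~\ref{prop:perturbdegen10}, using Proposition~\ref{prop:nearbydegen} for uniform scales and Proposition~\ref{prop:singlimits} for closedness and openness.

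\textbf{The model-existence step has a genuine gap.} Proposition~\ref{prop:singlimits} is semicontinuity in the wrong direction for you. It says limits of singular points are at least as singular ($k_\infty\ge\limsup k_i$, $\ell_\infty\le\liminf\ell_i$). So under perturbation a $\mathcal{C}_{n,k}$-singularity can only become milder, and nothing in that proposition or in the entropy ordering forces a nondegenerate $\mathcal{C}_{n,k}$-singularity to ``survive''. In a rotationally symmetric tube there is none to begin with: the whole spine is singular, so every point is degenerate. Your example also has the dimensions wrong; to pinch along $\mathcal{C}_{n,k}$ the spine must be $k$-dimensional.

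What forces a nondegenerate $\mathcal{C}_{n,k}$-singularity is topology.
\begin{enumerate}
\item Take a closed hypersurface $P^k\subset\mathbb{R}^{k+1}\subset\mathbb{R}^{n+1}$, and let $S=\partial\Omega$ be the boundary of a thin normal tube around $P$, with fibres $S^{n-k}$.
\item $S$ is $(k+1)$-convex, and this is preserved. Hence the AMCF given by the theorem in dimension $n$ has only nondegenerate $\mathcal{C}_{n,k'}$-singularities with $k'\le k$.
\item By \cite[Corollary 1.5]{SWX25}, if none had $k'=k$ then $b_{n-k+1}(\Omega,\partial\Omega)=0$. This contradicts $\Omega\cong P\times B^{n-k+1}$.
\end{enumerate}
This is Proposition~\ref{prop:nondegexist}. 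Applied in dimensions $<n$, it produces the models needed in dimension $n$.

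\textbf{The removal step also does not work as written.} There are two problems.

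First, after perturbing near $(x_1,T_1)$, the later points $(x_i,T_i)$ need not be singular points of the new flow. The new flow's $\ell$-degenerate set inside $Q_{\Phi(r_i)}(x_i,T_i)$ also need not be cleared by a single application of Proposition~\ref{prop:perturbdegen10}. You must keep choosing centres in the \emph{current} singular set, which needs a radius that is uniform over all sufficiently small perturbations. The paper gets a scale $2r_0$ from Proposition~\ref{prop:nearbydegen} plus compactness and sets $\rho_0=\Phi(r_0/4)$. The process stops because the successive centres are $\rho_0$-separated.

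Second, Definition~\ref{defn:AMCF} only allows modifications at globally smooth times. In your global type-by-type scheme, milder degenerate singularities, which need not be isolated, are still present around $T_i-r_i^2$. Nothing available guarantees that regular times are dense there, so ``nudging'' to a smooth time is unjustified.

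The paper avoids both problems by interleaving in time (Proposition~\ref{prop:AMCFperturb}). Let $s_1$ be the first $(k,\ell)$-time.
\begin{enumerate}
\item Apply the inductive hypothesis for milder types on $[T_1,s_1-(r_0/2)^2]$. This gives an AMCF that is smooth at $s_1-(r_0/2)^2$.
\item Make all the perturbations on that single smooth slice, clearing the window $[s_1-(\rho_0/2)^2,s_1+(\rho_0/2)^2]$. Every singular point there has $r\ge r_0/4$, so $\rho_0$ works uniformly.
\item Apply the inductive hypothesis again on the next window, and advance by $(\rho_0/2)^2$ each round.
\end{enumerate}
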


When $n=1$, then this result is due to Gage-Hamilton~\cite{GH86},
although by Grayson~\cite{Gray89} the mean convexity assumption is not
required in this case. When $n=2$, the result was shown by the author
in \cite{Sz26}. Again, the mean convexity assumption is not required,
because of the work of Bamler-Kleiner~\cite{BK23} on the multiplicity
one conjecture, and Chodosh-Choi-Mantoulidis-Schulze~\cite{CCMS24, CCMS24_2} on
the singularities of generic flows in $\mathbb{R}^3$.

We will suppose
that $n > 1$, and by induction we will assume that
Theorem~\ref{thm:mainAMCF} has been
shown in lower dimensions. We first observe that, because of this assumption,
model flows $\mathcal{M}^{n', k'}_\tau$ with nondegenerate
$\mathcal{C}_{n',k'}$-singularities exist.

\begin{prop}\label{prop:nondegexist}
  Suppose that Theorem~\ref{thm:mainAMCF} holds in dimension
  $n$. Then for any $k=1, \ldots, n-1$ there exists a mean convex mean
  curvature flow $L_t^n \subset \mathbb{R}^{n+1}$ of compact
  hypersurfaces, which develops a nondegenerate
  $\mathcal{C}_{n,k}$-singularity.
\end{prop}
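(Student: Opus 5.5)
The plan is to build a compact mean convex hypersurface whose flow is forced to develop a $\mathcal{C}_{n,k}$-singularity, and then use Theorem~\ref{thm:mainAMCF} in dimension $n$ together with the perturbation results to make that singularity nondegenerate.

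\emph{The initial hypersurface.} Take $L_0$ to be the boundary of a smoothed tubular neighbourhood of a round $(k-1)$-sphere, with a thin $S^{n-k}$ cross-section of radius $\rho$, glued onto two big round balls. Concretely, let $D$ be a large flat $k$-disc, take the boundary of its $\rho$-neighbourhood in $\mathbb{R}^{n+1}$, and attach at the edge of $D$ a much thicker rim so that the whole surface is mean convex. For $\rho$ small the central region is very close to $\mathbb{R}^k\times S^{n-k}(\rho)$ on a huge ball, while the rim survives longer. By the comparison principle, using shrinking cylinders and spheres as barriers, the thin central part must become singular before the rim disappears. A blow-up/entropy argument then shows that some singularity at the first singular time is $(n,k')$-cylindrical with $k'\ge k$. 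The flow near the center is close to the static cylinder product over a scale $\gg\rho$, and tangent flows there have entropy at least $\lambda(\mathcal{C}_{n,k})$ while staying close to $\mathbb{R}^k\times S^{n-k}$, so the center point has tangent flow exactly $\mathcal{C}_{n,k}$ (using Proposition~\ref{prop:singlimits} and White's classification). I therefore expect a $\mathcal{C}_{n,k}$-singularity, possibly degenerate.

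\emph{Upgrading to nondegenerate.} Apply Theorem~\ref{thm:mainAMCF}, valid in dimension $n$ by hypothesis. The expected mechanism is that the AMCF is built by repeatedly applying Proposition~\ref{prop:perturbdegen10} to remove degenerate singularities via arbitrarily small smooth perturbations, and these perturbations preserve mean convexity. The key point is that the $(n,k)$ character of the singularity near the center persists under small perturbation. Proposition~\ref{prop:singlimits} gives only $k_\infty\ge\limsup k_i$, so I also need a lower bound in the opposite direction. For that I would again use the barrier argument: any sufficiently small perturbation is still trapped between two nearly equal cylinder-like configurations, so it still has a singularity near the center whose tangent flow is close to $\mathcal{C}_{n,k}$, and hence equal to a rotation of it by rigidity of cylinders among shrinkers with nearby entropy.

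Take the first smooth interval $(t_0,t_1)$ of the resulting AMCF. Its flow is a genuine mean convex MCF from a small perturbation of $L_0$, and all its singularities are spherical or nondegenerate cylindrical. The central singularity is $(n,k)$-cylindrical by the previous step, so it is nondegenerate. This flow is the required $L^n_t$.

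\emph{Main obstacle.} The hardest step is guaranteeing that the singularity near the center is exactly $(n,k)$ and sits inside the first smooth interval of the AMCF, both before and after perturbation. The first thing I would try is to choose the geometry so that the center is the first singular point, with the rim much thicker. Then use pseudolocality (Proposition~\ref{prop:pseudo}) on the almost-cylindrical region to show the flow stays $\epsilon$-close to a shrinking $\mathbb{R}^k\times S^{n-k}$ there until extinction of the neck. That forces the tangent flow to have the $\mathbb{R}^k$ factor, and monotonicity of entropy forbids more Euclidean directions.
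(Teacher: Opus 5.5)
There is a genuine gap in your second step, where you need the singularity of the AMCF near the center to still be $(n,k)$-cylindrical. Closeness of two flows at a fixed scale controls the perturbed flow only down to the scale of the perturbation around the singular point. So does trapping it between nearby barriers, such as time translates of the original flow. Neither says anything about its tangent flows.

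Under Brakke convergence the Gaussian density is only upper semicontinuous. This is exactly why Proposition~\ref{prop:singlimits} gives just $k_\infty\geq\limsup k_i$. Singular points of nearby flows have density at most roughly $\lambda(\mathcal{C}_{n,k})$, with no lower bound, so ``rigidity of cylinders among shrinkers with nearby entropy'' has nothing to act on. Perturbations can in fact lower the type. Proposition~\ref{prop:perturbdegen10} only rules out nearby singularities of type $(n,k')$ with $k'>k$, or $(n,k)$ with $\ell'\leq\ell$. A degenerate $\mathcal{C}_{n,k}$-singularity may therefore be replaced by $(n,k')$-singularities with $k'<k$, or by spherical ones.

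The same issue already appears in your first step. Monotonicity bounds the density at the center from above, not from below as you state. Pseudolocality does not keep the flow close to $\mathbb{R}^k\times S^{n-k}$ down to the singular time.

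The hypothesis is also too weak for your argument. Theorem~\ref{thm:mainAMCF} only provides \emph{some} AMCF starting from $L_0$. Nothing says it is a small perturbation of the MCF of $L_0$, its isotopies may be arbitrary, and the relevant singularity need not lie in the first smooth interval.

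What is missing is a mechanism that forces a $\mathcal{C}_{n,k}$-singularity in \emph{every} AMCF with only nondegenerate singularities, independent of where and how they occur. The paper gets this from topology. Take $S$ to be the boundary of a thin tube around a closed hypersurface $P^k\subset\mathbb{R}^{k+1}\subset\mathbb{R}^{n+1}$. This $S$ is $(k+1)$-convex, hence mean convex, and bounds $\Omega\cong P\times B^{n-k+1}$.

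Isotopies preserve the topology of the enclosed region, and nondegenerate singularities act as the surgeries of Theorem~\ref{thm:SWXsurgery}. So by \cite[Corollary 1.5]{SWX25}, if no nondegenerate $\mathcal{C}_{n,k}$-singularity occurred, then $b_{n-k+1}(\Omega,\partial\Omega)=0$. This contradicts $H_{n-k+1}(\Omega,\partial\Omega)\cong H^k(P)\neq 0$. The MCF piece of the AMCF containing such a singularity is the desired flow.

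Your domain is a ball, so no such global topological obstruction is available. Even a localized version of it (the thin disc must vanish while the thick rim survives) would need the AMCF to stay close to the original flow, which the hypothesis does not give.
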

\begin{proof}
  Let $P\subset \mathbb{R}^{k+1}$ be a smooth compact
  hypersurface. Viewing $P$ as a codimension $(n+1-k)$ submanifold of
  $\mathbb{R}^{n+1}$ we define $S$ to be the $r$-sphere bundle inside
  the normal bundle of $P$, for small $r$. If we choose $r$
  sufficiently small, then $S$ is $k+1$-convex. By our hypothesis
  there is an AMCF with initial hypersurface $S$. We can assume that the
  $k+1$-convexity is preserved (see
  Huisken-Sinestrari~\cite{HS08}, White~\cite{White15}). Then 
  all the singularities are 
  nondegenerate $\mathcal{C}_{n, k'}$-singularities with $k' \leq k$.
  If there were no nondegenerate $\mathcal{C}_{n,k}$-singularities,
  then by Sun-Wang-Xue~\cite[Corollary 1.5]{SWX25} we would have
  $b_{n-k+1}(\Omega, \partial\Omega) = 0$, where $\Omega$ is the
  region with boundary $S = \partial\Omega$. This is a contradiction,
  since topologically $\Omega = P\times B^{n-k+1}$. 
\end{proof}

In particular we have model flows
$\mathcal{M}^{n',k'}_\tau$ for all $n' < n$ and $0 < k' < n'$ with
nondegenerate $\mathcal{C}_{n', k'}$-singularities, as required in
Proposition~\ref{prop:models}. 
In view of Proposition~\ref{prop:singlimits}, we have a hierarchy of degenerate
cylindrical singularities: given pairs $(k, \ell)$ and
$(k', \ell')$ with $0\leq \ell < k$ and $0\leq \ell' < k'$, we say that $(k', \ell')
\succ (k, \ell)$ if
\begin{itemize}
\item either $k'=k$ and $\ell' < \ell$,
\item or $k' > k$.
\end{itemize}

Given a mean curvature flow $L_t$ with only
cylindrical singularities, let us denote by $\Sigma_{k,\ell}$ the set
of $\ell$-degenerate $(n,k)$-cylindrical singularities, and let us
define
\[ \Sigma^{>}_{k, \ell} := \bigcup_{\substack{0 \leq \ell' < k' \leq n-1 \\ (k',
      \ell') \succ (k,\ell)}}
  \Sigma_{k', \ell'}. \] 
The notation suggests that $\Sigma^>_{k,\ell}$ contains the
degenerate cylindrical singularities that are ``worse'' than
$\Sigma_{k,\ell}$. Note that there cannot be any nondegenerate
cylindrical singularity in the closure of $\Sigma_{k,\ell}$, since
nondegenerate singularities are isolated in spacetime by
\cite{SX22}. From Proposition~\ref{prop:singlimits} it follows then that the
closure of $\Sigma_{k,\ell}$ is contained in
$\Sigma_{k,\ell} \cup \Sigma^>_{k,\ell}$.  When necessary, we will emphasize which flow
we are talking about by writing $\Sigma_{k,\ell}(L_t)$ and
$\Sigma_{k,\ell}^{>}(L_t)$.

The proof of Theorem~\ref{thm:mainAMCF} will follow from the
following, which allows us to inductively perturb away all of the
degenerate cylindrical singularities.

\begin{prop}\label{prop:AMCFperturb}
  Suppose that $L_t$ is a $q$-convex mean curvature flow for
  $t \geq T_1$ with $\Sigma_{k,\ell}^{>}(L_t) = \emptyset$ for $t\in
  [T_1,T_2]$, and $L_{T_1}$ smooth. Then we can approximate $L_t$
  arbitrarily well with 
  a $q$-convex AMCF $L'_t$, which satisfies the mean curvature flow for $t \geq T_2$, is
  smooth at $t=T_1, T_2$, and has only nondegenerate cylindrical
  singularities in $[T_1,T_2]$.
\end{prop}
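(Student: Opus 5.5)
We argue by a downward induction on the hierarchy $\succ$. The pairs $(k,\ell)$ with $0\le \ell<k\le n-1$ are finite and totally ordered by $\succ$. We prove the statement for $(k,\ell)$ assuming it holds for every $(k',\ell')\succ(k,\ell)$. In the base case there is no worse class, so the hypothesis says nothing. The key point is to remove the class $\Sigma_{k,\ell}$ on $[T_1,T_2]$ without creating any singularity in $\Sigma^>_{k,\ell}$. Afterwards the resulting flow satisfies $\Sigma^>_{k'',\ell''}=\emptyset$ for the next class $(k'',\ell'')$ below $(k,\ell)$, and we continue down the hierarchy. Once all degenerate classes are removed, only spherical and nondegenerate cylindrical singularities remain. $q$-convexity is preserved by the flow and by $C^\infty$-small perturbations of smooth strictly $q$-convex time slices, using compactness and the fact that the perturbations occur at smooth times.

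\textbf{Step 1: compactness of the bad set.} Since $\Sigma^>_{k,\ell}=\emptyset$ on $[T_1,T_2]$, Proposition~\ref{prop:singlimits} shows that $\Sigma_{k,\ell}\cap\{t\in[T_1,T_2]\}$ is closed, hence compact. It is also disjoint from $t=T_1$ because $L_{T_1}$ is smooth. For each point $(x,T)$ of this set, the singularity is $\ell$-degenerate at some scale $2r_{(x,T)}>0$ by Definition~\ref{defn:singscale} and Proposition~\ref{prop:sec2main}. The scale need not be uniform, so we use Proposition~\ref{prop:nearbydegen} to make it locally uniform. Each point has a neighborhood in which all $\ell$-degenerate singularities of the flow, and of any sufficiently close flow, are $\ell$-degenerate at a fixed scale $r'$. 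By compactness, finitely many such neighborhoods cover $\Sigma_{k,\ell}$. We obtain a single $r>0$ and a constant $\Phi=\Phi(r)$ from Proposition~\ref{prop:perturbdegen10}, valid for the flow and all sufficiently close flows. We also take $r$ so small that $r^2<\min(T-T_1)$ over the bad set.

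\textbf{Step 2: sweeping in time.} We cut $[T_1,T_2]$ into slabs of height $\sim\Phi(r)^2/4$. We then proceed slab by slab, forward in time, and within each slab ball by ball, using a finite cover by parabolic cylinders $Q_{\Phi(r)}(x_i,T_i)$ centered at bad points. At the time $T_i-r^2$ we apply Proposition~\ref{prop:perturbdegen10} to replace $L_{T_i-r^2}$ by an arbitrarily small $C^\infty$ perturbation. This is the isotopy allowed in Definition~\ref{defn:AMCF}; the time $T_i-r^2$ is smooth since it lies before the first bad time of the current slab. The perturbed flow has no singularity of class $(k',\ell')$ with $(k',\ell')\succ(k,\ell)$ or equal to $(k,\ell)$ in $Q_{\Phi(r)}(x_i,T_i)$. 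The perturbation is small, and flows that are close near $t=T_i-r^2$ stay Brakke-close on the compact time window, by uniqueness and non-fattening of mean convex flows. So by Proposition~\ref{prop:singlimits} and closedness, no new bad singularities appear far from the existing ones, and $\Sigma^>_{k,\ell}$ remains empty on $[T_1,T_2]$. Singularities already cleared in earlier cylinders also stay cleared, again by Proposition~\ref{prop:singlimits}: degenerate singularities of nearby flows must accumulate near degenerate ones of at least the same badness. The sizes of later perturbations are chosen successively smaller so that these open conditions persist. Only finitely many perturbation times arise, since the number of cylinders is bounded by the cover.

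\textbf{Main obstacle.} The main difficulty is the bookkeeping in Step 2. A perturbation at time $T_i-r^2$ changes the flow on the whole interval after it, so new $\ell$-degenerate singularities can appear in places not yet covered. We handle this by re-covering. After each perturbation the new bad set in the remaining slabs is again compact and close to the old one. Crucially, the scale $r$, and hence $\Phi(r)$, remains valid for all nearby flows by Proposition~\ref{prop:nearbydegen}. So the number of cylinders needed per slab is bounded independently of the step, and the process terminates. After all slabs up to $T_2$ are handled, we perturb once more slightly at $T_2$ if needed to keep it a smooth time, which is possible because the singular set is closed, and then flow by MCF for $t\ge T_2$.
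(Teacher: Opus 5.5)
There is a genuine gap, and it lies in how the argument is organized rather than in the local perturbation step.

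First, the induction as you set it up is vacuous. Whenever $(k',\ell')\succ(k,\ell)$ we have $\Sigma^>_{k',\ell'}\subseteq\Sigma^>_{k,\ell}$, so the statement for a worse class already implies the statement for $(k,\ell)$. Your ``base case'' $(n-1,0)$ is therefore the whole proposition. What you actually describe is an iteration: clear the worst class on all of $[T_1,T_2]$, then the next class, and so on.

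This iteration breaks at the sentence ``the time $T_i-r^2$ is smooth since it lies before the first bad time of the current slab.'' Lying before the first point of $\Sigma_{k,\ell}$ only excludes singularities of class $(k,\ell)$ or worse. The slice $L_{T_i-r^2}$ can still contain nondegenerate singularities and degenerate singularities of every class $(k',\ell')\prec(k,\ell)$, which you have not treated yet. Those degenerate singularities need not be isolated, and nothing in your argument lets you choose $T_i-r^2$ to avoid them. But Definition~\ref{defn:AMCF} only allows isotopies at times where the adjacent flows have smooth limits.

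There are two further problems:
\begin{itemize}
\item Your later rounds run on an AMCF rather than an MCF. Perturbing at an earlier time then changes the slices on which previously chosen isotopies were performed.
\item Even within one round, your perturbation times $T_i-r^2$ vary with $i$. After re-covering, a new bad point $(x',T')$ may have $T'-r^2$ earlier than a perturbation time already used, so the isotopies are not time-ordered.
\end{itemize}

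The paper avoids all of this by inducting in the opposite direction: it assumes the result for every $(k',\ell')\prec(k,\ell)$ and interleaves that hypothesis with the perturbations in time.
\begin{itemize}
\item Take a uniform scale $2r_0$ as in your Step 1, set $\rho_0=\Phi(r_0/4)\le r_0/2$, and let $s_1$ be the first time of $\Sigma_{k,\ell}$.
\item On $[T_1,s_1-(r_0/2)^2]$ the flow has no singularities of class $(k,\ell)$ or worse. The induction hypothesis then gives an AMCF with only nondegenerate singularities that is \emph{smooth at} $s_1-(r_0/2)^2$ and is a genuine MCF afterwards.
\item All perturbations for the window $I=[s_1-(\rho_0/2)^2,s_1+(\rho_0/2)^2]$ are made at this single smooth slice. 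For each successive bad point $(x_j,t_j)$ with $t_j\in I$, Proposition~\ref{prop:perturbdegen10} is applied with $r$ chosen so that $t_j-r^2=s_1-(r_0/2)^2$. This forces $r\ge r_0/4$, and degeneracy at scale $2r_0$ gives degeneracy at scale $2r$.
\item The chosen centers are $\rho_0$-separated, so this stops after finitely many steps.
\item The induction hypothesis is then applied again on $[s_1-(r_0/2)^2,\,s_1+(\rho_0/2)^2-(r_0/2)^2]$. This produces a new smooth slice $(\rho_0/2)^2$ later, and the cycle repeats.
\end{itemize}
Smoothness of these endpoint slices ultimately comes from the base case. There only nondegenerate, hence isolated, singularities occur, so a small time shift suffices.
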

\begin{proof}
  We will argue by induction on the type of degenerate singularity
  that appears. We can consider the base case to be where
  $\Sigma_{1,0}(L_t) \cup \Sigma^{>}_{1,0}(L_t) = \emptyset$ for $t\in
  [T_1,T_2]$, i.e. the case in which $L_t$ already
  only has nondegenerate cylindrical singularities. If there is a (nondegenerate)
  singularity at time $T_2$, then we can slightly translate in time by
  changing $L_{T_1}$ to ensure that $L_{T_2}$ becomes smooth.

  Suppose by induction that the result is already known for all $(k', \ell')
  \prec (k,\ell)$. Let $L_t$ be a mean curvature
  flow for $t\geq T_1$, with $\Sigma_{k,\ell}^> = \emptyset$ for $t\in
  [T_1, T_2]$. By the assumption we have that the set $\Sigma_{k,\ell}(L_t)$ is
  closed over the time interval $t\in [T_1, T_2]$.

  Let $X=(x,t)\in \Sigma_{k,\ell}(L_t)$, with $t\in [T_1,T_2]$. Then there
  exists an $r_X > 0$ such that $X$ is $\ell$-degenerate at scale
  $r_X$. It follows  from Proposition~\ref{prop:nearbydegen}
  that  there is an open set $U_X$ and an $r_X' > 0$ such 
  that all $X' \in U_X\cap \Sigma_{k,\ell}(L_t')$ for sufficiently
  small perturbations $L_t'$ of the flow $L_t$ are $\ell$-degenerate at
  scale $r_X'$. We can cover $\Sigma_{k, \ell}$ in $[T_1,T_2]$ with
  $U_{X_1}, \ldots, U_{X_N}$, and letting $r_0 = \frac{1}{2} \min\{r_{X_i}'\}$ we
  find that all $(x,t) \in \Sigma_{k, \ell}$ with $t\in [T_1,T_2]$ are
  $\ell$-degenerate at scale $2r_0$. Using Proposition~\ref{prop:nearbydegen},
  we can arrange that all the successive perturbations $\tilde{L}_t$ of $L_t$ that we
  consider are sufficiently small, so that they are still $q$-convex, and still
  satisfy the following condition on suitable time intervals inside
  $[T_1,T_2]$. Note that these perturbations may only be AMCFs, so we
  need to restrict to smaller time intervals on which they are genuine
  MCFs:

  \bigskip
  
  \begin{itemize}
    \item[(\dag)] $\Sigma_{k,\ell}^>(\tilde{L}_t) = \emptyset$ and every $X\in
  \Sigma_{k,\ell}(\tilde{L}_t)$ is $\ell$-degenerate at scale $2r_0$
  for times $t\in [T_1,T_2]$ at which $\tilde{L}_t$ is a MCF. 
\end{itemize}

\bigskip
  Consider the flow $L_t$. 
  Let us write $\rho_0 = \Phi(r_0/4)$
  in terms of the function from
  Proposition~\ref{prop:perturbdegen10}. Decreasing $\rho_0$ if
  necessary we can assume $\rho_0 \leq r_0/2$. Let us denote by $s_1 > T_1$
  the first time a singularity in $\Sigma_{k,\ell}(L_t)$ appears. By
  decreasing $r_0$ we can assume that $s_1 - (r_0/2)^2 > T_1$, using
  the assumption that $L_{T_1}$ is smooth.  We 
  apply the induction hypothesis to the interval $[T_1, s_1-(r_0/2)^2]$,
  so we can find a first perturbation $L^{(1)}_t$ of the flow
  $L_t$. This is an AMCF with only nondegenerate cylindrical
  singularities for $t\in [T_1, s_1 - (r_0/2)^2]$, is smooth at
  $t=s_1 - (r_0/2)^2$, and it is a MCF for $t\geq s_1 - (r_0/2)^2$ satisfying
  (\dag) for $t\in [s_1 - (r_0/2)^2, T_2]$. 

  We next construct successive perturbations of the time slice
  $L^{(1)}_{s_1-(r_0/2)^2}$ to obtain mean curvature flows $L^{(1,1)}_t, \ldots,
  L^{(1,N)}_t$ for $t \geq s_1 - (r_0/2)^2$, to eventually eliminate all
  $\ell$-degenerate $(n,k)$-cylindrical singularities in the time
  interval $I = [s_1 - (\rho_0/2)^2, s_1 + (\rho_0/2)^2]$. For this we
  inductively construct $L^{(1,j)}_t$ and points $X_j= (x_j, t_j)$ with
  $t_j \in I$, such that
  \begin{itemize}
    \item[(1)] $L^{(1,j)}_t$ has no $\ell$-degenerate $(n,k)$-cylindrical
      singularity in $Q_{\rho_0}(x_i, t_i)$ for $i\leq j$,
    \item[(2)] $L^{(1,j)}_t$ still satisfies (\dag) for $t\in [ s_1 -
      (r_0/2)^2, T_2]$,
    \item[(3)] Once $L^{(1,j)}_t$ is defined, then we choose some
      $X_{j+1}=(x_{j+1}, t_{j+1}) \in \Sigma_{(k,\ell)}(L^{(1,j)}_t)$
      with $t_{j+1}\in I$ if such a point exists. Then we use
      Proposition~\ref{prop:perturbdegen10} to find an arbitrarily
      small perturbation $L^{(1,j+1)}_{s_1-(r_0/2)^2}$ of $L^{(1,
        j)}_{s_1-(r_0/2)^2}$,  so that the
      corresponding flow satisfies (1), (2). Note that we are applying
      Proposition~\ref{prop:perturbdegen10} with $T=t_{j+1}$, and $r$
      in the notation there, such that $T-r^2=s_1 - (r_0/2)^2$. In
      particular we have $r \geq r_0/4$. 
    \end{itemize}
    This process must end in finitely many steps at some $L^{(1,N)}_t$
    which has no more $\ell$-degenerate $(n,k)$-cylindrical
    singularities with $t\in [s_1-(\rho_0/2)^2, s_1 + (\rho_0/2)^2]$,
    since each $Q_{\rho_0}(x_i, t_i)$ takes up a definite amount of
    area. Note that if our perturbations were sufficiently small, then
    we do not introduce any nondegenerate singularities in the time
    interval $[s_1 - (r_0/2)^2, s_1 - (\rho_0/2)^2]$ either. 

    The flow $L^{(1,N)}_t$ satisfies $\Sigma_{n,k}\cup \Sigma_{n,k}^>
    = \emptyset$ on the time interval $[s_1 - (r_0/2)^2, s_1 -
    (\rho_0/2)^2]$. 
    We apply the induction hypothesis to the flow
    $L^{(1,N)}_t$ on the slightly shorter time interval $[s_1 - (r_0/2)^2, s_1 +
    (\rho_0/2)^2 - (r_0/2)^2]$, and concatenate this with $L^{(1)}_t$
    for $t\in [T_1, s_1 - (r_0/2)^2]$. This way we can obtain an AMCF
    $L^{(2)}_t$ that is still an arbitrarily small perturbation of
    $L_t$, and satisfies
    \begin{itemize}
    \item[(a)] $L^{(2)}_t$ is a MCF for $t\in [s_1 +
      (\rho_0/2)^2 - (r_0/2)^2, T_2]$, satisfying (\dag).
    \item[(b)] $L^{(2)}_t$ is smooth at $t=s_1 +
      (\rho_0/2)^2 - (r_0/2)^2$.
    \item[(c)] $L^{(2)}_t$ has only nondegenerate cylindrical
      singularities for $t \in [T_1, s_1 +
    (\rho_0/2)^2 - (r_0/2)^2]$. 
    \end{itemize}
   We can now repeat the same argument, first choosing successive
   small perturbations of $L^{(2)}_{s_1+(\rho_0/2)^2 - (r_0/2)^2}$ to
   eliminate the $\ell$-degenerate $(n,k)$-cylindrical singularities
   in the time interval $[s_1, s_1 + 2(\rho_0/2)^2]$, and then
   applying the inductive hypothesis to obtain the AMCF
   $L^{(3)}_t$. Since we keep extending the time interval where only
   nondegenerate singularities occur by $(\rho_0/2)^2$ at each step, 
   after finitely many steps we obtain
   an AMCF as required by the Proposition.
\end{proof}

\begin{proof}[Proof of Theorem~\ref{thm:mainAMCF}]
   Given a $q$-convex (compact) initial hypersurface $L_0\subset
   \mathbb{R}^{n+1}$, we have that the flow $L_t$ becomes extinct in
   finite time $T_{ext}$. We can apply Proposition~\ref{prop:AMCFperturb}
   to $L_t$, on the   interval $[0,T_{ext}+1]$, and
   with $(k, \ell) = (n-1,0)$, since $\Sigma_{n-1,0}^>$ is always
   empty.
 \end{proof}

 \section{Appendix}
 In this Appendix we give the proof of
 Proposition~\ref{prop:logSobolev}, following the argument in
 \cite{Ecker}. For convenience we restate the proposition here.
 \begin{prop}
  Suppose that $M_\tau$ is a rescaled mean curvature flow of compact
  hypersurfaces in $\mathbb{R}^{n+1}$, and for
  some $\epsilon \geq 0$,  the nonnegative function
  $f(x,\tau)$ satisfies the differential inequality
  \[ 
    \partial_\tau f \leq \Delta f - \frac{1}{2} x\cdot \nabla f +
    a\cdot \nabla f + b f + \mathcal{E},  \]
  where $a(x, \tau)$ and $b(x,\tau)$ satisfy $|a| \leq \epsilon<1$ and
  $|b| \leq b_0$, and $\mathcal{E}(x,\tau) \geq 0$.
  Then we have 
   \[ \Vert f(\cdot, \tau)\Vert_{L^{p(\tau)}} \leq
     e^{(\epsilon+b_0)\tau} \Vert 
     f(\cdot, 0)\Vert_{L^{2}} + \int_0^\tau e^{(\epsilon+b_0)(\tau -
       s)} \Vert \mathcal{E}(\cdot, s)\Vert_{L^{p(s)}}\, ds, \]
   where we define $p(\tau) = 1 + \epsilon + (1-\epsilon)e^\tau$ (so
   that $p(0)=2$). The $L^p$ norms here are taken using the weight
  $(4\pi)^{-n/2} e^{-|x|^2/4}$. 
\end{prop}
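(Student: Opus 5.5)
The plan is to follow Ecker's log-Sobolev argument. We differentiate $\log\Vert f\Vert_{L^{p(\tau)}}$ in $\tau$, where the exponent increases with time, and use the log-Sobolev inequality on the Gaussian-weighted rescaled flow to absorb the entropy term produced by $p'(\tau)$. The forcing term $\mathcal{E}$ is then handled by a Duhamel-type argument. Throughout, write $d\mu=(4\pi)^{-n/2}e^{-|x|^2/4}d\mathcal{H}^n$. Along the rescaled flow we have
\[ \partial_\tau d\mu = -\left|H+\tfrac{x^\perp}{2}\right|^2 d\mu \le 0, \]
so time derivatives of weighted integrals only gain a nonpositive term. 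The drift Laplacian $\Delta - \frac12 x\cdot\nabla$ is symmetric with respect to $d\mu$ on the hypersurface, since the tangential part of $x$ is what appears.

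\textbf{Step 1 (the differential inequality for the norm).} Set $p=p(\tau)$, $I(\tau)=\int f^p\,d\mu$ and $N=I^{1/p}$. Then
\[ \frac{d}{d\tau}\log N = -\frac{p'}{p^2}\log I + \frac{1}{pI}\left(\int p' f^p\log f\,d\mu + p\int f^{p-1}\partial_\tau f\,d\mu\right) + (\text{nonpositive}). \]
We insert the inequality for $\partial_\tau f$ and integrate the drift Laplacian by parts, which gives $-p(p-1)\int f^{p-2}|\nabla f|^2 = -\frac{4(p-1)}{p}\int|\nabla f^{p/2}|^2$. The drift term is bounded by $p\epsilon\int f^{p-1}|\nabla f|$, and we split it by Cauchy–Schwarz. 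Part of it is absorbed into the gradient term, costing a factor $(1-\epsilon)$, and the remainder is at most $\epsilon\, p\, I$ up to constants. The $b$ term contributes at most $b_0 p I$. The $\mathcal{E}$ term is $p\int f^{p-1}\mathcal{E} \le p\,\Vert f\Vert_{L^p}^{p-1}\Vert\mathcal{E}\Vert_{L^p}$ by H\"older.

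\textbf{Step 2 (log-Sobolev).} Apply the Gaussian log-Sobolev inequality on $M_\tau$ to $g=f^{p/2}$. This is Ecker's version, or Brendle's sharp form, which on a self-shrinker-type weighted hypersurface reads
\[ \int g^2\log g^2\,d\mu - \Big(\int g^2\,d\mu\Big)\log\Big(\int g^2\,d\mu\Big) \le 4\int|\nabla g|^2\,d\mu + (\text{terms controlled by } |H+x^\perp/2|^2 \text{ and entropy}). \]
The extra terms are exactly absorbed by the nonpositive $\partial_\tau d\mu$ contribution; this is the point of Ecker's formulation, and Brendle's sharp constant removes any additive constant. The exponent $p(\tau)=1+\epsilon+(1-\epsilon)e^\tau$ solves $p'=(1-\epsilon)(p-1)-\ldots$, and is chosen precisely so that
\[ \frac{p'}{p^2}\cdot 4 \le \frac{4(p-1)(1-\epsilon)}{p^2}, \]
that is, $p'\le(1-\epsilon)(p-1)$. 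With this choice the entropy term $\frac{p'}{p^2 I}(\int f^p\log f^p - I\log I)$ is dominated by the remaining gradient term. Hence $\frac{d}{d\tau}N \le (\epsilon+b_0)N + \Vert\mathcal{E}\Vert_{L^{p(\tau)}}$.

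\textbf{Step 3 (integrating).} Gr\"onwall gives the stated estimate directly, since $p(0)=2$. The one subtlety is that the norm $\Vert\mathcal{E}\Vert$ appears with the time-varying exponent $p(s)$, and this is consistent with the statement. If $N$ vanishes somewhere we work with $N+\eta$ and let $\eta\to0$.

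The main obstacle is Step 2: matching the constants so that the log-Sobolev entropy term, the gradient term left after absorbing $a\cdot\nabla f$, and $p'(\tau)$ balance exactly. A second part of this obstacle is justifying log-Sobolev on a possibly singular $M_\tau$. For that we use the fact that $f$ is supported on the smooth part, and we invoke Ecker's inequality for Brakke flows with the $|H+x^\perp/2|^2$ correction absorbed by $\partial_\tau d\mu$.
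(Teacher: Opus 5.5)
Your strategy is the paper's: differentiate the $L^{p(\tau)}$-norm, integrate the drift Laplacian by parts, apply Brendle's sharp log-Sobolev inequality to $f^p$, let the $-|H+x^\perp/2|^2$ term from $\partial_\tau d\mu$ absorb the matching log-Sobolev term, use H\"older on $\mathcal{E}$, and integrate. However, the constant matching in Step 2 fails as you set it up, and you rightly identify this as the crux.

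After dividing by $pI$, integration by parts contributes $-(p-1)I^{-1}\int f^{p-2}|\nabla f|^2$. Log-Sobolev contributes $+p' I^{-1}\int f^{p-2}|\nabla f|^2$, since $4|\nabla f^{p/2}|^2=p^2f^{p-2}|\nabla f|^2$. You absorb $\epsilon\int f^{p-1}|\nabla f|$ multiplicatively, which costs $\epsilon(p-1)$ from the gradient coefficient, so you need $p'\le(1-\epsilon)(p-1)$. The stated exponent does not satisfy this. It satisfies $p'=(1-\epsilon)e^\tau=p-1-\epsilon$, not $(1-\epsilon)(p-1)-\ldots$. In fact $(1-\epsilon)(p-1)-p'=\epsilon(2-p)=-\epsilon(1-\epsilon)(e^\tau-1)<0$ for every $\tau>0$ when $\epsilon>0$, because $p(\tau)>2$. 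So the entropy term is not dominated. Your bookkeeping only closes for the smaller exponent $1+e^{(1-\epsilon)\tau}$, which is not the one claimed.

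The fix is to make the loss independent of $p$, as the paper does: $\epsilon f^{p-1}|\nabla f|\le\frac{\epsilon}{2}f^{p-2}|\nabla f|^2+\frac{\epsilon}{2}f^p$. The gradient coefficient is then $p'-(p-1)+\frac{\epsilon}{2}=-\frac{\epsilon}{2}\le 0$, which is exactly what the choice $p'=p-1-\epsilon$ is designed for. The zeroth-order terms give $b_0+\frac{\epsilon}{2}\le b_0+\epsilon$.

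You should also verify the $|H+x^\perp/2|^2$ absorption rather than assert it is exact. Log-Sobolev produces $\frac{p'}{p^2I}\int f^p|H+x^\perp/2|^2$, and the measure evolution gives $-\frac{1}{pI}\int f^p|H+x^\perp/2|^2$, so you need $p'\le p$. This holds, since $p'=p-1-\epsilon$. With these corrections your argument coincides with the paper's.
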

\begin{proof}
  Let us recall the
  sharp log-Sobolev inequality for submanifolds of $\mathbb{R}^{n+1}$
  proved by Brendle~\cite{Brendle20}: for any positive smooth function
  on $M_\tau$ we have
  \[\label{eq:logSob} \int_{M_\tau} \phi\log\phi\,d\mu &- \int_{M_\tau} \phi |\nabla\log
    \phi|^2\, d\mu - \int_{M_\tau} \phi\left| H+
      \frac{x^\perp}{2}\right|^2\,d\mu \\
    &\leq \left(\int_{M_\tau} \phi\,d\mu\right) \log \int_{M_\tau}
    \phi\,d\mu. \]

Let us compute the evolution of the $L^p$-norm of $f$, where
$p=p(\tau)$ also depends on $\tau$. We have
\[ \partial_\tau \left( e^{-|x|^2/4} d\mathcal{H}^n\right) &= -\frac{x}{2}\cdot
  \left(H + \frac{x^\perp}{2}\right) e^{-|x|^2/4} d\mathcal{H}^n -
  e^{-|x|^2/4} H\cdot\left( H + \frac{x^\perp}{2}\right)\,
  d\mathcal{H}^n \\
  &=- \left| H + \frac{x^\perp}{2}\right|^2\, e^{-|x|^2/4}\,
  d\mathcal{H}^n,
\]
and 
\[ \partial_\tau f^p &= pf^{p-1}\partial_\tau f + \dot p \log f\, f^p
  \\
  \partial_\tau \int_{M_\tau} f^p d\mu &= \int_{M_\tau} \left( p f^{p-1}
  \partial_\tau f + \dot p \log f\, f^p - f^p \left| H +
    \frac{x^\perp}{2}\right|^2\right) d\mu, \]
so 
\[ \partial_\tau \Vert f\Vert_p &= \partial_\tau \left(\int_{M_\tau}
    f^p\, d\mu\right)^{1/p} \\
  &= \frac{1}{p} \Vert f\Vert_p^{1-p}\, \partial_\tau \int_{M_\tau} f^p
  d\mu - \frac{\dot p}{p^2} \left( \log \int_{M_\tau} f^pd\mu\right)
  \Vert f\Vert_p \\
  &\leq \frac{1}{p} \Vert f\Vert_p^{1-p} \int_{M_\tau} \dot p \log
  f\, f^p d\mu \\
  &\quad + \Vert f\Vert_p^{1-p} \int_{M_\tau} \Big( f^{p-1}(\Delta f
    - \frac{1}{2}x\cdot \nabla f +
    a\cdot \nabla f + b f + \mathcal{E}) - \frac{1}{p} f^p |H +
    x^\perp/2|^2\Big)\, d\mu \\
    &\quad - \frac{\dot p}{p} \Vert f\Vert_p \log \Vert f\Vert_p,
  \]
  where we used the differential inequality for $\partial_\tau f$. 
  Integrating by parts we have
  \[ \int_{M_\tau} f^{p-1} \Delta f\, e^{-|x|^2/4} d\mathcal{H} = \int_{M_\tau}
    \Big( -(p-1)f^{p-2} |\nabla f|^2  + f^{p-1} \nabla f \cdot
    \frac{x}{2}\Big) e^{-|x|^2/4}\, d\mathcal{H},\]
  so
  \[\int_{M_\tau} f^{p-1}(\Delta f  - \frac{1}{2}x\cdot \nabla f) d\mu
    = -(p-1) \int_{M_\tau} f^{p-2} |\nabla f|^2\, d\mu. \]

  From the log-Sobolev inequality \eqref{eq:logSob} we get
  \[ p \int_{M_\tau} f^p\log f\, d\mu &\leq \left(\int_{M_\tau} f^p\right)
    \log \int_{M_\tau} f^p\, d\mu \\
    &\quad + \int_{M_\tau} p^2 f^{p-2}
    |\nabla f|^2\, d\mu + \int_{M_\tau} f^p\left| H + x^\perp/2\right|^2\,
      d\mu. \]
    Using this in our formula for $\partial_\tau \Vert f\Vert_p$ we get
    \[ \partial_\tau \Vert f\Vert_p &\leq 
      \frac{\dot p}{p^2} \Vert f\Vert_p^{1-p}\left( p \Vert f\Vert_p^p
       \log \Vert f\Vert_p + \int_{M_\tau} \left( p^2 f^{p-2}|\nabla
         f|^2 + f^p |H+x^\perp/2|^2 \right) d\mu\right) \\
     &\quad \Vert f\Vert_p^{1-p}\int_{M_\tau} - (p-1) f^{p-2}|\nabla
     f|^2 + \epsilon f^{p-1} |\nabla f| + b_0 f^p + f^{p-1} \mathcal{E} \, d\mu \\
     &\quad - \frac{1}{p} \Vert f\Vert_p^{1-p} \int_{M_\tau} f^p
     |H+x^\perp/2|^2\, d\mu \\
     &\quad - \frac{\dot p}{p} \Vert f\Vert_p \log \Vert f\Vert_p. 
   \]
   We estimate
   \[ \epsilon f^{p-1} |\nabla f| &\leq \frac{\epsilon}{2} f^{p-2} |\nabla
     f|^2 + \frac{\epsilon}{2} f^p, \\
       \int_{M_\tau} f^{p-1}\mathcal{E} \,d\mu &\leq \left(\int_{M_\tau}
         f^p\,d\mu\right)^{\frac{p-1}{p}} \left(\int_{M_\tau}
         \mathcal{E}^p\,d\mu\right)^{1/p} = \Vert f\Vert_p^{p-1} \Vert
       \mathcal{E}\Vert_p.  \]
   Using this in the calculation above, and simplifying, we get
   \[ \partial_\tau \Vert f\Vert_p &\leq \left(\dot p -
       (p-1-\epsilon)\right)\Vert f\Vert_p^{1-p} \int_{M_\tau} f^{p-2}|\nabla
     f|^2\ d\mu \\
     &\quad + \left(\frac{\dot p}{p^2} - \frac{1}{p}\right) \Vert
     f\Vert_p^{1-p} \int_{M_\tau} f^p |H + x^\perp/2|^2\, d\mu \\
     &\quad + (b_0+\epsilon) \Vert f\Vert_p + \Vert\mathcal{E}\Vert_p. 
   \]
   We set $p(t) = 1+\epsilon + (1-\epsilon) e^\tau$ so that $\dot p =
   p-1-\epsilon$ and $p(0)=2$. Then we get
   \[ \partial_\tau \Vert f\Vert_p \leq (b_0+\epsilon) \Vert f\Vert_p
     + \Vert\mathcal{E}\Vert_p. \] 
   Integrating this differential inequality from 0 to $\tau$ we get
   \[ \Vert f(\cdot, \tau)\Vert_{L^{p(\tau)}} \leq
     e^{(\epsilon+b_0)\tau} \Vert 
     f(\cdot, 0)\Vert_{L^{2}} + \int_0^\tau e^{(\epsilon+b_0)(\tau -
       s)} \Vert \mathcal{E}(\cdot, s)\Vert_{L^{p(s)}}\, ds, \]
   as required. 
\end{proof}
  
\bibliography{mybib}{}
\bibliographystyle{plain}

\end{document}